\numberwithin{equation}{section}
\title[]{Existence of Hermitian-Yang-Mills metrics under conifold transitions}
\author{Ming-Tao Chuan} 
\address{Department of Mathematics\\ Harvard University\\ Cambridge,
MA 02138\\ USA} \email{mtchuan@math.harvard.edu}
\newtheorem{theorem}{Theorem}[section]
\newtheorem{definition}[theorem]{Definition}
\newtheorem{lemma}[theorem]{Lemma}
\newtheorem{proposition}[theorem]{Proposition}
\newtheorem{corollary}[theorem]{Corollary}
     \def\qed{\qedmark\medbreak}%
\def\qedmark{{\enspace\vrule height 6pt width 5pt depth 1.5pt}}%
\begin{document}
\bibliographystyle{plain}
\maketitle

\begin{abstract}
  We first study the degeneration of a sequence of Hermitian-Yang-Mills metrics with respect to a sequence of balanced metrics on a Calabi-Yau threefold $\hat{X}$ that degenerates to the balanced metric constructed by Fu, Li and Yau \cite{FLY} on the complement of finitely many (-1,-1)-curves in $\hat{X}$. Then under some assumptions we show the existence of Hermitian-Yang-Mills metrics on bundles over a family of threefolds $X_t$ with trivial canonical bundles obtained by performing conifold transitions on $\hat{X}$.
\end{abstract}

\section{Introduction}

This paper is about the existence problem for Hermitian-Yang-Mills metrics on holomorphic vector bundles with respect to balanced metrics, when conifold transitions are performed on the base Calabi-Yau threefolds.

The construction of canonical geometric structures on manifolds and vector bundles has always been a very important problem in differential geometry, especially in K$\ddot{\text{a}}$hler geometry. A class of manifolds which are the main focus in this direction is the K$\ddot{\text{a}}$hler Calabi-Yau manifolds\footnote{In this paper, by a Calabi-Yau manifold we mean a complex manifold with trivial canonical bundle which may or may not be K$\ddot{\text{a}}$hler, and what is usually called a Calabi-Yau manifold will now be a K$\ddot{\text{a}}$hler Calabi-Yau manifold.}, i.e., K$\ddot{\text{a}}$hler manifolds with trivial canonical bundles.  The Calabi conjecture which was solved by Yau \cite{Y} in 1976 states that in every K$\ddot{\text{a}}$hler class of a K$\ddot{\text{a}}$hler Calabi-Yau manifold there is a unique representative which is Ricci-flat. 

After the solution of the Calabi conjecture, K$\ddot{\text{a}}$hler Calabi-Yau manifolds have undergone rapid developments, and the moduli spaces of K$\ddot{\text{a}}$hler Calabi-Yau threefolds gradually became one of the most important area of study. In the work of Todorov \cite{To} and Tian \cite{Tian1} the smoothness of the moduli spaces of K$\ddot{\text{a}}$hler Calabi-Yau manifolds in general dimensions was proved. In the complex two dimensional case, the moduli space of K3 surfaces is known to be a 20-dimensional complex smooth irreducible analytic space, with the algebraic K3 surfaces occupying a 19-dimensional reducible analytic subvariety with countable irreducible components \cite{Ko} \cite{Ty} \cite{M}. The global properties of the moduli spaces of K$\ddot{\text{a}}$hler Calabi-Yau threefolds remain much less understood. 

However, there was the proposal by Miles Reid \cite{Reid} which states that the moduli spaces of all Calabi-Yau threefolds can be connected by means of taking birational transformations and smoothings on the Calabi-Yau threefolds. This idea, later dubbed as ``Reid's Fantasy'', was checked for a huge number of examples in \cite{C1}\cite{CGGK}. The processes just mentioned are called geometric transitions in general, and the main focus in this paper is the most studied example, namely the conifold transition, which was first considered by Clemens \cite{Cl} in 1982 and later caught the attention of the physicists starting the late 1980's. It is described as follows. Let $\hat{X}$ be a smooth Calabi-Yau threefold containing a collection of mutually disjoint (-1,-1)-curves $C_1,...,C_l$, i.e., rational curves $C_i \cong \mathbb{P}^1$ with normal bundles in $\hat{X}$ isomorphic to $\mathcal{O}_{\mathbb{P}^1}(-1)\oplus \mathcal{O}_{\mathbb{P}^1}(-1)$. One can contract the $C_i$'s to obtain a space $X_0$ with ordinary double points, and then under certain conditions given by Friedman, $X_0$ can be smoothed and one obtains a family of threefolds $X_t$ with trivial canonical bundles. 

Even when $\hat{X}$ is K$\ddot{\text{a}}$hler, the manifolds $X_t$ may be non-K$\ddot{\text{a}}$hler, and it was proved in \cite{FLY} that they nevertheless admit balanced metrics, which we denote by $\tilde{\omega}_t$. In general, a Hermitian metric $\omega$ on a complex $n$-dimensional manifold is balanced if $d(\omega^{n-1})=0$. K$\ddot{\text{a}}$hler metrics are obviously balanced metrics, but, unlike the K$\ddot{\text{a}}$hler case, the existence of balanced metrics is preserved under birational transformations \cite{AB}. Moreover, if the manifold satisfies the $\partial\bar{\partial}$-lemma, then the aforementioned existence is also preserved under small deformations \cite{Wu}. What \cite{FLY} shows is that it is also preserved under conifold transitions provided $\hat{X}$ is K$\ddot{\text{a}}$hler Calabi-Yau.

In this paper we would like to push further the above result on the preservation of geometric structures after conifold transitions. Consider a pair $(\hat{X},\mathcal{E})$ where $\hat{X}$ is a K$\ddot{\text{a}}$hler Calabi-Yau threefold with a K$\ddot{\text{a}}$hler metric $\omega$, and $\mathcal{E}$ is a holomorphic vector bundle endowed with a Hermitian-Yang-Mills metric with respect to $\omega$. Denote the contraction of exceptional rational curves mentioned above by $\pi:\hat{X} \rightarrow X_0$. From the point of view of metric geometry, such a contraction can be seen as a degeneration of Hermitian metrics on $\hat{X}$ to a metric which is singular along the exceptional curves. In fact, following the methods in \cite{FLY}, one can construct a family of smooth balanced metrics $\{\hat{\omega}_a\}_{a>0}$ on $\hat{X}$ such that $\hat{\omega}_a^2$ and $\omega^2$ differ by $\partial\bar{\partial}$-exact forms and, as $a\rightarrow 0$, $\hat{\omega}_a$ converges to a metric $\hat{\omega}_0$ which is singular along the exceptional curves. The metric $\hat{\omega}_0$ can also be viewed as a smooth metric on $X_{0,sm}$, the smooth part of $X_0$.

We have the following result which is the first main theorem.
\begin{theorem}
 \label{th:main1}
    Let $\mathcal{E}$ be an irreducible holomorphic vector bundle over a K$\ddot{\text{a}}$hler Calabi-Yau threefold $(X,\omega)$ such that $c_1(\mathcal{E})=0$ and $\mathcal{E}$ is trivial on a neighborhood of the exceptional rational curves $C_i$. Suppose $\mathcal{E}$ is endowed with a HYM metric w.r.t. $\omega$. 
    
    Then there exists a HYM metric $H_0$ on $\mathcal{E}|_{X_{0,sm}}$ with respect to $\hat{\omega}_0$, and there is a decreasing sequence $\{a_i\}_{i=1}^\infty$ converging to 0, such that there is a sequence $\{H_{a_i}\}_{i=1}^\infty$ of Hermitian metrics on $\mathcal{E}$ converging weakly in the $L^p_2$-sense, for all $p$, to $H_0$ on each compactly embedded open subset of $X_{0,sm}$, where each $H_{a_i}$ is HYM with respect to $\hat{\omega}_{a_i}$. 
\end{theorem}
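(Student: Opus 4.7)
The plan is to first produce the metrics $H_a$ for each $a>0$ by invoking the Donaldson-Uhlenbeck-Yau theorem in the form extended by Li-Yau to Gauduchon (hence balanced) metrics, and then extract a subsequential limit as $a\to 0$ from a priori estimates on compact subsets of $X_{0,sm}$.

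For the existence of $H_a$, the main point is that $\mathcal{E}$ remains stable with respect to each $\hat\omega_a$. Irreducibility together with the assumed HYM metric gives $\omega$-stability, and for any coherent subsheaf $\mathcal{F}\subset\mathcal{E}$ the first Chern class $c_1(\mathcal{F})$ is represented by a $\partial\bar\partial$-closed current, so integration by parts combined with the relation $\hat\omega_a^2-\omega^2\in\partial\bar\partial(\text{smooth forms})$ from the introduction yields
\begin{equation*}
\deg_{\hat\omega_a}(\mathcal{F})=\int_{\hat X}c_1(\mathcal{F})\wedge\hat\omega_a^2=\int_{\hat X}c_1(\mathcal{F})\wedge\omega^2=\deg_\omega(\mathcal{F}).
\end{equation*}
Together with $c_1(\mathcal{E})=0$ this transfers $\omega$-stability to $\hat\omega_a$-stability, and the balanced version of DUY produces an HYM metric $H_a$ with respect to $\hat\omega_a$, unique up to a positive scalar. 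Fix a normalization, for example by prescribing $\det H_a$ against a smooth reference or by pinning $H_a$ at a base point of $X_{0,sm}$.

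Next I would establish uniform local estimates on each compact $K\Subset X_{0,sm}$. Because $\hat\omega_a\to\hat\omega_0$ smoothly on such $K$, the family is uniformly quasi-isometric to $\hat\omega_0$ there, and the HYM equation $\Lambda_{\hat\omega_a}F_{H_a}=0$ is uniformly elliptic on $K$. A Chern-Weil computation using $[\hat\omega_a^2]=[\omega^2]$ bounds the global Yang-Mills energy $\int_{\hat X}|F_{H_a}|^2\,d\mathrm{vol}_{\hat\omega_a}$ in terms of Chern numbers of $\mathcal{E}$, uniformly in $a$. Writing $H_a=K_{\mathrm{ref}}\,e^{s_a}$ for a fixed smooth reference Hermitian metric on $\mathcal{E}|_{X_{0,sm}}$, a version of Simpson's maximum-principle argument applied to $\log\mathrm{tr}(e^{s_a})$ should give a uniform $L^\infty$ bound on $s_a$ over $K$. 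With this $C^0$ bound and the $L^2$ bound on $F_{H_a}$, elliptic bootstrapping on the HYM equation produces uniform $L^p_2$ bounds on $s_a$ over $K$ for every $p$. A diagonal argument over an exhaustion $K_1\Subset K_2\Subset\cdots$ of $X_{0,sm}$ then extracts a subsequence $H_{a_i}$ converging weakly in $L^p_2$ on each $K_j$ to a limit $H_0$, and passing to the HYM equation in the distributional sense identifies $H_0$ as HYM with respect to $\hat\omega_0$.

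The main obstacle I expect is the $L^\infty$ estimate on $s_a$ in the degenerating geometry: one must rule out that the eigenvalues of $H_a$, measured against the reference, run off to $0$ or $\infty$ on the compact pieces as the mass of $\hat\omega_a$ concentrates near the exceptional curves $C_i$. The hypothesis that $\mathcal{E}$ is trivial on a neighborhood of $\bigcup C_i$ is essential here, since it supplies a natural flat reference there and prevents the topology of $\mathcal{E}$ from being forced into the singular locus. The remaining work is to combine this flat local reference with the global $L^2$ curvature control and the $\hat\omega_a$-stability of $\mathcal{E}$ to obtain the desired uniform $C^0$ bound, at which point the rest of the argument follows by standard gauge-theoretic compactness.
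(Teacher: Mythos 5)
Your outline correctly identifies the two-stage structure — produce HYM metrics $H_a$ for each $\hat\omega_a$ by Li--Yau and then extract a subsequential limit — and you correctly identify that the entire difficulty is the uniform $C^0$ estimate in the degenerating geometry. The stability-transfer step is fine: since $\hat\omega_a^2-\omega^2$ is $\partial\bar\partial$-exact, all $\hat\omega_a$-degrees agree with $\omega$-degrees, so $\mathcal{E}$ stays stable. But the proof as written stops exactly where the real work begins: you say the "remaining work" is to combine the flat local reference, global $L^2$ curvature control, and stability into a uniform $C^0$ bound, but this is not a step you can outsource to "standard gauge-theoretic compactness." In the paper this is the entire content of Section 4, and the argument you gesture at would not go through as stated.

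Concretely, two things are missing or would fail. First, the claimed Chern--Weil bound on $\int_{\hat X}|F_{H_a}|^2\,dV_a$ from $[\hat\omega_a^2]=[\omega^2]$ is not available: the Lübke-type identity that relates the Yang--Mills energy of an HYM metric to $\int\mathrm{tr}(F\wedge F)\wedge\hat\omega_a$ requires $\hat\omega_a$ itself (not $\hat\omega_a^2$) to be closed, which fails for a balanced non-Kähler metric, so this integral is not a Chern number and is not manifestly uniform in $a$. Second, and more centrally, a "Simpson maximum-principle argument" needs an a priori $L^1$ or $L^2$ control on $\log\mathrm{tr}\,h_a$ before it yields an $L^\infty$ bound, and obtaining that $L^2$ bound \emph{uniformly in $a$} is precisely the Uhlenbeck--Yau step the paper carries out (Proposition \ref{pr:L2}): one assumes the $L^2$ norms blow up, renormalizes, extracts a weak limit, shows it produces a weakly holomorphic destabilizing subsheaf $\mathcal{F}$ of $\mathcal{E}$, and then — and this is a genuinely new point in the degenerate setting — proves $\mu_{\hat\omega_a}(\mathcal{F})=\mu_{\hat\omega_0}(\mathcal{F})$ via a Stokes computation near the exceptional curves to derive a contradiction with $\hat\omega_a$-stability. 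On top of this, the paper still needs the normalization lemma (via the auxiliary potential $\varphi_a$, Green's function bounds over $X_0[\frac34]$, Yau's first-eigenvalue estimate, and maximum principle on $U(1)$ where $\Lambda_{\hat\omega_a}F_{\hat H}=0$) to fix the scaling ambiguity uniformly in $a$; merely "pinning $H_a$ at a base point" does not control eigenvalues elsewhere. Finally, for the $C^1$ bound near the exceptional curves the paper uses a Bochner-type subharmonicity statement for $|\partial H H^{-1}|^2$ on the Ricci-flat region (Lemma \ref{lm:X}), which is another ingredient not reducible to interior elliptic regularity on compact subsets. In short, the skeleton is right but the $C^0$ estimate — the heart of the theorem — is left as a conjecture, and the mechanism you suggest would not deliver it without the destabilizing-subsheaf argument and the uniform potential-theoretic input.
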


Suppose that one can smooth the singular space $X_0$ to $X_t$, and that the bundle $\pi_*\mathcal{E}$ fits in a family of holomorphic bundles $\mathcal{E}_t$ over $X_t$, i.e., the pair $(X_0, \pi_*\mathcal{E})$ can be smoothed to $(X_t,\mathcal{E}_t)$. We ask the question of whether a Hermitian-Yang-Mills metric with respect to the balanced metric $\tilde{\omega}_t$ exists on the bundle $\mathcal{E}_t$.
Note that the condition that $\mathcal{E}$ is trivial in a neighborhood of the exceptional rational curves $C_i$ implies that the bundles $\mathcal{E}_t$ would be trivial in a neighborhood of the vanishing cycles. Also note that $c_1(\mathcal{E}_t)=0$ for any $t\neq 0$.
 
We now state the second main theorem of this paper.
\begin{theorem}
 \label{th:main2}
Let $(\hat{X},\omega)$ be a smooth K$\ddot{\text{a}}$hler Calabi-Yau threefold and $\pi:\hat{X} \rightarrow X_0$ be a contraction of mutually disjoint (-1,-1)-curves. Let $\mathcal{E}$ be an irreducible  holomorphic vector bundle over $\hat{X}$ with $c_1(\mathcal{E})=0$ that is trivial in a neighborhood of the exceptional curves of $\pi$, and admits a Hermitian-Yang-Mills metric with respect to $\omega$. Suppose that the pair $(X_0,\pi_*\mathcal{E})$ can be smoothed to a family of pairs $(X_t,\mathcal{E}_t)$ where $X_t$ is a smooth Calabi-Yau threefold and $\mathcal{E}_t$ is a holomorphic vector bundle on $X_t$. 

Then for $t\neq 0$ sufficiently small, $\mathcal{E}_t$ admits a smooth Hermitian-Yang-Mills metric with respect to the balanced metric $\tilde{\omega}_t$ constructed in \cite{FLY}.
\end{theorem}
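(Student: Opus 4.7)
The plan is to build an approximate Hermitian--Yang--Mills metric $H_t^{\mathrm{app}}$ on $\mathcal{E}_t$ by gluing the degeneration limit supplied by Theorem~\ref{th:main1} to a flat model near each vanishing cycle, and then perturb to an exact HYM metric via a contraction mapping whose linearization is controlled using the irreducibility of $\mathcal{E}$.

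First I would apply Theorem~\ref{th:main1} to obtain a sequence $a_i \to 0$ and HYM metrics $H_{a_i}$ on $(\mathcal{E},\hat{\omega}_{a_i})$ converging weakly in $L^p_2$ on compact subsets of $X_{0,sm}$ to a HYM metric $H_0$ for $(\mathcal{E}|_{X_{0,sm}},\hat{\omega}_0)$. Next I would exploit the FLY construction: outside a fixed neighborhood of the vanishing $S^3$'s in $X_t$, there is a canonical identification with an open subset of $X_0^{\mathrm{sm}} \cong \hat{X}\setminus \bigcup C_i$, and under this identification $\tilde{\omega}_t$ agrees with $\hat{\omega}_{a(t)}$ for a suitable parameter $a(t)\to 0$; near each vanishing cycle, $\tilde{\omega}_t$ is a Candelas--de la Ossa-type model on the smoothed conifold. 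Since $\mathcal{E}_t$ is trivial near the vanishing cycles, I would define
\[
H_t^{\mathrm{app}} := \chi_t\, H_{a(t)} + (1-\chi_t)\, H_{\mathrm{flat}},
\]
where $\chi_t$ is a smooth cutoff supported outside a small neighborhood of the vanishing cycles and $H_{\mathrm{flat}}$ is the flat Hermitian metric in the given trivialization. In any fixed annulus away from the nodes, $H_{a(t)} \to H_0$ smoothly, and a removable-singularity argument for $H_0$ on the trivial bundle, combined with a constant gauge adjustment, matches $H_0$ with $H_{\mathrm{flat}}$ to leading order. Consequently the error $\Lambda_{\tilde{\omega}_t} F_{H_t^{\mathrm{app}}}$ is supported in the gluing annulus and is $C^k$-small as $t \to 0$.

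Then I would seek $H_t = H_t^{\mathrm{app}} \exp(s_t)$ with $s_t$ a traceless self-adjoint endomorphism solving $\Lambda_{\tilde{\omega}_t} F_{H_t^{\mathrm{app}}\exp(s_t)} = 0$. The linearization at $s=0$ is, up to lower order terms, the $\tilde{\omega}_t$-Laplacian on traceless endomorphisms of $\mathcal{E}_t$. Irreducibility of $\mathcal{E}$ propagates to $\mathcal{E}_t$ for small $t$ by an openness argument on the smoothing family, giving injectivity and hence invertibility of the linearization. Combined with the $C^k$-smallness of the error, a Banach fixed-point argument in a suitable weighted H\"older or Sobolev space then produces a unique small $s_t$, yielding a smooth HYM metric on $\mathcal{E}_t$ with respect to $\tilde{\omega}_t$.

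The principal obstacle is a uniform-in-$t$ bound on the inverse of the linearized operator: $(X_t,\tilde{\omega}_t)$ degenerates with collapsing necks modeled on the smoothed conifold, and standard elliptic estimates are not uniform across this degeneration. I would address this by a blow-up/compactness argument: if the bound failed along a sequence $t_i \to 0$, a normalized sequence of almost-kernel elements would subconverge to a nontrivial traceless harmonic endomorphism of $(\mathcal{E}|_{X_{0,sm}},H_0,\hat{\omega}_0)$; a removable-singularity estimate across the nodes, using the $L^p_2$ control from Theorem~\ref{th:main1} together with triviality of $\mathcal{E}$ there, would extend it to all of $\hat{X}$, contradicting the irreducibility of $\mathcal{E}$. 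Setting up function spaces that are uniformly adapted both to the neck geometry and to the weights used in Theorem~\ref{th:main1} is the delicate technical point that the argument hinges on.
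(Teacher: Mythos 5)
Your proposal follows the paper's architecture closely: use Theorem~\ref{th:main1} to obtain $H_0$ on $\mathcal{E}|_{X_{0,sm}}$, transfer it to $\mathcal{E}_t$ and glue against a flat model near the vanishing cycles, then perturb to an exact HYM metric by a contraction mapping whose invertibility is forced by a blow-up/compactness argument that ultimately appeals to the irreducibility of $\mathcal{E}$ over $\hat{X}$. This matches Sections 5, 6, and 7 of the paper, and in particular your diagnosis that the crux is a uniform bound on the inverse of the linearization, obtained by passing a normalized almost-kernel sequence to a limit on $X_{0,sm}$, extending by removable singularity (Hartog's) and getting a contradiction, is exactly Proposition~\ref{pr:5-1}.

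Two details in your write-up are misstated relative to what would actually need to be proved. First, the FLY metric $\tilde{\omega}_t$ does not agree with any $\hat{\omega}_{a(t)}$ on the bulk region; $\tilde{\omega}_t^2$ equals the gluing form $\Phi_t^{2,2}$ plus a correction $\theta_t+\bar\theta_t$ spread over all of $X_t$. Consequently the mean-curvature error $\Lambda_{\tilde{\omega}_t}F_{H_t^{\mathrm{app}}}$ is not supported in the gluing annulus: even on the bulk where $H_t^{\mathrm{app}}$ coincides with the pullback of $H_0$, the error is nonzero because $H_0$ is HYM for $\hat{\omega}_0$ but not for $\tilde{\omega}_t$. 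The paper controls this bulk term through the estimates $|\tilde{\omega}_t^{-1}-\omega_{co,t}^{-1}|\lesssim |t|\mathbf{r}_t^{-2/3}$ (Proposition~\ref{pr:6-1}) and the decay of $J_t-x_t^*J_0$ (Corollary~\ref{asym}); you would need analogous bounds in weighted spaces for your estimate of the error term. Second, the paper glues $f_t^*H_0$ against the \emph{constant} metric $H'_t=f_t^*(H_0(w_t))$ rather than against an abstract $H_{\mathrm{flat}}$; the constant is the value of $H_0$ at a reference point in the gluing annulus, and the matching to order $|t|^{\frac{2}{3}\alpha}$ comes from a $C^1$ bound on $h_0$ near the singularities (Propositions~\ref{pr:2-4} and~\ref{pr:1-4}), not a removable-singularity theorem. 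Neither misstatement breaks the strategy, but both mean more of the weight of the argument falls on the quantitative estimates for $H_0$ and for $\tilde{\omega}_t$ than your sketch suggests.
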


For irreducible holomorphic vector bundles over a K$\ddot{\text{a}}$hler manifold, the existence of Hermitian-Yang-Mills metrics corresponds to the slope stability of the bundles. For proofs of this correspondence, see \cite{Don1}\cite{Don2}\cite{UY}. On a complex manifold endowed with a balanced metric, or more generally a Gauduchon metric, i.e., a Hermitian metric $\omega$ satisfying $\partial\bar{\partial}(\omega^{n-1})=0$, one can still define the slopes of bundles and hence the notion of slope stability. Under this setting, Li and Yau \cite{LY1} proved the same correspondence. 

Another motivation for considering stable vector bundles over non-K$\ddot{\text{a}}$hler manifolds comes from physics. K$\ddot{\text{a}}$hler Calabi-Yau manifolds have always played a central role in the study of Supersymmetric String Theory, a theory that holds the highest promise so far concerning the unification of the fundamental forces of the physical world. Among the many models in Supersymmetric String Theory, the Heterotic String models \cite {G}\cite{W} require not only a manifold with trivial canonical bundle but a stable holomorphic vector bundle over it as well. Besides using the K$\ddot{\text{a}}$hler Calabi-Yau threefolds as the internal spaces, Strominger also suggested to use a model allowing nontrivial torsions in the metric. In \cite{St}, he proposed the following system of equations for a pair $(\omega,H)$ consisting of a Hermitian metric $\omega$ on a Calabi-Yau threefold $X$ and a Hermitian metric $H$ on a vector bundle $\mathcal{E} \rightarrow X$ with $c_1(\mathcal{E})=0$: 
\begin{equation}
 \label{s1}
		F_H\wedge \omega^2=0;\,\,\,F_H^{0,2}=F_H^{2,0}=0;
\end{equation}
\begin{equation}
 \label{s2}
		\sqrt{-1}\partial\bar{\partial}\omega=\frac{\alpha}{4}(\text{tr}(R_{\omega}\wedge R_{\omega})-\text{tr}(F_H\wedge F_H));
\end{equation}		
\begin{equation}
 \label{s3}
				d^*\omega =\sqrt{-1}(\bar{\partial}-\partial)\ln \Arrowvert \Omega \Arrowvert_{\omega};
\end{equation}
where $R_{\omega}$ is the full curvature of $\omega$ and $F_H$ is the Hermitian curvature of $H$. The equations (\ref{s1}) is simply the Hermitian-Yang-Mills equations for $H$. Equation (\ref{s2}) is named the Anomaly Cancellation equation derived from physics. In \cite{LY2} it was shown that equation (\ref{s3}) is equivalent to another equation showing that $\omega$ is conformally balanced:
\begin{equation*}
	d(\Arrowvert \Omega \Arrowvert_{\omega}\omega^2)=0.
\end{equation*}
It is mentioned in \cite{FLY} that this system should be viewed as a generalization of Calabi Conjecture for the case of non-K$\ddot{\text{a}}$hler Calabi-Yau manifolds. 

The system, though written down in 1986, was first shown to have non-K$\ddot{\text{a}}$hler solutions only in 2004 by Li and Yau \cite{LY2} using perturbation from a K$\ddot{\text{a}}$hler solution. The first solutions to exist on manifolds which are never K$\ddot{\text{a}}$hler are constructed by Fu and Yau \cite{FY}. The class of threefolds they consider are the $T^2$-bundles over K3 surfaces constructed by Goldstein and Prokushkin \cite{GP}. Some non-compact examples have also been constructed by Fu, Tseng and Yau \cite{FTY} on $T^2$-bundles over the Eguchi-Hanson space. More solutions are found in a recent preprint \cite{AG} using the perturbation method developed in \cite{LY2}.

The present paper can also be viewed as a step following \cite{FLY} in the investigation of the relation between the solutions to Strominger's system on $\hat{X}$ and those on $X_0$ and $X_t$.

This paper is organized as follows:

Section 2 sets up the conventions and contains more background information of conifold transitions and Hermitian-Yang-Mills metrics over vector bundles. Moreover, the construction of balanced metrics in \cite{FLY} is described in more details necessary for later discussions.

In Section 3 the uniform coordinate systems on $X_0$ and on $X_t$ are introduced, which are needed to show a uniform control of the constants in the Sobolev inequalities and elliptic regularity theorems.

In Section 4 Theorem \ref{th:main1} is proved, and several boundedness results of the HYM metric $H_0$ in that theorem are discussed. 

In Section 5 a family of approximate Hermitian metrics $H_t$ on $\mathcal{E}_t$ are constructed, and some estimates on their mean curvatures are established.

Section 6 describes the contraction mapping setup for the HYM equation on the bundle $\mathcal{E}_t$. Theorem \ref{th:main2} is proved here.

Section 7 deals with a proposition left to be proved from Section 6.\\[0.3cm]

\noindent \textbf{Acknowledgements} The author would like to thank his thesis advisor Professor S.-T. Yau for constant supports and valuable comments. The author is also grateful to Professor C. Taubes and Professor J. Li for helpful discussions, and to Professor J.-X. Fu for useful comments during the preparation of this work.%\\[0.3cm]

\section{Backgrounds}

\subsection{Conifold transitions}

Let $\hat{X}$ be a K$\ddot{\text{a}}$hler Calabi-Yau threefold with a K$\ddot{\text{a}}$hler metric denoted by $\omega$. Let $\bigcup C_i$ be a collection of $(-1,-1)$-curves in $\hat{X}$, and let $X_0$ be the threefold obtained by contracting $\bigcup C_i$, so $\hat{X}$ is a small resolution of $X_0$. $X_0$ has ordinary double points, which are the images of the curves $C_i$ under the contraction. There is a condition given by Friedman which relates the smoothability of the singular space $X_0$ to the classes $[C_i]$ of the exceptional curves in $\hat{X}$:
\begin{theorem} \cite{Fri1}\cite{Fri2}
		If there are nonzero numbers $\lambda_i$ such that the class 
\begin{equation}
 \label{eq:i-1}
		\sum_i \lambda_i[C_i]=0
\end{equation}
in $H^2(\hat{X},\Omega_{\hat{X}}^2)$ then a smoothing of $X_0$ exists, i.e., there is a 4-dimensional complex manifold $\mathcal{X}$ and a holomorphic projection $\mathcal{X} \rightarrow \Delta$ to the disk $\Delta$ in $\mathbb{C}$ such that the general fibers are smooth and the central fiber is $X_0$.
\end{theorem}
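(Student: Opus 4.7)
The plan is to attack this via deformation theory of the singular space $X_0$. A smoothing of $X_0$ corresponds, at first order, to a section of the local-to-global deformation sheaf $T^1_{X_0}=\mathcal{E}xt^1(\Omega^1_{X_0},\mathcal{O}_{X_0})$ that is nonvanishing at every singular point, together with a compatible global first-order deformation. Since each ordinary double point is analytically the affine variety $\{xy-zw=0\}\subset\mathbb{C}^4$ with versal smoothing $\{xy-zw=t\}$, the sheaf $T^1_{X_0}$ is a direct sum of skyscraper sheaves of length one supported at the nodes, and so $H^0(X_0,T^1_{X_0})\cong\mathbb{C}^l$. The local-to-global Ext spectral sequence yields
\[
0\to H^1(X_0,T_{X_0})\to\operatorname{Ext}^1(\Omega^1_{X_0},\mathcal{O}_{X_0})\to H^0(X_0,T^1_{X_0})\xrightarrow{\delta}H^2(X_0,T_{X_0}),
\]
and a first-order smoothing direction $(\sigma_1,\dots,\sigma_l)$ lifts to an honest first-order deformation of $X_0$ precisely when it lies in $\ker\delta$.

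The next step is to rewrite the obstruction map $\delta$ in terms of data on the small resolution $\hat{X}$. Since $K_{\hat{X}}$ is trivial, contraction with a nowhere-vanishing holomorphic $3$-form gives an isomorphism $T_{\hat{X}}\cong\Omega^2_{\hat{X}}$. Using that each exceptional $C_i\cong\mathbb{P}^1$ has normal bundle $\mathcal{O}(-1)\oplus\mathcal{O}(-1)$, one computes $R^1\pi_\ast T_{\hat{X}}=0$ and identifies the cokernel of $\pi_\ast T_{\hat{X}}\hookrightarrow T_{X_0}$ as a torsion sheaf supported at the nodes whose sections encode the local smoothing directions. Feeding this into the Leray spectral sequence for $\pi$ and applying the identification $T_{\hat{X}}\cong\Omega^2_{\hat{X}}$, one shows that $\delta$ factors through $H^2(\hat{X},T_{\hat{X}})\cong H^2(\hat{X},\Omega^2_{\hat{X}})$ as the map $\mathbb{C}^l\to H^2(\hat{X},\Omega^2_{\hat{X}})$ sending $(\lambda_i)\mapsto\sum_i\lambda_i[C_i]$. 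The given hypothesis therefore produces an element of $\ker\delta$ with every component nonzero, i.e.\ a first-order deformation smoothing all nodes simultaneously.

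Finally, I would promote this first-order direction to an honest analytic family $\mathcal{X}\to\Delta$ by showing that the miniversal deformation of $X_0$ is unobstructed along the chosen direction. Obstructions to extending an $n$th-order deformation again live in $H^2(X_0,T_{X_0})$, and these are controlled by a Bogomolov--Tian--Todorov-type argument adapted to the Gorenstein Calabi--Yau $X_0$; one exploits the triviality of $K_{X_0}$ together with the explicit local smoothing to run the $T^1$-lifting principle of Ran and Kawamata on the singular space. This unobstructedness step is what I expect to be the main technical obstacle: identifying the first-order obstruction with $\sum\lambda_i[C_i]$ is essentially a bookkeeping exercise in the sheaf-theoretic dictionary, but producing a genuine one-parameter analytic family requires all-order unobstructedness on the mildly singular Calabi--Yau $X_0$, which is exactly the hard content of Friedman's argument.
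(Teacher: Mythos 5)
The paper does not prove this theorem; it is quoted verbatim from Friedman's papers \cite{Fri1}\cite{Fri2} (and the paper also points to Tian's differential-geometric treatment in \cite{Tian2}). There is therefore no in-paper proof to compare against, and I will assess your reconstruction of Friedman's argument on its own terms.

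Your overall architecture---identify $T^1_{X_0}$ as a skyscraper of length one at each node, run the local-to-global Ext spectral sequence to get the obstruction map $\delta : H^0(T^1_{X_0}) \to H^2(T_{X_0})$, transfer to $\hat{X}$ via $T_{\hat{X}} \cong \Omega^2_{\hat{X}}$ and Leray, then prove unobstructedness---is indeed the skeleton of Friedman's argument, and you correctly flag unobstructedness as the deep step, for which a Bogomolov--Tian--Todorov or Ran--Kawamata $T^1$-lifting argument on the Gorenstein Calabi--Yau $X_0$ is a valid route. However, the intermediate mechanism you describe is wrong: for a small resolution $\pi:\hat{X}\to X_0$ of ordinary double points, the natural map $\pi_* T_{\hat{X}} \to T_{X_0}$ is an \emph{isomorphism}, not an inclusion with torsion cokernel. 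Both sheaves are reflexive on the normal space $X_0$ and agree away from the nodes, which sit in codimension $3$, so reflexivity forces equality everywhere. Consequently there is no ``torsion cokernel supported at the nodes whose sections encode the local smoothing directions''; the local smoothing directions live in $T^1_{X_0}=\mathcal{E}xt^1(\Omega^1_{X_0},\mathcal{O}_{X_0})$, an entirely separate object that is not a pushforward cokernel. What the isomorphism $\pi_* T_{\hat{X}}\cong T_{X_0}$ together with $R^1\pi_* T_{\hat{X}}=0$ actually gives you, via Leray, is $H^2(X_0,T_{X_0})\cong H^2(\hat{X},T_{\hat{X}})\cong H^2(\hat{X},\Omega^2_{\hat{X}})$. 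The real computational content is then the explicit local verification that the $i$-th coordinate vector in $H^0(T^1_{X_0})\cong\mathbb{C}^l$ maps under $\delta$ to a nonzero multiple of the class $[C_i]$, and this is what forces the condition ``all $\lambda_i \neq 0$'' rather than merely ``not all $\lambda_i$ zero.'' You acknowledge this identification but dismiss it as bookkeeping while proposing a route to it that would not work; as written, this step is a genuine gap in the proposal.
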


The above theorem is also considered in \cite{Tian2} from a more differential geometric point of view, and in \cite{Chan} the condition (\ref{eq:i-1}) is discussed in the obstructed case of the desingularization of K$\ddot{\text{a}}$hler Calabi-Yau 3-folds with conical singularities. \\[0.2cm]

The local geometry of the total space $\mathcal{X}$ near an ODP of $X_0$ is described in the following. For some $\epsilon>0$ and for
\begin{equation*}
       \tilde{U}=\{(z,t) \in \mathbb{C}^4 \times \Delta_\epsilon | \Arrowvert z \Arrowvert<2,\,\,z_1^2+z_2^2+z_3^2+z_4^2=t \}
\end{equation*}
there is a holomorphic map $\Xi:\tilde{U} \rightarrow \mathcal{X}$ respecting the projections to $\Delta$ and $\Delta_\epsilon$ so that $\tilde{U}$ is biholomorphic to its image. We will denote
\begin{equation*}
    Q_t:=\{z_1^2+z_2^2+z_3^2+z_4^2=t\} \subset \mathbb{C}^4.
\end{equation*}
From the above description, a neighborhood of 0 in $Q_0$ models a neighborhood of an ODP in $X_0$. For $t \neq 0$, $Q_t$ is called a deformed conifold. Throughout this paper we will denote by $\mathbf{r}_t$ the restriction of $\Arrowvert z \Arrowvert$ to $Q_t \subset \mathbb{C}^4$, and use the same notation for their pullbacks under $\Xi^{-1}$. 

For each ODP $p_i$ of $X_0$, we have the biholomorphism $\Xi_i:\tilde{U}_i \rightarrow \mathcal{X}$ as above. Without loss of generality we may assume that the images of the $\Xi_i$'s are disjoint. For a given $t \in \Delta$, define $V_{t,i}(c)$ to be the image under $\Xi_i$ of $\{(z,t) \in \mathbb{C}^4 \times \Delta_\epsilon | \mathbf{r}_t(z)< c,\,\,z_1^2+z_2^2+z_3^2+z_4^2=t \}$, and define $V_t(c)=\bigcup_i V_{t,i}(c)$. Define $V_{t,i}(R_1,R_2)=V_{t,i}(R_2)\backslash V_{t,i}(R_1)$ for any $0<R_1<R_2$ and $V_t(R_1,R_2)=\bigcup_i V_{t,i}(R_1,R_2)$. Define $U_i(c):=\pi^{-1}(V_{0,i}(c))\subset \hat{X}$ where $\pi$ is the small resolution $\pi:\hat{X} \rightarrow X_0$, and $U(c)=\bigcup_i U_{i}(c)$. Finally, define $X_t[c]=X_t \backslash V_t(c)$.

For each $t \neq0$, it can be easily checked that $\mathbf{r}_t\geq |t|^\frac{1}{2}$ on $Q_t$ and the subset $\{\mathbf{r}_t=|t|^\frac{1}{2}\}\subset Q_t$ is isomorphic to a copy of $S^3$, which is usually called the vanishing sphere. Each subset $V_t(c)$ is thus an open neighborhood of the vanishing spheres.\\[0.2cm]

\noindent \textbf{Remark} In the rest of the paper we will always regard $V_{t,i}(c)$ not only as a subset of $X_t$, but also as a subset of $Q_t$ via the map $\Xi_i$ and the projection map from the set $\{(z,t) \in \mathbb{C}^4 \times \Delta_\epsilon | \mathbf{r}_t(z)< c,\,\,z_1^2+z_2^2+z_3^2+z_4^2=t \}$ to $\mathbb{C}^4$. %We will even drop the subscript $i$ from $V_{t,i}(c)$ when it is clear that we are working in a single copy of $Q_t$. The same applies to $V_{t,i}(R_1,R_2)$, and similarly $i$ is dropped from $U_i(c)$ when we work in a single copy of $\hat{Q}$.

We also use the same notation $\mathbf{r}_t$ to denote a fixed smooth extension of $\mathbf{r}_t$ from $V_t(1)$ to $X_t$ so that $\mathbf{r}_t<3$.\\[0.2cm]

The following description of $Q_0$ and $Q_1$ will be useful in our discussion. Denote $\Sigma=SO(4)/SO(2)$. Then there are diffeomorphisms 
\begin{equation}
 \label{p1}
    \phi_0: \Sigma \times (0,\infty) \rightarrow Q_{0,sm}\,\,\,\,\text{such that}\,\,\,
    \phi_0(A\cdot SO(2),\mathbf{r}_0)=A
              \begin{pmatrix}                
           			\frac{1}{\sqrt{2}}\mathbf{r}_0 \\
  						\frac{i}{\sqrt{2}}\mathbf{r}_0 \\
          						0 \\
          						0
				\end{pmatrix},
\end{equation}
and
\begin{equation}
 \label{p2}
    \phi_1: \Sigma \times (1,\infty) \rightarrow Q_1\backslash \{\mathbf{r}_1=1\}\,\,\,\,\text{such that}\,\,\,
    \phi_1(A\cdot SO(2),\mathbf{r}_1)=A
              \begin{pmatrix}                
           			\cosh (\frac{1}{2}\cosh^{-1}(\mathbf{r}_1^2)) \\
  						i\sinh (\frac{1}{2}\cosh^{-1}(\mathbf{r}_1^2)) \\
          						0 \\
          						0
				\end{pmatrix}.
\end{equation}
Here $Q_{0,sm}$ is the smooth part of $Q_0$, and the variables $\mathbf{r}_0$ and $\mathbf{r}_1$ are indeed the distances of the image points to the origin. 

We can see in particular from (\ref{p1}) that $\phi_0$ describes $Q_0$ as a cone over $\Sigma$. It is not hard to see that $\Sigma \cong S^2 \times S^3$. However, the radial variable for the Ricci-flat K$\ddot{\text{a}}$hler cone metric $g_{co,0}$ on $Q_0$ is not $\mathbf{r}_0$, but $\rho_0=\mathbf{r}_0^\frac{2}{3}$. In fact, $g_{co,0}$ can be expressed as
\begin{equation}
 \label{cone}
    g_{co,0}=(d\mathbf{r}_0^\frac{2}{3})^2+\mathbf{r}_0^\frac{4}{3} g_\Sigma
\end{equation}
where $g_\Sigma$ is an $SO(4)$-invariant Sasaki-Einstein metric on $\Sigma$. The K$\ddot{\text{a}}$hler form of $g_{co,0}$ is given by $\omega_{co,0}=\sqrt{-1}\partial \bar{\partial} f_0(\mathbf{r}_0^2)$ where $f_0(s)=\frac{3}{2}s^\frac{2}{3}$. In this paper we will not use the variable $\rho_0$.

In this paper, given a Hermitian metric $g$, the notation $\nabla_g$ will always refer to the Chern connection of $g$.\\[0.2cm]

\subsection{The Candelas-de la Ossa metrics}

Candelas and de la Ossa \cite{CO} constructed a 1-parameter family of Ricci-flat K$\ddot{\text{a}}$hler metrics $\{ g_{co,a} | a>0\}$ on the small resolution $\hat{Q}$ of $Q_0$. The space $\hat{Q}$ is named the resolved conifold, and the parameter $a$ measures the size of the exceptional curve $C$ in $\hat{Q}$. Identifying $Q_{0,sm}$ with $\hat{Q}\backslash C$ biholomorphically via the resolution map, the family $\{ g_{co,a} | a>0\}$ converges smoothly, as $a$ goes to 0, to the cone metric $g_{co,0}$ on each compactly embedded open subset of $Q_{0,sm}$, i.e., each open subset of $Q_{0,sm}$ whose closure in $Q_0$ is contained in $Q_{0,sm}$. The K$\ddot{\text{a}}$hler forms of the metrics $g_{co,a}$ will be denoted by $\omega_{co,a}$.

They also construct a Ricci-flat K$\ddot{\text{a}}$hler metric $g_{co,t}$ on $Q_t$ for each $0\neq t \in \Delta$. Explicitly, the K$\ddot{\text{a}}$hler form of $g_{co,t}$ is given by $\omega_{co,t}=\sqrt{-1}\partial \bar{\partial} f_t(\mathbf{r}_t^2)$ where 
\begin{equation}
 \label{potentials}
    f_t(s)=2^{-\frac{1}{3}}|t|^\frac{2}{3} \int_0^{\cosh^{-1}(\frac{s}{|t|})} (\sinh(2\tau)-2\tau)^\frac{1}{3} d\tau,
\end{equation}
and it satisfies
\begin{equation}
 \label{n}
		\omega_{co,t}^3=\sqrt{-1}\frac{1}{2}\Omega_t\wedge \Omega_t
\end{equation}
where $\Omega_t$ is the holomorphic (3,0)-form on $Q_t$ such that, on $\{ z_1 \neq 0 \}$,
\begin{equation*}
     \Omega_t=\frac{1}{z_1}dz_2 \wedge dz_3 \wedge dz_4|_{Q_t}.
\end{equation*}

In this paper, the metrics $g_{co,a}$ with subscripts $a$ will always denote the Candelas-de la Ossa metrics on the resolved conifold $\hat{Q}$, and the metrics $g_{co,t}$ with subscripts $t$ will always denote the Candelas-de la Ossa metrics on the deformed conifolds $Q_t$.\\[0.2cm]

In the following we discuss the asymptotic behavior of the CO-metrics $g_{co,t}$. Consider the smooth map 
\begin{equation*}
     \Phi:\Sigma \times (1,\infty) \rightarrow \Sigma \times (0,\infty)
\end{equation*}
defined by
\begin{equation*}
     \Phi (A\cdot SO(2),\mathbf{r}_1)= ( A\cdot SO(2), \mathbf{r}_0(\mathbf{r}_1))	
\end{equation*}
where
\begin{equation*}
		\mathbf{r}_0(\mathbf{r}_1) =\left(\frac{1}{2}(\sinh(2\cosh^{-1}(\mathbf{r}_1^2))-2\cosh^{-1}(\mathbf{r}_1^2)) \right)^\frac{1}{4}.
\end{equation*}

Note that 
\begin{equation}
 \label{eq:6-2}
          \mathbf{r}_1=\left(\cosh \left ( f^{-1}\left( 2\mathbf{r}_0^4 \right) \right)\right)^\frac{1}{2}
\end{equation}
where $f(s)=\sinh(2s)-2s$. 

Define $x_1= \phi_0\circ\Phi\circ\phi_1^{-1}$, which is a diffeomorphism from $Q_1\backslash \{\mathbf{r}_1=1\}$ to $Q_{0,sm}$. Then $\mathbf{r}_0(x_1(x))=\mathbf{r}_0(\mathbf{r}_1(x))$ for $x \in Q_1\backslash \{\mathbf{r}_1=1\}$. Define $\Upsilon_1=x_1^{-1}$. It is shown in \cite{Chan} that the following hold for some constants $D_{1,k}$, $D_{2,k}$, and $D_{3,k}$ as $\mathbf{r}_0 \rightarrow \infty$:

\begin{equation}
 \label{eq:3-4}
           \Upsilon_1^*\omega_{co,1} = \omega_{co,0},
\end{equation}
\begin{equation}
 \label{eq:3-4-1}
        |\nabla_{g_{co,0}}^k(\Upsilon_1^*\Omega_1-\Omega_0)|_{g_{co,0}} \leq  D_{1,k}\mathbf{r}_0^{\frac{2}{3}(-3-k)},
\end{equation}
\begin{equation}
 \label{eq:6-1}
         |\nabla_{g_{co,0}}^k(\Upsilon_1^*g_{co,1}-g_{co,0})|_{g_{co,0}} \leq D_{2,k}\mathbf{r}_0^{\frac{2}{3}(-3-k)},
\end{equation}
and 
\begin{equation}
 \label{eq:3-5}
         |\nabla_{g_{co,0}}^k(\Upsilon_1^*J_1-J_0)|_{g_{co,0}} \leq D_{3,k}\mathbf{r}_0^{\frac{2}{3}(-3-k)}
\end{equation}
where $J_t$ is the complex structure on $Q_t$.

Let $\psi_t:Q_1 \rightarrow Q_t$ be a map such that $\psi_t^*(z_i)=t^{\frac{1}{2}}z_i$. Here $t^\frac{1}{2}$ can be either of the two square roots of $t$. We then have 
\begin{equation}
 \label{eq:s}
   \begin{split}
                               \psi_t^*\mathbf{r}_t=|t|^\frac{1}{2}\mathbf{r}_1, \, &\,\psi_t^*\mathbf{r}_0=|t|^\frac{1}{2}\mathbf{r}_0\\
								\psi_t^*\Omega_t=t\Omega_1, \,&\,\psi_t^*\Omega_0=t\Omega_0, \\
     \psi_t^*\omega_{co,t}=|t|^\frac{2}{3}\omega_{co,1}, \,&\,\psi_t^*\omega_{co,0}=|t|^\frac{2}{3}\omega_{co,0},\,\, \\
     \psi_t^* g_{co,t}=|t|^\frac{2}{3}g_{co,1}, \,\,& \text{and}\, \,\psi_t^* g_{co,0}=|t|^\frac{2}{3} g_{co,0}.\,\, \\
   \end{split}
\end{equation}
The equality $\psi_t^* \omega_{co,t}=|t|^\frac{2}{3}\omega_{co,1}$ follows from the explicit formulas of the K$\ddot{\text{a}}$hler potentials (\ref{potentials}) and the fact that the map $\psi_t$ is biholomorphic. With this understood, $\psi_t^* g_{co,t}=|t|^\frac{2}{3}g_{co,1}$ then follows easily. The rest are trivial. Note that $\nabla_{\psi_t^* g_{co,0}}=\nabla_{|t|^\frac{2}{3}g_{co,0}}=\nabla_{g_{co,0}}$ and $\nabla_{\psi_t^* g_{co,t}}=\nabla_{|t|^\frac{2}{3}g_{co,1}}=\nabla_{g_{co,1}}$ for $t\neq0$.

Let $x_t=\psi_t\circ x_1 \circ \psi_t^{-1}$, which is understood as a diffeomorphism from $Q_t\backslash \{\mathbf{r}_t=|t|^\frac{1}{2}\}$ to $Q_{0,sm}$. Note that $x_t$ is independent of the choice of $t^\frac{1}{2}$, and so $\{x_t\}_t$ form a smooth family. Define $\Upsilon_t=x_t^{-1}$.

\begin{lemma}
 \label{lm:4}
 		We have
		\begin{equation*}
				x_t^*\omega_{co,0}=\omega_{co,t},
		\end{equation*}
		and for the same constants $D_{1,k}$, $D_{2,k}$ and $D_{3,k}$ as in (\ref{eq:3-4-1}), (\ref{eq:6-1}) and (\ref{eq:3-5}), we have, as $\mathbf{r}_0 \rightarrow \infty$,
        \begin{equation*}
            \begin{split}
                 & |\nabla_{g_{co,0}}^k(\Upsilon_t^*\Omega_t-\Omega_0)|_{g_{co,0}} \leq D_{1,k} |t|\mathbf{r}_0^{\frac{2}{3}(-3-k)}, \\
                 & |\nabla_{g_{co,0}}^k(\Upsilon_t^*g_{co,t}-g_{co,0})|_{g_{co,0}} \leq D_{2,k}|t|\mathbf{r}_0^{\frac{2}{3}(-3-k)}, \,\,\text{and}\\
                 & |\nabla_{g_{co,0}}^k(\Upsilon_t^*J_t-J_0)|_{g_{co,0}} \leq D_{3,k}|t|\mathbf{r}_0^{\frac{2}{3}(-3-k)}.
            \end{split}  
        \end{equation*}
\end{lemma}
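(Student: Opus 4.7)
The plan is to reduce Lemma~\ref{lm:4} to the previously established $t=1$ estimates (\ref{eq:3-4})--(\ref{eq:3-5}) via the decomposition $x_t = \psi_t \circ x_1 \circ \psi_t^{-1}$ (equivalently $\Upsilon_t = \psi_t \circ \Upsilon_1 \circ \psi_t^{-1}$) together with the scaling relations recorded in (\ref{eq:s}). For the first identity, I would compute
\begin{equation*}
x_t^*\omega_{co,0} \;=\; (\psi_t^{-1})^* x_1^* \psi_t^* \omega_{co,0},
\end{equation*}
substitute $\psi_t^*\omega_{co,0} = |t|^{2/3}\omega_{co,0}$, rewrite (\ref{eq:3-4}) as $x_1^*\omega_{co,0} = \omega_{co,1}$, and then apply $(\psi_t^{-1})^*\omega_{co,1} = |t|^{-2/3}\omega_{co,t}$ (which follows from $\psi_t^*\omega_{co,t} = |t|^{2/3}\omega_{co,1}$). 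The two factors of $|t|^{2/3}$ cancel and leave $\omega_{co,t}$.

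For the three asymptotic estimates, the same telescoping yields identities of the form
\begin{equation*}
\Upsilon_t^* g_{co,t} - g_{co,0} \;=\; |t|^{2/3}\,(\psi_t^{-1})^*\bigl(\Upsilon_1^* g_{co,1} - g_{co,0}\bigr),
\end{equation*}
with the analogous identity for $\Omega$ carrying the prefactor $t$ in place of $|t|^{2/3}$ (from $\psi_t^*\Omega_s = t\Omega_s$), and for $J$ with no prefactor at all, since $\psi_t$ is biholomorphic so $\psi_t^* J_s = J_s$. Because conformally rescaling $g_{co,0}$ does not alter its Chern connection, and $\psi_t^{-1}$ is a tautological isometry from $(Q_0, g_{co,0})$ to $(Q_0, |t|^{2/3} g_{co,0})$, the operator $(\psi_t^{-1})^*$ commutes with $\nabla_{g_{co,0}}^k$, and pulls pointwise norms according to $|T|_{\lambda g} = \lambda^{(r-s)/2}|T|_g$ for a type $(r,s)$ tensor.

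It then remains to assemble the three sources of $|t|$-scaling: (i) the explicit prefactor coming from (\ref{eq:s}), (ii) the tensor-norm rescaling above applied to $\nabla_{g_{co,0}}^k$ of the $t=1$ difference, and (iii) the scaling $\mathbf{r}_0 \circ \psi_t^{-1} = |t|^{-1/2}\mathbf{r}_0$, applied to the $\mathbf{r}_0^{\frac{2}{3}(-3-k)}$ decay in (\ref{eq:3-4-1}), (\ref{eq:6-1}), and (\ref{eq:3-5}). In each of the three cases, the different tensor types ($(0,2)$ for $g$, $(3,0)$ for $\Omega$, $(1,1)$ for $J$) scale differently under (ii), but this asymmetry is precisely offset by the differing prefactors in (i), so the total powers of $|t|$ combine to exactly $|t|^1$ with the same constants $D_{i,k}$ as in the $t=1$ estimates. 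The proof is entirely a scaling bookkeeping; no genuine obstacle arises, and the only mild care needed is to track the three different tensor weights case by case.
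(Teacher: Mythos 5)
Your proposal is correct and takes essentially the same approach as the paper: both decompose $\Upsilon_t = \psi_t\circ\Upsilon_1\circ\psi_t^{-1}$ and track the three rescaling factors --- the explicit prefactor from (\ref{eq:s}), the tensor-norm rescaling under $g_{co,0}\mapsto|t|^{2/3}g_{co,0}$, and the substitution $\mathbf{r}_0\mapsto|t|^{-1/2}\mathbf{r}_0$ in the decay rate --- so that all powers of $|t|$ combine to exactly $|t|^1$. One small notational caution: you list $\Omega$ as a type $(3,0)$ tensor, but that is its Dolbeault bidegree; for the formula $|T|_{\lambda g}=\lambda^{(r-s)/2}|T|_g$ a $(3,0)$-form is a $(0,3)$-tensor (purely covariant), so $\nabla_{g_{co,0}}^k(\Upsilon_1^*\Omega_1-\Omega_0)$ rescales with exponent $-(k+3)/2$ rather than $(3-k)/2$; with that correction the prefactor and norm rescaling yield $|t|^{-k/3}$ in all three cases, and the $\mathbf{r}_0$-rescaling then restores $|t|^1$ as you assert.
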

\begin{proof}
The first equation follows easily. From the rescaling properties (\ref{eq:s}) we have, for $w\in X_0$,
\begin{equation*}
    \begin{split}
        & |\nabla_{g_{co,0}}^k(\Upsilon_t^*\Omega_t-\Omega_0)|_{g_{co,0}}(w)= |\nabla_{g_{co,0}}^k((\psi_t^{-1})^*\Upsilon_1^*\psi_t^*\Omega_t-\Omega_0)|_{g_{co,0}}(w) \\
     = & |\nabla_{g_{co,0}}^k(t (\psi_t^{-1})^*\Upsilon_1^*\Omega_1-\Omega_0)|_{g_{co,0}}(w)
         =|\nabla_{\psi_t^*g_{co,0}}^k(t \Upsilon_1^*\Omega_1-\psi_t^*\Omega_0)|_{\psi_t^*g_{co,0}}(\psi_t^{-1}(w)) \\
     = & |\nabla_{g_{co,0}}^k( t \Upsilon_1^*\Omega_1- t \Omega_0)|_{|t|^\frac{2}{3}g_{co,0}}(\psi_t^{-1}(w))
         =|t| |\nabla_{g_{co,0}}^k( \Upsilon_1^*\Omega_1- \Omega_0 )|_{|t|^\frac{2}{3}g_{co,0}}(\psi_t^{-1}(w)) \\
     = & |t| |t|^{-\frac{1}{3}(3+k)} |\nabla_{g_{co,0}}^k( \Upsilon_1^*\Omega_1- \Omega_0)|_{g_{co,0}}(\psi_t^{-1}(w)) \\
  \leq & |t| |t|^{-\frac{1}{3}(3+k)} D_{1,k}\mathbf{r}_0(\psi_t^{-1}(w))^{\frac{2}{3}(-3-k)}
         =|t|^{-\frac{1}{3}k} D_{1,k}|t|^{\frac{1}{3}(3+k)}\mathbf{r}_0(w)^{\frac{2}{3}(-3-k)}\\
       =&D_{1,k}|t|\mathbf{r}_0(w)^{\frac{2}{3}(-3-k)}.
    \end{split}
\end{equation*}
The other two estimates can be carried out in a similar manner. \qed
\end{proof}

Using the explicit formula (\ref{eq:6-2}), the following lemma is elementary, and the proof is omitted: 
\begin{lemma}
 \label{lm:2}
As $x\in Q_1\backslash \{\mathbf{r}_1=1\}$ goes to infinity, $\mathbf{r}_1(x)\mathbf{r}_0(x_1(x))^{-1}$ goes to 1. In particular, there is a constant $A>0$ such that 
    \begin{equation*}
        \frac{1}{A}<\mathbf{r}_1(x)\mathbf{r}_0(x_1(x))^{-1}<A
    \end{equation*}
for any $x\in Q_1$ such that $1 \ll \mathbf{r}_1(x)$. As a result, by the rescaling relation (\ref{eq:s}), for the same constant $A$ we have
    \begin{equation*}
        \frac{1}{A}<\mathbf{r}_t(z)\mathbf{r}_0(x_t(z))^{-1}<A
    \end{equation*}
for any $z\in Q_t$ such that $|t|^\frac{1}{2}\ll \mathbf{r}_t(z)$.
\end{lemma}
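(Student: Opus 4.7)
The plan is to reduce everything to an asymptotic analysis of the explicit formula (\ref{eq:6-2}) at $t=1$, and then to transfer to general $t \neq 0$ using the rescaling relations (\ref{eq:s}).

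For the first statement, I would begin by making the right-hand side of (\ref{eq:6-2}) fully explicit. Setting $u = \cosh^{-1}(\mathbf{r}_1^2)$ and applying the identity $\sinh(2u) = 2\sinh(u)\cosh(u)$ with $\cosh(u) = \mathbf{r}_1^2$ and $\sinh(u) = (\mathbf{r}_1^4 - 1)^{1/2}$, the defining equation $\mathbf{r}_0(\mathbf{r}_1)^4 = \tfrac{1}{2}(\sinh(2u) - 2u)$ becomes
$$\mathbf{r}_0(\mathbf{r}_1)^4 = \mathbf{r}_1^2\bigl(\mathbf{r}_1^4 - 1\bigr)^{1/2} - \cosh^{-1}(\mathbf{r}_1^2).$$
As $\mathbf{r}_1 \to \infty$, the first term is $\mathbf{r}_1^4 + O(1)$ and the second is $O(\ln \mathbf{r}_1)$, so dividing by $\mathbf{r}_1^4$ and taking fourth roots gives $\mathbf{r}_0(\mathbf{r}_1)/\mathbf{r}_1 \to 1$. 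Since the diffeomorphism $x_1 = \phi_0 \circ \Phi \circ \phi_1^{-1}$ is constructed so that $\mathbf{r}_0(x_1(x)) = \mathbf{r}_0(\mathbf{r}_1(x))$, this is exactly the first limit assertion. The existence of a two-sided bound by $A$ on $\{\mathbf{r}_1 \gg 1\}$ follows at once from continuity, since the ratio is positive and continuous on any closed end $\{\mathbf{r}_1 \geq R\}$ and tends to $1$ at infinity.

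For the second statement, I would write $z = \psi_t(x)$ for $x \in Q_1$ and combine the identities $\psi_t^*\mathbf{r}_t = |t|^{1/2}\mathbf{r}_1$ and $\psi_t^*\mathbf{r}_0 = |t|^{1/2}\mathbf{r}_0$ from (\ref{eq:s}) with the definition $x_t = \psi_t \circ x_1 \circ \psi_t^{-1}$. This gives $\mathbf{r}_t(z) = |t|^{1/2}\mathbf{r}_1(x)$ and $\mathbf{r}_0(x_t(z)) = |t|^{1/2}\mathbf{r}_0(x_1(x))$, so the factors of $|t|^{1/2}$ cancel and
$$\frac{\mathbf{r}_t(z)}{\mathbf{r}_0(x_t(z))} = \frac{\mathbf{r}_1(x)}{\mathbf{r}_0(x_1(x))}.$$
The hypothesis $|t|^{1/2} \ll \mathbf{r}_t(z)$ is precisely $1 \ll \mathbf{r}_1(x)$, so the second inequality follows from the first with the same constant $A$.

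I do not anticipate any substantive obstacle: the only nontrivial computational input is the asymptotic expansion $\mathbf{r}_1^2(\mathbf{r}_1^4 - 1)^{1/2} - \cosh^{-1}(\mathbf{r}_1^2) = \mathbf{r}_1^4(1 + o(1))$, which is elementary, and everything else is bookkeeping with the scaling formulas already recorded in (\ref{eq:s}).
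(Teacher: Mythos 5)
Your proof is correct and carries out exactly the elementary computation the paper has in mind; the paper itself omits the proof, stating only that it follows from the explicit formula (\ref{eq:6-2}). Your expansion $\mathbf{r}_0(\mathbf{r}_1)^4 = \mathbf{r}_1^2(\mathbf{r}_1^4-1)^{1/2} - \cosh^{-1}(\mathbf{r}_1^2) = \mathbf{r}_1^4(1+o(1))$, the identification $\mathbf{r}_0(x_1(x)) = \mathbf{r}_0(\mathbf{r}_1(x))$ (which the paper records just before the lemma), and the cancellation of the $|t|^{1/2}$ factors via (\ref{eq:s}) and $x_t = \psi_t \circ x_1 \circ \psi_t^{-1}$ are all the right ingredients and are executed correctly.
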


Lemma \ref{lm:4} and Lemma \ref{lm:2} imply
\begin{corollary}
 \label{asym}
   There exists a constant $D_0>0$ such that for any $z\in Q_t$ with $|t|^\frac{1}{2}\ll \mathbf{r}_t(z)$,
   \begin{equation*}
			\begin{split}
                  |\nabla_{g_{co,0}}^k(\Upsilon_t^*J_t-J_0)|_{\Upsilon_t^*g_{co,t}}(x_t(z))  \leq D_0|t|\mathbf{r}_t(z)^{\frac{2}{3}(-3-k)}
            \end{split}
	\end{equation*}
		for $k=0,1$.
\end{corollary}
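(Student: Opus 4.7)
The proposal is to derive Corollary \ref{asym} from Lemma \ref{lm:4} and Lemma \ref{lm:2} by two conversions: on the right-hand side, replacing $\mathbf{r}_0(x_t(z))$ by $\mathbf{r}_t(z)$, and on the left-hand side, replacing the $g_{co,0}$-norm by the $\Upsilon_t^*g_{co,t}$-norm. Since the connection $\nabla_{g_{co,0}}$ on the tensor is unchanged, neither conversion affects the differentiated object itself, only its measurement.

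For the radial variable conversion, Lemma \ref{lm:2} supplies a constant $A>0$ such that in the region $|t|^{1/2}\ll \mathbf{r}_t(z)$ one has $A^{-1}\mathbf{r}_t(z)\leq \mathbf{r}_0(x_t(z))\leq A\mathbf{r}_t(z)$. Hence any upper bound of the form $\mathbf{r}_0(x_t(z))^{\frac{2}{3}(-3-k)}$ gets converted into one involving $\mathbf{r}_t(z)^{\frac{2}{3}(-3-k)}$ at the cost of the harmless factor $A^{\frac{2}{3}(3+k)}$.

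For the norm conversion, I would apply the metric estimate of Lemma \ref{lm:4} (with $k=0$) to obtain
\begin{equation*}
|\Upsilon_t^*g_{co,t}-g_{co,0}|_{g_{co,0}}(x_t(z))\leq D_{2,0}|t|\mathbf{r}_0(x_t(z))^{-2}\leq D_{2,0}A^2|t|\mathbf{r}_t(z)^{-2},
\end{equation*}
which is bounded, and in fact arbitrarily small, once $|t|^{1/2}\ll \mathbf{r}_t(z)$. Thus $g_{co,0}$ and $\Upsilon_t^*g_{co,t}$ are uniformly equivalent in this asymptotic region, and the pointwise norms with respect to these two metrics, of any tensor of fixed valence, differ by a uniformly bounded multiplicative factor.

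Combining both conversions with the bound
\begin{equation*}
|\nabla_{g_{co,0}}^k(\Upsilon_t^*J_t-J_0)|_{g_{co,0}}(x_t(z))\leq D_{3,k}|t|\mathbf{r}_0(x_t(z))^{\frac{2}{3}(-3-k)}
\end{equation*}
furnished by Lemma \ref{lm:4} yields the claimed estimate with $D_0$ depending on $A$, $D_{2,0}$, and $\max_{k=0,1}D_{3,k}$. I do not expect any genuine obstacle here; the argument is essentially bookkeeping. The only minor point requiring care is the valence factor in the norm equivalence for $k=1$, where $\nabla_{g_{co,0}}(\Upsilon_t^*J_t-J_0)$ carries one extra covariant index relative to $k=0$, but this is immediate from the $C^0$-equivalence of the two metrics with a uniform constant.
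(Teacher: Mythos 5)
Your proposal is correct and follows precisely the route the paper intends: the paper offers no separate proof, stating only that Lemma \ref{lm:4} and Lemma \ref{lm:2} imply the corollary, which is exactly the combination of the radial-variable conversion via Lemma \ref{lm:2} and the uniform metric equivalence extracted from the $k=0$ case of the second estimate in Lemma \ref{lm:4}. Your treatment of the valence factor for $k=1$ is the right (and only) point of care, and it is handled correctly.
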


\subsection{The balanced metrics constructed by Fu-Li-Yau}

Using Mayer-Vietoris sequence, the change in the second Betti numbers before and after a conifold transition is given in the following proposition:
\begin{proposition} \cite{Reid}
Let $k$ be the maximal number of homologically independent exceptional rational curves in $\hat{X}$. Then the second Betti numbers of $\hat{X}$ and $X_t$ satisfy the equations
\begin{equation*}
   \begin{split}
		& b_2(X_t)=b_2(\hat{X})-k. \\
	\end{split}
\end{equation*}
\end{proposition}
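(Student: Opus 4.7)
The plan is to use the common ``neck'' geometry of the conifold transition. Both $\hat{X}$ and $X_t$ are obtained from the same noncompact space $X^{*} := X_0 \setminus \{p_1,\dots,p_l\}$ by attaching $l$ tubular neighborhoods---of $C_i \cong S^2$ in $\hat{X}$, and of the vanishing $3$-sphere $S^3_i$ in $X_t$---whose common ``link'' is (up to homotopy) $\bigsqcup_i (S^2 \times S^3)$, the link of an ordinary double point. Write $U = \bigsqcup_i U_i \simeq \bigsqcup_i S^2$ and $W = \bigsqcup_i W_i \simeq \bigsqcup_i S^3$ for the two types of attached regions, so that $X^{*} \cap U \simeq X^{*} \cap W \simeq \bigsqcup_i(S^2 \times S^3)$.

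I would then apply the Mayer-Vietoris sequence with rational coefficients to both decompositions. Since $H_1(S^2 \times S^3) = 0$, the relevant segments reduce to
\begin{equation*}
H_2(X^{*} \cap U) \xrightarrow{\alpha} H_2(X^{*}) \oplus H_2(U) \to H_2(\hat{X}) \to 0,
\end{equation*}
and similarly $H_2(X^{*} \cap W) \xrightarrow{\alpha'} H_2(X^{*}) \oplus H_2(W) \to H_2(X_t) \to 0$. A direct geometric identification shows that the $H_2(U) \cong \mathbb{Q}^l$ component of $\alpha$ sends the $i$-th generator $[S^2_i]$ of the link to $[C_i]$, hence is an isomorphism; therefore $\alpha$ is injective, and
\begin{equation*}
b_2(\hat{X}) = b_2(X^{*}) + l - l = b_2(X^{*}).
\end{equation*}
Restricting the same injectivity argument to the $H_2(X^{*}) \oplus 0$ summand also shows that the inclusion induces an injection $\iota \colon H_2(X^{*}) \hookrightarrow H_2(\hat{X})$ satisfying $\iota([S^2_i]_{X^{*}}) = [C_i]$. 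For $X_t$, $H_2(W) = 0$, so $\alpha'$ is simply $[S^2_i] \mapsto [S^2_i]_{X^{*}}$, giving $b_2(X_t) = b_2(X^{*}) - \operatorname{rank}(\alpha')$.

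Finally, since $\iota$ is injective, the rank of $\{[S^2_i]_{X^{*}}\}_{i=1}^l$ in $H_2(X^{*})$ equals the rank of the image family $\{[C_i]\}_{i=1}^l$ in $H_2(\hat{X})$, which is $k$ by definition. Combining yields $b_2(X_t) = b_2(X^{*}) - k = b_2(\hat{X}) - k$. I expect the only step that requires genuine verification to be the geometric identification of the Mayer-Vietoris connecting map on the $H_2(U)$ factor---namely, that a small $2$-sphere in the link $S^2 \times S^3$ is homologous in $U_i$ to the exceptional curve $C_i$---which is immediate from the standard local model $U_i \cong \mathcal{O}_{\mathbb{P}^1}(-1) \oplus \mathcal{O}_{\mathbb{P}^1}(-1)$ of the small resolution of an ordinary double point. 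Everything else is bookkeeping in the two long exact sequences.
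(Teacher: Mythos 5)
Your proof is correct and supplies exactly the Mayer--Vietoris argument that the paper only alludes to (the proposition itself is quoted from Reid without proof). The decomposition into $X^{*}$ and the two kinds of tubular neighborhoods, the identification of the link $S^2\times S^3$, the injectivity of $H_2(X^{*})\to H_2(\hat X)$, and the identification of $\operatorname{rank}\alpha'$ with $k$ are all sound over $\mathbb{Q}$; the only micro-point is that $\iota([S^2_i]_{X^{*}})=\pm[C_i]$ rather than $[C_i]$ on the nose, which of course does not affect the rank count.
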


From this proposition one sees that the second Betti number drops after each transition, and when it becomes 0, the resulting threefold is never K$\ddot{\text{a}}$hler. Because of this, when considering Reid's conjecture, a class of threefolds strictly containing the K$\ddot{\text{a}}$hler Calabi-Yau ones have to be taken into account. A particular question of interest would be finding out suitable geometric structures that are possessed by every member in this class of threefolds. One achievement in this direction is the work of \cite{FLY} in which the following theorem is proved:

\begin{theorem} 
Let $\hat{X}$ be a K$\ddot{\text{a}}$hler Calabi-Yau threefold. Then after a conifold transition, for sufficiently small $t$, $X_t$ admits a balanced metric. 
\end{theorem}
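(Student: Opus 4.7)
The plan is to construct, for each sufficiently small $t \neq 0$, a smooth $d$-closed strictly positive $(2,2)$-form $\Omega_t$ on $X_t$. By a theorem of Michelsohn, any strictly positive $(n-1,n-1)$-form on an $n$-dimensional complex manifold is uniquely the $(n-1)$-th power of a positive $(1,1)$-form. Hence $\Omega_t = \tilde{\omega}_t^2$ for a unique positive $(1,1)$-form $\tilde{\omega}_t$, and $d(\tilde{\omega}_t^2)=0$ is exactly the balanced condition for $\tilde{\omega}_t$.

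The form $\Omega_t$ is assembled from two main pieces. On the neighborhoods $V_t(2\delta)$ of the vanishing spheres, for a small fixed $\delta > 0$, set $\Omega_t := \omega_{co,t}^2$; this is closed since $\omega_{co,t}$ is K\"ahler on $Q_t$, and is manifestly positive. On the bulk region $X_t[\delta]$, which may be identified with a subset of $\hat{X}$ away from the exceptional curves via a smooth family of diffeomorphisms arising from the smoothness of $\mathcal{X}$ off the ODPs, we use the pullback of $\hat{\omega}_0^2$, where $\hat{\omega}_0$ is the singular balanced metric on $X_{0,sm}$ discussed in the introduction before Theorem \ref{th:main1}. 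By construction $\hat{\omega}_0$ coincides with the Candelas--de la Ossa cone form $\omega_{co,0}$ on a punctured neighborhood of each ODP, and $\hat{\omega}_0^2$ is $d$-closed on $X_{0,sm}$.

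The main task is patching these two pieces together in the annular region $V_t(\delta, 2\delta)$. The key input is Lemma \ref{lm:4}, which provides $C^k$-estimates of order $O(|t|)$ for the difference $\Upsilon_t^*\omega_{co,t}^2 - \omega_{co,0}^2$ on the corresponding annular region in $X_{0,sm}$. Combined with the equality $\hat{\omega}_0 = \omega_{co,0}$ near each ODP, this says the two candidate forms in the overlap differ by a $d$-closed $4$-form $\beta_t$ of $C^k$-size $O(|t|)$. Since the annulus deformation retracts onto $\Sigma \cong S^2 \times S^3$ and $H^4(\Sigma;\mathbb{R}) = 0$ by the K\"unneth formula, $\beta_t$ is $d$-exact. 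Applying an explicit homotopy operator on the annulus produces a primitive $\eta_t$ with $\|\eta_t\|_{C^k} = O(|t|)$. Choose a cutoff $\chi$ supported in the annulus and equal to $1$ near the inner boundary, and take $\Omega_t := \omega_{co,t}^2$ near the spheres while $\Omega_t := (\text{pullback of } \hat{\omega}_0^2) + d(\chi \eta_t)$ on the outside; the two definitions agree smoothly across the overlap and together define a global smooth $d$-closed $(2,2)$-form on $X_t$.

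The main obstacle is maintaining positivity of $\Omega_t$ throughout the gluing region. The correction $d(\chi \eta_t)$ has $C^0$-norm of order $|t|$ by the Lemma \ref{lm:4} estimates together with bounds on $\chi$ and on the homotopy operator, while the background form $\omega_{co,0}^2$ is uniformly positive there. Hence for $|t|$ sufficiently small, $\Omega_t$ is strictly positive everywhere, and Michelsohn's theorem delivers the desired balanced metric $\tilde{\omega}_t$.
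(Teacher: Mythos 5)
Your plan has a genuine gap, and it is precisely the gap that makes the Fu--Li--Yau theorem nontrivial. You glue $\omega_{co,t}^2$ near the vanishing spheres to the pullback, under a family of diffeomorphisms, of $\hat{\omega}_0^2$ over the bulk. The resulting form $\Omega_t$ is indeed $d$-closed. (Incidentally the annular patching is simpler than you describe: by Lemma \ref{lm:4} one has $x_t^*\omega_{co,0}=\omega_{co,t}$ \emph{exactly}, so the two pieces agree as forms on the overlap; the $O(|t|)$ estimates in that lemma concern $g_{co,t}$, $J_t$ and the holomorphic volume form, not the K\"ahler form, so there is no $\beta_t$ to kill.) But $\Omega_t$ is not a $(2,2)$-form on $X_t$. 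The map identifying $X_t[\delta]$ with a subset of $X_{0,sm}$ is a diffeomorphism, not a biholomorphism, so pulling back the closed $(2,2)$-form $\hat{\omega}_0^2$ from $X_0$ yields a closed $4$-form on $X_t$ that acquires nonzero $(3,1)$ and $(1,3)$ components, of size $O(|t|)$, throughout the bulk. Michelsohn's theorem needs exact type $(n-1,n-1)$, so you cannot take a square root of your $\Omega_t$; and projecting onto the $(2,2)$-part destroys $d$-closedness, since $d\Phi_t^{2,2}=-\partial\Phi_t^{1,3}-\bar\partial\Phi_t^{3,1}\neq 0$.

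This is exactly the difficulty that \cite{FLY} resolve, and it is the analytic heart of the theorem. In the paper's account, one decomposes the glued closed $4$-form as $\Phi_t=\Phi_t^{3,1}+\Phi_t^{2,2}+\Phi_t^{1,3}$ and must add a correction $\theta_t+\bar{\theta}_t$ to $\Phi_t^{2,2}$ so that the sum is closed, with $\theta_t=\partial\bar\partial^*\partial^*\gamma_t$ obtained by solving the fourth-order elliptic equation $E_t\gamma_t=-\partial\Phi_t^{1,3}$. The real work is establishing the smallness estimate (\ref{eq:0-1}) for $\theta_t$ \emph{uniformly as $t\to 0$} — delicate because the metrics $g_t$ degenerate in that limit — so that $\Phi_t^{2,2}+\theta_t+\bar\theta_t$ remains positive. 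Your outline is the right scaffolding up to the point where this correction is needed, but the correction and the limiting analysis that controls it are the substance of the proof, not a routine patching step, and they do not appear in your argument.
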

In the following we review the results in \cite{FLY} in more detail.

First, a balanced metric $\hat{\omega}_0$ on $X_{0,sm}$ is constructed by replacing the original metric $\omega$ near the ODPs with the CO-cone metric $\omega_{co,0}$. One of the main feature of this construction is that $\omega^2$ and $\hat{\omega}_0^2$ differ by a $\partial \bar{\partial}$-exact form. It is not hard to see that their construction can be used the construct a family of balanced metrics $\{ \hat{\omega}_{co,a} | a>0\}$ on $\hat{X}$ converging smoothly, as $a$ goes to 0, to the metric $\hat{\omega}_0$ on compactly embedded open subsets of $\hat{X}\backslash \bigcup C_i \cong X_{0,sm}$, such that $\omega^2$ and all $\hat{\omega}_{co,a}^2$ differ by $\partial \bar{\partial}$-exact forms.

The main achievement in \cite{FLY} is the construction of balanced metrics $\tilde{\omega}_t$ on $X_t$. Fix a smooth family of diffeomorphisms $x_t: X_t[\frac{1}{2}] \rightarrow X_0[\frac{1}{2}]$ that such $x_0=id$.  Let $\varrho(s)$ be a decreasing cut-off function such that $\varrho(s)=1$ when $s \leq \frac{5}{8}$ and $\varrho(s)=0$ when $s \geq \frac{7}{8}$. Define a cut off function $\varrho_0$ on $X_0$ such that $\varrho_0|_{X_0[1]}=0$, $\varrho_0|_{V_0(\frac{1}{2})}=1$, and $\varrho_0|_{V_0(\frac{1}{2},1)}=\varrho(\mathbf{r}_0)$. Also define a cut off function $\varrho_t$ on $X_t$ such that $\varrho_t|_{X_t[\frac{1}{2}]}=x_t^*\varrho_0$ and $\varrho_t|_{V_t(\frac{1}{2})}=1$. Denote $\hat{\Omega}_0=\hat{\omega}_0^2=i\partial \bar{\partial}(f_0 \partial \bar{\partial}f_0)$, and let 
\begin{equation*}
     \Phi_t=f_t^*(\hat{\Omega}_0-i\partial \bar{\partial}(\varrho_0\cdot f_0(\mathbf{r}_0^2) \partial \bar{\partial} f_0(\mathbf{r}_0^2)))+i\partial \bar{\partial}(\varrho_t\cdot f_t(\mathbf{r}_t^2) \partial \bar{\partial} f_t(\mathbf{r}_t^2)).
\end{equation*}
We can decompose the 4-form $\Phi_t=\Phi_t^{3,1}+\Phi_t^{2,2}+\Phi_t^{1,3}$. It is proved in \cite{FLY} that for $t\neq 0$ sufficiently small the (2,2) part $\Phi_t^{2,2}$ is positive and over $V_t(\frac{1}{2})$ it coincides with $\omega_{co,t}^2$. Let $\omega_t$ be the positive (1,1)-form on $X_t$ such that $\omega_t^2=\Phi_t^{2,2}$. Neither $\omega_t$ nor $\omega_t^2$ is closed in general. The balanced metric $\tilde{\omega}_t$ constructed in \cite{FLY} satisfies the condition $\tilde{\omega}_t^2=\Phi_t^{2,2}+\theta_t+\bar{\theta}_t$ where $\theta_t$ is a (2,2)-form satisfying the condition that, for any $\kappa>-\frac{4}{3}$,
\begin{equation}
 \label{eq:0-1}
          \lim_{t \rightarrow 0} (|t|^\kappa \sup_{X_t}|\theta_t|_{g_t}^2)=0
\end{equation}
where $g_t$ is the Hermitian metric associated to $\omega_t$. The proof of this limit makes use of the expression 
\begin{equation}
 \label{eq:0-2}
        \theta_t=\partial\bar{\partial}^*\partial^*\gamma_t
\end{equation}
for a unique (2,3)-form $\gamma_t$ satisfying the equation $E_t(\gamma_t)=-\partial \Phi^{1,3}_t$ and $\gamma_t\perp \ker{E_t}$ where
\begin{equation*}
    E_t=\partial\bar{\partial}\bar{\partial}^*\partial^*+\partial^*\bar{\partial}\bar{\partial}^*\partial+\partial^*\partial
\end{equation*}
and the $\ast$-operators are with respect to the metric $g_t$. It was proved in \cite{FLY} that $\partial\gamma_t=0$. Moreover, the $(2,3)$-form $\partial \Phi^{1,3}_t$ is supported on $X_t[1]$, so there is a constant $C>0$ independent of $t$ such that 
\begin{equation}
 \label{eq:1}
    |\partial \Phi^{1,3}_t|_{C^k}<C|t|.
\end{equation}

We will denote $|\cdot|_t$ the norm w.r.t. $\tilde{g}_t$, $|\cdot|_{co,t}$ the norm w.r.t. $g_{co,t}$, and $|\cdot|$ the norm w.r.t. $g_t$. We will denote $dV_t$ the volume w.r.t. $\tilde{g}_t$, $dV_{co,t}$ the volume w.r.t. $g_{co,t}$, and $dV$ the volume w.r.t. $g_t$.

Because of (\ref{eq:0-1}) we have the following lemma concerning a uniformity property between the metrics $g_t$ and $g_{co,t}$. 
\begin{lemma}
 \label{lm:0-1}
    There exists a constant $\tilde{C}>1$ such that for any small $t\neq 0$, over the region $V_t(1)$ we have
    \begin{equation*}
          \tilde{C}^{-1}\tilde{g}_t \leq g_{co,t} \leq \tilde{C}\tilde{g}_t.
    \end{equation*} 
    Consequently, we have constants $\tilde{C}_1>1$ and $\tilde{C}_2>1$ such that for any $t\neq 0$ small enough,
    \begin{equation*}
          \tilde{C}_1^{-1}dV_t \leq dV_{co,t} \leq \tilde{C}_1 dV_t
    \end{equation*} 
    and
    \begin{equation*}
          \tilde{C}_2^{-1}|\cdot|_t \leq |\cdot|_{co,t} \leq \tilde{C}_2 |\cdot|_t.
    \end{equation*}     
\end{lemma}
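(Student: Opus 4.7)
The plan is to combine a direct comparison of $g_t$ with $g_{co,t}$ on $V_t(1)$ (coming from the FLY construction of $\omega_t$) with the smallness of $\theta_t$ given by (\ref{eq:0-1}).

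For the first piece, unwind the definition of $\Phi_t$ on $V_t(1)$. The singular balanced metric $\hat\omega_0$ agrees with the CO cone metric $\omega_{co,0}$ on $V_0(1)$ by construction, and the cutoffs satisfy $\varrho_t=x_t^*\varrho_0$. Combining these with the identity $x_t^*\omega_{co,0}=\omega_{co,t}$ from Lemma \ref{lm:4}, the defining expression for $\Phi_t$ rearranges on $V_t(1)$ into
\begin{equation*}
\Phi_t=\omega_{co,t}^2+i\partial\bar\partial\bigl(\varrho_t\,(u_t-x_t^*u_0)\bigr),\qquad u_s:=f_s(\mathbf{r}_s^2)\,\partial\bar\partial f_s(\mathbf{r}_s^2).
\end{equation*}
Since $x_t^*\omega_{co,0}=\omega_{co,t}$ exactly, the form $u_t-x_t^*u_0$ reduces to the scalar multiple $-i\bigl(f_t(\mathbf{r}_t^2)-f_0(\mathbf{r}_0(x_t)^2)\bigr)\omega_{co,t}$. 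Corollary \ref{asym} together with Lemma \ref{lm:2} and the rescaling relations (\ref{eq:s}) then give $O(|t|)$ bounds, with two $g_{co,t}$-derivatives, for this scalar on the annular region $V_t(\tfrac{1}{2},1)$; the decisive point is the prefactor $|t|$ in those asymptotics, which beats the boundedness of $\mathbf{r}_t$ there. Hence $\Phi_t^{2,2}$ is within a factor $1+O(|t|)$ of $\omega_{co,t}^2$ on $V_t(1)$ for $t$ small. The pointwise map $\omega\mapsto\omega^{n-1}$ on positive $(1,1)$-forms is a diffeomorphism with continuous inverse (verified in a $g_{co,t}$-orthonormal frame by inspecting eigenvalues), so this upgrades to $C_0^{-1}g_{co,t}\le g_t\le C_0\,g_{co,t}$ on $V_t(1)$ with $C_0$ independent of $t$.

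For the second piece, apply (\ref{eq:0-1}) with $\kappa=0$ to conclude $\sup_{X_t}|\theta_t|_{g_t}\to 0$ as $t\to 0$. Then the positive $(2,2)$-form $\tilde\omega_t^2=\omega_t^2+\theta_t+\bar\theta_t$ differs from $\omega_t^2$ by a term whose $g_t$-norm is uniformly small, and the same root-extraction argument yields $C_1^{-1}g_t\le\tilde g_t\le C_1 g_t$ on all of $X_t$ for $t$ small enough. Combining the two pieces on $V_t(1)$ gives the first inequality of the lemma with $\tilde C=C_0 C_1$. The volume comparison $\tilde C_1$ follows by taking determinants in a common frame, and the tensor norm comparison $\tilde C_2$ by inspecting the joint eigenvalues of the two metrics.

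The main technical obstacle is the first piece on the transition annulus $V_t(\tfrac{5}{8},1)$: although $\omega_t$ equals $\omega_{co,t}$ identically on $V_t(\tfrac{5}{8})$ (since both cutoff-driven terms of $\Phi_t$ simplify), the interpolation region requires genuine use of the $O(|t|)$ asymptotic estimates from Lemma \ref{lm:4}/Corollary \ref{asym}. These are stated in the regime $\mathbf{r}_0\to\infty$ and must be transported by the rescaling $\psi_t$ to the bounded range $\mathbf{r}_t\in[\tfrac{5}{8},1]$, where $|t|^{1/2}\ll\mathbf{r}_t$ is the condition that activates them. Getting $C^2$-control on the cutoff correction (needed to bound the $i\partial\bar\partial$ of $\varrho_t(u_t-x_t^*u_0)$ pointwise) is the part that demands the most care, but is available because Lemma \ref{lm:4} supplies estimates in every $C^k$.
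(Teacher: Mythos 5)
Your decomposition into two pieces — first comparing $g_t$ with $g_{co,t}$ on $V_t(1)$, then comparing $\tilde g_t$ with $g_t$ via the estimate (\ref{eq:0-1}) on $\theta_t$ — is the right structure, and is what the paper's phrasing ``Because of (\ref{eq:0-1})'' points to. The second piece is handled correctly, as is the passage from $(2,2)$-forms back to $(1,1)$-forms and the volume and norm consequences.

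There is, however, a gap in the first piece. The claimed rearrangement
\begin{equation*}
\Phi_t=\omega_{co,t}^2+i\partial\bar\partial\bigl(\varrho_t\,(u_t-x_t^*u_0)\bigr)
\end{equation*}
is not an exact identity. In the definition of $\Phi_t$ the first $\partial\bar\partial$ acts on $X_0$ and is then pulled back by $x_t$, while the second acts on $X_t$. Since $x_t$ is only asymptotically holomorphic — it intertwines $J_0$ and $J_t$ only up to the error bounded in Lemma \ref{lm:4} and Corollary \ref{asym} — one has $x_t^*\bigl(i\partial\bar\partial_0(\varrho_0u_0)\bigr)\neq i\partial\bar\partial_t\bigl(\varrho_t\,x_t^*u_0\bigr)$; the discrepancy involves $J_t-x_t^*J_0$ and its derivatives acting on $\varrho_0u_0$. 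These extra terms happen to be $O(|t|)$ on $V_t(\frac12,1)$ by the very asymptotics you invoke, so the intended conclusion $\Phi_t^{2,2}=\omega_{co,t}^2\bigl(1+O(|t|)\bigr)$ is still reachable, but the step as written silently treats the two $\partial\bar\partial$ operators as the same, and this must be repaired before the estimates can be applied.

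In fact the quantitative $1+O(|t|)$ control is more than the lemma requires. Since the paper already records that $\Phi_t^{2,2}$ coincides with $\omega_{co,t}^2$ on $V_t(\frac12)$, the only region where work remains is the compact transition annulus $V_t(\frac12,1)$. There, $g_t$ and $g_{co,t}$ are smooth families of positive metrics (after pulling back by a fixed family of diffeomorphisms) with the same $t\to 0$ limit $\hat g_0=g_{co,0}$; a uniform two-sided comparability constant for small $t$ then follows from continuity and compactness alone, without unwinding the cutoff construction or invoking Lemma \ref{lm:4}. This softer argument is likely what the author intends, avoids the $\partial\bar\partial$-intertwining subtlety entirely, and isolates (\ref{eq:0-1}) as the only quantitative input.
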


Now we introduce our conventions on (negative) Laplacians. Let $\omega$ be a Hermitian metric on $X$ and $n=\dim_{\mathbb{C}}X$. For any (1,1)-form $\varphi$ on $X$, define $\Lambda_{\omega}\varphi:=\frac{\varphi\wedge\omega^{n-1}}{\omega^n}$. For a smooth function $f$ on $X$, define $\Delta_{\omega}f=\sqrt{-1}\Lambda_{\omega} \partial\bar{\partial}f$. In local coordinates, if $\omega=\frac{\sqrt{-1}}{2}g_{i\bar{j}}dz_i\wedge d\bar{z}_j$ and $\varphi=\varphi_{i\bar{j}}dz_i\wedge d\bar{z}_j$, then $\sqrt{-1}\Lambda_{\omega}\varphi=2g^{i\bar{j}}\varphi_{i\bar{j}}$. We denote $\tilde{\Delta}_t:=\Delta_{\tilde{\omega}_t}$ and $\hat{\Delta}_a:=\Delta_{\hat{\omega}_a}$. \\[0.2cm]

\subsection{Hermitian-Yang-Mills equation}

Let $H$ be a Hermitian metric on a holomorphic vector bundle $\mathcal{E}$ over a complex manifold $X$ endowed with a balanced metric $g$. Let $\nabla_A=\partial_A+\bar{\partial}_A$ be an $H$-unitary connection on $\mathcal{E}$. We denote by $\langle \cdot,\cdot \rangle_{H,g}$ the pointwise pairing induced by $H$ and $g$ between the $\mathcal{E}$-valued forms or the $\text{End}(\mathcal{E})$-valued forms. The following proposition is will be used in later calculations.
\begin{proposition} \cite{LT}
		For $h_1,h_2 \in \Gamma(\text{End}(\mathcal{E}))$, we have
		\begin{equation*}
				\int_X \langle \partial_Ah_1,\partial_Ah_2 \rangle_{H,g}\,dV_g=\sqrt{-1}\int_X \langle \Lambda_g\bar{\partial}_A\partial_Ah_1,h_2 \rangle_{H,g}\,dV_g
		\end{equation*}
				and
		\begin{equation*}
				\int_X \langle \bar{\partial}_Ah_1,\bar{\partial}_Ah_2 \rangle_{H,g}\,dV_g=-\sqrt{-1}\int_X \langle \Lambda_g\partial_A\bar{\partial}_Ah_1,h_2 \rangle_{H,g}\,dV_g.\\[0.3cm]
		\end{equation*}
\end{proposition}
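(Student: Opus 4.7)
The plan is to derive both identities by integration by parts, with the balanced hypothesis $d\omega^{n-1}=0$ removing the only obstruction. By type, the balanced condition is equivalent to $\partial\omega^{n-1}=\bar\partial\omega^{n-1}=0$. I focus on the first identity; the second is the mirror statement.

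First I would form the $(n,n-1)$-form
\[\tau := \text{tr}\bigl(\partial_A h_1 \cdot h_2^{*_H}\bigr) \wedge \omega^{n-1}.\]
Since $\tau$ has bidegree $(n,n-1)$, one has $d\tau = \bar\partial\tau$ automatically. Using $\bar\partial\omega^{n-1}=0$ together with the Leibniz rule for the $H$-unitary connection on $\text{End}(\mathcal{E})$ and the parallelism of the trace, this simplifies to
\[d\tau = \text{tr}\bigl(\bar\partial_A\partial_A h_1\cdot h_2^{*_H}\bigr)\wedge\omega^{n-1} - \text{tr}\bigl(\partial_A h_1\wedge\bar\partial_A h_2^{*_H}\bigr)\wedge\omega^{n-1}.\]
The $H$-unitarity identity $\bar\partial_A h_2^{*_H} = (\partial_A h_2)^{*_H}$ rewrites the second term as a Hermitian pairing between $\partial_A h_1$ and $\partial_A h_2$.

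Next I would translate each integrand into the pairings appearing in the statement. The definition $\Lambda_\omega\varphi:=(\varphi\wedge\omega^{n-1})/\omega^n$ gives
\[\text{tr}\bigl(\bar\partial_A\partial_A h_1\cdot h_2^{*_H}\bigr)\wedge\omega^{n-1} = \langle \Lambda_\omega\bar\partial_A\partial_A h_1,\,h_2\rangle_{H,g}\,\omega^n,\]
while for the scalar $(1,1)$-form $\psi := \text{tr}(\partial_A h_1\wedge(\partial_A h_2)^{*_H})$, writing $\partial_A h_i = (\partial_A h_i)_j\,dz^j$ locally and using the identity $\sqrt{-1}\Lambda_\omega\psi=2g^{i\bar j}\psi_{i\bar j}$ recorded in the paper gives
\[\text{tr}\bigl(\partial_A h_1\wedge(\partial_A h_2)^{*_H}\bigr)\wedge\omega^{n-1} = -\sqrt{-1}\,\langle\partial_A h_1,\partial_A h_2\rangle_{H,g}\,\omega^n.\]
Stokes' theorem on the compact $X$ then gives $\int_X d\tau = 0$; after dividing by the proportionality constant between $\omega^n$ and $dV_g$ and rearranging, this is exactly the first identity.

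The second identity is obtained by the mirror argument applied to $\tilde\tau:=\text{tr}(\bar\partial_A h_1\cdot h_2^{*_H})\wedge\omega^{n-1}$, which has type $(n-1,n)$ so $d\tilde\tau = \partial\tilde\tau$; the balanced condition again kills the $\partial\omega^{n-1}$ term, and the analogous bookkeeping produces the $-\sqrt{-1}$ on the right-hand side. The only genuine subtlety is tracking the signs and factors of $\sqrt{-1}$ when passing between wedge products and Hermitian pairings; beyond this the argument is the classical integration-by-parts one performs in the K\"ahler setting, with the use of K\"ahler identities replaced by the balanced condition.
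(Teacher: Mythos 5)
The paper does not give its own proof of this proposition — it cites Lübke--Teleman \cite{LT} — so there is no internal argument to compare against. That said, your argument is correct and is the standard one: form the scalar $(n,n-1)$-form $\tau=\mathrm{tr}(\partial_A h_1\cdot h_2^{*_H})\wedge\omega^{n-1}$, note that type considerations give $d\tau=\bar\partial\tau$, split $\partial\omega^{n-1}=\bar\partial\omega^{n-1}=0$ off from the balanced condition $d\omega^{n-1}=0$, apply the Leibniz rule together with $\mathrm{tr}\circ\bar\partial_A=\bar\partial\circ\mathrm{tr}$ and the unitarity identity $\bar\partial_A(h^{*_H})=(\partial_A h)^{*_H}$, translate each of the two resulting $(1,1)$-form terms back into the $\langle\cdot,\cdot\rangle_{H,g}$ pairings via $\varphi\wedge\omega^{n-1}=(\Lambda_\omega\varphi)\,\omega^n$ and the paper's convention $\sqrt{-1}\Lambda_\omega\varphi=2g^{i\bar j}\varphi_{i\bar j}$, and integrate. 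The sign and $\sqrt{-1}$ bookkeeping all checks out, including the extra sign flip in the mirror case coming from $d\bar z\wedge dz=-dz\wedge d\bar z$. The one point you state without verification is $\bar\partial_A(h^{*_H})=(\partial_A h)^{*_H}$, but this follows directly by differentiating $\langle hs_1,s_2\rangle_H=\langle s_1,h^{*_H}s_2\rangle_H$ and using $H$-unitarity of $\nabla_A$, so it is a legitimate one-line lemma to invoke. In short, the proof is correct and is essentially the argument one finds in \cite{LT}.
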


In a local holomorphic frame of $\mathcal{E}$, the curvature of a connection $\nabla_A$ is given by
\begin{equation*}
		F_A:=dA-A \wedge A, 
\end{equation*}
which is an End$(\mathcal{E})$-valued 2-form. Given a Hermitian metric $H$ over a bundle $\mathcal{E}$, the curvature for the Chern connection can then be locally computed to be 
\begin{equation*}
		F_H=\bar{\partial}(\partial HH^{-1}).
\end{equation*}
Taking the trace of the curvature 2-form with respect to a Hermitian metric $\omega$, we obtain the mean curvature $\sqrt{-1}\Lambda_{\omega}F_H$ of $H$. It is not hard to see that $\sqrt{-1}\Lambda_{\omega}F_H$ is $H$-symmetric.

\begin{definition}
		A Hermitian metric $H$ on $\mathcal{E}$ satisfies the Hermitian-Yang-Mills equation with respect to $\omega$ if
		\begin{equation*}
					\sqrt{-1}\Lambda_{\omega}F_H=\lambda I
		\end{equation*}
		for some constant $\lambda$. Here $I$ denotes the identity endomorphism of $\mathcal{E}$.
\end{definition}
Next we introduce slope stability. For a given Hermitian metric $H$ on $\mathcal{E}$, the first Chern form of $\mathcal{E}$ with respect to $H$ is defined to be 
\begin{equation*}
		c_1(\mathcal{E},H)=\frac{\sqrt{-1}}{2\pi}\text{tr}F_H.
\end{equation*}
It is independent of $H$ up to a $\partial\bar{\partial}$-exact form, and is a representative of the topological first Chern class $c_1(E)\in H^2(X,\mathbb{C})$. 

The $\omega$-degree of $\mathcal{E}$ with respect to a Hermitian metric $\omega$ is defined to be
\begin{equation*}
		\text{deg}_{\omega}(\mathcal{E}):=\int_{X} c_1(\mathcal{E},H)\wedge \omega^{n-1}
\end{equation*}
where $n=\dim_{\mathbb{C}}X$. This is not well-defined for a general $\omega$. It is, however, well-defined for a Gauduchon metric $\omega$ since $\partial\bar{\partial}(\omega^{n-1})=0$ and $c_1(\mathcal{E},H)$ is independent of $H$ up to $\partial\bar{\partial}$-exact forms. In particular, the degree with respect to a balanced metric is well-defined. Note that the $\omega$-degree is a topological invariant, i.e., depends only on $c_1(\mathcal{E})$, if $\omega$ is balanced. We restrict ourselves from now on to the case when $\omega$ is Gauduchon.

For an arbitrary coherent sheaves $\mathcal{F}$ of $\mathcal{O}_X$-modules of rank $s>0$, we define deg$_{\omega}(\mathcal{F}):=\text{deg}_{\omega}(\det\mathcal{F})$ where $\det \mathcal{F}:=(\Lambda^s \mathcal{F})^{**}$ is the determinant line bundle of $\mathcal{F}$. We define the $\omega$-slope of $\mathcal{F}$ to be $\mu_{\omega}(\mathcal{F}):=\frac{ \text{deg}_{\omega}(\mathcal{F})}{s}$. 

\begin{definition}
   A holomorphic vector bundle $\mathcal{E}$ is said to be $\omega$-(semi)stable if $\mu_{\omega}(\mathcal{F})<(\leq)\mu_{\omega}(\mathcal{E})$ for every coherent subsheaf $\mathcal{F}\hookrightarrow\mathcal{E}$ with $0<\text{rank}\,\mathcal{F}<\text{rank}\,\mathcal{E}$. 
   
   A holomorphic vector bundle $\mathcal{E}$ is said to be $\omega$-polystable if $\mathcal{E}$ is a direct sum of $\omega$-stable bundles all of which have the same $\omega$-slope.

\end{definition}

The following theorem generalizing \cite{UY} was proved by Li and Yau \cite{LY1}:
\begin{theorem}
 \label{LY}
		On a complex manifold $X$ endowed with a Gauduchon metric $\omega$, a holomorphic vector bundle $\mathcal{E}$ is $\omega$-polystable if and only if it admits a Hermitian-Yang-Mills metric with respect to $\omega$.\\[0.2cm]
\end{theorem}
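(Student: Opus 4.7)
The plan is to adapt the Donaldson--Uhlenbeck--Yau argument to the Gauduchon setting. The Gauduchon condition $\partial\bar{\partial}(\omega^{n-1})=0$ enters in exactly two places: it makes $\text{deg}_\omega$ well-defined on coherent sheaves, and it yields the integration-by-parts identities (such as the proposition cited just above the theorem) that are needed in the Chern--Weil calculations. The essential novelty over the K\"ahler case is the replacement of the usual K\"ahler identities by these weaker Gauduchon identities; everything else is parallel.

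For the easy direction, suppose $H$ is HYM with constant $\lambda$. Given any coherent subsheaf $\mathcal{F}\subset\mathcal{E}$ of intermediate rank, the Uhlenbeck--Yau regularity theorem represents $\mathcal{F}$ by an $L^2_1$ weakly holomorphic projection $\pi$, i.e.\ $\pi^{*_H}=\pi=\pi^2$ and $(I-\pi)\bar{\partial}\pi=0$. A Chern--Weil computation, using the Gauduchon condition to integrate by parts, gives
\begin{equation*}
\text{deg}_\omega(\mathcal{F})=\int_X\text{tr}(\pi\cdot\sqrt{-1}\Lambda_\omega F_H)\frac{\omega^n}{n!}-\int_X|\bar{\partial}\pi|_{H,\omega}^2\frac{\omega^n}{n!}.
\end{equation*}
Substituting $\sqrt{-1}\Lambda_\omega F_H=\lambda I$ yields $\mu_\omega(\mathcal{F})\le\mu_\omega(\mathcal{E})$, with equality only if $\bar{\partial}\pi=0$. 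In the equality case $\pi$ is holomorphic and splits $\mathcal{E}=\text{Im}\,\pi\oplus\ker\pi$ into two HYM pieces of equal slope; iterating gives polystability.

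For the converse direction, I would use a continuity method. Fix a reference Hermitian metric $H_0$, write $H=H_0 e^h$ with $h$ an $H_0$-Hermitian endomorphism, and consider
\begin{equation*}
L_\epsilon(h):=\sqrt{-1}\Lambda_\omega F_{H_0 e^h}-\lambda I+\epsilon h=0,\qquad\epsilon\in[0,1].
\end{equation*}
At $\epsilon=1$ a direct elliptic/variational argument gives existence. Openness for $\epsilon>0$ follows from the implicit function theorem, since the linearization has the form $-\Delta_\omega^A+\epsilon\cdot\text{id}$ and the $\epsilon\cdot\text{id}$ term makes it strictly positive on $H_0$-Hermitian endomorphisms. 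Closedness as $\epsilon\to 0$ reduces, via elliptic bootstrapping, to a uniform $C^0$ bound on $h_\epsilon$. An $L^2$ bound follows from pairing the equation with $h_\epsilon$ and using the Gauduchon integration-by-parts identity; a Moser iteration based on the Sobolev inequality on $(X,\omega)$ promotes this to a $C^0$ bound, and elliptic regularity then yields a smooth HYM metric $H_0$ as a limit.

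The main obstacle is the possibility that the $C^0$ estimate fails, and it is here that polystability is used. If $M_\epsilon:=\sup_X|h_\epsilon|_{H_0}\to\infty$ along a subsequence, normalize $u_\epsilon=h_\epsilon/M_\epsilon$ and extract a weak $L^2_1$ limit $u_\infty\not\equiv 0$ that is Hermitian and trace-free. Following Uhlenbeck--Yau, a spectral cut-off of $u_\infty$ (functional calculus applied via a suitable smooth truncation) produces an $L^2_1$ weakly holomorphic projection; their regularity theorem then realizes it as a coherent subsheaf $\mathcal{F}\subset\mathcal{E}$. Tracking degrees along the continuity path and passing to the limit yields $\mu_\omega(\mathcal{F})\ge\mu_\omega(\mathcal{E})$, with strictness unless $u_\infty$ has a block structure corresponding to a genuine holomorphic splitting of $\mathcal{E}$, contradicting polystability. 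The technical heart of the argument is this last step: one must propagate weak $L^2_1$ convergence through a nonlinear spectral cut-off and extract an honest holomorphic subsheaf from an $L^2_1$ projection. This is exactly the content of the Uhlenbeck--Yau regularity lemma, and its adaptation from the K\"ahler to the Gauduchon setting is the essential contribution of Li--Yau.
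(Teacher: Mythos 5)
The paper does not give a proof of this theorem: it is quoted verbatim as a result of Li and Yau, citation \cite{LY1}, so there is no internal argument to compare against. That said, your sketch does correctly identify the Li--Yau strategy, namely the easy direction via a Chern--Weil formula on the weakly holomorphic projection representing a subsheaf, and the hard direction via an Uhlenbeck--Yau continuity method, with the Gauduchon condition $\partial\bar{\partial}(\omega^{n-1})=0$ doing exactly the work you describe (well-definedness of degree and validity of the integration-by-parts identity recorded just before the theorem in this paper).

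There is, however, one genuine gap in the way you close the hard direction. You argue that the block structure of the blow-up limit $u_\infty$ gives a holomorphic splitting ``contradicting polystability.'' But a polystable bundle that is not stable \emph{does} split holomorphically into equal-slope summands, so a holomorphic splitting is not a contradiction; your argument as written only yields a contradiction when $\mathcal{E}$ is stable. The correct route is to first prove the stable case, where the destabilizing coherent subsheaf $\mathcal{F}\subset\mathcal{E}$ with $0<\text{rank}\,\mathcal{F}<\text{rank}\,\mathcal{E}$ and $\mu_\omega(\mathcal{F})\geq\mu_\omega(\mathcal{E})$ directly violates stability, and then handle a general polystable $\mathcal{E}=\bigoplus_i\mathcal{E}_i$ by applying the stable case to each summand. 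Because all the $\mathcal{E}_i$ have the same slope, the resulting HYM constants $\lambda_i$ coincide, so the direct sum metric $\bigoplus_i H_i$ is HYM on $\mathcal{E}$ with a single constant $\lambda$. Without this reduction the final contradiction you invoke is not available in the polystable-but-not-stable case, and that case is exactly what the ``if and only if'' in the statement requires.
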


\subsection{Controls of constants}

Let $\mathcal{E}$ be a holomorphic vector bundle over a compact Hermitian manifold $(X,g)$, $H$ a Hermitian metric on $\mathcal{E}$, and $\nabla_{H,g}$ the connection on $\mathcal{E}\otimes (\Omega^1)^{\otimes k}$ induced from the Chern connections of $H$ and $g$. Let $\mathbf{r}$ be a smooth positive function on $X$.

We can define the following weighted norms on the usual Sobolev spaces $L^p_k(\mathcal{E})$ over $X$: for each $\sigma \in L^p_k(\mathcal{E})$,
    \begin{equation*}
         \Arrowvert \sigma \Arrowvert_{L^p_{k,\beta}} :=\left( \sum_{j=0}^{k} \int_{X} |\mathbf{r}^{-\frac{2}{3}\beta+\frac{2}{3}j}\nabla_{H,g}^j\sigma|_{H,g}^p\mathbf{r}^{-4}\, dV_g \right)^\frac{1}{p}.
    \end{equation*}
We denote by $L^p_{k,\beta}(\mathcal{E})$ the same space as $L^p_k(\mathcal{E})$ but endowed with the above norm. Here $dV_g$ is the volume form of $g$.
   
There are also the weighted $C^k$-norms:
    \begin{equation*}
         \Arrowvert \sigma \Arrowvert_{C^k_{\beta}} :=\sum_{j=0}^{k} \sup_{X} |\mathbf{r}^{-\frac{2}{3}\beta+\frac{2}{3}j}\nabla_{H,g}^j \sigma |_{H,g}.
    \end{equation*}
We denote by $C^k_{\beta}(\mathcal{E})$ the same space as $C^k(\mathcal{E})$ but endowed with the above norm.\\[0.2cm]

Now let $\{\phi_ {z}:B_z\rightarrow U_z\subset X\}_{z\in X}$ be a system of complex coordinate charts where each $\phi_ {z}$ maps the Euclidean ball of radius $\rho$ in $\mathbb{C}^3$ centered at 0 homeomorphically to $U_z$, an open neighborhood of $z$, such that $\phi_ {z}(0)=z$. Over each $U_z$ define $\bar{g}$ to be $\mathbf{r}(z)^{-\frac{4}{3}}g$. Let $g_e$ denote the standard Euclidean metric on $B_z\subset \mathbb{C}^3$, and $\nabla_e$ the Euclidean derivatives.

	For $m\geq 0$, let $R_m>0$ be constants such that for any $z \in X$ and $y\in U_z$,
          \begin{equation}
           \label{R-1}
              \frac{1}{R_0}\mathbf{r}(z) \leq \mathbf{r}(y) \leq R_0\mathbf{r}(z)
          \end{equation}
          and
          \begin{equation}
           \label{R-2}
					|\nabla_e^m \mathbf{r}|_{g_e}(y) \leq R_m\mathbf{r}(y).
			\end{equation}  
	For $k\geq 0$, let $C_k>0$ be constants such that for any $z\in X$,
 		\begin{equation}
		 \label{R-3}
                 \frac{1}{C_0}g_e \leq \phi_{z}^*\bar{g} \leq C_0g_e
           \end{equation}
        over $B_z$ where $g_e$ is the Euclidean metric, and
           \begin{equation}
	     	 \label{R-4}
                 \Arrowvert \phi_{z}^*\bar{g} \Arrowvert_{C^k(B_z,g_e)} \leq C_k.
           \end{equation}

We may deduce the following version of Sobolev Embedding Theorem. 
\begin{theorem}
  \label{th:so}
    For each $l,p,q,r$ there exists a constant $C>0$ depending only on the constants $R_m$ and $C_k$ above such that
    \begin{equation*}
         \Arrowvert \sigma \Arrowvert_{L^r_{l,\beta}}\leq C \Arrowvert \sigma \Arrowvert_{L^p_{q,\beta}}
    \end{equation*}
    whenever $\frac{1}{r}\leq \frac{1}{p} \leq \frac{1}{r}+\frac{q-l}{6}$ and
    \begin{equation*}
         \Arrowvert \sigma \Arrowvert_{C^l_{\beta}}\leq C \Arrowvert \sigma \Arrowvert_{L^p_{q,\beta}}
    \end{equation*}
    whenever $\frac{1}{p} < \frac{q-l}{6}$.
\end{theorem}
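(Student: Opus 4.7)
My plan is to reduce the weighted Sobolev inequality on $X$ to the standard Sobolev inequality on Euclidean balls of fixed radius $\rho$ in $\mathbb{C}^3$, via the chart system $\{\phi_z\}$ and the rescaled metric $\bar{g}=\mathbf{r}(z)^{-4/3}g$. The weights in the norms are calibrated exactly so that this works: on each chart $U_z$, by (R-1) we have $\mathbf{r}(y)\sim \mathbf{r}(z)$, and a rank-$j$ covariant tensor $\nabla^j\sigma$ satisfies $|\nabla^j\sigma|_g=\mathbf{r}(z)^{-2j/3}|\nabla^j\sigma|_{\bar{g}}$ while $dV_g=\mathbf{r}(z)^{4}dV_{\bar{g}}$. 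A direct substitution then gives, up to a uniform multiplicative factor,
\[
\int_{U_z}\bigl|\mathbf{r}^{-\frac{2}{3}\beta+\frac{2}{3}j}\nabla^j\sigma\bigr|_g^{p}\mathbf{r}^{-4}\,dV_g\;\approx\; \mathbf{r}(z)^{-\frac{2p\beta}{3}}\int_{U_z}|\nabla^j\sigma|_{\bar{g}}^{p}\,dV_{\bar{g}}.
\]
The prefactor $\mathbf{r}(z)^{-\frac{2p\beta}{3}}$ (respectively $\mathbf{r}(z)^{-\frac{2r\beta}{3}}$ for the $L^r$-side, and $\mathbf{r}(z)^{-\frac{2\beta}{3}}$ for $C^l$) depends only on the chart, not on $j$ or $l$, so it is common to both sides of the target inequality and cancels out.

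The second step is the local Sobolev inequality on each $U_z$ with respect to $\bar{g}$, with constants uniform in $z$. Pulling back by $\phi_z$, the standard Euclidean Sobolev embedding on the ball $B(0,\rho)\subset\mathbb{R}^6$ gives, for $\tfrac{1}{r}\le \tfrac{1}{p}\le \tfrac{1}{r}+\tfrac{q-l}{6}$,
\[
\left(\sum_{l'=0}^l\int_{B_z}|\nabla_e^{l'}(\phi_z^*\sigma)|_{g_e}^{r}\,dV_{g_e}\right)^{1/r}\le C_0\left(\sum_{j=0}^q\int_{B_z}|\nabla_e^{j}(\phi_z^*\sigma)|_{g_e}^{p}\,dV_{g_e}\right)^{1/p},
\]
with $C_0$ depending only on $\rho$ and the dimension. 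By (R-3) and (R-4), $\phi_z^*\bar{g}$ is uniformly equivalent to $g_e$ with uniformly bounded derivatives, so the Chern connection of $\bar{g}$ differs from $\nabla_e$ by Christoffel symbols whose $C^{k-1}$-norms with respect to $g_e$ are uniformly controlled by the $C_k$. Converting between Euclidean and $\bar{g}$-covariant derivatives then transfers the inequality above to one with $|\cdot|_{\bar{g}}$ and $\nabla_{\bar{g}}^j$ on the right-hand side, with a new uniform constant depending only on the $C_k$. The analogous Morrey embedding handles the $C^l$ case when $\tfrac{1}{p}<\tfrac{q-l}{6}$.

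To globalize, I would extract a Vitali-type subfamily $\{z_i\}$ such that the sub-balls $\phi_{z_i}(B(0,\rho/5))$ are pairwise disjoint; bounded geometry (coming from (R-3)) then yields a uniform bound $N$ on the multiplicity of $\{U_{z_i}\}$. Multiplying the local inequality from step two by $\mathbf{r}(z_i)^{-2\beta/3}$, using the scaling identity from step one on both sides, and raising to the $r$-th power gives a local weighted inequality on each $U_{z_i}$. Summing over $i$ and invoking the elementary bound $\sum_i a_i^{r/p}\le(\sum_i a_i)^{r/p}$ (valid for $r\ge p$, $a_i\ge 0$) together with the bounded multiplicity gives the global $L^r_{l,\beta}\hookrightarrow L^p_{q,\beta}$ embedding; for the $C^l$ case, take the supremum over $i$ in place of the sum. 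The only delicate (though routine) point is the derivative conversion in step two, where (R-3)--(R-4) are essential to keep the constant independent of $z$; everywhere else the argument is a bookkeeping exercise in the scaling weights.
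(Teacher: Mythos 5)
Your proposal is correct and follows essentially the same strategy as the paper: the scaling analysis showing that the weighted integrals on each chart reduce to $\mathbf{r}(z)^{-\frac{2}{3}p\beta}$ times unweighted $\bar{g}$-integrals, the uniform local Sobolev/Morrey inequality via (\ref{R-3})--(\ref{R-4}), and globalization using bounded geometry. The only cosmetic difference is in the globalization step: the paper phrases it as a continuous integration over $z\in X$ of the local inequalities (a Fubini/averaging argument with uniform overlap), while you extract a discrete Vitali subcover with bounded multiplicity and sum --- these are the continuous and discrete versions of the same bounded-overlap argument and both are standard.
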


The proof of the above result is standard. Simply put, we integrate over $z \in X$ the Sobolev inequalities on each chart $U_z$, and use the bounds (\ref{R-1})-(\ref{R-4}) to help control the constants of the global inequalities. 

In fact, the method of this proof is useful in controlling not only the Sobolev constants, but the constants in elliptic estimates as well. Consider a linear differential operator $P:C^\infty(\mathcal{E}) \rightarrow C^\infty(\mathcal{E})$ of order $m$ on the space of smooth sections of $\mathcal{E}$. Assume also that $P$ is strongly elliptic, i.e., its principal symbol $\sigma(P)$ satisfies the condition that there is a constant $\lambda>0$ such that $\langle \sigma_{\xi}(P)(v),v \rangle \geq \lambda ||v||^2 $ for any $v \in \mathbb{R}^r$ ($r=\text{rank}\mathcal{E}$) and $\xi \in \mathbb{R}^6$ with norm $||\xi||=1$.

\begin{proposition}
 \label{pr:uniform}
Assume there are constants $\Lambda_k>0$, $k\geq 0$, such that for any $z \in X$ there is a trivialization of $\mathcal{E}|_{U_z}$ under which the operator $P$ above takes the form
\begin{equation*}
		P=\sum_{|\alpha|\leq m} A_{\alpha}\frac{\partial^{|\alpha|}}{\partial w_1^{\alpha_1}...\partial \bar{w}_3^{\alpha_6}}
\end{equation*}
in the coordinates $(w_1,w_2,w_3)\in B_z \subset \mathbb{C}^3$, and the matrix-valued coefficient functions $A_\alpha$ satisfy
\begin{equation*}
		|\nabla_{e}^k A_\alpha|_{g_e} \leq \Lambda_k 
\end{equation*}
for all $\alpha$ and $k$. Here $\alpha=(\alpha_1,...,\alpha_6)$, $\alpha_i \geq 0$, are the multi-indices and $|\alpha|=\alpha_1+...+\alpha_6$.

Assume also that there is a Hermitian metric $H$ on $\mathcal{E}$ and constants $C'_k>0$ for $k\geq 0$, such that when $H$ is viewed as a matrix-valued function on $U_z$ under the above frames, we have ${C'_0}^{-1}I \leq H \leq C'_0I$ and $|\nabla_{e}^k H|_{g_e} \leq C'_k$ on $U_z$ for any $k$ and $z \in X$. 

Then there exists a constant $C>0$ depending only on $p$, $l$, $m$, $\beta$, $\lambda$, $\Lambda_k$, $R_m$, $C_k$, and $C'_k$ such that for any $\sigma \in C^\infty(\mathcal{E})$, we have
    \begin{equation*}
         \Arrowvert \sigma \Arrowvert_{L^p_{l+m,\beta}}\leq C \left( \Arrowvert P(\sigma) \Arrowvert_{L^p_{l,\beta}}+\Arrowvert \sigma \Arrowvert_{L^2_{0,\beta}} \right).\\[0.2cm]
    \end{equation*}
\end{proposition}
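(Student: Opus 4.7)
The proof follows the localization-and-rescaling strategy the author alludes to in the paragraph just after Theorem~\ref{th:so}. The key insight is that after pointwise rescaling by a $z$-dependent power of $\mathbf{r}$, the geometry on each chart $U_z$, together with the operator $P$ and the bundle metric $H$, becomes $C^\infty$-equivalent to a fixed Euclidean model uniformly in $z$, so a single Euclidean interior elliptic estimate serves all charts simultaneously; the weighted norms then reassemble correctly when one sums over a bounded-multiplicity cover of $X$.

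Fix $z\in X$ and set $\bar g := \mathbf{r}(z)^{-4/3}g$ on $U_z$. Because $\mathbf{r}(z)$ is a constant, the Chern connection of $\bar g$ coincides with that of $g$, so $\nabla_{H,g}^j\sigma = \nabla_{H,\bar g}^j\sigma$ pointwise, while $|\,\cdot\,|_g = \mathbf{r}(z)^{-2j/3}|\,\cdot\,|_{\bar g}$ on $j$-covariant tensors and $dV_g = \mathbf{r}(z)^{4}\,dV_{\bar g}$. Combined with the pointwise bound $\mathbf{r}\sim \mathbf{r}(z)$ on $U_z$ from (\ref{R-1}), a direct computation shows
\begin{equation*}
\int_{U_z}\bigl|\mathbf{r}^{-\frac{2\beta}{3}+\frac{2j}{3}}\nabla_{H,g}^j\sigma\bigr|_{H,g}^{p}\mathbf{r}^{-4}\,dV_g \;\leq\; C\,\mathbf{r}(z)^{-\frac{2p\beta}{3}}\int_{U_z}\bigl|\nabla_{H,\bar g}^j\sigma\bigr|_{H,\bar g}^{p}\,dV_{\bar g},
\end{equation*}
and a matching lower bound with comparable constant, where $C$ depends only on $R_0,p,\beta,j$. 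Thus the weighted norm on $U_z$ equals, up to a uniform multiplicative constant and the overall factor $\mathbf{r}(z)^{-2p\beta/3}$, the ordinary $L^p_j(U_z,\bar g,H)$-norm; crucially, the same factor appears on both sides of any local inequality one writes and will cancel.

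For the local inequality itself, observe that $\phi_z^*\bar g$ is $C^k$-uniformly equivalent to the Euclidean metric on $B_z$ by (\ref{R-3})--(\ref{R-4}); the coefficients $A_\alpha$ of $P$ have $|\nabla_e^k A_\alpha|_{g_e}\leq \Lambda_k$; the metric $H$ is $C^k$-close to the identity in the given trivialization with bounds depending only on $C_k'$; and $P$ is strongly elliptic with constant $\lambda$. The classical interior $L^p$ estimate for strongly elliptic systems then gives, on a smaller concentric ball $B_z'\subset B_z$,
\begin{equation*}
\|\sigma\|_{L^p_{l+m}(B_z',\bar g,H)} \;\leq\; C_0\bigl(\|P\sigma\|_{L^p_l(B_z,\bar g,H)} + \|\sigma\|_{L^p(B_z,\bar g,H)}\bigr),
\end{equation*}
with $C_0$ depending only on $p,l,m,\lambda,\Lambda_k,C_k,C_k'$ and \emph{not} on $z$. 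I would then choose a countable subcollection $\{\phi_{z_i}(B_{z_i}')\}$ covering $X$ with uniformly bounded multiplicity (available by a standard Vitali-type argument on the compact $X$), raise the local inequality to the $p$-th power, multiply by $\mathbf{r}(z_i)^{-2p\beta/3}$, and sum over $i$. The rescaling identity converts these sums directly into global weighted norms, and bounded multiplicity controls the over-counting on the right, producing
\begin{equation*}
\|\sigma\|_{L^p_{l+m,\beta}} \;\leq\; C_1\bigl(\|P\sigma\|_{L^p_{l,\beta}} + \|\sigma\|_{L^p_{0,\beta}}\bigr).
\end{equation*}

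The final step is to replace $\|\sigma\|_{L^p_{0,\beta}}$ by $\|\sigma\|_{L^2_{0,\beta}}$ on the right. For $p\leq 2$ this reduces to H\"older's inequality on the compact base, after moving a power of $\mathbf{r}$ into the weight. For $p>2$ I would combine the inequality just established with the $L^2$-analogue (which is the same argument at the exponent $p=2$) and with the weighted Sobolev embedding $L^2_{l+m,\beta}\hookrightarrow L^p_{0,\beta}$ provided by Theorem~\ref{th:so} (valid in the allowed range of exponents), then absorb the intermediate $L^p_{l+m,\beta}$ term by a small interpolation/bootstrap. I expect this last absorption to be the main technical step, since it demands care tracking the Sobolev-exponent arithmetic against the weight $\beta$; by contrast, the first three steps are essentially bookkeeping with the uniform bounds (\ref{R-1})--(\ref{R-4}) and the hypothesized coefficient bounds on $P$ and $H$.
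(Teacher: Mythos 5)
The paper supplies no proof of Proposition~\ref{pr:uniform}; it only gestures, just before the statement, at re-using the localization argument sketched for Theorem~\ref{th:so}. Your write-up is therefore a genuine expansion rather than a reproduction, and your first three steps (rescaling to $\bar g=\mathbf{r}(z)^{-4/3}g$, identifying the weighted chart norm with the $L^p_j(\bar g,H)$ norm up to the factor $\mathbf{r}(z)^{-2p\beta/3}$, the $z$-uniform interior estimate, and summation over a bounded-multiplicity cover) are exactly the right bookkeeping and are carried out correctly. One small remark there: the total number of charts in the cover is \emph{not} controlled by $R_m,C_k,\rho$, but the multiplicity is, and that is all the summation uses; your phrasing is fine, but it is worth keeping the distinction in mind.

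The gap is in the final replacement of $\|\sigma\|_{L^p_{0,\beta}}$ by $\|\sigma\|_{L^2_{0,\beta}}$, which you rightly flag as the delicate step. For $p\le 2$, H\"older on $(X,\mathbf{r}^{-4}dV_g)$ does give $\|\sigma\|_{L^p_{0,\beta}}\le \mu(X)^{1/p-1/2}\|\sigma\|_{L^2_{0,\beta}}$ with $\mu(X)=\int_X\mathbf{r}^{-4}dV_g$, but $\mu(X)$ is \emph{not} controlled by the listed constants $R_m,C_k$ etc.~--- each chart contributes an $O(1)$ amount, but the number of charts is unbounded, and indeed in the intended application $\mu(X_t)\sim -\log|t|$ blows up. So the H\"older route quietly introduces a dependence the statement excludes. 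For $p>2$, your proposed route --- combine with the $p=2$ case, Sobolev, and absorb --- also does not quite close as written: Theorem~\ref{th:so} only gives $L^2_{l+m,\beta}\hookrightarrow L^p_{0,\beta}$ (increasing the $L$-exponent as regularity drops), so plugging it in produces $\|P\sigma\|_{L^2_{l,\beta}}$ on the right, which must then be dominated by $\|P\sigma\|_{L^p_{l,\beta}}$ --- again a H\"older step with the $\mu(X)$ issue. The clean absorption instead uses the pointwise interpolation $\|f\|_{L^p(\mu)}\le\|f\|_{L^\infty}^{1-2/p}\|f\|_{L^2(\mu)}^{2/p}$ applied to $f=\mathbf{r}^{-2\beta/3}\sigma$, together with the weighted embedding $L^p_{l+m,\beta}\hookrightarrow C^0_\beta$ from Theorem~\ref{th:so}, then Young's inequality to get $\|\sigma\|_{L^p_{0,\beta}}\le\epsilon\|\sigma\|_{L^p_{l+m,\beta}}+C(\epsilon)\|\sigma\|_{L^2_{0,\beta}}$; this avoids $\mu(X)$ entirely but requires $\frac{1}{p}<\frac{l+m}{6}$. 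That restriction is satisfied in the paper's actual uses (e.g.~in Proposition~\ref{pr:4-1}, $l=0$, $m=2$, $p=k\ge 6$), so in practice the estimate holds with the advertised constants, but it is worth recording that for small $p$ the statement as written is slightly too optimistic.
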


\section{Uniform coordinate systems} 

In this section we will construct coordinate systems with special properties over $X_{0,sm}$ and over each $X_t$ for small $t\neq 0$. Later we will mainly be using the wieghted Sobolev spaces and the discussions in Section 2 show that these coordinate systems help providing uniform controls of constants appearing in the weighted versions of Sobolev inequalities and elliptic estimates. The use of weighted Sobolev spaces is now standard in the gluing constructions or desingularization of spaces with conical singularities. See \cite{LM} and \cite{Pa} for more details.

The main goal of this section is to prove the following theorem.

\begin{theorem}
 \label{th:0}   
There is a constant $\rho>0$ such that, for any $t$ ($t$ can be zero), at each point $z\in X_t$ (or $z \in X_{0,sm}$ when $t=0$), there is an open neighborhood $U_z\subset X_t$ (or $U_z\subset X_{0,sm}$ when $t=0$) of $z$ and a diffeomorphic map $\phi_ {t,z}:B_z\rightarrow U_z$ from the Euclidean ball of radius $\rho$ in $\mathbb{C}^3$ centered at 0 to $U_z$ mapping 0 to $z$ so that one has the following properties:
 \begin{enumerate}
   \item[(i)] There are constants $R_m>0$, $m \geq 0$, such that for any $t$, $z \in X_t$ (or $z \in X_{0,sm}$ when $t=0$) and $y\in U_z$,
          \begin{equation}
           \label{eq:18}
              \frac{1}{R_0}\mathbf{r}_t(z) \leq \mathbf{r}_t(y) \leq R_0\mathbf{r}_t(z)
          \end{equation}
          and
          \begin{equation}
           \label{eq:ra-1}
					|\nabla_e^m \mathbf{r}_t|_{g_e}(y) \leq R_m\mathbf{r}_t(y).
			\end{equation}  
    \item[(ii)]  Over each $U_z$ define $\bar{\tilde{g}}_t$ to be $\mathbf{r}_t(z)^{-\frac{4}{3}}\tilde{g}_t$. Then for each $k\geq 0$, there is a constant $C_k$ independent of $t$ and $z\in X_t$ (or $z \in X_{0,sm}$ when $t=0$) such that 
           \begin{equation}
             \label{eq:19}
                 \frac{1}{C_0}g_e \leq \phi_{t,z}^*\bar{\tilde{g}}_t \leq C_0g_e 
           \end{equation}
            over $B_z$, and 
           \begin{equation}
             \label{eq:20}
                 \Arrowvert \phi_{t,z}^*\bar{\tilde{g}}_t \Arrowvert_{C^k(B_z,g_e)} \leq C_k.
           \end{equation}
  \end{enumerate}  
\end{theorem}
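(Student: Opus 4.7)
The plan is to construct $\phi_{t,z}$ in two regimes distinguished by the size of $\mathbf{r}_t(z)$. Fix once and for all a small constant $c_0>0$ independent of $t$, and write $X_t=X_t[c_0]\cup V_t(2c_0)$ with overlap $V_t(c_0,2c_0)$. On the bulk piece $X_t[c_0]$ the function $\mathbf{r}_t$ is bounded between positive constants, so $\mathbf{r}_t(z)^{-4/3}$ is a uniformly bounded rescaling and we use charts of fixed size. On the conifold piece $V_t(2c_0)$ the natural length scale at a point with $\mathbf{r}_t(z)=r$ is $r^{2/3}$, as dictated by the Ricci-flat cone formula \eqref{cone} and the rescaling relations \eqref{eq:s}, and the chart has to be rescaled accordingly.

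For $z\in X_t[c_0]$, transport a fixed finite atlas from the compact model. Fix on $\hat X$ a finite cover by complex coordinate charts $\psi_\alpha:B\to\mathcal U_\alpha$ with uniformly bounded $C^k$-geometry for $\omega$; this is immediate from compactness of $\hat X$ and smoothness of $\omega$. Use the diffeomorphisms $x_t:X_t[\tfrac12]\to X_0[\tfrac12]$ and the identification $X_{0,sm}\cong \hat X\setminus\bigcup C_i$ to define $\phi_{t,z}$ as a recentred $\psi_\alpha$ composed with $x_t^{-1}$. Uniform $C^k$-bounds on $\phi_{t,z}^*\tilde g_t$ then amount to the smooth convergence of $x_t^*\tilde g_t$ on $X_0[c_0]$ as $t\to 0$; this is exactly what the Fu--Li--Yau construction reviewed in Section~2.3 provides, via the defining relation $\tilde\omega_t^2=\Phi_t^{2,2}+\theta_t+\bar\theta_t$ combined with the decay \eqref{eq:0-1} on $\theta_t$. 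Condition (i) on $\mathbf{r}_t$ holds for $\rho$ sufficiently small because $\mathbf{r}_t$ is smooth and bounded between two positive constants on this region.

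For $z\in V_t(2c_0)$, use the rescaling diffeomorphisms $\psi_t:Q_1\to Q_t$ whose scaling behaviour \eqref{eq:s} gives $\psi_t^*\mathbf{r}_t=|t|^{1/2}\mathbf{r}_1$ and $\psi_t^*g_{co,t}=|t|^{2/3}g_{co,1}$. Set $w=\psi_t^{-1}(z)\in Q_1$. The heart of the matter is to construct, once and for all, a system of charts $\tilde\phi_w:B_\rho(0)\to\tilde U_w\subset Q_1$ with $\tilde\phi_w(0)=w$ such that $\tilde\phi_w^*\bigl(\mathbf{r}_1(w)^{-4/3}g_{co,1}\bigr)$ has uniformly bounded $C^k$-geometry in $w$. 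For $\mathbf{r}_1(w)$ bounded this is just compactness; for large $\mathbf{r}_1(w)$ it follows from \eqref{eq:6-1}, because the cone metric $g_{co,0}$ on $Q_{0,sm}$ is scale-invariant under $\mathbf{r}_0\mapsto\lambda\mathbf{r}_0$, so the rescaled metric $\mathbf{r}_0^{-4/3}g_{co,0}$ has bounded geometry globally once it is verified in a single fundamental domain of the scaling, and $g_{co,1}$ is $C^k$-close to $g_{co,0}$ at infinity. Setting $\phi_{t,z}=\psi_t\circ\tilde\phi_w$ yields
\[
\phi_{t,z}^{\,*}\bar{\tilde g}_t \;=\; \mathbf{r}_1(w)^{-4/3}\,\tilde\phi_w^{\,*}\bigl(|t|^{-2/3}\psi_t^{\,*}\tilde g_t\bigr),
\]
and (i), (ii) reduce to showing that $|t|^{-2/3}\psi_t^{\,*}\tilde g_t$ is $C^k$-close to $g_{co,1}$ uniformly on each $\tilde U_w$.

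The main obstacle is this last $C^k$-closeness, i.e.\ uniform higher-derivative control of $\tilde g_t-g_{co,t}$ in the conifold region. Lemma~\ref{lm:0-1} supplies only the $L^\infty$ comparison, so the higher-derivative statement must be extracted from the construction of $\theta_t$ via \eqref{eq:0-2}: one applies elliptic regularity to $E_t$ on the weighted spaces of Section~2.5 and uses the compactly supported bound \eqref{eq:1} on $\partial\Phi_t^{1,3}$ to propagate decay from $X_t[1]$ into the conifold region, obtaining $C^k$-bounds on $\theta_t$ with weights in powers of $\mathbf{r}_t$. The derivative estimates \eqref{eq:ra-1} for $\mathbf{r}_t$ are then elementary because $\mathbf{r}_t$ is essentially the restriction of $\|z\|$ on $\mathbb{C}^4$. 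A standard shrinking of $\rho$ matches the two families of charts on the overlap $V_t(c_0,2c_0)$ and finishes the proof.
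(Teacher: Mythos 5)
Your decomposition into the bulk $X_t[c_0]$ and the conifold neighborhood $V_t(2c_0)$, the rescaled charts built from the cone structure and transported by $\psi_t$, and the reduction of the problem to comparing $\tilde g_t$ with $g_{co,t}$ on the conifold region all match the paper's strategy (Proposition~\ref{pr:n-1} and the discussion following it). You also correctly identify the one real obstacle: controlling higher derivatives of $\theta_t$, since (\ref{eq:0-1}) and Lemma~\ref{lm:0-1} give only a sup bound.

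However, the way you propose to overcome that obstacle has a genuine gap. You suggest running elliptic regularity for the fourth-order mixed operator $E_t$ on the weighted spaces and ``propagating decay from $X_t[1]$ into the conifold region.'' That is a global Fredholm/weighted argument (indicial roots, mapping properties of $E_t$ on $\gamma_t$, then three derivatives to recover $\theta_t=\partial\bar\partial^*\partial^*\gamma_t$), none of which is set up in the paper, and it is far from clear it delivers the precise $C^k$ bounds on $\mathbf{r}_t(z)^{-8/3}\theta_t$ that are actually required. The key observation you are missing, which is what makes Proposition~\ref{pr:1} work cleanly, is that $\theta_t$ itself satisfies a \emph{homogeneous, second-order} elliptic equation locally on the whole conifold region: on $V_t(1)$ the background metric $g_t$ coincides with the Ricci-flat K\"ahler metric $g_{co,t}$, and one has $\bar\partial^*\theta_t = \bar\partial^*\partial\bar\partial^*\partial^*\gamma_t = \partial(\bar\partial^*)^2\partial^*\gamma_t = 0$ (using K\"ahlerianity to commute $\bar\partial^*$ past $\partial$ and $(\bar\partial^*)^2=0$), while $\bar\partial\theta_t = -E_t(\gamma_t) = \partial\Phi_t^{1,3}$ is supported on $X_t[1]$ and hence vanishes on $V_t(1)$. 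Thus $\Delta_{\bar\partial}\theta_t=0$ there, and the rescaled operator $\mathbf{r}_t^{4/3}\Delta_{\bar\partial}$ has uniformly bounded coefficients in your charts by the already-established version of the theorem for $g_t$; a purely interior local elliptic estimate on each ball $B_z$ (in the spirit of Proposition~\ref{pr:uniform}), with $L^2$ input supplied by (\ref{eq:0-1}), then gives the required uniform $C^k$ bounds on $\mathbf{r}_t(z)^{-8/3}\theta_t$ with no global weighted analysis at all. Without this observation, your sketch does not close: you have named the right difficulty but not the mechanism that resolves it.
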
 

We first consider the following version of this theorem:
\begin{theorem}
 \label{th:0-1}   
Theorem \ref{th:0} holds with $B_z$ understood as a Euclidean ball of radius $\rho$ in $\mathbb{R}^6$ centered at 0 and $g_e$ as the standard Euclidean metric on $B_z\subset \mathbb{R}^6$.
\end{theorem}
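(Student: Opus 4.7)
\medbreak\noindent\textbf{Proof plan.}\enspace
The plan is to split $X_t$ (and $X_{0,sm}$ when $t=0$) into three regions, build coordinate charts in each by exploiting the scaling symmetries of the Candelas--de la Ossa (CO) geometry, and then transfer the bounds from $g_{co,t}$ to $\tilde g_t$ via Lemma~\ref{lm:0-1} together with (\ref{eq:0-1}). The three regions are: (I) the outer piece $X_t[1]$; (II) the ``cone annulus'' $\{2A|t|^{1/2}\leq\mathbf{r}_t\leq 1\}$, with $A$ the constant from Lemma~\ref{lm:2} (for $t=0$ this becomes all of $V_0(1)\cap X_{0,sm}$); and (III) the ``vanishing sphere'' region $\{\mathbf{r}_t\leq 2A|t|^{1/2}\}$, which is present only when $t\neq 0$.

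In region (I), $\mathbf{r}_t(z)^{-4/3}$ is uniformly bounded and the rescaling is harmless. The smooth family of diffeomorphisms $x_t:X_t[\tfrac12]\to X_0[\tfrac12]$ lets me view $\{\tilde g_t\}$ as a continuous family of smooth Hermitian metrics on the fixed compact manifold $X_0[1]$, and a finite coordinate cover of $X_0[1]$ then yields uniform charts by compactness. In region (II), I fix a reference chart $\phi_0$ at a point with $\mathbf{r}_0=1$ in $Q_{0,sm}$ and use the $SO(4)$-action on $\Sigma$ together with the cone scaling $s_\lambda:(A\cdot SO(2),\mathbf{r}_0)\mapsto(A\cdot SO(2),\lambda\mathbf{r}_0)$ (which satisfies $s_\lambda^*g_{co,0}=\lambda^{4/3}g_{co,0}$ by (\ref{cone})) to transport $\phi_0$ to a chart near any point of $Q_{0,sm}$; in this chart $\mathbf{r}_0(z)^{-4/3}g_{co,0}$ pulls back to the fixed metric $\phi_0^*g_{co,0}$, which settles $t=0$ at once. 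For $t\neq 0$, compose with $\Upsilon_t=x_t^{-1}$: Corollary~\ref{asym}, Lemma~\ref{lm:4} and Lemma~\ref{lm:2} show that $\mathbf{r}_t(z)^{-4/3}\Upsilon_t^*g_{co,t}$ deviates from $\mathbf{r}_0(x_t(z))^{-4/3}g_{co,0}$ by an amount that is uniformly small in the $C^k$-norm on the reference ball. In region (III), pull back via $\psi_t:Q_1\to Q_t$: the identity $\psi_t^*g_{co,t}=|t|^{2/3}g_{co,1}$ rewrites $\mathbf{r}_t(z)^{-4/3}\psi_t^*g_{co,t}$ as $(\mathbf{r}_t(z)^2/|t|)^{-2/3}g_{co,1}$, which is uniformly comparable to the smooth metric $g_{co,1}$ on the fixed compact region $V_1(2A)\subset Q_1$, so a finite cover there suffices.

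In each of the three regions, Lemma~\ref{lm:0-1} together with (\ref{eq:0-1}) lets me replace $g_{co,t}$ by $\tilde g_t$ at the cost of uniform constants, yielding (\ref{eq:19})--(\ref{eq:20}). The bounds (\ref{eq:18})--(\ref{eq:ra-1}) on $\mathbf{r}_t$ follow from the rescaling $\psi_t^*\mathbf{r}_t=|t|^{1/2}\mathbf{r}_1$ together with the fact that $\mathbf{r}_1$ is smooth with uniformly bounded derivatives on each compact subset of $Q_1$ (and, in region (I), from the smoothness of the fixed extension of $\mathbf{r}_t$ to all of $X_t$ noted in the Remark of Section~2.1).

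The main obstacle is to arrange a single set of constants $R_m$, $C_k$ that works uniformly in $t$ and $z$, particularly on the overlaps of the three regions. The choice of transition radii $1$ and $2A|t|^{1/2}$ is made so that in each region the natural scale of the CO reference model matches $\mathbf{r}_t(z)^{-4/3}$ up to a bounded multiplicative factor, allowing the three separate uniform estimates to be combined. The most delicate piece is region (III), where $\mathbf{r}_t(z)^{-4/3}$ blows up as $|t|^{-2/3}$; it is only the exact rescaling law of $g_{co,t}$ under $\psi_t$ that compensates for this divergence and makes a $t$-independent estimate possible.
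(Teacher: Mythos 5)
Your construction of the uniform coordinate charts themselves—splitting into the outer region, the cone annulus, and the vanishing-sphere region, and using the $SO(4)$-symmetry, the cone scaling of $g_{co,0}$, the asymptotic estimates via $\Upsilon_t$, and the exact rescaling law $\psi_t^*g_{co,t}=|t|^{2/3}g_{co,1}$—is essentially the route the paper takes for $g_{co,t}$ on $Q_t$ (Proposition~\ref{pr:n-1}) and for $g_t$ on $X_t$ (Proposition~\ref{pr:2}). That part is sound and matches the paper.

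The genuine gap is the last step: you claim that Lemma~\ref{lm:0-1} together with (\ref{eq:0-1}) lets you ``replace $g_{co,t}$ by $\tilde g_t$ at the cost of uniform constants, yielding (\ref{eq:19})--(\ref{eq:20}).'' Lemma~\ref{lm:0-1} is only a $C^0$ equivalence $\tilde C^{-1}\tilde g_t\leq g_{co,t}\leq\tilde C\tilde g_t$, and (\ref{eq:0-1}) is likewise a sup-norm bound on $\theta_t$. Neither controls any derivatives of $\theta_t$, so they cannot produce the $C^k$ bound (\ref{eq:20}) on $\phi_{t,z}^*\bar{\tilde g}_t$, which requires estimates on $\nabla_e^k\theta_t$ uniform in $z$ and $t$. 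Recall that $\tilde\omega_t^2=\omega_t^2+\theta_t+\bar\theta_t$ and each $\tilde g_{i\bar j}$ is a nonlinear algebraic expression in the coefficients of $\tilde\omega_t^2$; a $C^0$ bound on $\theta_t$ only yields a $C^0$ bound on $\tilde g_t$, not derivative bounds. The paper bridges exactly this gap in Proposition~\ref{pr:1}: over $V_t(1)$ the form $\theta_t=\partial\bar\partial^*\partial^*\gamma_t$ is $\Delta_{\bar\partial}$-harmonic (since $\bar\partial^*\theta_t=0$ automatically and $\bar\partial\theta_t=-E_t(\gamma_t)=\partial\Phi_t^{1,3}=0$ away from the support of $\partial\Phi_t^{1,3}$ in $X_t[1]$), and one then applies interior elliptic regularity for the rescaled operator $\mathbf{r}_t^{4/3}\Delta_{\bar\partial}$ on the uniform charts $B_z$ together with Kodaira's Bochner formula to upgrade the $C^0$ control (from (\ref{eq:0-1})) to the $C^k$ control $\Arrowvert\mathbf{r}_t(z)^{-8/3}\theta_t\Arrowvert_{C^k(B_z',g_e)}\leq C|t|^{1/3}$. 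Without an argument of this kind—exploiting the PDE that $\theta_t$ satisfies—your final step does not go through. (A similar remark applies to your region (I): asserting that $\{\tilde g_t\}$ is a ``continuous family of smooth metrics'' in $C^k$ norms on $X_0[1]$ again presupposes derivative control on $\theta_t$ that must be proven.)
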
 
 
The proof of Theorem \ref{th:0-1} begins with a version where $X_t$ are replaced by $Q_t$ and $\tilde{g}_t$ by $g_{co,t}$.
\begin{proposition}
 \label{pr:n-1}
There is a constant $\rho>0$ such that, for any $t$ ($t$ can be zero), at each point $z\in Q_t$ ($z \in Q_{0,sm}$ when $t=0$), there is an open neighborhood $U_z\subset Q_t$ (or $U_z\subset Q_{0,sm}$ when $t=0$) of $z$ and a diffeomorphic map $\phi_ {t,z}:B_z\rightarrow U_z$ from the Euclidean ball of radius $\rho$ in $\mathbb{R}^6$ centered at 0 to $U_z$ mapping 0 to $z$ so that one has the following properties:
 \begin{enumerate}
   \item[(i)] There are constants $R_m>0$, $m \geq 0$, such that for any $t$, $z \in Q_t$ (or $z \in Q_{0,sm}$ when $t=0$) and $y\in U_z$,
          \begin{equation}
           \label{eq:Q-1}
              \frac{1}{R_0}\mathbf{r}_t(z) \leq \mathbf{r}_t(y) \leq R_0\mathbf{r}_t(z)
          \end{equation}
          and
          \begin{equation}
           \label{eq:Q-2}
					|\nabla_e^m \mathbf{r}_t|_{g_e}(y) \leq R_m\mathbf{r}_t(y).
			\end{equation}  
    \item[(ii)]  Over each $U_z$ define $\bar{g}_{co,t}$ to be $\mathbf{r}_t(z)^{-\frac{4}{3}}g_{co,t}$. Then for each $k\geq 1$, there is a constant $C_k$ independent of $t$ and $z\in Q_t$ (or $z \in Q_{0,sm}$ when $t=0$) such that 
           \begin{equation}
            \label{eq:Q-3}
                 \frac{1}{C_0}g_e \leq \phi_{t,z}^*\bar{g}_{co,t} \leq C_0g_e 
           \end{equation}
            over $B_z$, and 
           \begin{equation}
            \label{eq:Q-4}
                 \Arrowvert \phi_{t,z}^*\bar{g}_{co,t} \Arrowvert_{C^k(B_z,g_e)} \leq C_k.
           \end{equation}
  \end{enumerate}  
\end{proposition}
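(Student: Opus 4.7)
The plan is a three-stage reduction: first reduce $Q_t$ (for $t\neq 0$) to $Q_1$ via the biholomorphism $\psi_t$ from (\ref{eq:s}); second reduce the asymptotic part of $Q_1$ to $Q_{0,sm}$ via the diffeomorphism $\Upsilon_1$; and third handle $Q_{0,sm}$ using its cone structure (\ref{p1})--(\ref{cone}).

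For the first stage, given $z\in Q_t$ set $z_1:=\psi_t^{-1}(z)\in Q_1$, and, assuming we already have a chart $\phi_{1,z_1}\colon B_{z_1}\to U_{z_1}$ satisfying (\ref{eq:Q-1})--(\ref{eq:Q-4}) on $Q_1$, define $\phi_{t,z}:=\psi_t\circ\phi_{1,z_1}$. The scaling relations (\ref{eq:s}) yield
\[
\phi_{t,z}^*\bar g_{co,t}=\phi_{1,z_1}^*\bar g_{co,1},\qquad \phi_{t,z}^*\mathbf{r}_t=|t|^{1/2}\,\phi_{1,z_1}^*\mathbf{r}_1,
\]
since the $|t|^{2/3}$ from $\psi_t^*g_{co,t}$ cancels the $|t|^{-2/3}$ coming from $\mathbf{r}_t(z)^{-4/3}=|t|^{-2/3}\mathbf{r}_1(z_1)^{-4/3}$. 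Hence the four bounds transfer with identical constants; no additional $t$-dependence is introduced.

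For the third stage, work in the product description $Q_{0,sm}\cong\Sigma\times(0,\infty)$. The radial dilation $\mu_r(A,s):=(A,rs)$ satisfies $\mu_r^*g_{co,0}=r^{4/3}g_{co,0}$ and $\mu_r^*\mathbf{r}_0=r\mathbf{r}_0$, so the normalized metric $\bar g_{co,0}$ is exactly scale-invariant under $\mu_r$. Cover the compact shell $\Sigma\times\{1\}$ by finitely many smooth charts in which both $g_{co,0}$ and $\mathbf{r}_0$ have bounded Euclidean geometry; then, for an arbitrary $z=(A,r)$, propagate such a chart to $z$ by composing with an element of $SO(4)$ (moving the base of $\Sigma$) and with $\mu_r$ (moving the radius to $r$). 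Since the normalization absorbs the scaling factor exactly and $SO(4)$ acts isometrically on $g_\Sigma$, all bounds are inherited from the finite family of base charts.

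The main obstacle is the second stage on $Q_1$. Split $Q_1=\{\mathbf{r}_1\le R_0\}\cup\{\mathbf{r}_1>R_0\}$ for some large $R_0$. On the compact piece, choose finitely many smooth charts by compactness. On the asymptotic piece, transport the $Q_{0,sm}$-charts just constructed through $\Upsilon_1\colon Q_{0,sm}\to Q_1\setminus\{\mathbf{r}_1=1\}$. The key point is that these transported charts must yield uniform bounds for $g_{co,1}$ and $\mathbf{r}_1$, not merely for $g_{co,0}$ and $\mathbf{r}_0$. This is where the asymptotic estimate (\ref{eq:6-1}) enters: the covariant derivatives of $\Upsilon_1^*g_{co,1}-g_{co,0}$ decay like $\mathbf{r}_0^{(2/3)(-3-k)}$, so after the $\mathbf{r}_0^{-4/3}$ normalization the perturbation of the bounded-geometry constants becomes $C^k$-small for $R_0$ large and is absorbed into the constants $C_k$. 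Lemma~\ref{lm:2} then shows that $\mathbf{r}_1\asymp\mathbf{r}_0$ uniformly on $\{\mathbf{r}_1>R_0\}$, so the radial normalization required in (\ref{eq:Q-1})--(\ref{eq:Q-2}) (by $\mathbf{r}_1$) is equivalent to the one inherited from $Q_{0,sm}$ (by $\mathbf{r}_0$) up to a uniform factor. Combining the compact and asymptotic pieces on $Q_1$, possibly shrinking the common $\rho$ to ensure overlap compatibility at $\mathbf{r}_1=R_0$, completes the argument; the first stage then transports everything to $Q_t$.
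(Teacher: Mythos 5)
Your proposal is correct and follows essentially the same route as the paper: the $t=0$ charts come from the cone structure $\Sigma\times(0,\infty)$ together with $SO(4)$-invariance of $g_\Sigma$, the $t=1$ case splits into a compact piece and an asymptotic piece controlled by the decay estimate (\ref{eq:6-1}) and Lemma~\ref{lm:2}, and general $t$ is obtained from $t=1$ by the rescaling $\psi_t$ and the identities (\ref{eq:s}). The only cosmetic difference is the order of presentation; the reductions and the supporting estimates are the same as in the paper's proof.
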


\begin{proof}
While constructing the coordinate charts, we prove (\ref{eq:Q-1}), (\ref{eq:Q-3}), and (\ref{eq:Q-4}) first, leaving (\ref{eq:Q-2}) to be discussed at the end. 

We begin with the $t=0$ case. Choose $\rho<1$ to be significantly smaller then the injectivity radius of the metric $g_\Sigma$ from (\ref{cone}). Then at each point $p\in \Sigma$ one has the coordinates $\Phi_p:\tilde{B}_p\rightarrow \Sigma$ from the Euclidean ball of radius $\rho$ in $\mathbb{R}^5$ centered at 0 to $\Sigma$ mapping 0 to $p$ and satisfying the properties that there are constants $\tilde{C}_k>0$, $k\geq 0$, independent of $p$ such that
\begin{equation}
 \label{eq:c0-1}
     \frac{1}{\tilde{C}_0}\tilde{g}_e \leq  \Phi_p^*g_\Sigma \leq \tilde{C}_0\tilde{g}_e
\end{equation}
over $\tilde{B}_p$, and 
\begin{equation}
 \label{eq:c0-2}
     \Arrowvert \Phi_p^*g_\Sigma \Arrowvert_{C^k(\tilde{B}_p,\tilde{g}_e)} \leq \tilde{C}_k.
\end{equation}
Here $\tilde{g}_e$ is the standard Euclidean metric on $\tilde{B}_p$. More explicitly, we can simply choose a coordinate chart around one point in $\Sigma$ and then define the coordinates around the other points of $\Sigma$ by using the transitive action of $SO(4)$ on $\Sigma$. Since the metric $g_\Sigma$ is $SO(4)$-invariant, the above constants are easily seen to exist.

For $x \in Q_{0,sm}$ with $\phi_0^{-1}(x)=(p,\mathbf{r}_0(x)) \in \Sigma \times (0,\infty)$, define 
\begin{equation*}
    j_x: \tilde{B}_p\times (-\rho, \rho) \hookrightarrow \Sigma \times (0,\infty)
\end{equation*}
which maps $(y,s) \in \tilde{B}_p\times (-\rho, \rho)$ to $j_x(y,s)=(\Phi_p(y), \mathbf{r}_0(x)e^{\frac{3}{2}s})$. Denote the restriction of $j_x$ to $B_x \subset \tilde{B}_p\times (-\rho, \rho)$ by the same notation. Then define $\phi_{0,x}:\phi_0\circ j_x:B_x \hookrightarrow Q_{0,sm}$. Condition (\ref{eq:Q-1}) is manifest.

We have
\begin{equation}
   \label{eq:3}
     \phi_{0,x}^*g_{co,0} =(d(\mathbf{r}_0(x)^\frac{2}{3}e^s))^2+\mathbf{r}_0(x)^\frac{4}{3}e^{2s}\Phi_p^*g_\Sigma=\mathbf{r}_0(x)^\frac{4}{3}e^{2s}((ds)^2+\Phi_p^*g_\Sigma). 
\end{equation}
By choosing $\rho$ small so that $\frac{1}{2}<e^{2s}<2$ for $s \in (-\rho,\rho)$. Using the identity $g_e=(ds)^2+\tilde{g}_e$, one sees that the bound (\ref{eq:Q-3}) for the $t=0$ case follows from (\ref{eq:c0-1}). Moreover, using the fact that the derivatives of $e^{2s}$ and $(ds)^2+\Phi_p^*g_\Sigma$ are bounded in the Euclidean norm on $B_x$, the bound (\ref{eq:Q-4}) for this case follows.\\[0.2cm]

Next we deal with the $t=1$ case. We will use the asymptotically conical behavior of the deformed conifold metrics discussed in Section 2. Recall the explicit diffeomorphism $x_1:Q_1\backslash \{\mathbf{r}_1=1\} \rightarrow Q_{0,sm}$ with inverse $\Upsilon_1$, and also the estimate
\begin{equation}
 \label{eq:2}
     | \nabla_{g_{co,0}}^k(\Upsilon_1^*g_{co,1}-g_{co,0}) |_{g_{co,0}} \leq D_{2,k}\mathbf{r}_0^{-\frac{2}{3}(3+k)}.
\end{equation}
for $\mathbf{r}_0 \in (R,\infty)$ where $R>0$ is a large number. Let $\overline{V_1(R)}$ be the compact subset of $Q_1$ where $\mathbf{r}_1\leq R$. We will specify the choice of $R$ later. It is easy to see that the desired neighborhood $U_w$ exists for $w$ inside $\overline{V_1(R)}$. In fact, for $w\in \overline{V_1(R)}$ we can even choose $B_w$ to be a Euclidean ball of fixed small radius in $\mathbb{C}^3$ with the real coordinates taken from the real and imaginary parts of the complex coordinates. Therefore we focus on $Q_1\backslash \overline{V_1(R)}$. 

For $w \in Q_1\backslash \overline{V_1(R)}$, define 
\begin{equation*}
     \phi_{1,w}:=\Upsilon_1\circ \phi_{0,x_1(w)}: B_w \rightarrow Q_1\backslash \overline{V_1(R)}
\end{equation*}
for each $w \in Q_1\backslash \overline{V_1(R)}$. Here we identify $B_w$ with $B_{x_1(w)}$. What we do is defining the chart around $w \in Q_1\backslash \overline{V_1(R)}$ by pushing forward the chart around $x_1(w)$ via $\Upsilon_1$. Property (\ref{eq:Q-1}) is clear in view of Lemma \ref{lm:2}. \\[0.2cm]

From the $k=0$ case of (\ref{eq:2}) and (\ref{eq:Q-3}) for the $t=0$ case, (\ref{eq:Q-3}) holds for $t=1$ for a constant independent of $w\in Q_1\backslash \overline{V_1(R)}$ when $R$ is large enough. 

We have
\begin{equation}
 \label{eq:15}
   \begin{split}
         \phi_{1,w}^*\left(\mathbf{r}_1(w)^{-\frac{4}{3}}g_{co,1}\right) 
         =\mathbf{r}_1(w)^{-\frac{4}{3}} \phi_{0,x_1(w)}^*\left(  \Upsilon_1^*g_{co,1}-g_{co,0}\right)+\left( \mathbf{r}_1(w)^{-\frac{4}{3}} \phi_{0,x_1(w)}^*g_{co,0}  \right) \\
   \end{split}
\end{equation}

The second term in the RHS of (\ref{eq:15}) is dealt with in a way similar to the $t=0$ case as follows. By (\ref{eq:3}) we can write
\begin{equation*}
   \begin{split}
         \mathbf{r}_1(w)^{-\frac{4}{3}} \phi_{0,x_1(w)}^*g_{co,0}  
         =&\mathbf{r}_1(w)^{-\frac{4}{3}}\mathbf{r}_0(x_1(w))^\frac{4}{3}e^{2s}\left((ds)^2+\Phi_p^*g_\Sigma\right)
   \end{split}
\end{equation*}
Lemma \ref{lm:2} implies that for $R$ large enough we have
\begin{equation*}
			|\mathbf{r}_1(w)^{-\frac{4}{3}}\mathbf{r}_0(x_1(w))^\frac{4}{3}| <A,
\end{equation*}
where $A$ is independent of $w\in Q_1\backslash \overline{V_1(R)}$, and from this we obtain, as in the $t=0$ case,

\begin{equation}
 \label{eq:9}
    \Arrowvert  \mathbf{r}_1(w)^{-\frac{4}{3}} \phi_{0,x_1(w)}^*g_{co,0}  \Arrowvert_{C^k(B_w,g_e)}<C_{0,k}.
\end{equation}

Next we deal with the first term in the RHS of (\ref{eq:15}). Note that by (\ref{eq:2}) and the bound (\ref{eq:Q-3}) for $t=0$, we have, for any $w \in Q_1\backslash \overline{V_1(R)}$ when $R$ is large enough,
\begin{equation}
 \label{eq:7}
     \left\Arrowvert \mathbf{r}_1(w)^{-\frac{4}{3}}\phi_{0,x_1(w)}^*\left( \nabla_{g_{co,0}}^k (\Upsilon_1^*g_{co,1}-g_{co,0}) \right)\right\Arrowvert_{C^0(B_w,g_e)} \leq D'_{2,k}\sup_{y\in x_1(U_w)}( \mathbf{r}_1(w)^{-\frac{4}{3}}\mathbf{r}_0(y)^{-\frac{2}{3}}).
\end{equation}
Here $U_w$ is the image of $B_w$ in $Q_1$.
Note that from (\ref{eq:Q-1}) (for the $t=1$ case) and Lemma \ref{lm:2} one can deduce that 
\begin{equation*}
     \mathbf{r}_1(w)^{-\frac{4}{3}}\mathbf{r}_0(y)^{-\frac{2}{3}}<1
\end{equation*}
for $w\in  Q_1\backslash \overline{V_1(R)}$ and for any $y \in x_1(U_w)$ if $R$ is large enough. 

\begin{lemma}
     For each $k \geq 0$ there is a constant $C_{1,k}>0$ independent of $w \in Q_1 \backslash \overline{V_1(R)}$ such that 
     \begin{equation*}
          \left\Arrowvert  \phi_{0,x_1(w)}^* (\Upsilon_1^*g_{co,1}-g_{co,0}) \right\Arrowvert_{C^k(B_w,g_e)} 
          \leq C_{1,k} \sum_{j=0}^k\Arrowvert \phi_{0,x_1(w)}^*\left( \nabla_{g_{co,0}}^j (\Upsilon_1^*g_{co,1}-g_{co,0}) \right)\Arrowvert_{C^0(B_w,g_e)}. 
     \end{equation*}
\end{lemma}
\begin{proof}
    Recall the expression (\ref{eq:3}) for the pullback of $g_{co,0}$ to $B_w$. Using (\ref{eq:c0-1}) and (\ref{eq:c0-2}), an explicit calculation shows that the Christoffel symbols of the cone metric $g_{co,0}$ and their derivatives are bounded in $B_w$ w.r.t. the Euclidean norm by constants independent of $w \in Q_1 \backslash \overline{V_1(R)}$. The lemma now follows easily. \qed  
\end{proof}

From this lemma we have for $k \geq 1$
\begin{equation}
 \label{eq:14}
    \Arrowvert \mathbf{r}_1(w)^{-\frac{4}{3}}   \phi_{0,x_1(w)}^* (\Upsilon_1^*g_{co,1}-g_{co,0}) \Arrowvert_{C^k(B_w,g_e)} <C_{2,k}
\end{equation}

The required bound (\ref{eq:Q-4}) for the $t=1$ case then follow from (\ref{eq:15}), (\ref{eq:9}) and (\ref{eq:14}). \\[0.3cm]

We proceed to consider the case for general $t \neq 0$. For each point $z=\psi_t(w)$ in $Q_t$, denote $U_z=\psi_t(U_w)$, $B_z=B_w$ and define $\phi_{t,z}=\psi_t\circ \phi_{1,w}$. Then $\{(U_z,\phi_{t,z})|z\in Q_t\}$ is a coordinate system on $Q_t$ and one can check that
    \begin{equation}
      \label{eq:16}
          \frac{1}{C_0}g_e \leq \phi_{t,z}^*\bar{g}_{co,t} \leq C_0g_e
    \end{equation}
    over $B_z$, and
    \begin{equation}
      \label{eq:17}
          \Arrowvert  \phi_ {t,z} ^*\bar{g}_{co,t} \Arrowvert_{C^k(B_z,g_e)} \leq C_k.
    \end{equation}
for the same constants $C_k$ appearing in the $t=1$ case.\\[0.2cm]

Finally, we prove (\ref{eq:Q-2}). In the $t=0$ case, for $y\in U_x$ we have $\mathbf{r}_0=\mathbf{r}_0(x)e^{\frac{2}{3}s}$, $s \in (-\rho,\rho)$, for values of $\mathbf{r}_0$, and (\ref{eq:Q-2}) follows immediately. 

For the $t=1$ case, recall the expression (\ref{eq:6-2}) of $\mathbf{r}_1$ as a function of $\mathbf{r}_0$. If a point $y\in U_w\subset Q_1$ has coordinates $(p,s)\in B_w$, then $\mathbf{r}_1(y)=\mathbf{r}_1(s)=\mathbf{r}_1\left(\mathbf{r}_0(x_1(w))e^{\frac{3}{2}s}\right)$.  From straight forward computation we can see that there exist constants $R'_m$, $m\geq 1$, independent of $w \in Q_1$ such that $\left| \frac{\partial^m}{\partial s^m}\mathbf{r}_1(s) \right| \leq R'_m\mathbf{r}_1(s)$. This implies (\ref{eq:Q-2}) for the $t=1$ case. The general case follows easily from a rescaling argument.

The proof of Proposition \ref{pr:n-1} is now complete.\qed
\end{proof}

It's not hard to deduce the following:
\begin{corollary}
 \label{co:3}
      For any fixed $\beta \in\mathbb{R}\backslash \{0\}$, there are constants $R''_m>0$, $m\geq 1$, such that $|\nabla_{g_{co,t}}^m \mathbf{r}_t^\beta|_{g_{co,t}} \leq R''_m\mathbf{r}_t^{\beta-\frac{2}{3}m}$ on $Q_t$ for any $t$.
\end{corollary}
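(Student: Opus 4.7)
The plan is to reduce the pointwise estimate at a point $z\in Q_t$ to a bounded, scale-invariant estimate on the chart $B_z$ provided by Proposition \ref{pr:n-1}, using the fact that the Chern connection is invariant under constant conformal rescaling. Set $\lambda=\mathbf{r}_t(z)^{2/3}$, so that $g_{co,t}=\lambda^2\bar{g}_{co,t}$ on $U_z$. Since $\lambda$ is a (positive) constant on $U_z$, the Chern connections of $g_{co,t}$ and $\bar{g}_{co,t}$ coincide as differential operators, so $\nabla_{g_{co,t}}^m(\mathbf{r}_t^\beta)=\nabla_{\bar g_{co,t}}^m(\mathbf{r}_t^\beta)$ as $(0,m)$-tensors, while the norms scale by
\[
|\nabla_{g_{co,t}}^m(\mathbf{r}_t^\beta)|_{g_{co,t}}(z)=\mathbf{r}_t(z)^{-\frac{2m}{3}}\,|\nabla_{\bar g_{co,t}}^m(\mathbf{r}_t^\beta)|_{\bar g_{co,t}}(z).
\]

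Next I would pull the right-hand side back to $B_z$ via $\phi_{t,z}$. By (\ref{eq:Q-3}) and (\ref{eq:Q-4}) the pulled-back metric $\phi_{t,z}^*\bar g_{co,t}$ is uniformly equivalent to $g_e$ and has all Euclidean $C^k$-norms controlled by constants $C_k$ independent of $t$ and $z$. Consequently the Christoffel symbols of $\phi_{t,z}^*\bar g_{co,t}$ and their Euclidean derivatives are bounded by constants depending only on the $C_k$, and hence
\[
|\nabla_{\bar g_{co,t}}^m(\mathbf{r}_t^\beta)|_{\bar g_{co,t}}\ \le\ K_m\sum_{j=0}^{m}|\nabla_e^j(\phi_{t,z}^*\mathbf{r}_t^\beta)|_{g_e}\quad\text{on }B_z,
\]
for some constants $K_m$ depending only on the $C_k$.

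Finally, Euclidean derivatives of $\mathbf{r}_t^\beta$ are polynomial combinations of terms of the form $\mathbf{r}_t^{\beta-j}\bigl(\nabla_e^{k_1}\mathbf{r}_t\bigr)\otimes\cdots\otimes\bigl(\nabla_e^{k_j}\mathbf{r}_t\bigr)$ with $k_1+\cdots+k_j\le m$, so by (\ref{eq:ra-1}),
\[
|\nabla_e^k(\mathbf{r}_t^\beta)|_{g_e}(y)\ \le\ L_{k,\beta}\,\mathbf{r}_t(y)^{\beta}\qquad(y\in U_z,\ 0\le k\le m),
\]
with $L_{k,\beta}$ depending only on the $R_m$ and on $\beta$. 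Using (\ref{eq:Q-1}) to replace $\mathbf{r}_t(y)^{\beta}$ by $\mathbf{r}_t(z)^{\beta}$ at the cost of a factor depending only on $R_0$ and $\beta$, and combining the three displayed inequalities, gives
\[
|\nabla_{g_{co,t}}^m(\mathbf{r}_t^\beta)|_{g_{co,t}}(z)\ \le\ R''_m\,\mathbf{r}_t(z)^{\beta-\frac{2m}{3}},
\]
with $R''_m$ independent of $t$ and $z$, as required. The argument is essentially bookkeeping; the only point that needs care (but presents no real obstacle) is verifying that the Chern connection is unchanged under constant conformal rescaling, which is immediate from the local formula $\Gamma^{k}_{ij}=g^{k\bar l}\partial_i g_{j\bar l}$.
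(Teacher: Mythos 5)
Your argument is correct, and it is essentially the intended deduction from Proposition \ref{pr:n-1} (the paper gives no written proof, merely asserting "it's not hard to deduce"). You correctly use that a \emph{constant} conformal rescaling $g_{co,t}\mapsto \mathbf{r}_t(z)^{-4/3}g_{co,t}$ leaves the Chern connection (equal to the Levi--Civita connection here, since $g_{co,t}$ is K\"ahler) unchanged while scaling the norm of a $(0,m)$-tensor by $\mathbf{r}_t(z)^{-2m/3}$, and then transfer the remaining bounded estimate to the chart $B_z$ using (\ref{eq:Q-1})--(\ref{eq:Q-4}) together with the Fa\`a di Bruno expansion of $\nabla_e^k(\mathbf{r}_t^\beta)$ controlled by (\ref{eq:ra-1}). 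The bookkeeping is all correct.
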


The above proposition and the uniform geometry of $\bigcup_t X_t[1]$ together imply
\begin{proposition}
 \label{pr:2}   
 Theorem \ref{th:0-1} is true if $\tilde{g}_t$ is replaced by $g_t$.\\[0.2cm]
\end{proposition}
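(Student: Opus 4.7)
The plan is to split $X_t$ into an ``inner'' and an ``outer'' region and build the coordinate system separately on each piece. On the inner region $V_t(\tfrac{1}{2})$, the construction of $\omega_t$ in \cite{FLY} gives $\omega_t^2=\Phi_t^{2,2}=\omega_{co,t}^2$, so $\omega_t=\omega_{co,t}$ and hence $g_t=g_{co,t}$ there; moreover the biholomorphism $\Xi_i$ identifies $V_{t,i}(\tfrac{1}{2})$ with an open subset of $Q_t$. Therefore every point $z$ in the inner region inherits a chart $\phi_{t,z}$ satisfying (\ref{eq:18})--(\ref{eq:20}) directly from Proposition \ref{pr:n-1}.

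For the outer region $X_t[c]$ (with a fixed small $c<\tfrac{1}{2}$), the function $\mathbf{r}_t$ is bounded above by $3$ and below by $c$ uniformly in $t$, so the rescaling factor $\mathbf{r}_t(z)^{-\frac{4}{3}}$ is comparable to $1$ and the bounds (\ref{eq:18})--(\ref{eq:ra-1}) on $\mathbf{r}_t$ follow from the smoothness of $\mathbf{r}_t$ on a compact set once $\rho$ is chosen small enough. What remains is to produce charts in which $g_t$ itself enjoys uniform $C^k$-bounds across the family. For this I would use the smooth family of diffeomorphisms $x_t:X_t[\tfrac{1}{2}]\to X_0[\tfrac{1}{2}]$ with $x_0=\mathrm{id}$: from the explicit formula for $\Phi_t$ (with all pieces pulled back by $x_t$ from fixed objects on $X_0$) together with the definition $\omega_t^2=\Phi_t^{2,2}$, the pushforwards $(x_t^{-1})^*g_t$ form a smooth family of Hermitian metrics on the compact manifold-with-boundary $X_0[c]\subset X_{0,sm}$, depending continuously on $t$ down to $t=0$, where the limit is $\hat g_0$. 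Covering $X_0[c]$ by finitely many fixed coordinate charts and transporting them to $X_t[c]$ via $x_t^{-1}$ therefore yields charts on $X_t[c]$ in which $g_t$ has $C^k$-bounds depending only on the $t=0$ datum, uniform for $|t|$ small.

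The main point to verify is compatibility in the overlap annulus $V_t(c,\tfrac{1}{2})$, where both constructions apply. Here $g_t=g_{co,t}$ still holds and $\mathbf{r}_t$ is bounded away from zero, so the weighted norms reduce to unweighted ones and the inner bounds from Proposition \ref{pr:n-1} are of the same strength as the outer bounds obtained by pushing forward from $X_0[c]$; one simply retains whichever chart is convenient at each point. Combining the two systems of charts produces a single coordinate system on all of $X_t$ with all the asserted uniform estimates, which proves Proposition \ref{pr:2}. The only mildly delicate ingredient is checking that the pushforward family $(x_t^{-1})^*g_t$ is genuinely smooth in $t$ at $t=0$ on the fixed compact set $X_0[c]$, which is where one has to use the smooth dependence of all the ingredients of $\Phi_t$ (in particular the uniform estimates on $\theta_t$ are not needed since $g_t$, not $\tilde g_t$, is what is being controlled here).
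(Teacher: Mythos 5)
Your proposal is correct and matches the paper's (terse, one-sentence) proof, which invokes Proposition \ref{pr:n-1} together with ``the uniform geometry of $\bigcup_t X_t[1]$.'' Your decomposition into an inner region $V_t(\tfrac{1}{2})$ where $g_t=g_{co,t}$ (so Proposition \ref{pr:n-1} applies directly) and an outer compact piece handled via the smooth family $x_t$ and fixed charts on $X_{0,sm}$ is precisely the content of that statement, and your observation that the $\theta_t$-estimates of Proposition \ref{pr:1} are only needed for $\tilde g_t$, not $g_t$, is accurate.
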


What we have now are charts $B_z$ endowed with some Euclidean coordinates $(y_1,...,y_6)$. In the following we introduce holomorphic coordinates $(w_1,w_2,w_3)$ on $B_z$ (with possibly a smaller common radius) so each $B_z$ can be regarded as a copy of the ball $B$ in Section 2. From the construction above for $z\in X_t\backslash V_t(R|t|^\frac{1}{2},\frac{3}{4})$ we can simply take $w_i=y_i+\sqrt{-1}y_{i+3}$ for $i=1,2,3$. For $z\in V_t(R|t|^\frac{1}{2},\frac{3}{4})$, by our construction it is actually enough to consider $z \in Q_1$ where $\mathbf{r}_1(z) \geq R$. Moreover, by the homogeneity property of $Q_1$ it is enough to consider $z=(\sqrt{-1}\sqrt{\frac{\mathbf{r}_1^2-1}{2}},0,0,\sqrt{\frac{\mathbf{r}_1^2+1}{2}})\in Q_1$.

The coordinates of each point $(z_1,...,z_4)\in Q_1$ near $z$ satisfy
\begin{equation*}
Z=M_1Z_0M_2
\end{equation*}
where
\begin{equation*}
Z=
\begin{pmatrix}                
  z_1+\sqrt{-1}z_2 & -z_3+\sqrt{-1}z_4 \\
  z_3+\sqrt{-1}z_4 & z_1-\sqrt{-1}z_2 
\end{pmatrix},
\end{equation*}
\begin{equation*}
Z_0=\sqrt{-1}
\begin{pmatrix}                
  \sqrt{\frac{\mathbf{r}_1(s)^2-1}{2}}+\sqrt{\frac{\mathbf{r}_1(s)^2+1}{2}} & 0 \\
  0 & \sqrt{\frac{\mathbf{r}_1(s)^2-1}{2}}-\sqrt{\frac{\mathbf{r}_1(s)^2+1}{2}} 
\end{pmatrix},
\end{equation*}
\begin{equation*}
M_1=
\begin{pmatrix}               
  \cos (\theta_1+\frac{\pi}{4})e^{\sqrt{-1}(\psi+\phi_1)} & -\sin (\theta_1+\frac{\pi}{4})e^{-\sqrt{-1}(\psi-\phi_1)} \\
  \sin (\theta_1+\frac{\pi}{4})e^{\sqrt{-1}(\psi-\phi_1)} & \cos (\theta_1+\frac{\pi}{4})e^{-\sqrt{-1}(\psi+\phi_1)} 
\end{pmatrix}
\end{equation*}
 and
 \begin{equation*}
M_2=
\begin{pmatrix}               
  \cos (\theta_2+\frac{\pi}{4})e^{-\sqrt{-1}\phi_2} & \sin (\theta_2+\frac{\pi}{4})e^{\sqrt{-1}\phi_2} \\
  -\sin (\theta_2+\frac{\pi}{4})e^{-\sqrt{-1}\phi_2} & \cos (\theta_2+\frac{\pi}{4})e^{\sqrt{-1}\phi_2} 
\end{pmatrix}
\end{equation*}
for $(\theta_1,\theta_2,\phi_1,\phi_2,\psi,s)\in B_z$, viewed as the ball of radius $0<\rho \ll 1$ in $\mathbb{R}^6$ centered at 0. Here $\mathbf{r}_1(s)=\mathbf{r}_1\left(\mathbf{r}_0(x_1(z))e^{\frac{3}{2}s}\right)$ as before, and $(\theta_1,\theta_2,\phi_1,\phi_2,\psi)$ form a local coordinate system on $\Sigma$. Explicitly, we have $(y_1,...,y_6)=(\theta_1,\theta_2,\phi_1,\phi_2,\psi,s)$.

Near the point $z=(z_1,...,z_4)=(\sqrt{-1}\sqrt{\frac{\mathbf{r}_1^2-1}{2}},0,0,\sqrt{\frac{\mathbf{r}_1^2+1}{2}})\in Q_1$ we can let $(z_1,z_2,z_3)$ be local holomorphic coordinates. Using the above explicit expressions, we can show that, for some $\rho>0$ small enough independent of $z$, on the ball $B_z$ the rescaled holomorphic coordinates $(w_1,w_2,w_3):=\mathbf{r}_1(z)^{-1}(z_1,z_2,z_3)$ satisfy the following property that there exist constants $\Lambda_k>0$ and $\Lambda_{k,l}>0$ for $k \geq 1$ and $l \geq 0$ independent of $z$ such that as functions in coordinates $(x_1,...,x_6)$ on $B_z$ where $w_i=x_i+\sqrt{-1}x_{i+3}$, $i=1,2,3$, the partial derivatives $\frac{\partial^k y_j}{\partial x_{i_1}...\partial x_{i_k}}$ and $\frac{\partial^l }{\partial x_{i_1}...\partial x_{i_l}}\left(\frac{\partial^k x_i}{\partial y_{j_1}...\partial y_{j_k}}\right)$ satisfy
\begin{equation*}
		\left| \frac{\partial^k y_j}{\partial x_{i_1}...\partial x_{i_k}} \right|\leq \Lambda_k\,\,\text{and}\,\,
		\left| \frac{\partial^k }{\partial x_{i_1}...\partial x_{i_k}}\left(\frac{\partial^l x_i}{\partial y_{j_1}...\partial y_{j_l}}\right) \right|\leq \Lambda_{k,l}
\end{equation*}
for $k \geq 1$ and $l \geq 0$. Moreover, there is a constant $\Lambda_0>0$ independent of $z$ such that
		\begin{equation*}
				\frac{1}{\Lambda_0}\leq \frac{\partial (y_1,...,y_6)}{\partial (x_1,...,x_6)} \leq \Lambda_0
		\end{equation*}
on $B_z$.

These properties are not affected if we make a shift in the coordinates $(x_1,...,x_6)$, and we do so to have $B_z$ centered at the origin of $\mathbb{R}^6\cong \mathbb{C}^3$.   We can easily see from the above properties that for some possibly smaller choice of $\rho>0$, the version of Theorem \ref{th:0} with $\tilde{g}_t$ replaced by $g_t$ holds on each $B_z$ endowed with the coordinates $(w_1,w_2,w_3)$ and with $\nabla_e$ now understood as the Euclidean derivative w.r.t. $(w_1,w_2,w_3)$. This is what we'll always have in mind from now on when we work in the charts $B_z$, and in all our later calculations on $B_z$ the coordinates $(w_1,w_2,w_3)$ will always be understood as the choice of holomorphic coordinates introduced here unless stated otherwise.\\[0.3cm]

\noindent \textbf{Remark} 
 For simplicity, in the following we will identify $B_z$ with its image $U_z$ under $\phi_{t,z}$. In particular, $B_z$ can also be regarded as a subset of $X_t$ if $z\in X_t$, and the pullback sign $\phi_{t,z}^*$ will be omitted without causing confusion.\\[0.3cm]

We proceed to prove the original version of Theorem \ref{th:0}. Recall that the Hermitian form $\tilde{\omega}_t$ of the balanced metric $\tilde{g}_t$ on $X_t$ satisfies $\tilde{\omega}_t^2=\omega_t^2+\theta_t+\bar{\theta}_t$ where $\theta_t=\partial\bar{\partial}^*\partial^*\gamma_t$ for some (2,3)-form $\gamma_t$ satisfying the equations $E_t(\gamma_t)=-\partial \Phi^{1,3}_t$ and $\partial \gamma_t=0$, where
\begin{equation*}
    E_t=\partial\bar{\partial}\bar{\partial}^*\partial^*+\partial^*\bar{\partial}\bar{\partial}^*\partial+\partial^*\partial
\end{equation*}
and the $\ast$-operators are with respect to the metric $g_t$. Moreover, $\partial \Phi^{1,3}_t$ is supported on $X_t[1]$ and there is a constant $C>0$ such that 
\begin{equation}
 \label{eq:1}
    |\partial \Phi^{1,3}_t|_{C^k}<C|t|.
\end{equation}

For an arbitrary Hermitian metric $g$ with Hermitian form $\omega$, in a complex coordinate system $(w_1,w_2,w_3)$ we have
\begin{equation*}
		\omega=\frac{\sqrt{-1}}{2}\sum_{1\leq i,j\leq 3}g_{i\bar{j}}dw_i\wedge d\bar{w}_j.
\end{equation*}
Write 
\begin{equation*}
		\omega^2=-\frac{1}{2}\sum_{1\leq i,j\leq 3}G_{i\bar{j}} dw_1\wedge d\bar{w}_1\wedge ...\wedge \widehat{dw_i} \wedge...\wedge\widehat{d\bar{w}_j}\wedge...\wedge dw_3\wedge d\bar{w}_3,
\end{equation*}
then each $g_{i\bar{j}}$ is a polynomial in the $G_{i\bar{j}}$'s and $\det(G_{i\bar{j}})^{-\frac{1}{2}}$. With this elementary fact in mind Theorem \ref{th:0} follows from its version for $g_t$ and
\begin{proposition}
 \label{pr:1}
   For given $k\geq0$, there is a constant $C>0$ which may depend on $k$ such that
   \begin{equation*}
       \Arrowvert \mathbf{r}_t(z)^{-\frac{8}{3}}\theta_t \Arrowvert_{C^k(B'_z,g_e)}<C|t|^\frac{1}{3}
   \end{equation*}
   for any $z \in X_t$ when $t\neq 0$ sufficiently small. Here $B'_z\subset B_z$ is the ball centered at 0 with radius $\frac{\rho}{2}$.
\end{proposition}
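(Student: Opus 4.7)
The statement is a weighted $C^k$-estimate for $\theta_t$. Via the uniform coordinates of Section 3, the rescaled metric $\bar{\tilde g}_t = \mathbf{r}_t(z)^{-4/3}\tilde g_t$ is uniformly comparable to $g_e$ on each chart $B_z$, so $\mathbf{r}_t(z)^{-8/3}|\theta_t|_{g_e}$ is equivalent to $|\theta_t|_{\tilde g_t}$, and each Euclidean derivative corresponds to a $\tilde g_t$-covariant derivative with an accompanying factor of $\mathbf{r}_t(z)^{-2/3}$. The proposition is therefore essentially the weighted bound $\sum_{j=0}^k |\mathbf{r}_t^{-8/3+2j/3}\nabla_{\tilde g_t}^j\theta_t|_{\tilde g_t} \leq C|t|^{1/3}$ on $X_t$. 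The case $k=0$ already follows from (\ref{eq:0-1}): picking $\kappa$ slightly larger than $-\tfrac{4}{3}$ gives $\sup|\theta_t|_{g_t} = o(|t|^{2/3-\epsilon})$, which is $O(|t|^{1/3})$, and the transfer from $g_t$ to $\tilde g_t$ follows from Lemma \ref{lm:0-1} on $V_t(1)$ together with the uniform geometry of $\bigcup_t X_t[1]$ on the complement.

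For $k\geq 1$, the plan is to exploit the fourth-order elliptic equation $E_t\gamma_t = -\partial\Phi^{1,3}_t$ (with $\partial\gamma_t=0$), which expresses $\theta_t = \partial\bar\partial^*\partial^*\gamma_t$ as a third-order differential of $\gamma_t$. The coefficients of $E_t$ are built from $g_t$ and its first two derivatives, hence are uniformly bounded in $C^k$ on each rescaled chart $B_z$ by Theorem \ref{th:0}, and Proposition \ref{pr:uniform} yields, with constants independent of $t$ and $z$,
\begin{equation*}
\|\gamma_t\|_{L^p_{l+4,\beta}} \leq C \left( \|\partial\Phi^{1,3}_t\|_{L^p_{l,\beta}} + \|\gamma_t\|_{L^2_{0,\beta}} \right),
\end{equation*}
for a weight $\beta$ matched to the target weight $-8/3$ for $\theta_t$. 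The inhomogeneous term is bounded by (\ref{eq:1}): since $\partial\Phi^{1,3}_t$ is supported on $X_t[1]$ where $\mathbf{r}_t\in[1,3]$, its weighted and unweighted norms agree up to a fixed constant, giving $\|\partial\Phi^{1,3}_t\|_{L^p_{l,\beta}} \leq C|t|$. The weighted Sobolev embedding Theorem \ref{th:so} then converts the $L^p_{l+4,\beta}$-bound on $\gamma_t$ into a $C^{l+3}_{\beta}$-bound (for $p$ large), and applying $\partial\bar\partial^*\partial^*$---a third-order operator with uniformly bounded coefficients on each chart---yields the desired weighted $C^l$-bound on $\theta_t$.

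The main obstacle will be the low-order estimate $\|\gamma_t\|_{L^2_{0,\beta}} \leq C|t|^{1/3}$, which must avoid any $t$-dependent degeneration of the spectrum of $E_t$ as the geometry near the vanishing spheres collapses. My approach is to exploit the integration-by-parts identity $\langle E_t\gamma_t,\gamma_t\rangle_{L^2} = \|\bar\partial^*\partial^*\gamma_t\|_{L^2}^2$ (using $\partial\gamma_t=0$): pairing $E_t\gamma_t$ with $\gamma_t$ and rewriting $\langle -\partial\Phi^{1,3}_t,\gamma_t\rangle = -\langle \Phi^{1,3}_t,\partial^*\gamma_t\rangle$, together with Cauchy--Schwarz, gives $\|\bar\partial^*\partial^*\gamma_t\|_{L^2}^2 \leq \|\Phi^{1,3}_t\|_{L^2}\|\partial^*\gamma_t\|_{L^2}$; combining this with the pointwise control (\ref{eq:0-1}) on $\theta_t = \partial\bar\partial^*\partial^*\gamma_t$ and a Poincar\'e-type inequality for $E_t$ restricted to $(\ker E_t)^\perp$ should deliver the required bound. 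The $|t|^{1/3}$ factor ultimately reflects the interplay between the $|t|$ from (\ref{eq:1}) and the rescaling $\psi_t^*g_{co,t} = |t|^{2/3}g_{co,1}$ near the vanishing spheres, through which weighted norms transform as the geometry degenerates.
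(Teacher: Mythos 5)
Your $k=0$ case is correct, but for $k\geq1$ your route through the fourth-order equation $E_t\gamma_t=-\partial\Phi^{1,3}_t$ introduces a gap that you yourself flag but do not close. The elliptic estimate from Proposition~\ref{pr:uniform} produces a term $\|\gamma_t\|_{L^2_{0,\beta}}$ on the right-hand side, and for the weight $\beta$ forced by matching $\theta_t$ at weight $-8/3$ through the third-order operator $\partial\bar\partial^*\partial^*$, this is a weighted $L^2$-norm with a density that blows up sharply as $\mathbf{r}_t\to 0$. An integration-by-parts pairing of $E_t\gamma_t$ with $\gamma_t$ only produces the \emph{unweighted} quantity $\|\bar\partial^*\partial^*\gamma_t\|_{L^2}^2$, and the ``Poincar\'e-type inequality for $E_t$ restricted to $(\ker E_t)^\perp$'' you invoke is precisely the sort of $t$-uniform spectral control near the vanishing spheres that cannot be taken for granted here; nothing in the proposal shows it, and the constants could well degenerate as $t\to 0$. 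Even the paper's bound $\int|\gamma_t|^2\,dV_t\leq C|t|^2$ (Proposition~\ref{pr:6-1}) is an \emph{unweighted} estimate obtained by a separate contradiction argument, and converting it to the weighted space you need would cost extra negative powers of $|t|$ over $V_t(|t|^{1/2})$.

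The paper's actual proof avoids $\gamma_t$ entirely by noticing that $\theta_t$ itself satisfies an elliptic equation with no source on the collapsing region. Since $\partial\gamma_t=0$, one has $E_t\gamma_t=\partial\bar\partial\bar\partial^*\partial^*\gamma_t$, hence $\bar\partial\theta_t=-E_t\gamma_t=\partial\Phi^{1,3}_t$, which vanishes on $V_t(1)$ because $\partial\Phi^{1,3}_t$ is supported on $X_t[1]$; and $\bar\partial^*\theta_t=\bar\partial^*\partial\bar\partial^*\partial^*\gamma_t=0$ identically. Therefore $\Delta_{\bar\partial}\theta_t=0$ on $V_t(1)$, and the rescaled second-order operator $\mathbf{r}_t^{4/3}\Delta_{\bar\partial}$ has uniformly bounded coefficients on each chart $B_z$ by Theorem~\ref{th:0} (via Kodaira's Bochner formula). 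Interior elliptic regularity on $B_z$ then bounds $\|\theta_t\|_{C^k(B'_z,g_e)}$ by $\sup_{B_z}|\theta_t|_{g_e}$ alone, and the latter is already controlled by (\ref{eq:0-1}). No new global estimate is needed beyond what is established in \cite{FLY}. Your proposal, by contrast, requires a genuinely new global weighted bound on $\gamma_t$, which is the crux and remains open; it also requires four orders of elliptic regularity where two suffice.
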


\begin{proof}
It is enough to prove for $z\in V_t(\frac{1}{8})$. Let $\Delta_{\bar{\partial}}=\bar{\partial}\bar{\partial}^*+\bar{\partial}^*\bar{\partial}$ be the $\bar{\partial}$-Laplacian w.r.t. $g_t$. Over the region $V_t(1)$ where $g_t$ is just the CO-metric $g_{co,t}$, we have $\Delta_{\bar{\partial}}\theta_t=0$ since 
\begin{equation*}
		\bar{\partial}^*\theta_t=\bar{\partial}^*\partial\bar{\partial}^*\partial^*\gamma_t=\partial\bar{\partial}^*\bar{\partial}^*\partial^*\gamma_t=0
\end{equation*}
and 
\begin{equation}
 \label{th-1}
		\bar{\partial}\theta_t=\bar{\partial}\partial\bar{\partial}^*\partial^*\gamma_t=-E_t(\gamma_t)=\partial \Phi^{1,3}_t=0.
\end{equation}
The second equality of the second line follows because $\partial \gamma_t=0$.

The operator 
\begin{equation*}
		\mathbf{r}_t^\frac{4}{3}\Delta_{\bar{\partial}}:\Gamma(X_t, \Omega^{2,2}) \rightarrow \Gamma(X_t, \Omega^{2,2}) 
\end{equation*}
is elliptic. In general, given a $(p,q)$-form $\psi=\sum \psi_{\alpha_1...\bar{\beta}_q}dw_{\alpha_1}\wedge...\wedge d\bar{w}_{\beta_q}$, Kodaira's Bochner formula says
\begin{equation*}
    \begin{split}
		(\Delta_{\bar{\partial}} \psi)_{\alpha_1...\bar{\beta}_q}
		=&-\sum_{\alpha,\beta}g^{\bar{\beta}\alpha}\nabla_{\alpha}\nabla_{\bar{\beta}}\psi_{\alpha_1...\bar{\beta}_q}\\
		  &+\sum_{i=1}^p\sum_{k=1}^q \sum_{\alpha,\beta} {{R^{\alpha}}_{\alpha_i\bar{\beta}_k}}^{\bar{\beta}}\psi_{\alpha_1...\alpha_{i-1}\alpha\alpha_{i+1}...\bar{\beta}_{k-1}\bar{\beta}\bar{\beta}_{k+1}...\bar{\beta}_q}\\
		  &-\sum_{k=1}^q \sum_{\beta} {{R_{\bar{\beta}_k}}}^{\bar{\beta}}\psi_{\alpha_1...\bar{\beta}_{k-1}\bar{\beta}\bar{\beta}_{k+1}...\bar{\beta}_q}.
    \end{split}
\end{equation*}
Applying this to $\psi=\theta_t=\sum \theta_{\alpha_1\alpha_2\bar{\beta}_1\bar{\beta}_2}dw_{\alpha_1}\wedge dw_{\alpha_2}\wedge d\bar{w}_{\beta_1}\wedge d\bar{w}_{\beta_2}$ and using (\ref{th-1}), we have 
\begin{equation}
 \label{th-2}
    \begin{split}
		&\mathbf{r}_t^\frac{4}{3}\sum_{\alpha,\beta}g^{\bar{\beta}\alpha}\nabla_{\alpha}\nabla_{\bar{\beta}}\theta_{\alpha_1\alpha_2\bar{\beta}_1\bar{\beta}_2}\\
		  &- \sum_{\alpha,\beta} \mathbf{r}_t^\frac{4}{3}\left( {{R^{\alpha}}_{\alpha_1\bar{\beta}_2}}^{\bar{\beta}}\theta_{\alpha\alpha_2\bar{\beta}_{1}\bar{\beta}}
		  + {{R^{\alpha}}_{\alpha_2\bar{\beta}_2}}^{\bar{\beta}}\theta_{\alpha_1\alpha\bar{\beta}_{1}\bar{\beta}}
		  +  {{R^{\alpha}}_{\alpha_1\bar{\beta}_1}}^{\bar{\beta}}\theta_{\alpha\alpha_2\bar{\beta}\bar{\beta}_{2}}
		  +  {{R^{\alpha}}_{\alpha_2\bar{\beta}_1}}^{\bar{\beta}}\theta_{\alpha_1\alpha\bar{\beta}\bar{\beta}_{2}} \right)\\
		  &+ \sum_{\beta} \mathbf{r}_t^\frac{4}{3}\left( {{R_{\bar{\beta}_1}}}^{\bar{\beta}}\theta_{\alpha_1\alpha_2\bar{\beta}\bar{\beta}_2}+ {{R_{\bar{\beta}_2}}}^{\bar{\beta}}\theta_{\alpha_1\alpha_2\bar{\beta}_1\bar{\beta}} \right)=0.
    \end{split}
\end{equation}
The first term above can be written as

\begin{equation}
 \label{th-3}
     \begin{split}
		&\sum_{\alpha,\beta}\mathbf{r}_t^\frac{4}{3}g^{\bar{\beta}\alpha}\nabla_{\alpha}\nabla_{\bar{\beta}}\theta_{\alpha_1\alpha_2\bar{\beta}_1\bar{\beta}_2}\\
	=&\mathbf{r}_t^\frac{4}{3}g^{\bar{\beta}\alpha}\frac{\partial}{\partial w_\alpha}\frac{\partial}{\partial \bar{w}_\beta}\theta_{\alpha_1\alpha_2\bar{\beta}_1\bar{\beta}_2}
		+\,\text{remaining terms},
	\end{split}
\end{equation}
where the remaining terms involve derivatives of $\theta_{\alpha_1\alpha_2\bar{\beta}_1\bar{\beta}_2}$ of order 1 or less, with coefficients bounded as in Proposition \ref{pr:uniform} for constants $\Lambda_k$ independent of $z$ and $t\neq 0$.
Note that the products of $\mathbf{r}_t^\frac{4}{3}$ and the curvature terms in (\ref{th-2}) are bounded similarly. Therefore, $\theta_t$ is the zero of the elliptic operator $\mathbf{r}_t^\frac{4}{3}\Delta_{\bar{\partial}}$ whose coefficients are bounded as in Proposition \ref{pr:uniform} for constants $\Lambda_k$ independent of $z$ and $t\neq 0$. We use the Hermitian metric on $\Omega^{2,2}$ induced by $g_e$. Then there are constants $C_{p,k}>0$ such that
\begin{equation*}
      \Arrowvert \theta_t \Arrowvert_{L^p_{k+2}(B_z,g_e)} \leq C_{p,k}\Arrowvert \theta_t \Arrowvert_{L^2(B_z,g_e)}
\end{equation*}
for $z\in V_t(\frac{1}{8})$ (so $B_z\subset V_t(\frac{1}{4})$ for $\rho$ small enough, which we assume is the case). Each $C_{p,k}$ is independent of $z$ and $t$ since we use the Euclidean metric in each chart. By the usual Sobolev Theorem over the Euclidean ball $(B_z, g_e)$, for $p$ large enough one can get 
\begin{equation*}
   \begin{split}
      \Arrowvert \theta_t \Arrowvert_{C^{k}(B'_z,g_e)} 
      \leq &C'_{p,k}\Arrowvert \theta_t \Arrowvert_{L^2(B_z,g_e)}\\
      =&C'_{p,k}\left (\int_{B_z} |\theta_t|_{g_e}^2dV_e \right)^\frac{1}{2} \leq C'_{p,k}\text{Vol}_e(B_z)^\frac{1}{2} \sup_{B_z}|\theta_t|_{g_e}
      \end{split}
\end{equation*}
for some constants $C'_{p,k}>0$ independent of $z$ and $t$. Therefore,
\begin{equation}
 \label{eq:theta-1}
      \Arrowvert \mathbf{r}_t(z)^{-\frac{8}{3}}\theta_t \Arrowvert_{C^{k}(B_z,g_e)} \leq C'_{p,k}\text{Vol}_e(B_z)^\frac{1}{2} \sup_{B_z}|\mathbf{r}_t(z)^{-\frac{8}{3}}\theta_t|_{g_e}.
\end{equation}

From (\ref{eq:0-1}) one sees easily that
\begin{equation*}
		|\theta_t|_{g_{co,t}}^2 \leq |t|^\frac{2}{3} 
\end{equation*}
for $t\neq 0$ sufficiently small, and by Proposition \ref{pr:n-1} this implies
\begin{equation}
 \label{eq:theta-2}
		|\mathbf{r}_t(z)^{-\frac{8}{3}}\theta_t|_{g_e}^2 \leq C|t|^\frac{2}{3} 
\end{equation}
for $t \neq 0$ sufficiently small. Now (\ref{eq:theta-1}) and (\ref{eq:theta-2}) complete the proof.\qed
\end{proof}

In later section we will need the following result on the sup norm of $\theta_t$:
\begin{proposition}
 \label{pr:6-1}
    There is a constant $C>0$  independent of $t$ such that 
\begin{equation*}
			|\theta_t|_{g_t}  \leq  C\mathbf{r}_t^{-\frac{2}{3}}\cdot  |t|.
\end{equation*}
Consequently, there is a constant $C>0$ such that
\begin{equation*}
		|\tilde{\omega}_t^{-1}-\omega_t^{-1}|_{g_t} \leq C\mathbf{r}_t^{-\frac{2}{3}}\cdot  |t|.
\end{equation*}
 
\end{proposition}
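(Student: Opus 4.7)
The plan is to improve the $C^0$ bound of order $|t|^{1/3}$ coming out of Proposition \ref{pr:1} by applying uniform weighted elliptic theory directly to the defining equation $E_t(\gamma_t)=-\partial\Phi_t^{1,3}$ for $\gamma_t$, rather than only to the interior equation $\Delta_{\bar\partial}\theta_t=0$ on $V_t(1)$. The natural framework is the weighted Sobolev and $C^k_\beta$ spaces of Section 2.5, with the uniform coordinate systems of Theorem \ref{th:0} supplying $t$-independent constants throughout.

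The easy input is that $\partial\Phi_t^{1,3}$ is supported in $X_t[1]$, where $1\le\mathbf{r}_t\le 3$, so the $C^k$ bound $|\partial\Phi_t^{1,3}|_{C^k}\le C|t|$ from \eqref{eq:1} at once gives $\Arrowvert\partial\Phi_t^{1,3}\Arrowvert_{L^p_{k,\beta-4}}\le C|t|$ for any weight $\beta$, with $C$ independent of $t$. The core step is to establish, uniformly in $t$, a weighted elliptic estimate for the fourth-order operator $E_t$ on $(2,3)$-forms,
\begin{equation*}
\Arrowvert\gamma\Arrowvert_{L^p_{k+4,\beta}}\;\le\;C\bigl(\Arrowvert E_t\gamma\Arrowvert_{L^p_{k,\beta-4}}+\Arrowvert\gamma\Arrowvert_{L^2_{0,\beta}}\bigr).
\end{equation*}
This I expect to follow by applying Proposition \ref{pr:uniform} chart by chart over the $\phi_{t,z}$: the uniform bounds on $\phi_{t,z}^*\bar{\tilde g}_t$ from Theorem \ref{th:0} yield uniform bounds on the coefficients of $E_t$ in these charts, and integrating the local estimates against $\mathbf{r}_t^{-(2/3)\beta-4}$ produces the desired global weighted estimate with $t$-independent constant. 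The natural target weight is $\beta=2$, dictated by the rate at which a $(2,3)$-form on the Candelas--de la Ossa cone must grow so that $\partial\bar\partial^*\partial^*\gamma$ decays like $\mathbf{r}_t^{-2/3}$; one verifies that this $\beta$ avoids the indicial roots of the asymptotic model of $E_t$.

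Combining these inputs with $\gamma_t\perp\ker E_t$ and an $L^2_{0,2}$ bound on $\gamma_t$ of order $|t|$, which follows from $\Arrowvert\partial\Phi_t^{1,3}\Arrowvert_{L^2}\le C|t|$ via the Fredholm alternative for $E_t$ on the allowable weight, the estimate yields $\Arrowvert\gamma_t\Arrowvert_{L^p_{k+4,2}}\le C|t|$. The weighted Sobolev embedding (Theorem \ref{th:so}) upgrades this to $\Arrowvert\gamma_t\Arrowvert_{C^{k+3}_2}\le C|t|$, and applying the third-order operator $\partial\bar\partial^*\partial^*$ lowers the weight index by three, so that $\Arrowvert\theta_t\Arrowvert_{C^k_{-1}}\le C|t|$; by definition of the $C^0_{-1}$ norm this is exactly $|\theta_t|_{g_t}\le C|t|\mathbf{r}_t^{-2/3}$. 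For the consequence on $\tilde\omega_t^{-1}-\omega_t^{-1}$, I would write $\tilde\omega_t=\omega_t+\eta_t$ and use $\tilde\omega_t^2-\omega_t^2=\theta_t+\bar\theta_t$ to get $2\omega_t\wedge\eta_t+\eta_t\wedge\eta_t=\theta_t+\bar\theta_t$. The Lefschetz-type map $\eta\mapsto 2\omega_t\wedge\eta$ on $(1,1)$-forms is pointwise invertible with operator norm uniformly controlled in $t$ by Lemma \ref{lm:0-1}, so for small $t$ a contraction argument converts the bound on $\theta_t$ into $|\eta_t|_{g_t}\le C|t|\mathbf{r}_t^{-2/3}$, and the identity $\tilde\omega_t^{-1}-\omega_t^{-1}=-\omega_t^{-1}\eta_t\,\tilde\omega_t^{-1}$, read as an identity of endomorphisms via a reference metric, transfers the bound directly.

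The genuine difficulty is the weighted elliptic estimate for $E_t$ described above: since $E_t$ is a nonstandard fourth-order composite and not a pure Laplacian, its indicial roots on the Candelas--de la Ossa cone model must be identified, the target weight $\beta=2$ verified to lie in an allowable range, and the accompanying $L^2$ control of $\gamma_t$ justified in the degenerating family. Once these ingredients are secured, the rest of the argument is routine bookkeeping in the weighted spaces of Section 2.5.
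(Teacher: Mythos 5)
Your route is genuinely different from the paper's. The paper does not run weighted elliptic theory for the fourth-order operator $E_t$ at all. Instead it (i) observes that $\theta_t$ satisfies $\Delta_{\bar\partial}\theta_t=0$ on $V_t(1)$ (since $\bar\partial^*\theta_t=0$ and $\bar\partial\theta_t=\partial\Phi_t^{1,3}=0$ there), and runs the same chart-by-chart elliptic estimate for $\mathbf{r}_t^{4/3}\Delta_{\bar\partial}$ as in Proposition~\ref{pr:1} to reduce $\sup|\mathbf{r}_t^{2/3}\theta_t|_{g_t}$ to the weighted integral $\int_{V_t(1/4)}|\theta_t|^2\mathbf{r}_t^{-8/3}\,dV_t$; (ii) quotes Lemma~17 of \cite{FLY}, which bounds that weighted integral by the \emph{unweighted} $L^2$ integral $\int(|\gamma_t|^2+|\partial\Phi_t^{1,3}|^2)\,dV_t$, so no indicial-root analysis for $E_t$ is ever needed; and (iii) proves $\int_{X_t}|\gamma_t|^2\,dV_t\le C|t|^2$ by a rescaling/compactness contradiction argument in the degenerating family, mirroring the analogous step in \cite{FLY}.

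The genuine gap in your proposal is precisely the sentence asserting that the $L^2_{0,2}$ bound on $\gamma_t$ of order $|t|$ ``follows from $\Arrowvert\partial\Phi_t^{1,3}\Arrowvert_{L^2}\le C|t|$ via the Fredholm alternative for $E_t$ on the allowable weight.'' For a fixed $t\neq 0$, each $X_t$ is a smooth compact manifold and the Fredholm alternative for $E_t$ does give a bound for $\gamma_t\perp\ker E_t$ --- but with a constant that a priori depends on $t$. The whole difficulty is uniformity of that constant as $t\to 0$, since the near-zero spectrum of $E_t$ could degenerate as the manifold does. Your chart-by-chart application of Proposition~\ref{pr:uniform} only yields the a priori estimate $\Arrowvert\gamma\Arrowvert_{L^p_{k+4,\beta}}\le C(\Arrowvert E_t\gamma\Arrowvert_{L^p_{k,\beta-4}}+\Arrowvert\gamma\Arrowvert_{L^2_{0,\beta}})$, which still has $\Arrowvert\gamma\Arrowvert_{L^2_{0,\beta}}$ on the right and therefore does not by itself produce an inverse for $E_t$ with $t$-independent norm. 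That is exactly what the paper's contradiction/bubbling argument ($|t_i|^{-2}\alpha_i^{-2}\int|\gamma_{t_i}|^2=1$, pass to a limit $\tilde\gamma_0$ on $X_{0,sm}$, show $\tilde\gamma_0$ is simultaneously normalized and zero) is doing, and it cannot be replaced by a naked appeal to the Fredholm alternative. If you want to pursue your route, you would have to supply the indicial-root analysis for the model operator of $E_t$ on the cone, show absence of low-lying spectrum uniformly in $t$, and reconcile the $L^2_{0,2}$ normalization with the orthogonality $\gamma_t\perp\ker E_t$ (which is taken with respect to the unweighted $L^2$ inner product) --- none of which is addressed. Your argument for the second inequality on $\tilde\omega_t^{-1}-\omega_t^{-1}$ (via the invertible Lefschetz map $\eta\mapsto 2\omega_t\wedge\eta$) is fine.
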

\begin{proof}
Again, it is enough to consider over $B_z$ for $z\in V_t(\frac{1}{8})$. A similar discussion as in Proposition \ref{pr:1} shows that for each $z\in V_t(\frac{1}{8})$ we have
\begin{equation}
 \label{eq:theta-3}
   \begin{split}
      \sup_{B'_z} |\mathbf{r}_t^\frac{2}{3}\theta_t|_{g_t}
      \leq &C\Arrowvert \theta_t \Arrowvert_{C^0_{3}(B'_z,g_e)} 
      \leq C'\Arrowvert \theta_t \Arrowvert_{L^2_{3}(B_z,g_e)}
      =C'\left (\int_{B_z} |\mathbf{r}_t^{-2}\theta_t|_{g_e}^2dV_e \right)^\frac{1}{2}\\
      =&C''\left (\int_{B_z} |\mathbf{r}_t^\frac{2}{3}\theta_t|_{g_t}^2\mathbf{r}_t^{-4}dV_t \right)^\frac{1}{2}
      \leq C''\left (\int_{V_t(\frac{1}{4})} |\theta_t|_{g_t}^2\mathbf{r}_t^{-\frac{8}{3}}dV_t \right)^\frac{1}{2}.
    \end{split}
\end{equation}

It is proved in Lemma 17 of \cite{FLY} that
\begin{equation*}
		\int_{V_t(\frac{1}{4})} |\theta_t|_{g_t}^2\mathbf{r}_t^{-\frac{8}{3}}dV_t \leq C\int_{X_t[\frac{1}{8}]}(|\gamma_t|_{g_t}^2+|\partial \Phi^{1,3}_t |_{g_t}^2)dV_t
\end{equation*}
for some constant $C>0$ independent of $t$. In view of (\ref{eq:1}), to prove the proposition it is enough to show
\begin{equation*}
		\int_{X_t} |\gamma_t|_{g_t}^2dV_t \leq C|t|^2
\end{equation*}
for some constant $C>0$ independent of $t$. Suppose that there is a sequence $\{t_i\}$ converging to 0 such that
\begin{equation*}
    |t_i|^{-2}\int_{X_{t_i}} |\gamma_{t_i}|^2dV_{t_i}=\alpha_i^2\rightarrow \infty\,\,\,\text{when}\,\,i\rightarrow \infty.
\end{equation*}
where $\alpha_1>0$. Define $\tilde{\gamma}_{t_i}=|t_i|^{-1}\alpha_i^{-1}\gamma_{t_i}$ then
\begin{equation*}
    \int_{X_{t_i}} |\tilde{\gamma}_{t_i}|^2dV_{t_i}=1\,\,\,\text{and}\,\,E_{t_i}(\tilde{\gamma}_{t_i})=-|t_i|^{-1}\alpha_i^{-1}\partial \Phi^{1,3}_{t_i}.
\end{equation*}
Thus there exists a smooth (2,3)-form $\tilde{\gamma}_0$ on $X_{0,sm}$ such that $E_0(\tilde{\gamma}_0)=0$ and $\tilde{\gamma}_{t_i} \rightarrow \tilde{\gamma}_0$ pointwise. Then one can prove that
\begin{equation*}
    \int_{X_{0,sm}} |\tilde{\gamma}_0|^2dV_0=1\,\,\,\text{but}\,\,\tilde{\gamma}_0=0.
\end{equation*}
as in \cite{FLY} in exactly the same way, only noticing that in several places we use the fact that $|t_i|^{-2}\alpha_i^{-2}|\partial \Phi^{1,3}_{t_i}|^2 \rightarrow 0$ as $i \rightarrow \infty$. This completes the proof. \qed 
\end{proof}

\section{HYM metrics on the vector bundle over $X_0$}

Let $\mathcal{E}$ be an irreducible holomorphic vector bundle over the K$\ddot{\text{a}}$hler Calabi-Yau threefold $(\hat{X},\omega)$ as before. Our assumption on $\mathcal{E}$ is that it is trivial over a neighborhood of the exceptional curves $C_i$'s. By a rescaling of the metric $\hat{\omega}_0$, we may assume that $\mathcal{E}$ is trivial over $U(1)\subset \hat{X}$. As mentioned in Section 2, over $\hat{X}$ there is a 1-parameter family of balanced metrics $\hat{\omega}_a$, $0< a \ll 1$, constructed as in \cite{FLY}. Since for each $a\neq 0$ the (2,2)-forms $\hat{\omega}_a^2$ and $\omega^2$ differ by smooth $\partial \bar{\partial}$-exact forms, the bundle $\mathcal{E}$ is stable with respect to all $\hat{\omega}_a$ if it is so with respect to $\omega$. Assume that this is the case. Then by the result of \cite{LY1}, there exists a HYM metric $H_a$ on $\mathcal{E}$ with respect to $\hat{\omega}_a$.

In this section, $\hat{H}$ will be a metric such that $\hat{H}=I$ with respect to some a constant frame over $U(1)$ where $\mathcal{E}$ is trivial. By a constant frame we mean the following: under an isomorphism $\mathcal{E}|_{U(1)}\cong \mathcal{O}_{U(1)}^r$, a holomorphic section of $\mathcal{E}$ over $U(1)$ can be viewed as a holomorphic vector-valued function on $U(1)$. Then a constant frame $\{s_1,...,s_r\}$ is a set of such functions which are (pointwise) linearly independent and  each member $s_i$ is a constant (vector-valued) function. A constant frame is in particular a holomorphic frame.

The metric $\hat{H}$ will serve as the reference metric. The constants appearing in this section may depend on $\hat{H}$. We will also often use implicitly the identification $\hat{X}\backslash \bigcup C_{i} \cong X_{0,sm}$.

\subsection{Proof of the first main theorem} The goal of this subsection is to prove the following theorem on the existence of a HYM metric with respect to $\hat{\omega}_0$ over $\mathcal{E}|_{X_{0,sm}}$. The techniques we use are largely based on \cite{Don1} \cite{Don2} \cite{Don3} \cite{Siu} \cite{UY}.
\begin{theorem}
 \label{th:2-1}
    There is a smooth Hermitian metric $H_0$ on $\mathcal{E}|_{X_{0,sm}}$ which is HYM with respect to $\hat{\omega}_0$ such that there is a decreasing sequence $\{a_i\}_{i=1}^\infty$ converging to 0 for which a sequence $\{H_{a_i}\}$ of HYM metrics (w.r.t. $\hat{\omega}_{a_i}$, respectively) converge weakly to $H_0$ in the $L^p_2$-sense for all $p$ on each compactly embedded open subset of $X_{0,sm}$.
\end{theorem}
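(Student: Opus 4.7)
My plan is to obtain $H_0$ as a weak subsequential limit of the HYM metrics $H_a$ by the Donaldson--Uhlenbeck--Yau compactness machinery, adapted to our degenerating base. Since $\mathcal{E}$ is stable with respect to $\omega$ and $\hat{\omega}_a^2-\omega^2$ is $\partial\bar{\partial}$-exact, $\mathcal{E}$ is also $\hat{\omega}_a$-stable and Theorem \ref{LY} yields a HYM metric $H_a$ on $\mathcal{E}$ over $\hat{X}$ for each $a>0$; because $c_1(\mathcal{E})=0$ the Einstein constant vanishes, so $\Lambda_{\hat{\omega}_a}F_{H_a}=0$. After normalizing so that $\det H_a=\det\hat{H}$, the endomorphisms $h_a:=\hat{H}^{-1}H_a$ are $\hat{H}$-self-adjoint, positive, trace-free in $\log$, and satisfy a second order elliptic equation of the form $\Lambda_{\hat{\omega}_a}\bar{\partial}(h_a^{-1}\partial_{\hat{H}}h_a)=0$. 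Our task is to show that $\{h_a\}$ has a subsequence converging weakly in $L^p_2$ on every compactly embedded $K\subset X_{0,sm}$.

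The heart of the argument is a uniform $C^0$-bound $\sup_K|\log h_a|\leq C_K$ on compact $K\subset X_{0,sm}$, independent of $a$. I would follow the Uhlenbeck--Yau contradiction argument: applying Donaldson's Bochner-type inequality
\[
\hat{\Delta}_a\log\mathrm{tr}(h_a)\geq -|\Lambda_{\hat{\omega}_a}F_{H_a}|_{\hat{H}}=0
\]
together with the corresponding inequality for $h_a^{-1}$, so bounds on $\sup|h_a|+\sup|h_a^{-1}|$ follow from bounds on the Donaldson functional $M(\hat{H},H_a)$. If no such bound holds, then along a subsequence $\ell_a:=\sup|\log h_a|\to\infty$, and the rescaled endomorphisms $\ell_a^{-1}\log h_a$ converge weakly in $L^2_1$ to a non-trivial $\hat{H}$-self-adjoint limit $u_\infty$ whose eigenspace projections yield a weakly holomorphic, $\bar{\partial}$-invariant proper subbundle of $\mathcal{E}$ away from the exceptional locus. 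Since $\mathcal{E}$ is trivial in a neighborhood $U(1)$ of $\bigcup C_i$, the spectral projections extend trivially across the exceptional curves, giving a proper coherent subsheaf $\mathcal{F}\hookrightarrow\mathcal{E}$ on all of $\hat{X}$ (by Siu's extension theorem on the complement); the Donaldson slope identity then yields $\mu_{\hat{\omega}_a}(\mathcal{F})\geq\mu_{\hat{\omega}_a}(\mathcal{E})$, and using the $\partial\bar\partial$-cohomological identification of $\hat{\omega}_a^2$ and $\omega^2$ this contradicts $\omega$-stability of $\mathcal{E}$. This step is the main obstacle, and the main point requiring care is showing that the weakly convergent limit really defines a coherent subsheaf on $\hat{X}$, given that our controls are only on compact subsets of $X_{0,sm}$; the triviality of $\mathcal{E}$ near $C_i$ is what makes the extension across the exceptional curves safe.

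With the uniform $C^0$-bound in hand, the equation $\bar{\partial}(h_a^{-1}\partial_{\hat{H}}h_a)\wedge\hat{\omega}_a^2=0$ is uniformly elliptic on each pair $K\Subset K'\Subset X_{0,sm}$ because $\hat{\omega}_a\to\hat{\omega}_0$ smoothly on $K'$. Standard interior $L^p$ elliptic estimates (bootstrapped in the usual way by writing the equation in Hodge form against the fixed metric $\hat{H}$ and a smooth background metric on $K'$) upgrade the $L^\infty$-bound to uniform bounds $\|h_a\|_{L^p_2(K)}\leq C_{K,p}$ for every $p<\infty$.

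Finally, exhaust $X_{0,sm}$ by $K_1\Subset K_2\Subset\cdots$ and use a diagonal argument together with the weak compactness of bounded sets in $L^p_2$ (and Rellich--Sobolev compactness into $C^{1,\alpha}$) to extract a decreasing sequence $a_i\to 0$ along which $H_{a_i}\to H_0$ weakly in $L^p_2(K_j)$ for every $j,p$, and strongly in $C^{1,\alpha}_{\mathrm{loc}}$. Pointwise $C^{1,\alpha}_{\mathrm{loc}}$-convergence lets us pass to the limit in $\Lambda_{\hat{\omega}_{a_i}}F_{H_{a_i}}=0$ to conclude $\Lambda_{\hat{\omega}_0}F_{H_0}=0$ on $X_{0,sm}$. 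Since the limiting equation is elliptic with smooth coefficients on $X_{0,sm}$, further bootstrapping yields $H_0\in C^\infty(X_{0,sm})$, completing the proof.
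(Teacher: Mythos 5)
Your route is the same Uhlenbeck--Yau compactness machinery the paper uses, and the overall structure (stability $\Rightarrow$ existence of $H_a$, normalization of determinants, a $C^0$ bound via a blow-up/destabilization dichotomy, local elliptic bootstrap, diagonal extraction) matches. But two steps you pass over quickly are exactly where the work in the paper goes, and as written there is a genuine gap.

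First, the slope contradiction. The destabilizing projector $\pi$ is a limit over compacta of $X_{0,sm}$ taken against the degenerating metrics, so the Uhlenbeck--Yau argument naturally produces an inequality for the \emph{improper} slope
$\mu_0 := \lim_{\delta\to 0}\frac{1}{\operatorname{rank}\mathcal{F}}\int_{X_0[\delta]} c_1(\mathcal{F}',\hat H_1)\wedge\hat\omega_0^2 \ge 0$,
i.e.\ the slope computed against the singular metric $\hat\omega_0$ on $X_{0,sm}$. It does \emph{not} directly give $\mu_{\hat\omega_a}(\mathcal{F}) \ge 0$ for $a>0$. To transfer, one must show $\mu_{\hat\omega_a}(\mathcal{F}) = \mu_0$, which is not a formal consequence of the $\partial\bar\partial$-exactness of $\hat\omega_a^2 - \omega^2$: the difference $\int_{X_0[\delta]}c_1\wedge(\Phi_a-\Phi_0)$ reduces by Stokes' theorem to a boundary integral over $\partial X_0[\delta]$ involving $(\partial\mathbf r^{4/3}-\bar\partial\mathbf r^{4/3})\wedge\partial\bar\partial\mathbf r^{4/3}$, and one has to verify by an explicit local computation that this boundary term tends to $0$ as $\delta\to 0$. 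That is the content of the Claim in the paper's proof; without it your inequality $\mu_{\hat\omega_a}(\mathcal{F})\ge\mu_{\hat\omega_a}(\mathcal{E})$ is unjustified.

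Second, your reliance on a bound for the Donaldson functional $M(\hat H, H_a)$ is not innocuous here: each $M$ is taken with respect to a \emph{different} Gauduchon metric $\hat\omega_a$, and these degenerate near the exceptional curves, so a uniform lower bound for $M_a(\hat H, H_a)$ is not standard. The paper avoids this by proving a uniform $L^2$ bound on $\log\operatorname{tr} h_a$ directly (Proposition~\ref{pr:L2}, via the blow-up argument), and then converting $L^2$ control to $C^0$ control using the balanced structure of $\hat\omega_a$, Green's function estimates on $X_0[\delta]$ where the metrics are uniform, and the maximum principle on $U(1)$ where $\mathcal{E}$ is trivial and $\Lambda_{\hat\omega_a}F_{\hat H}=0$. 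A related remark: you attribute the coherent extension of $\mathcal{F}$ across the exceptional curves to the local triviality of $\mathcal{E}$, but the extension actually comes from the UY/L\"ubke--Teleman regularity theorem for weakly holomorphic subbundles (giving $\mathcal{F}$ coherent on $\hat X$ outside a codimension-$\ge 2$ set $S$); the triviality near the $C_i$ is used instead to apply the maximum principle near the degenerate locus, both for the $C^0$ and $C^1$ estimates (the latter via a Bochner-type inequality on the trivial bundle over the Ricci-flat Candelas--de la Ossa region).
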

\begin{proof}
We begin with a boundedness result on the determinants of $h_a:=H_a\hat{H}^{-1}$.
\begin{lemma}
 \label{lm:d-1}
  After a rescaling $H_a$ by positive constants we can assume that $\det h_a$ are bounded from above and below by positive constants independent of $0<a\ll 1$.
\end{lemma}
\begin{proof}
    Let $\varphi_a$ be the unique smooth function on $\hat{X}$ satisfying 
\begin{equation*}
		\hat{\Delta}_a\varphi_a=-\frac{\sqrt{-1}}{r}\text{tr}\,\Lambda_{\hat{\omega}_a}F_{\hat{H}}
\end{equation*}
and $\int_{\hat{X}} \varphi_a   dV_a=0$ 
    where $dV_a$ is the volume form of $\hat{g}_a$. \\
    
\noindent \textbf{Claim} The sup norm of $\varphi_a$ is bounded by a constant independent of $0<a\ll 1$.
        
\begin{proof}
First note that since $\Lambda_{\hat{\omega}_a} F_{\hat{H}}=0$ on $U(1)$, $\varphi_a$ is harmonic over $U(1)$ and so we have by the maximum principle $\sup_{U(1)}|\varphi_a| \leq \sup_{X_0[\frac{3}{4}]}|\varphi_a|$. 
    
Since $\hat{\omega}_a$ is a balanced metric the Laplacian $\hat{\Delta}_a$ coincides (up to a constant multiple) with the negative of the Laplace-Beltrami operator its associated Riemannian metric (see for example \cite{Ga}). We thus have the Greens formula \cite{Au}: for each $x \in X_0[\frac{3}{4}]$, $0<a\ll 1$, $\frac{1}{4}\leq\delta\leq\frac{1}{2}$, and smooth function $f$ on $\hat{X}$,
\begin{equation}
 \label{eq:green}
      f(x)=  \int_{\partial X_0[\delta]} \Gamma_{a,\delta}(x,y) f(y) dS_a(y)+\int_{y\in X_0[\delta]}\,G_{a,\delta}(x,y)\hat{\Delta}_a f(y)\, dV_a(y) 
\end{equation}
where $G_{a,\delta}(x,y)\leq 0$ is the Green's function for $\hat{\Delta}_a$ over the region $X_0[\delta]$, and $\Gamma_{a,\delta}$ is the boundary normal derivative of $G_{a,\delta}(x,y)$ with respect to $y$. Moreover, $dS_a$ is the volume form on $\partial X_0[\delta]$ with respect to the metric induced from $\hat{g}_a$. 

We apply the above formula to $f=\varphi_a$. Since the family of metrics $\{\hat{\omega}_a| 0<a\ll 1\}$ are uniform over $X_0[\frac{1}{4}]$ there is a constant $K_0$ such that for any $0<a\ll 1$, $\frac{1}{4}\leq\delta\leq\frac{1}{2}$, $y \in \partial X_0[\delta]$ and $x \in X_0[\frac{3}{4}]$,
\begin{equation*}
		|\Gamma_{a,\delta}(x,y)| \leq K_0.
\end{equation*}
For the same reason there is a constant $K_1>0$ such that
\begin{equation*}
      -\int_{y\in X_0[\delta]}\,G_{a,\delta}(x,y)\, dV_a(y) \leq K_1
\end{equation*}
for any $x \in X_0[\frac{3}{4}]$, $\frac{1}{4}\leq\delta\leq\frac{1}{2}$, and $0<a\ll 1$.

Because $\Lambda_{\hat{\omega}_a} F_{\hat{H}}=0$ over $U(1)$, $|\frac{1}{r}\text{tr}\,\Lambda_{\hat{\omega}_a} F_{\hat{H}}|$ is bounded by a constant $K_2>0$ independent of $a$. Therefore we have
\begin{equation*}
         | G_{a,\delta}(x,y)\hat{\Delta}_a \varphi_a| \leq -G_{a,\delta}(x,y) |\frac{1}{r}\text{tr}\,\Lambda_{\hat{\omega}_a} F_{\hat{H}}| \leq -K_2\cdot G_{a,\delta}(x,y).
\end{equation*}
We can conclude from the above bounds that
\begin{equation}
 \label{eq:d-1}
   \begin{split}
         | \varphi_a(x) | \leq & K_0\int_{\partial X_0[\delta]} |\varphi_a| dS_a - \int_{y\in X_0[\delta]}\,K_2\cdot G_{a,\delta}(x,y) dV_a(y) \\
                       \leq    & K_0\int_{\partial X_0[\delta]} |\varphi_a|\, dS_a+K_1K_2. \\     
   \end{split}
\end{equation}

Integrate (\ref{eq:d-1}) with respect to $\delta$ from $\frac{1}{4}$ to $\frac{1}{2}$ and use once again the uniformity in the metrics over $X_0[\frac{1}{4}]$ we obtain
\begin{equation}
    \begin{split}
        | \varphi_a(x) |  \leq & K_4\left( \int_{X_0[\frac{1}{2}] \backslash X_0[\frac{1}{4}] } | \varphi_a |  dV_a \right)+4K_1K_2  \\
                 \leq & K_4K_5^\frac{1}{2}\left( \int_{\hat{X}} | \varphi_a |^2 dV_a \right)^\frac{1}{2}+4K_1K_2 
     \end{split}
\end{equation}
for each $x \in X_0[\frac{3}{4}]$. Here $K_5$ is a comment upper bound for the volumes of $\hat{X}$ w.r.t. $\hat{g}_a$. Now, to prove the claim, we have to show that $\int_{\hat{X}} | \varphi_a |^2 dV_a$ is bounded by a constant independent of $0<a\ll 1$. For this we use the estimates on the first eigenvalue of Laplacians due to Yau \cite{Y1} which implies that for a compact Riemannian manifold $(X,g)$ of dimension $n$, if (i) $\text{diag}_g(X) \leq D_1$, (ii) $\text{Vol}_g(X) \geq D_2$ and (iii) $Ric(g)\geq (n-1)K$ hold, then the number
		\begin{equation*}
					\lambda_1:=\inf_{0\neq f\in C^\infty(X),\int_{X} f   dV_g=0} \frac{\int_{X} f\Delta_g^{LB}f \, dV_g}{\int_{X} f^2 \, dV_g}.
		\end{equation*}	
is bounded below by a constant depending only on $D_1$, $D_2$ and $K$. Here $\Delta_g^{LB}$ denotes the Laplace-Beltrami operator of $g$.

For the family of metrics $\{\hat{g}_a\}$ on $\hat{X}$, it is easy to see that the diameters and volumes are bounded as in (i) and (ii) by the same constants $D_1$ and $D_2$. Note that in a neighborhood of the exceptional curves each member $\hat{g}_a$ is Ricci-flat, and so by the uniformity outside that neighborhood, condition (iii) holds for a common value of $K$.

Therefore, there is a constant $K_6>0$ such that 
\begin{equation*}
   \begin{split}
		\int_{\hat{X}} | \varphi_a |^2 dV_a 
		&\leq K_6 \int_{\hat{X}} | \varphi_a||\hat{\Delta}_a\varphi_a|  dV_a = K_6\int_{\hat{X}}  |\varphi_a| |\frac{1}{r}\text{tr}\,\Lambda_{\hat{\omega}_a} F_{\hat{H}}| dV_a \\
		&\leq K_6K_2\int_{\hat{X}} | \varphi_a | dV_a \leq K_6K_2K_5^\frac{1}{2}\left( \int_{\hat{X}} | \varphi_a |^2 dV_a \right)^\frac{1}{2}
	\end{split}
\end{equation*}
and hence
\begin{equation*}
		\left( \int_{\hat{X}} | \varphi_a |^2 dV_a \right)^\frac{1}{2} \leq K_6K_2K_5^\frac{1}{2}.
\end{equation*}
This completes the proof of the claim. \qed
\end{proof}

Define $\hat{H}_a:=e^{\varphi_a}\hat{H}$. Then it follows from the claim that to prove the lemma, it is enough to show that the determinants of $\hat{h}_a:=H_a\hat{H}_a^{-1}$ have common positive upper and lower bounds.

To do so first note that we have $\text{tr}\,\Lambda_{\hat{\omega}_a}F_{\hat{H}_a}=0$. Then the proof of Proposition 2.1 in \cite{UY} shows that this and the fact that $\Lambda_{\hat{\omega}_a}F_{H_a}=0 $ imply $\det \hat{h}_a$ is constant for each $a$. After a rescaling of $H_a$ by a positive constant, we can assume $\det \hat{h}_a=1$, and the proof of Lemma \ref{lm:d-1} is complete.\qed
\end{proof} 

From now on we assume that the rescaling in the above lemma is done. We next show a result on the $C^0$-bound for $\text{tr}\,h_a$.
\begin{proposition}
 \label{pr:2-1}
    Assume that the integrals $\int_{\hat{X}} |\log\text{tr} \,h_a|^2 \,dV_a$ have a common upper bound for $0<a \ll1$. Then there is a constant $C_0>0$ such that for any $0<a \ll1$,
     \begin{equation*}
         -C_0< \log\text{tr}\,h_a<C_0.
     \end{equation*}
\end{proposition}
\begin{proof}
First of all, we have the following inequality whose proof can be found in \cite{Siu}:
\begin{lemma}
 \label{lm:main}
  Let $H_0$ and $H_1$ be two Hermitian metrics on a holomorphic vector bundle $\mathcal{E}$ over a Hermitian manifold $(X,\omega)$, and define $h=H_1H_0^{-1}$. Then
    \begin{equation}
        \Delta_{\omega} \log\text{tr}\,h \geq -(|\Lambda_{\omega} F_{H_0}|_{H_0}+|\Lambda_{\omega} F_{H_1}|_{H_0}).
    \end{equation}
\end{lemma}

By Lemma \ref{lm:main}, we have the inequality
    \begin{equation}
     \label{eq:2-3}
        \hat{\Delta}_a \log\text{tr} (h_a) \geq -(|\Lambda_{\hat{\omega}_a} F_{H_a}|_{\hat{H}}+|\Lambda_{\hat{\omega}_a} F_{\hat{H}}|_{\hat{H}})=-|\Lambda_{\hat{\omega}_a} F_{\tilde{H}}|_{\hat{H}}
    \end{equation}
where the equality follows since $H_a$ is HYM with respect to $\hat{\omega}_a$.

Over $U(\frac{7}{8})$ we have $\hat{\Delta}_a \log\text{tr}\,h_a \geq -|\Lambda_{\hat{\omega}_a} F_{\hat{H}}|_{\hat{H}}=0$ and so by Maximum Principle we have
\begin{equation*} 
  \begin{split}
     \sup_{U(\frac{7}{8})} \log\text{tr}\,h_a \leq \sup_{\partial U(\frac{7}{8})} \log\text{tr}\,h_a \leq \sup_{  X_0[\frac{3}{4}]} \log\text{tr}\,h_a.
  \end{split}
\end{equation*}

Using the Green's formula (\ref{eq:d-1}), we can show as in Lemma \ref{lm:d-1} that 
\begin{equation}
		\sup_{  X_0[\frac{3}{4}]} \log\text{tr}\,h_a \leq K'
\end{equation}
for some $K'>0$ independent of $0<a \ll1$ assuming that the integrals $\int_{\hat{X}} |\log\text{tr} \,h_a|^2 \,dV_a$ have a common upper bound. We thus have a commen upper bound for $\sup_{\hat{X}} \log\text{tr}\,h_a$. 

Together with the fact that the determinants of $h_a$ are bounded from above and below by positive constants independent of $0<a \ll1$, this upper bound also implies a common lower bound for $\log\text{tr}\,h_a$ over $\hat{X}$. The proof is completed. \qed

\end{proof}

Therefore, to get $C^0$-estimate we prove
\begin{proposition}
 \label{pr:L2}
     There is a constant $C'_0>0$ such that
     \begin{equation*}
             \int_{\hat{X}} |\log\text{tr}\,h_a|^2 dV_a<C'_0
     \end{equation*}
     for any $0<a\ll 1$.
\end{proposition}
\begin{proof}
  The idea is basically the same as in the proof of Proposition 4.1 in \cite{UY}. Assume the contrary. Then there is a sequence $\{ a_k \}_{k=1}^\infty$ converging to 0 such that $\lim_{k\rightarrow \infty} \int_{\hat{X}} |\log\text{tr}\,h_{a_k}|^2 \,dV_{a_k}=\infty$. Denote $h(k)=h_{a_k}$, and define $\rho_k=e^{-M_k}$ where $M_k$ is the largest eigenvalue of $\log h(k)$. Then $\rho_k h(k) \leq I$. 
  
The following inequality is proved in Lemma 4.1 of \cite{UY}: 
\begin{lemma}
Suppose
\begin{equation*}
     \Lambda_{\omega} F_H+\Lambda_{\omega} \bar{\partial}((\partial_H h)h^{-1})=0
\end{equation*}
holds for a Hermitian metric $H$ on a vector bundle $\mathcal{E}$ over a Hermitian manifold $(X,\omega)$ and $h\in \Gamma(\text{End}(E))$. Then for $0<\sigma \leq 1$, we have the inequality
\begin{equation*}
     |h^{-\frac{\sigma}{2}}\partial_H h^\sigma|_{H,g}^2-\frac{1}{\sigma}\Delta_{\omega}|h^\sigma|_H \leq -\langle \Lambda_{\omega} F_H, h^\sigma \rangle_H.
\end{equation*}
\end{lemma}

In our case, because $\Lambda_{\hat{\omega}_a} F_{\hat{H}}+\Lambda_{\hat{\omega}_a} \bar{\partial}((\partial_{\hat{H}} h_a)h_a^{-1})=\Lambda_{\hat{\omega}_a} F_{H_a}=0$, apply the above lemma to $\sigma=1$, we see immediately that
\begin{equation}
 \label{eq:s-1}
    -\hat{\Delta}_a |h_a|_{\hat{H}} \leq |\Lambda_{\hat{\omega}_a} F_{\hat{H}}|_{\hat{H}} |h_a|_{\hat{H}} \leq K_7|h_a|_{\hat{H}}
\end{equation}
where $K_7$ is a common upper bound for $|\Lambda_{\hat{\omega}_{a}} F_{\hat{H}}|_{\hat{H}} $. 

Note that $|h_a|_{\hat{H}}$ is subharmonic in $U(\frac{7}{8})$ because of the first inequality in (\ref{eq:s-1}) and the fact that $\Lambda_{\hat{\omega}_a} F_{\hat{H}}=0$ there. Maximum Principle then implies that 
\begin{equation*}
		\sup_{U(\frac{7}{8})} |h_a|_{\hat{H}} \leq \sup_{X_0[\frac{3}{4}]} |h_a|_{\hat{H}}. 
\end{equation*}

From this observation and an iteration argument over $X_0[\frac{3}{4}]$ on (\ref{eq:s-1}), we can deduce that
\begin{equation*}
    \sup_{\hat{X}} |h_a|_{\hat{H}} \leq K_8\left( \int_{X_0[\frac{1}{4}]} |h_a|_{\hat{H}}^2 dV_a \right)^\frac{1}{2}.
\end{equation*}
This implies 
\begin{equation}
 \label{eq:2-2}
    1 \leq K_8\left( \int_{X_0[\frac{1}{4}]} | \rho_k h(k)|_{\hat{H}}^2 dV_{a_k} \right)^\frac{1}{2}.
\end{equation}
for any $k>0$.

As in page S275 of \cite{UY}, one can show that
\begin{equation*}
    \int_{\hat{X}} | \nabla_{\hat{H}}(\rho_k h(k))|_{\hat{H},\hat{g}_{a_k}}^2 dV_{a_k} \leq 4 \max_{\hat{X}} |\Lambda_{\hat{\omega}_{a_k}} F_{\hat{H}}|_{\hat{H}} \cdot \text{Vol}_{a_k}(\hat{X}) \leq 4K_7K_5
\end{equation*}
where $K_5$ is as in the proof of Lemma \ref{lm:d-1}. 

Thus we see that the $L^2_1$-norms of $\rho_k h(k)$ with respect to $\hat{\omega}_{a_k}$ are bounded by a constant independent of $k$. Because the sequence of metrics $\{ \hat{\omega}_{a_k} \}$ are uniformly bounded only on each compactly embedded open subset in $X_{0,sm}$, a subsequence of the sequence $\{\rho_k h(k)\}$ converges strongly on each subset of this kind. After taking a sequence $\{U_l\subset\subset \hat{X} \}_l$ of exhausting increasing subsets and use the diagonal argument, we obtain a subsequence $\{ \rho_{k_i} h(k_i) \}_{i\geq 1}$ of $\{\rho_k  h(k)\}_{k \geq 1}$ and an $\hat{H}$-symmetric endomorphism $h_\infty$ of $\mathcal{E}$ which is the limit of $\{ \rho_{k_i} h_{k_i}|_{U_l} \}_{i\geq 1}$ in $L^2(U_l,\text{End}(\mathcal{E}))$ for all $l$. From (\ref{eq:2-2}) one immediately sees that $h_\infty$ is nontrivial.

Define $h_i=\rho_{k_i} h(k_i)$. The same argument shows that $h_i^\sigma$ converges weakly in the $L^2_1$ sense on each $U_l$ to some $h_\infty^\sigma$. The uniform bound on the $L^2_1$-norm of $h_i^\sigma$ gives the same bound on $h_\infty^\sigma$ for all $\sigma$. It follows that $I-h_\infty^\sigma$ has a weak limit in $L^2_1$ sense on each $U_l$ for some subsequence $\sigma \rightarrow 0$. We call the limit $\pi$. Similar to \cite{UY} except that we consider integrals over each $U_l$, we can show that $\pi$ gives a weakly holomorphic subbundle of $\mathcal{E}$. More precisely \cite{LT}, there is a coherent subsheaf $\mathcal{F}$ of $\mathcal{E}$ and an analytic subset $S\subset \hat{X}$ (containing the exceptional curves) such that $S$ has codimension greater than 1 in $\hat{X}$, the restriction of $\pi$ to $\hat{X}\backslash S$ is smooth and satisfies $\pi^{*_{\hat{H}}}=\pi=\pi^2$ and $(I-\pi)\bar{\partial}\pi=0$, and finally, the restriction $\mathcal{F}':=\mathcal{F}|_{\hat{X}\backslash S}=\pi|_{\hat{X}\backslash S}(\mathcal{E}|_{\hat{X}\backslash S})\hookrightarrow \mathcal{E}$ is a holomorphic subbundle. The rank of $\mathcal{F}$ satisfies $0<\text{rank} \mathcal{F}<\text{rank} \mathcal{E}$.\\[0.2cm]

Following the argument in \cite{K} p. 181-182 (see also Proposition 3.4.9 of \cite{LT}), we have
\begin{equation*}
		\mu_0
		:=\lim_{\delta\rightarrow 0} \frac{1}{\text{rank}\mathcal{F}}\int_{X_0[\delta]} c_1(\det\mathcal{F},u)\wedge \hat{\omega}_0^2
		=\lim_{\delta\rightarrow 0} \frac{1}{\text{rank}\mathcal{F}}\int_{X_0[\delta]} c_1(\mathcal{F}',\hat{H}_1)\wedge \hat{\omega}_0^2.
\end{equation*}
Here $u$ is some smooth Hermitian metric on the holomorphic line bundle $\det\mathcal{F}$ over $\hat{X}$, and $\hat{H}_1$ is the Hermitian metric on the bundle $\mathcal{F}'$ induced by the metric $\hat{H}$ on $\mathcal{E}$. Using the above construction of $\pi$ by convergence on the $U_l$'s one can show by a slight modification of the arguments in \cite{UY} that
\begin{equation*}
		\lim_{\delta\rightarrow 0} \frac{1}{\text{rank}\mathcal{F}}\int_{X_0[\delta]} c_1(\mathcal{F}',\hat{H}_1)\wedge \hat{\omega}_0^2 \geq 0.\\[0.2cm]
\end{equation*}

\noindent \textbf{Claim} For $0<a\ll 1$, $\mu_{\hat{\omega}_a}(\mathcal{F})\geq 0$. 

\begin{proof} It is enough to show $\mu_{\hat{\omega}_a}(\mathcal{F})=\mu_0$. From the construction of $\hat{\omega}_0$ in \cite{FLY}, we have
\begin{equation*}
   		\hat{\omega}_0^2=\Psi+\Phi_0
\end{equation*}
 where $\Psi$ is a (2,2)-form supported outside $U(1)$ and $\Phi_0$ is a $\partial\bar{\partial}$-exact (2,2)-form which is defined only on $\hat{X}\backslash \cup C_i$, is supported in $U(\frac{3}{2})\backslash \cup C_i$, and equals $\omega_{co,0}^2=\frac{9}{4}\sqrt{-1}\partial\bar{\partial}\mathbf{r}^\frac{4}{3}\wedge \sqrt{-1}\partial\bar{\partial}\mathbf{r}^\frac{4}{3}$ on $U(1)\backslash \cup C_i$. The same construction gives $\hat{\omega}_a$ such that
 \begin{equation*}
   		\hat{\omega}_a^2=\Psi+\Phi_a
\end{equation*}
 where $\Phi_a$ is a smooth $\partial\bar{\partial}$-exact (2,2)-form supported in $U(\frac{3}{2})$ which equals $\omega_{co,a}^2$ on $U(1)$. 
 
 Denote the smooth (1,1)-form $c_1(\det\mathcal{F},u)$ on $\hat{X}$ by $c_1$ and $\text{rank}\mathcal{F}$ by $s$. From the above descriptions we have
\begin{equation*}
   \begin{split}
		&\mu_{\hat{\omega}_a}(\mathcal{F})-\mu_0
		=\lim_{\delta\rightarrow 0} \frac{1}{s}\int_{X_0[\delta]} c_1\wedge \left(\hat{\omega}_a^2-\hat{\omega}_0^2\right)
			=\lim_{\delta\rightarrow 0} \frac{1}{s}\int_{X_0[\delta]} c_1\wedge \left(\Phi_a-\Phi_0\right) \\
		=&\frac{1}{s}\int_{\hat{X}} c_1\wedge \Phi_a-\lim_{\delta\rightarrow 0} \frac{1}{s}\int_{X_0[\delta]} c_1\wedge \Phi_0
		=-\lim_{\delta\rightarrow 0} \frac{1}{s}\int_{X_0[\delta]} c_1\wedge \Phi_0
   \end{split}
\end{equation*}
where the last equality follows from the fact that, as smooth forms on $\hat{X}$, $c_1$ is closed and $\Phi_a$ is exact. One can write $c_1\wedge \Phi_0=d(c_1\wedge \varsigma)$ where $\varsigma$ is a 3-form supported on $U(\frac{3}{2})\backslash \cup C_i$ which equals $\frac{9}{8}(\partial \mathbf{r}^\frac{4}{3}-\bar{\partial}\mathbf{r}^\frac{4}{3})\wedge \partial \bar{\partial}\mathbf{r}^\frac{4}{3}$ on $U(1)\backslash \cup C_i$. By Stokes' Theorem, we have
\begin{equation}
 \label{lim}
		-\lim_{\delta\rightarrow 0} \frac{1}{s}\int_{X_0[\delta]} c_1\wedge \Phi_0
		=-\lim_{\delta\rightarrow 0} \frac{9}{8}\frac{1}{s}\int_{\partial X_0[\delta]} c_1\wedge (\partial \mathbf{r}^\frac{4}{3}-\bar{\partial}\mathbf{r}^\frac{4}{3})\wedge \partial\bar{\partial}\mathbf{r}^\frac{4}{3}.
\end{equation}

An explicit calculation on coordinate charts can then show that the last limit is zero. \qed
\end{proof}

Since $\mu_{\hat{\omega}_a}(\mathcal{E})=0$, we get from this claim a contradiction to the assumption that $\mathcal{E}$ is stable with respect to $\hat{\omega}_a$ and complete the proof of Proposition \ref{pr:L2}.\qed
\end{proof}

We continue with the proof of Theorem \ref{th:2-1}. Using $\hat{H}$ and $\hat{g}_a$ one can define $L^2_1$-norms for $h_a$. The next step is to give an $L^2_1$-boundedness.

\begin{proposition}
 \label{pr:L21}
     The $L^2_1$-norm of $h_a$ over $\hat{X}$ is bounded by some constant $C_2$ independent of $0<a \ll 1$.
\end{proposition}
\begin{proof}
The $C^0$-boundedness obtained above and the common upper bound in Vol$_a(\hat{X})$ imply that the $L^2$ norm of $h_a$ is bounded above by a constant independent of $a$.

Choose a finite number of Hermitian metrics $H^{(\nu)}$ for $1 \leq \nu \leq k$ on $\mathcal{E}$ which are constant in some holomorphic frame 
$\mathcal{E}|_{U(1)} \cong \mathcal{O}^r$ over $U(1)$, such that for any smooth Hermitian metric $K$ on $\mathcal{E}$ the entries of the Hermitian matrix representing $K$ are linear functions of $\text{tr}(K(H^{(\nu)} )^{-1})$, $1 \leq \nu \leq k$, whose coefficients are constants depending only on $H^{(\nu)}$. Denote $h_a^{(\nu)}=H_a(H^{(\nu)})^{-1}$. It is therefore enough to bound the integrals
\begin{equation*}
		\int_{\hat{X}} |d\,\text{tr}\,h_a^{(\nu)}|_{\hat{g}_a}^2  dV_a
\end{equation*}
for $1 \leq \nu \leq k$.

From Lemma \ref{lm:main} and the fact that $H_a$ is HYM w.r.t. $\hat{\omega}_a$, we have
    \begin{equation}
       \begin{split}
          \hat{\Delta}_a \log\text{tr}\,h_a^{(\nu)} 
          \geq &-(|\Lambda_{\hat{\omega}_a} F_{H^{(\nu)}}|_{H^{(\nu)}}+|\Lambda_{\hat{\omega}_a} F_{H_a}|_{H^{(\nu)}})
          \geq -|\Lambda_{\hat{\omega}_a} F_{H^{(\nu)}}|_{H^{(\nu)}},
       \end{split}
    \end{equation}
from which we have the inequality
\begin{equation}
 \label{eq:19-5}
			-\hat{\Delta}_a  \text{tr}\,h_a^{(\nu)} 
			\leq |\Lambda_{\hat{\omega}_a} F_{H^{(\nu)}}|_{H^{(\nu)}}\text{tr}\,h_a^{(\nu)}
			\leq K_9\text{tr}\,h_a^{(\nu)}
\end{equation}
for some constant $K_9>0$. Here the last inequality follows from the fact that $F_{H^{(\nu)}}$ is supported on $\hat{X}\backslash U(1)$, where the $\hat{\omega}_a$ are uniform.

Multiplying $\text{tr} \,h_a^{(\nu)}$ on both sides of the inequality (\ref{eq:19-5}) and using integration by parts, we get
\begin{equation*}
  \begin{split}
     \int_{\hat{X}} |d \,\text{tr} h_a^{(\nu)}|_{\hat{g}_a}^2 dV_a \leq  K_9\int_{\hat{X}} |\text{tr} h_a^{(\nu)}|^2 dV_a.
  \end{split}
\end{equation*}
Finally, write $h_a^{(\nu)}=h_a\hat{H}(H^{(\nu)})^{-1}$, and we see that the result follows from the uniform $C^0$ bound of $h_a$. \qed
\end{proof}
Using the diagonal argument, the uniform boundedness of the $L^2_1$-norm of $h_a$ over $\hat{X}$ implies that there is a sequence $\{ a_i \}_{i\geq 1}$ converging to 0 and an $\hat{H}$-symmetric endomorphism $h_0$ of $\mathcal{E}$ which is the limit of $\{ h_{a_i}|_{U_l} \}_{i\geq 1}$ in $L^2(U_l,\text{End}(\mathcal{E}))$ for all $l$. 

As in \cite{Don1} and \cite{Siu}, we can then prove that the sequence $\{ h_{a_i} \}_{i\geq 1}$ converges in the $C^0$-sense to $h_0$ on each $U_l$. Next we argue that there is a uniform $C^1$-bound for $\{ h_{a_i} \}_{i\geq 1}$ over $\hat{X}$. We need the following lemma, whose proof will be given later.

\begin{lemma}   
 \label{lm:X}
   Let $V$ be a K$\ddot{\text{a}}$hler manifold endowed with a Ricci-flat K$\ddot{\text{a}}$hler metric $g$, and let $H$ be a HYM metric on a trivial holomorphic $\mathcal{F}$ bundle over $V$ w.r.t. $g$. Fixed a trivialization of $\mathcal{F}$ and view $H$ as a matrix-valued function on $V$. Then
   \begin{equation*}
			-\Delta_g | \partial HH^{-1}|_{H,g}^2 \leq 0.
   \end{equation*}
\end{lemma}

We apply this lemma to the the restriction of $\mathcal{E}$ to $U(1)$ under a trivialization in which $\hat{H}=I$. Also let $H=H_{a_i}$ and $g$ the restriction of $\hat{g}_{a_i}$ to $U(1)$, where it coincides with the CO-metric on resolved conifold. Then we have
\begin{equation*}
		-\hat{\Delta}_{a_i} | \partial H_{a_i} H_{a_i}^{-1}|_{H_{a_i},\hat{g}_{a_i}}^2 \leq 0
\end{equation*}
and hence, by the Maximum Principle,
\begin{equation*}
		\sup_{U(1)} | \partial H_{a_i}H_{a_i}^{-1}|_{H_{a_i},\hat{g}_{a_i}}^2 \leq \sup_{\partial U(1)} | \partial H_{a_i} H_{a_i}^{-1}|_{H_{a_i},\hat{g}_{a_i}}^2.
\end{equation*}

Using the uniform $C^0$-boundedness of $H_a$ and the fact that $\hat{H}=I$, the above inequality implies 
\begin{equation*}
		\sup_{U(1)}| \partial_{\hat{H}} h_{a_i}|_{\hat{H},\hat{g}_{a_i}}\leq K_{10}\sup_{\partial U(1)} | \partial_{\hat{H}} h_{a_i}|_{\hat{H},\hat{g}_{a_i}}.
\end{equation*}

Therefore, it ia enough to bound the maximum of $| \partial_{\hat{H}} h_{a_i}|_{\hat{H},\hat{g}_{a_i}}$ over $X_0[\frac{1}{2}]$. Let $x_i\in X_0[\frac{1}{2}]$ be a sequence of points such that 
\begin{equation*}
		m_i:=\sup_{X_0[\frac{1}{2}]} | \partial_{\hat{H}} h_{a_i}|_{\hat{H},\hat{g}_{a_i}}=| \partial_{\hat{H}} h_{a_i}|_{\hat{H},\hat{g}_{a_i}}(x_i). 
\end{equation*}

Assume $m_i$ is unbounded. If $\{x_i\}$ has a converging subsequence with limit in the interior of $X_0[\frac{1}{2}]$, then one can argue as in \cite{Don1} and \cite{Siu} and get a contradiction. Thus it is enough to get a uniform bound near $\partial X_0[\frac{1}{2}]$. For this we use Lemma \ref{lm:X} and an iteration argument to conclude that $\sup_{\partial X_0[\frac{1}{2}]}| \partial_{\hat{H}} h_{a_i}|_{\hat{H},\hat{g}_{a_i}}$ is bounded by the $L^2$-integral of $| \partial_{\hat{H}} h_{a_i}|_{\hat{H},\hat{g}_{a_i}}$ in a neighborhood of $\partial X_0[\frac{1}{2}]$, say $V_0(\frac{1}{4},\frac{3}{4})$. This last integral is uniformly bounded by Proposition \ref{pr:L21}. Thus if $\{x_i\}$ has a limit on $\partial X_0[\frac{1}{2}]$, $m_i$ is bounded, which contradicts to the assumption. We therefore prove uniform $C^1$-boundedness for $\{ h_{a_i} \}_{i\geq 1}$.

One can then obtain from this uniform $C^1$-bound a uniform $L^p_2$-bound for $\{ h_{a_i} \}_{i\geq 1}$ over each $U_l$ as in \cite{Don1} and \cite{Siu}. Then after taking a subsequence, we may assume that $h_{a_i}$ converges to $h_0$ weakly in the $L^p_2$ sense for all $p$ over each $U_l$. This implies $\Lambda_{\hat{\omega}_0} F_{H_0}=0$ where $H_0=h_0\hat{H}$. By standard elliptic regularity $H_0$ is smooth.  

The proof of Theorem \ref{th:2-1} is now complete.\qed
\end{proof}

\noindent \textbf{Remark} From Lemma \ref{lm:d-1} and Proposition \ref{pr:2-1} it is easy to see that the largest eigenvalue of $h_0$ is bounded from above over $X_{0,sm}$, and lower eigenvalues of $h_0$ is bounded from below over $X_{0,sm}$. In particular, the $C^0$-norm of $h_0$ is bounded over $X_{0,sm}$.\\[0.2cm]

\noindent{\scshape Proof of Lemma \ref{lm:X}}

The HYM equation takes the form
\begin{equation*}
	\sqrt{-1}\Lambda_ g\bar{\partial}(\partial HH^{-1})=0.
\end{equation*}
In local coordinates this is just
\begin{equation*}
	g^{i\bar{j}} \frac{\partial}{\partial \bar{z}_j} \left(\frac{\partial H}{\partial z_i}H^{-1} \right)=0.
\end{equation*}
In the following we denote $\partial_i=\frac{\partial}{\partial z_i}$ and $\partial_{\bar{j}}=\frac{\partial}{\partial \bar{z}_j}$. Taking partial derivatives on both sides of the above equation, we get 
\begin{equation}
 \label{eq:n-1}
   \begin{split}
		-g^{i\bar{p}} \partial_k g_{\bar{p}q}g^{q\bar{j}} \partial_{\bar{j}} \left(\partial_i H H^{-1}\right) 
		+g^{i\bar{j}} \partial_{\bar{j}} \left((\partial_k\partial_i H)H^{-1} 
		- \partial_i HH^{-1}\partial_k HH^{-1} \right)=0.
   \end{split}
\end{equation}
One can compute that 
\begin{equation*}
    \begin{split}
	    &(\partial_k\partial_i H)H^{-1} - \partial_i HH^{-1}\partial_k HH^{-1}\\
	    =& \partial_i\left( \partial_k HH^{-1} \right)
	    +\partial_k HH^{-1}\partial_i HH^{-1} -\partial_i HH^{-1}\partial_k HH^{-1}
	    =(\partial_H)_i\left( \partial_k HH^{-1}\right).
	\end{split}
\end{equation*}
Note also that $g^{i\bar{p}} \frac{\partial g_{\bar{p}q}}{\partial z_k}$ is the Christoffel symbol $\Gamma^i_{kq}$ of $g$. Therefore (\ref{eq:n-1}) becomes
\begin{equation}
 \label{eq:n-2}
		-\Gamma^i_{kq}g^{q\bar{j}} \partial_{\bar{j}}  \left(\partial_i H H^{-1}\right)  
		+g^{i\bar{j}} \partial_{ \bar{j}} (\partial_H)_i\left( \partial_k HH^{-1}\right)=0.
\end{equation}
Now, in local charts,
\begin{equation}
 \label{eq:n-5}
   \begin{split}
		&-\Delta_g | \partial HH^{-1}|_{H,g}^2=-\sqrt{-1}\Lambda_g \partial \bar{\partial} | \partial HH^{-1}|_{H,g}^2 \\
		%=&-\langle \sqrt{-1}\Lambda_g \nabla_{H,g}^{1,0}\wedge \nabla_{H,g}^{0,1} (\partial HH^{-1}) , \partial HH^{-1} \rangle_{H,g}
		%-\langle H^{-1}\partial H , \sqrt{-1}\Lambda_g \nabla_{H,g}^{0,1}\wedge  \nabla_{H,g}^{1,0} (\partial HH^{-1}) \rangle_{H,g}\\
		%&-| \nabla_{H,g}^{0,1} (\partial HH^{-1}) |_{H,g}^2
		 % -| \nabla_{H,g}^{0,1} (\partial HH^{-1}) |_{H,g}^2\\
		\leq &-\langle \sqrt{-1}\Lambda_g \nabla_{H,g}^{1,0}\wedge  \nabla_{H,g}^{0,1} (\partial HH^{-1}) , \partial HH^{-1} \rangle_{H,g}
		-\langle \partial HH^{-1} , \sqrt{-1}\Lambda_g \nabla_{H,g}^{0,1} \wedge \nabla_{H,g}^{1,0} (\partial HH^{-1}) \rangle_{H,g}.		  
   \end{split}
\end{equation}

Here $\Lambda_g: \Gamma(V, \text{End}(\mathcal{F})\otimes \Omega^1 \otimes \Omega^2 ) \rightarrow \Gamma(V, \text{End}(\mathcal{F})\otimes \Omega^1)$ is the contraction of the 2-form part with the K$\ddot{\text{a}}$hler form $\omega_g$ of $g$. The operator $\nabla_{H,g}^{1,0}\wedge  \nabla_{H,g}^{0,1}$ is the composition 
\begin{equation*}
  \begin{split}
		&\Gamma(V, \text{End}(\mathcal{F})\otimes \Omega^1) 
		\xrightarrow{\nabla_{H,g}^{0,1}} \Gamma(V,\text{End}(\mathcal{F})\otimes \Omega^1\otimes \Omega^{0,1})\\
		&\xrightarrow{\nabla_{H,g}^{1,0}} \Gamma(V,\text{End}(\mathcal{F})\otimes \Omega^1\otimes \Omega^{0,1}\otimes \Omega^{1,0})
		\xrightarrow{a}\Gamma(V,\text{End}(\mathcal{F})\otimes \Omega^1 \otimes \Omega^{1,1} )
  \end{split}
\end{equation*}
where the last map is the natural anti-symmetrization. The operator $\nabla_{H,g}^{0,1}\wedge  \nabla_{H,g}^{1,0}$ is analogously defined.

Write $A=\partial HH^{-1}=A_k dz_k$ so $\nabla_{H,g}=\nabla_{A,g}$. Explicitly, we have
%\begin{equation*}
%   \begin{split}
%		\nabla_{H,g}^{1,0} \nabla_{H,g}^{0,1} (\partial HH^{-1})%=\nabla_{A,g}^{1,0} \nabla_{A,g}^{0,1} (A_k dz_k)
		%=\nabla_{A,g}^{1,0}\left( (\partial_{\bar{j}} A_k) dz_k \otimes d\bar{z}_j\right)\\
%		=((\partial_A)_i\partial_{\bar{j}} A_k) dz_k\otimes   d\bar{z}_j\otimes dz_i-(\partial_{\bar{j}} A_i)\Gamma^i_{kq}dz_k \otimes d\bar{z}_j\otimes dz_q  
%	\end{split}
%\end{equation*}
%where in the last term we use the formula $\nabla_gdz_i=-\Gamma^i_{kq}dz_q \otimes dz_k$. Therefore
%\begin{equation*}
%		\nabla_{H,g}^{1,0} \wedge \nabla_{H,g}^{0,1} (\partial HH^{-1})=
%		((\partial_A)_i\partial_{\bar{j}} A_k) dz_k\otimes   d\bar{z}_j\wedge dz_i-(\partial_{\bar{j}} A_i)\Gamma^i_{kq}dz_k \otimes d\bar{z}_j\wedge dz_q 
%\end{equation*}
%and
\begin{equation}
 \label{eq:n-3}
  \begin{split}
		&\sqrt{-1}\Lambda_g \nabla_{H,g}^{1,0}\wedge  \nabla_{H,g}^{0,1} (\partial HH^{-1})\\
		=&-2(g^{i\bar{j}}(\partial_A)_i\partial_{\bar{j}} A_k) dz_k+2g^{q\bar{j}}(\partial_{\bar{j}} A_i)\Gamma^i_{kq}dz_k  
		=2(-g^{i\bar{j}}\partial_{\bar{j}}(\partial_A)_i A_k+g^{q\bar{j}}(\partial_{\bar{j}} A_i)\Gamma^i_{kq})dz_k
  \end{split}
\end{equation}
where we use the fact that
\begin{equation*}
	g^{i\bar{j}}(\partial_A)_i\partial_{\bar{j}} A_k=g^{i\bar{j}}\partial_{\bar{j}}(\partial_A)_i A_k + [g^{i\bar{j}}(F_A)_{i\bar{j}},A_k]=g^{i\bar{j}}\partial_{\bar{j}}(\partial_A)_i A_k
\end{equation*}
because $d+A$ is a HYM connection. Now (\ref{eq:n-2}) and (\ref{eq:n-3}) together implies that 
\begin{equation}
 \label{eq:n-6}
		\sqrt{-1}\Lambda_g \nabla_{H,g}^{1,0}\wedge  \nabla_{H,g}^{0,1} (\partial HH^{-1})=0.
\end{equation}

Next we compute $\sqrt{-1}\Lambda_g\nabla_{H,g}^{0,1}\wedge  \nabla_{H,g}^{1,0} (\partial HH^{-1})$. We have
\begin{equation}
 \label{eq:n-7}
   \begin{split}
		&\sqrt{-1}\Lambda_g \nabla_{H,g}^{0,1}\wedge \nabla_{H,g}^{1,0} (\partial HH^{-1})
		=2\left(g^{i\bar{j}}(\partial_{\bar{j}}(\partial_A)_i A_k) dz_k -(\partial_{\bar{j}} A_i)g^{q\bar{j}}\Gamma^i_{kq}dz_k 
		 -A_ig^{q\bar{j}}\partial_{\bar{j}} \Gamma^i_{kq}dz_k\right) .
	\end{split}
\end{equation}
%\begin{equation*}
%   \begin{split}
%		&\nabla_{H,g}^{0,1} \nabla_{H,g}^{1,0} (\partial HH^{-1})=\nabla_{A,g}^{0,1} \nabla_{A,g}^{1,0} (A_k dz_k)\\
		%=&\nabla_{A,g}^{0,1}\left( ((\partial_A)_i A_k) dz_k \otimes dz_i-A_i\Gamma^i_{kq}dz_k \otimes dz_q\right)\\
%		=&(\partial_{\bar{j}}(\partial_A)_i A_k) dz_k\otimes  dz_i\otimes d\bar{z}_j -(\partial_{\bar{j}} A_i)\Gamma^i_{kq}dz_k \otimes  dz_q \otimes d\bar{z}_j
%		 -A_i\partial_{\bar{j}} \Gamma^i_{kq}dz_k \otimes dz_q \otimes d\bar{z}_j.
%	\end{split}
%\end{equation*}
Note that 
\begin{equation*}
		-\partial_{\bar{j}} \Gamma^i_{kq}=-\partial_{\bar{j}} (g^{i\bar{p}} \partial_k g_{\bar{p}q})
		=-g^{i\bar{p}}\partial_{\bar{j}}\partial_k g_{\bar{p}q}+g^{i\bar{s}}g^{t\bar{p}}\partial_{\bar{j}} g_{\bar{s}t}\partial_k g_{\bar{p}q}
\end{equation*}
is the full curvature tensor $R^i_{qk\bar{j}}$ of $g$. From the Bianchi identity and the fact that $g$ is Ricci flat, we have
\begin{equation}
 \label{eq:n-4}
		-g^{q\bar{j}}\partial_{\bar{j}} \Gamma^i_{kq}=g^{q\bar{j}}R^i_{qk\bar{j}}
		=g^{q\bar{j}}R_{q\bar{p}k\bar{j}}g^{i\bar{p}}= g^{q\bar{j}}R_{q\bar{j}k\bar{p}}g^{i\bar{p}}=R_{k\bar{p}}g^{i\bar{p}}=0.
\end{equation}

From (\ref{eq:n-2}) and (\ref{eq:n-4}) we then have
\begin{equation}
 \label{eq:n-7}
		\sqrt{-1}\Lambda_g \nabla_{H,g}^{0,1}\wedge \nabla_{H,g}^{1,0} (\partial HH^{-1})
		=0.
\end{equation}

The result now follows from (\ref{eq:n-5}), (\ref{eq:n-6}) and (\ref{eq:n-7}).\qed

\subsection{Boundedness results for $H_0$}
We will now establish some boundedness results for $H_0$. The following $C^1$-boundedness for $h_0$ follows easily from the uniform $C^1$-bound of the sequence $\{ h_{a_i} \}_{i\geq 1}$ which converges to $h_0$.
\begin{proposition}
 \label{pr:2-4}
 	There is a constant $C'_1>0$ such that $| \nabla_{\hat{H}}h_0 |_{\hat{H},\hat{g}_0} \leq C'_1$ on $X_{0,sm}$.   
\end{proposition}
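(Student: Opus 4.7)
\medbreak\noindent\textbf{Proof proposal.} The plan is to pass the uniform $C^1$-bound for the sequence $\{h_{a_i}\}$, established during the proof of Theorem \ref{th:2-1}, to the limit $h_0$. The argument has three ingredients: the uniform bound for the approximating sequence, the smooth convergence of the background metrics on compact subsets of $X_{0,sm}$, and the weak $L^p_2$-convergence $h_{a_i}\to h_0$.

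First I would recall that in the course of proving Theorem \ref{th:2-1} we established a constant $K>0$, independent of $i$, such that
\begin{equation*}
    | \nabla_{\hat{H}}h_{a_i} |_{\hat{H},\hat{g}_{a_i}} \leq K \quad\text{on }\hat{X}.
\end{equation*}
This bound was obtained on $U(1)$ via Lemma \ref{lm:X} together with the maximum principle, and on $X_0[\tfrac{1}{2}]$ via a blow-up/iteration argument based on the uniform $L^2$ control from Proposition \ref{pr:L21}. In particular, $K$ does not depend on the behavior of $\hat{g}_{a_i}$ near the exceptional curves.

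Next, fix any compactly embedded open subset $U \subset\subset X_{0,sm}$. By construction of the family $\{\hat{\omega}_{a}\}$ recalled in Section 2, $\hat{g}_{a_i}$ converges smoothly to $\hat{g}_0$ on $U$. Hence for $i$ sufficiently large there is a constant $K'$, depending only on $K$ and the fixed comparison $\hat{g}_{a_i} \geq \tfrac{1}{2}\hat{g}_0$ on $U$, so that
\begin{equation*}
    | \nabla_{\hat{H}}h_{a_i} |_{\hat{H},\hat{g}_0} \leq K' \quad\text{on }U.
\end{equation*}
Since $h_{a_i}\to h_0$ weakly in $L^p_2$ on $U$ for every $p$, the sections $\nabla_{\hat{H}} h_{a_i}$ converge weakly to $\nabla_{\hat{H}} h_0$ in $L^p$ on $U$. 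By weak lower semicontinuity of the $L^p$-norm,
\begin{equation*}
    \left( \int_U | \nabla_{\hat{H}}h_0 |_{\hat{H},\hat{g}_0}^p\, dV_0 \right)^{1/p} \leq K' \,\mathrm{Vol}_0(U)^{1/p}.
\end{equation*}
Letting $p\to\infty$ yields $| \nabla_{\hat{H}}h_0 |_{\hat{H},\hat{g}_0} \leq K'$ almost everywhere on $U$, and by smoothness of $h_0$ (established at the end of the proof of Theorem \ref{th:2-1}) this bound holds pointwise on $U$. Since $U\subset\subset X_{0,sm}$ was arbitrary and $K'$ is independent of $U$, setting $C'_1 := K'$ gives the desired global bound on $X_{0,sm}$.

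The only step requiring care is the comparison of the norms $|\cdot|_{\hat{H},\hat{g}_{a_i}}$ and $|\cdot|_{\hat{H},\hat{g}_0}$ on $U$; this is handled by the smooth convergence of $\hat{g}_{a_i}\to\hat{g}_0$ on compact subsets of $X_{0,sm}$, which is built into the Fu--Li--Yau construction. No further obstacle is anticipated.
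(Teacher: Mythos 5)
Your proposal is correct and follows exactly the route the paper intends: the paper merely remarks that the bound ``follows easily from the uniform $C^1$-bound of the sequence $\{h_{a_i}\}$,'' and you have filled in the standard details (metric comparison on compact subsets plus weak lower semicontinuity of the $L^p$-norm as $p\to\infty$). One small slip: to pass from the $\hat{g}_{a_i}$-norm to the $\hat{g}_0$-norm of the one-form-valued section $\nabla_{\hat{H}}h_{a_i}$ you need a bound of the form $\hat{g}_{a_i}\leq 2\hat{g}_0$ (so that the dual metrics satisfy $\hat{g}_0^{-1}\leq 2\hat{g}_{a_i}^{-1}$), not $\hat{g}_{a_i}\geq\tfrac{1}{2}\hat{g}_0$; since the convergence is smooth on $U$ both two-sided comparisons hold for $i$ large, so the argument is unaffected.
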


Higher order bounds for $H_0$ will be described in the uniform coordinate system $\{(B_z,\phi_ {0,z})| z\in X_{0,sm} \} $ from Section 3.
\begin{proposition}
 \label{pr:2-3}
    There are constants $C'_k>0$ for $k \geq 0$ such that in the above coordinate system, 
    \begin{equation*}
        \Arrowvert  h_0 \Arrowvert_{C^k(B_z,\hat{H},g_e)}<C'_k
    \end{equation*}
    for each $z \in X_{0,sm}$.
\end{proposition}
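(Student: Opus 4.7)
The plan is to bootstrap elliptic regularity for the HYM equation $\sqrt{-1}\Lambda_{\hat\omega_0}F_{H_0}=0$, executed in the rescaled uniform coordinate system $\{(B_z,\phi_{0,z})\}_{z\in X_{0,sm}}$ from Section 3 so that every constant is independent of $z$. For $z$ with $\mathbf{r}_0(z)$ bounded below by a fixed positive constant the statement reduces to standard interior elliptic regularity on a compact region where both $\hat g_0$ and $\hat H$ are smooth with controlled geometry, so the substantive case is $z\in U(1)$. There I would fix the constant holomorphic frame in which $\hat H\equiv I$, provided by the triviality of $\mathcal{E}$ on a neighborhood of the exceptional curves.

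In such a frame $F_{\hat H}=0$ and $\partial_{\hat H}=\partial$, so the HYM equation for $H_0=h_0$ becomes the quasi-linear elliptic system
\begin{equation*}
  g_{co,0}^{i\bar j}\,\partial_i\partial_{\bar j}h_0
    =g_{co,0}^{i\bar j}\,(\partial_i h_0)\,h_0^{-1}\,(\partial_{\bar j}h_0).
\end{equation*}
The key observation is that $\sqrt{-1}\Lambda_\omega F_H=0$ is invariant under a constant rescaling $\omega\mapsto c\omega$, since the factor $c^{-1}$ from $(cg)^{i\bar j}$ cancels on both sides. Hence the same system holds after replacing $g_{co,0}$ with the rescaled metric $\bar g_0:=\mathbf{r}_0(z)^{-\frac{4}{3}}g_{co,0}$. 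By the $t=0$ case of Proposition \ref{pr:n-1}, $\phi_{0,z}^*\bar g_0$ is uniformly $C^k$-equivalent to $g_e$ on $B_z$ with constants independent of $z$. Thus on $B_z$, $h_0$ satisfies a uniformly elliptic quasi-linear PDE whose second-order coefficients lie in a uniformly bounded subset of $C^k(B_z,g_e)$.

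The required zero- and first-order input is already uniform. The Remark after Theorem \ref{th:2-1}, combined with the determinant normalization from Lemma \ref{lm:d-1}, gives uniform $C^0$ bounds on $h_0$ and $h_0^{-1}$. Proposition \ref{pr:2-4} gives $|\nabla h_0|_{\hat H,\hat g_0}\leq C'_1$ on $X_{0,sm}$, and under the scaling $\phi_{0,z}^*\hat g_0\asymp \mathbf{r}_0(z)^{\frac{4}{3}}g_e$ this translates to the pointwise bound $|d(\phi_{0,z}^*h_0)|_{g_e}\leq C\,\mathbf{r}_0(z)^{\frac{2}{3}}C'_1$, which is uniformly bounded in $z$ and in fact vanishes as $\mathbf{r}_0(z)\to 0$. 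Consequently the right-hand side of the PDE, which is quadratic in $\partial h_0$ with coefficient controlled by $|h_0^{-1}|$, is uniformly bounded in $L^\infty(B_z,g_e)$.

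From here a standard Schauder bootstrap closes the argument. Interior $W^{2,p}$ estimates, an application of Proposition \ref{pr:uniform} in the Euclidean setting, give uniform control of $\Arrowvert h_0\Arrowvert_{W^{2,p}(B'_z,g_e)}$ for every $p<\infty$ on a slightly smaller ball $B'_z\subset B_z$; Sobolev embedding promotes this to a uniform $C^{1,\alpha}$ bound; the right-hand side, being polynomial in $h_0$, $h_0^{-1}$, and $\partial h_0$, then lies in $C^{0,\alpha}$ uniformly, and Schauder upgrades $h_0$ to $C^{2,\alpha}$. Iterating this loop yields the asserted uniform $C^k(B'_z,g_e)$ bounds for every $k$. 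The only real obstacle, and the whole reason for working in the rescaled chart, is keeping every constant independent of $z$ as $\mathbf{r}_0(z)\to 0$; the conformal invariance of HYM together with the uniform coordinate system from Section 3 are precisely what make this uniformity possible.
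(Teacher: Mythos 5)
Your proposal is essentially the paper's own argument: work on $U(1)$ in the constant holomorphic frame where $\hat H=I$, use Proposition~\ref{pr:2-1} and the remark after Theorem~\ref{th:2-1} for the $C^0$ bound on $h_0$ and $h_0^{-1}$, translate the intrinsic $C^1$ bound of Proposition~\ref{pr:2-4} into a Euclidean bound on $B_z$ via the scaling $\phi_{0,z}^*\hat g_0\asymp\mathbf{r}_0(z)^{4/3}g_e$, and then bootstrap the quasi-linear HYM equation in the rescaled uniform charts. Your explicit observation that multiplying the equation by $\mathbf{r}_0(z)^{4/3}$ is exactly the statement that the HYM equation is invariant under homothety of the base metric is a clean articulation of what the paper's equation (\ref{eq:21}) implements; the paper carries out the elliptic bootstrap by citing p.\,15 of \cite{J} and then applying Schauder iteratively, whereas you phrase it as $W^{2,p}$ followed by Schauder, but these are the same estimates.
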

\begin{proof}
It is enough to focus on $V_{0,sm}(1)$, where $\mathcal{E}$ is the trivial bundle. Moreover, by gauge invariance of the norm, it is enough to work under a holomorphic frame in which $\hat{H}=I$. With this understood, $h_0$ is just $H_0$.

The result for the $k=0$ cases is Proposition \ref{pr:2-1}. For the $k=1$ case, note that by Proposition \ref{pr:2-4} we have locally
\begin{equation*}
    \text{tr}\left( {\hat{g}_0}^{i\bar{j}}\frac{\partial H_0}{\partial w_i }\frac{\partial H^*_0}{\partial \bar{w}_j} \right)<(C'_1)^2.
\end{equation*}  
Here $\ast$ is w.r.t. $I$, and in this case $H_0^*=H_0$. Therefore, because the norm $\mathbf{r}_0(z)^{-\frac{4}{3}}\hat{g}_0\leq C_0g_e$ where $g_e$ is the Euclidean metric in $(w_1,w_2,w_3)$, we have
\begin{equation}
 \label{eq:22}
      \text{tr}\left({g}_e^{i\bar{j}}\frac{\partial H_0}{\partial w_i }\frac{\partial H_0}{\partial \bar{w}_j} \right) 
      \leq C_0 \text{tr}\left(\mathbf{r}_0(z)^\frac{4}{3}(\hat{g}_0)^{i\bar{j}}\frac{\partial H_0}{\partial w_i }\frac{\partial H_0}{\partial \bar{w}_j} \right)<(C'_1)^2C_0\mathbf{r}_0(z)^\frac{4}{3}<K_{11},
\end{equation} 
for some constant $K_{11}$ independent of $z$. This is the desired result for $k=1$. 
 
For the $k \geq 2$ case, note that the metric $H_0$ is HYM, so in each coordinate chart $B_z$ it satisfies the equation 
\begin{equation}
 \label{eq:21}
          \mathbf{r}_0(z)^\frac{4}{3}{\hat{g}_0}^{i\bar{j}}\frac{\partial^2H_0}{\partial w_i \partial \bar{w}_j}
          =\mathbf{r}_0(z)^\frac{4}{3}{\hat{g}_0}^{i\bar{j}}\frac{\partial H_0}{\partial w_i }{H_0}^{-1}
           			\frac{\partial H_0}{\partial \bar{w}_j }.
\end{equation}
By the $k=0,1$ cases and Proposition \ref{pr:2-4} the right hand side of (\ref{eq:21}) is bounded by some constant independent of $z \in V_{0,sm}(1)$. Moreover, there is a constant $\lambda>0$ independent of $z\in V_{0,sm}(1)$ such that
\begin{equation}
 \label{eq:23}
     \mathbf{r}_0(z)^\frac{4}{3}(\hat{g}_0)^{i\bar{j}}\xi_i \bar{\xi}_j \geq \lambda |\xi|^2
\end{equation}
over any $B_z$. Therefore, by p.15 of \cite{J}, the bounds in (\ref{eq:22}) and (\ref{eq:23}) together with the estimates on the higher derivatives of $\mathbf{r}_0(z)^{-\frac{4}{3}}\hat{g}_0$ from (\ref{eq:20}) imply that
\begin{equation*}
    \Arrowvert H_0 \Arrowvert_{C^{1,\frac{1}{2}}(B'_z,g_e)}<K_{12}
\end{equation*}
where $B'_z\subset B_z$ is the ball of radius $\frac{\rho}{2}$ and $K_{12}$ is a constant independent of $z$. It is not hard to improve this to
\begin{equation*}
    \Arrowvert H_0 \Arrowvert_{C^{1,\frac{1}{2}}(B_z,g_e)}<K_{13}
\end{equation*}
by considering the estimates over $B_y$ for $y \in B_z \backslash B'_z$. What is important is that this $C^{1,\frac{1}{2}}(B_z,g_e)$ bound of $H_0$ implies that the right hand side of (\ref{eq:21}) is bounded in the $C^{0,\frac{1}{2}}$ sense, and so by elliptic regularity we get
\begin{equation*}
    \Arrowvert H_0 \Arrowvert_{C^{2,\frac{1}{2}}(B'_z,g_e)}<K_{14},
\end{equation*}
which can be improved to $B_z$ as before. Using bootstrap arguments, we can obtain, for any $k\geq 1$, a constant $C'_k$ independent of $z \in V_{0,sm}(1)$ such that
\begin{equation*}
    \Arrowvert H_0 \Arrowvert_{C^{k}(B_z,g_e)}<C'_k.
\end{equation*}
Here the derivatives is w.r.t. the Euclidean derivatives. However, these are also the derivatives w.r.t. $g_e$ and $\hat{H}$ since $\hat{H}=I$ here. 
\qed
\end{proof}

Let $\alpha$ be a number such that $0<\alpha <\frac{1}{2}$. We will specify the choice of $\alpha$ later. If we restrict ourselves to the region $V_0(\frac{1}{2}R|t|^\alpha,3R|t|^\alpha)$, where the bundle $\mathcal{E}$ is trivial, we have the following result which we will need in the next section.
\begin{proposition}
 \label{pr:1-4}
     For every small $t$ and $w_t \in V_0(\frac{1}{2}R|t|^\alpha,3R|t|^\alpha)$, we have
     \begin{equation*}
          \Arrowvert H_0H_0(w_t)^{-1}-I \Arrowvert_{C^{2,\alpha}(B_z,\hat{H},g_e)}<D|t|^{\frac{2}{3}\alpha}
     \end{equation*}
     where $D>0$ is a constant independent of $t$ and $z\in V_0(\frac{1}{2}R|t|^\alpha,3R|t|^\alpha)$. Here $H_0(w_t)$ is viewed as a constant metric on $\mathcal{E}|_{V_0(\frac{1}{2}R|t|^\alpha,3R|t|^\alpha)} \cong \mathcal{O}^r$.
\end{proposition}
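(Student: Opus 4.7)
The plan is to work inside the rescaled Euclidean chart $B_z$ constructed in Section 3, where the metric $\bar g_{co,0}:=\mathbf{r}_0(z)^{-4/3}g_{co,0}$ is uniformly equivalent to $g_e$ and has $C^k$-bounds uniform in $z$ (and, critically, independent of $t$). Since $\hat g_0=g_{co,0}$ on $V_0(1)$ and $\mathcal E$ is trivial there, I fix a constant frame with $\hat H=I$ and regard every metric/endomorphism as a matrix-valued function. Multiplying the HYM equation $g_{co,0}^{i\bar j}(\partial_i\partial_{\bar j}H_0-\partial_iH_0\,H_0^{-1}\,\partial_{\bar j}H_0)=0$ through by $\mathbf{r}_0(z)^{4/3}$ rewrites it relative to $\bar g_{co,0}$. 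Setting $\tilde H:=H_0H_0(w_t)^{-1}-I$ and multiplying on the right by the uniformly bounded constant matrix $H_0(w_t)^{-1}$, one obtains
\begin{equation*}
\bar g_{co,0}^{i\bar j}\partial_i\partial_{\bar j}\tilde H\;=\;\bar g_{co,0}^{i\bar j}\,\partial_iH_0\,H_0^{-1}\,\partial_{\bar j}H_0\,H_0(w_t)^{-1}\;=:\;f,
\end{equation*}
a uniformly elliptic quasilinear equation on $B_z$ whose coefficient matrix has uniform $C^k$-bounds.

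Next I would extract the basic smallness of the first derivative. From $\bar g_{co,0}\sim g_e$ one reads off the scaling relation $|\nabla_e H_0|_{g_e}=O(1)\cdot|\nabla_{\bar g_{co,0}}H_0|_{\bar g_{co,0}}=O(\mathbf{r}_0(z)^{2/3})\cdot|\nabla_{g_{co,0}}H_0|_{g_{co,0}}$, so the global bound of Proposition \ref{pr:2-4} gives $\|\nabla_e H_0\|_{C^0(B_z)}\le C|t|^{2\alpha/3}$. Writing the cone metric as $d\rho^2+\rho^2g_\Sigma$ with $\rho=\mathbf{r}_0^{2/3}$, the annulus $V_0(\tfrac{1}{2}R|t|^\alpha,3R|t|^\alpha)$ has $g_{co,0}$-diameter $O(|t|^{2\alpha/3})$, and the physical image $U_z$ of $B_z$ sits inside a slightly enlarged annulus of the same order. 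Integrating $|\nabla_{g_{co,0}}H_0|\le C$ along any path from $w_t$ to a point of $U_z$ therefore yields $\|H_0-H_0(w_t)\|_{C^0(U_z)}\le C|t|^{2\alpha/3}$, hence (using the uniform bound on $H_0(w_t)^{-1}$ from the remark after Theorem \ref{th:2-1}) $\|\tilde H\|_{C^0(B_z)}\le C|t|^{2\alpha/3}$.

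The final step is an interior Schauder estimate on a slightly enlarged chart $\widetilde B_z\supset B_z$. Proposition \ref{pr:2-3} gives $\|\partial^kH_0\|_{C^0(\widetilde B_z)}\le C'_k$, and combined with the small $C^0$-bound on $\partial H_0$, the standard interpolation $[\partial H_0]_{C^{0,\alpha}}\le C\|\partial H_0\|_{C^0}^{1-\alpha}\|\partial^2H_0\|_{C^0}^{\alpha}$ produces $[\partial H_0]_{C^{0,\alpha}(\widetilde B_z)}\le C|t|^{2\alpha(1-\alpha)/3}$. Expanding $[f]_{C^{0,\alpha}}$ by the Leibniz rule, each term retains at least one factor of $\partial H_0$ measured in $C^0$, contributing at worst $|t|^{2\alpha(2-\alpha)/3}\le|t|^{2\alpha/3}$ for $0<\alpha<1$; thus $\|f\|_{C^{0,\alpha}(\widetilde B_z)}\le C|t|^{2\alpha/3}$. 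Interior Schauder estimates for the uniformly elliptic $\bar g_{co,0}^{i\bar j}\partial_i\partial_{\bar j}$ then yield
\begin{equation*}
\|\tilde H\|_{C^{2,\alpha}(B_z)}\;\le\;C\bigl(\|\tilde H\|_{C^0(\widetilde B_z)}+\|f\|_{C^{0,\alpha}(\widetilde B_z)}\bigr)\;\le\;D|t|^{2\alpha/3}.
\end{equation*}

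The principal difficulty is the careful bookkeeping of smallness in the quadratic nonlinearity $f$: only the first-derivative factor $\partial H_0$ carries the scaling $|t|^{2\alpha/3}$, whereas $H_0$, $H_0^{-1}$, $H_0(w_t)^{-1}$, $\partial^kH_0$ for $k\ge 2$, and the coefficient matrix $\bar g_{co,0}^{i\bar j}$ are only uniformly bounded. Keeping at least one factor of $\partial H_0$ in $C^0$ in each piece of the Leibniz expansion of $[f]_{C^{0,\alpha}}$, rather than transferring the Hölder seminorm onto it, is what preserves the $|t|^{2\alpha/3}$ scaling through the Schauder closing.
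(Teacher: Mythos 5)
Your proof is correct and follows essentially the same approach as the paper: rescale to the chart $B_z$, get the $C^0$ smallness of $H_0-H_0(w_t)$ from the global $C^1$ bound of Proposition \ref{pr:2-4} via the $O(|t|^{2\alpha/3})$ diameter of the annulus, feed the quadratic nonlinearity of the HYM equation (which carries a factor $\|\partial H_0\|_{C^0}=O(|t|^{2\alpha/3})$) into an interior Schauder estimate. The only cosmetic difference is that the paper runs a two-pass elliptic bootstrap (first $C^{1,1/2}$, then $C^{2,\alpha}$, so that the Hölder seminorm of $\partial(H_0-H_0(w_t))$ inherits smallness from the first pass), whereas you bound $[\partial H_0]_{C^{0,\alpha}}$ directly by interpolating between the small $C^0$ norm and the uniform $C^2$ bound of Proposition \ref{pr:2-3}; both close the estimate with the same exponent.
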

\begin{proof}
	We work in a holomorphic frame over $V_0(\frac{1}{2}R|t|^\alpha,3R|t|^\alpha)$ under which $\hat{H}=I$, so $H_0(w_t)$ is constant a matrix. Because of the bound in the remark before the proof of Lemma \ref{lm:X}, it is enough to show that 
	     \begin{equation*}
          \Arrowvert H_0-H_0(w_t) \Arrowvert_{C^{2}(B_z,g_e)}<D|t|^{\frac{2}{3}\alpha}
     \end{equation*}
     for some constant $D$.
     
     Since $|\nabla_{\hat{H}} H_0|_{\hat{H},\hat{g}_0}<C'_1$ for some constant $C'_1$ and there is a constant $K_{15}>0$ such that $\text{dist}_{\hat{g}_0}(z,w_t)<K_{15}|t|^{\frac{2}{3}\alpha}$ for any small $t$ and $z \in V_0(\frac{1}{2}R|t|^\alpha,3R|t|^\alpha)$, by the mean value theorem we have $|H_0-H_0(w_t)|<K_{16}|t|^{\frac{2}{3}\alpha}$ on $V_0(\frac{1}{2}R|t|^\alpha,3R|t|^\alpha)$.
     For each $z \in V_0(\frac{1}{2}R|t|^\alpha,3R|t|^\alpha)$ in each coordinate chart $B_z$ we have 
\begin{equation}
 \label{eq:24}
           \mathbf{r}_0(z)^\frac{4}{3}(\hat{g}_0)^{i\bar{j}}\frac{\partial^2(H_0-H_0(w_t))}{\partial w_i \partial \bar{w}_j}
          =\mathbf{r}_0(z)^\frac{4}{3}(\hat{g}_0)^{i\bar{j}}\frac{\partial H_0}{\partial w_i }{H_0}^{-1}
           			\frac{\partial H_0}{\partial \bar{w}_j }.
\end{equation}
Notice that equation (\ref{eq:22}) actually implies that the right hand side of equation (\ref{eq:24}) is bounded by $(C'_1)^2C_0\mathbf{r}_0(z)^\frac{4}{3}$, which is less than $K_{17}|t|^{\frac{4}{3}\alpha}$ for some constant $K_{17}>0$. Therefore, in view of (\ref{eq:20}), by elliptic regularity there is a constant $K_{18}$ independent of $t$ and $z \in V_0(\frac{1}{2}R|t|^\alpha,3R|t|^\alpha)$ such that
     \begin{equation*}
        \begin{split}
           &\Arrowvert H_0-H_0(w_t) \Arrowvert_{C^{1,\frac{1}{2}}(B'_z,g_e)} \\
          \leq &K_{18}\left( \left\Arrowvert RHS\,of\,(\ref{eq:24})\right \Arrowvert_{C^0(B_z)}
          + \Arrowvert H_0-H_0(w_t) \Arrowvert_{C^0(B_z)} \right) 
          \leq K_{18}(K_{17}+K_{16})|t|^{\frac{2}{3}\alpha}.
        \end{split}
     \end{equation*}
As in the proof of Proposition \ref{pr:2-3}, this final estimate can be improved to $\Arrowvert H_0-H_0(w_t) \Arrowvert_{C^{1,\frac{1}{2}}(B_z,g_e)} \leq K_{19}|t|^{\frac{2}{3}\alpha}$, and we use elliptic regularity once again to get the desired bound.\qed
\end{proof}

\section{The approximate Hermitian metrics on $\mathcal{E}_t$ over $X_t$}

\subsection{Construction of approximate metrics} In this subsection we construct approximate Hermitian metrics on $\mathcal{E}_t$. We will compare the estimates on the bundles $\mathcal{E}_t$, each over a different manifold $X_t$. For this we first recall the smooth family of diffeomorphisms $x_t:Q_t\backslash \{\mathbf{r}_t=|t|^\frac{1}{2}\}\rightarrow Q_{0,sm}$ from Section 2. Recall also the fixed large number $R \gg 1$ from Section 3 (after (\ref{eq:2})). For $t$ small, restricting to $V_t(\frac{1}{2}R|t|^\frac{1}{2},1)$ we get a smooth family of injective maps
\begin{equation*}
		x_t:V_t(\frac{1}{2}R|t|^\frac{1}{2},1)\rightarrow V_0(\frac{1}{4}R|t|^\frac{1}{2},\frac{3}{2}).
\end{equation*}
We can extend these to a smooth family of injective maps, still denoted by $x_t$:
\begin{equation*}
		x_t:X_t[\frac{1}{2}R|t|^\frac{1}{2}]\rightarrow X_0[\frac{1}{4}R|t|^\frac{1}{2}].
\end{equation*}

Next, choose a smooth family 
\begin{equation*}
		f_t:\mathcal{E}_t|_{X_t[\frac{1}{2}R|t|^\frac{1}{2}]} \rightarrow \mathcal{E}|_{X_0[\frac{1}{4}R|t|^\frac{1}{2}]}
\end{equation*}
of maps between smooth complex vector bundles which commute with $x_t$ and are diffeomorphic onto the images. 

In addition, we require the following condition on $f_t$. Denote by $(\mathcal{X}, \tilde{\mathcal{E}})$ the smoothing of the pair $(X_0,\pi_*\mathcal{E})$ mentioned in the introduction. By our assumption on $\mathcal{E}$ the restriction of $\tilde{\mathcal{E}}$ to $\mathcal{V}:=\bigcup_{t \in \Delta_\epsilon}V_t(1)$ is a trivial holomorphic bundle. Fix a holomorphic trivialization $\tilde{\mathcal{E}}|_{\mathcal{V}}\cong \mathcal{O}_{\mathcal{V}}^r$ inducing the trivialization $\mathcal{E}|_{V_{0,sm}(1)}\cong \mathcal{O}_{V_{0,sm}(1)}^r$ under which $\hat{H}=I$, the $r\times r$ identity matrix. With the induced holomorphic trivialization of $\mathcal{E}_t|_{V_t(1)}$ for all small $t$, we require the family $f_t$ to be such that when restricting to $V_t(\frac{1}{2}R|t|^\frac{1}{2},\frac{3}{4})$, we get a map from the trivial rank $r$ bundle to another trivial rank $r$ bundle which is the product of the map on the base and the identity map on the $\mathbb{C}^r$ fibers.

Over $X_t[\frac{1}{2}R|t|^\frac{1}{2}]$ we let $H''_t=f_t^*H_0$, the pullback of the HYM metric $H_0$ from $X_0[\frac{1}{4}R|t|^\frac{1}{2}]$. Note that our choice of $f_t$ over $V_t(\frac{1}{2}R|t|^\frac{1}{2},\frac{3}{4})$ is one such that $f_t^*$ becomes the pullback of vector-valued functions by $x_t$. In particular, the pullback of a constant frame of $\mathcal{E}|_{x_t(V_t(2R|t|^\alpha,\frac{3}{4}))}$ by $f_t$ is again a constant frame of $\mathcal{E}_t|_{V_t(2R|t|^\alpha,\frac{3}{4})}$. Therefore, under some constant frame of $\mathcal{E}_t|_{V_t(\frac{1}{2}R|t|^\frac{1}{2},\frac{3}{4})}$, the pullback $\hat{H}_t$ of $\hat{H}$ can be seen as an identity matrix. We can extend this constant frame of $\mathcal{E}_t|_{V_t(2R|t|^\alpha,\frac{3}{4})}$ naturally to one over $V_t(\frac{3}{4})$, and we then can extend $\hat{H}_t$ over $V_t(\frac{3}{4})$ by taking the identity matrix under this constant frame. We then further extend $\hat{H}_t$ over the whole $X_t$ to form a smooth family. We still denote these extensions by $\hat{H}_t$, and they will serve as reference metrics on $\mathcal{E}_t$. \\[0.2cm]

From Proposition \ref{pr:2-3} one can deduce
\begin{lemma}
 \label{lm:5}
      There exists a constant $C_k$ such that for any $t \neq 0$ and $z \in X_t[R|t|^\frac{1}{2}]$ we have $\Arrowvert f_t^*h_0 \Arrowvert_{C^k(B_z,\hat{H}_t,\tilde{g}_t)} \leq C_k $.
\end{lemma}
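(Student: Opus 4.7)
My plan is to transfer the uniform $C^k$ bound on $h_0$ given by Proposition \ref{pr:2-3} to $f_t^*h_0$ by means of the base diffeomorphism $x_t$ and the bundle map $f_t$. I would split the region $X_t[R|t|^{1/2}]$ into the outer piece $X_t[\frac{3}{4}]$ and the neck $V_t(R|t|^{1/2},\frac{3}{4})$, and argue differently on each.

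On the outer piece $X_t[\frac{3}{4}]$, the family $\{(X_t[\frac{3}{4}],\tilde{g}_t,\mathcal{E}_t,\hat{H}_t)\}$ is uniform in $t$: the diffeomorphisms $x_t$ are close to the identity, $\tilde{g}_t$ converges smoothly to $\hat{\omega}_0$, the bundles $\mathcal{E}_t$ form a smooth family via $f_t$, and the reference metrics $\hat{H}_t$ vary smoothly with $t$. For $t$ small, the image $x_t(X_t[\frac{3}{4}])$ lies in the fixed compact subset $X_0[\frac{1}{2}] \subset X_{0,sm}$, where by Proposition \ref{pr:2-3} the $C^k$-norms of $h_0$ are uniformly bounded. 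Pulling back under $f_t$ and exploiting the uniformity of the family then gives the required bound on $f_t^*h_0$ over $X_t[\frac{3}{4}]$.

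On the neck $V_t(R|t|^{1/2},\frac{3}{4})$, by the setup in Section 2 the bundle $\mathcal{E}_t$ is trivialized by a constant frame in which $\hat{H}_t=I$, and by the choice of $f_t$ there it is the product of $x_t$ on the base with the identity on the $\mathbb{C}^r$-fibers; hence $f_t^*h_0$ is simply the matrix-valued function $h_0\circ x_t$ in that frame. From the construction in Section 3 together with the scaling relations (\ref{eq:s}), the chart $B_z\subset X_t$ is obtained from $B_{x_t(z)}\subset X_0$ by pushforward under $\Upsilon_t$ composed with the $\psi_t$-rescaling, so in the Euclidean coordinates on these charts the map $x_t:B_z\to B_{x_t(z)}$ has uniform $C^k$-bounds independent of $t$ and $z$. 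Theorem \ref{th:0} together with Lemma \ref{lm:4} then implies that the rescaled metric $\bar{\tilde{g}}_t$ on $B_z$ is uniformly $C^k$-comparable to $x_t^*\bar{\hat{g}}_0$ on $B_{x_t(z)}$, so the corresponding Chern connections agree up to uniformly bounded lower-order terms. Consequently the bound $\Arrowvert h_0 \Arrowvert_{C^k(B_{x_t(z)},\hat{H},g_e)}\leq C'_k$ from Proposition \ref{pr:2-3} pulls back to the required uniform bound on $\Arrowvert f_t^*h_0 \Arrowvert_{C^k(B_z,\hat{H}_t,\tilde{g}_t)}$.

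The main technical obstacle is making the transfer precise on the neck: one must verify that, in the rescaled coordinates supplied by Section 3, the pullback by $x_t$ preserves $C^k$-norms up to uniform constants, and that the discrepancy between the Chern connections of $\tilde{g}_t$ and $x_t^*\hat{g}_0$ contributes only uniformly bounded lower-order terms when expressing $\nabla_{\hat{H}_t,\tilde{g}_t}^j$ in terms of Euclidean derivatives. Once these comparisons are established, the lemma follows by combining Proposition \ref{pr:2-3} with the uniform estimates from Section 3.
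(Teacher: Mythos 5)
Your proposal is correct and matches the intended deduction. The paper gives no explicit proof of this lemma---it simply states ``From Proposition \ref{pr:2-3} one can deduce''---and your plan fills in exactly the expected argument: the uniform $C^k$-bounds on $h_0$ over the charts $B_{x_t(z)}\subset X_{0,sm}$ from Proposition \ref{pr:2-3} are transported to the charts $B_z\subset X_t$ by observing that (i) the charts on $X_t$ near the neck are by construction pushforwards under $\Upsilon_t$ (i.e.\ $x_t\circ\phi_{t,z}=\phi_{0,x_t(z)}$ in the real coordinates, so $x_t$ is literally the identity there, with the holomorphic coordinates differing only by the uniformly bounded ratio $\mathbf{r}_t(z)/\mathbf{r}_0(x_t(z))$ controlled by Lemma \ref{lm:2}), (ii) $f_t^*h_0=h_0\circ x_t$ in the constant frame where $\hat{H}_t=I$, and (iii) Theorem \ref{th:0} provides the uniform $C^k$-equivalence between the rescaled $\tilde{g}_t$ and $g_e$ needed to pass from the Euclidean-derivative norm of Proposition \ref{pr:2-3} to the $(\hat{H}_t,\tilde{g}_t)$-derivative norm in the lemma. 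Splitting off the compact piece $X_t[\frac{3}{4}]$ is sound but not strictly necessary, since the same chart-comparison argument covers it.
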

%\begin{proof}
%    It is enough to show this for $z \in V_t(R|t|^\frac{1}{2},\frac{1}{2})$. In this case, for $w=x_t(z)$, note that by our construction of the coordinates on the charts $B_z$ and $B_w$, over $B_w$ the metric $(x_t)_*g_e$ satisfies the assumptions of Lemma \ref{lm:001} for constants independent of $z$. Therefore for any $y \in B_z$ we have 
%    \begin{equation*}
%         | \nabla_{\hat{H}_t,g_e}^j f_t^*h_0 |_{\hat{H}_t,g_e}(y)=|\nabla_{(f_t)_*\hat{H}_t,(x_t)_*g_e}^jh_0 |_{(f_t)_*\hat{H}_t,(x_t)_*g_e}(x_t(y))
%         \leq C\sum_j|\nabla_{\hat{H},g_e}^jh_0 |_{\hat{H},g_e}(x_t(y))
%    \end{equation*} 
%    for some constant $C>0$ independent of $z$ and $y \in B_z$. The result then follows from the corresponding bounds of $h_0$ over $B_w$ in Proposition \ref{pr:2-3}. \qed
%\end{proof}

In view of Theorem \ref{th:0} and Proposition \ref{pr:n-1}, we can deduce
\begin{corollary}
 \label{co:4}
      There exists a constant $C''_k$ such that for any $t$ over $V_t(R|t|^\frac{1}{2},\frac{3}{4})$, we have 
      \begin{equation*}
           \sum_{j=0}^k| \mathbf{r}_t^{\frac{2}{3}j} \nabla_{g_{co,t}}^j (f_t^*h_0) |_{\hat{H}_t,g_{co,t}} \leq C''_k.\\[0.2cm]
      \end{equation*}
\end{corollary}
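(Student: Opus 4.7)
My plan is to combine three observations: (i) by the first identity of Lemma \ref{lm:4}, the map $x_t$ is an isometry from $(Q_t, g_{co,t})$ to $(Q_{0,sm}, g_{co,0})$; (ii) on $V_t(\frac{1}{2}R|t|^{\frac{1}{2}}, \frac{3}{4})$, by the construction of $\hat{H}_t$ and $f_t$ above, the map $f_t$ is simply $x_t$ on the base tensored with the identity on fibres under the fixed constant holomorphic frames, and both $\hat{H}_t$ and $\hat{H}$ equal $I$ in these frames, so $f_t^*h_0$ is just the matrix-valued function $h_0 \circ x_t$; and (iii) Proposition \ref{pr:2-3} already supplies uniform Euclidean $C^k$-bounds $\|h_0\|_{C^k(B_y,\hat{H},g_e)} \leq C'_k$ on each uniform coordinate chart $B_y \subset X_{0,sm}$ of Proposition \ref{pr:n-1}.

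Given these, I would first observe that because $x_t$ is an isometry of the base metrics and because the bundle Chern connections in the chosen constant frames reduce to the ordinary exterior derivative, the iterated covariant derivatives commute with pullback under $x_t$, yielding the pointwise identity
\[
|\nabla^j_{g_{co,t}}(f_t^*h_0)|_{\hat{H}_t, g_{co,t}}(z) \;=\; |\nabla^j_{g_{co,0}} h_0|_{\hat{H}, g_{co,0}}(x_t(z))
\]
for every $z \in V_t(R|t|^{\frac{1}{2}}, \frac{3}{4})$ and every $j \geq 0$. Next I would convert Proposition \ref{pr:2-3} into a weighted pointwise estimate at $y := x_t(z)$. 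On the chart $\phi_{0,y}$, the rescaled metric $\bar{g}_{co,0} = \mathbf{r}_0(y)^{-4/3} g_{co,0}$ differs from $g_{co,0}$ by a \emph{constant} conformal factor, so its Chern connection is unchanged while the norm of each $j$-covariant tensor scales by $\mathbf{r}_0(y)^{-2j/3}$. Hence
\[
\mathbf{r}_0(y)^{2j/3}\,|\nabla^j_{g_{co,0}} h_0|_{\hat{H}, g_{co,0}}(y) \;=\; |\nabla^j_{\bar{g}_{co,0}} h_0|_{\hat{H}, \bar{g}_{co,0}}(y),
\]
and the right-hand side is controlled by a constant multiple of the Euclidean $C^j$-norm of $h_0$ on $B_y$ using the uniform equivalence (\ref{eq:Q-3}) and the uniform $C^k$-bounds (\ref{eq:Q-4}) on $\bar{g}_{co,0}$; this last quantity is then bounded by $C'_j$ via Proposition \ref{pr:2-3}.

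The final step is to transfer the weight from $\mathbf{r}_0$ back to $\mathbf{r}_t$: by Lemma \ref{lm:2}, $\mathbf{r}_t(z) \leq A\,\mathbf{r}_0(x_t(z))$ uniformly on $V_t(R|t|^{\frac{1}{2}}, \frac{3}{4})$, so $\mathbf{r}_t(z)^{2j/3} \leq A^{2j/3}\,\mathbf{r}_0(y)^{2j/3}$, and summing the resulting pointwise bounds over $j = 0, \ldots, k$ produces the desired constant $C''_k$. The only delicate point is the bookkeeping to confirm that the Chern connections genuinely agree under pullback in the chosen constant frames (so that $\nabla_{\hat{H}_t}$ acting on $f_t^*h_0$ is literally the pullback of $\nabla_{\hat{H}}$ acting on $h_0$); this is built into the construction of $\hat{H}_t$ above, and once checked, the rest of the argument is a mechanical conformal rescaling, so I do not expect any serious obstacle.
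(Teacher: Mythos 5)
There is a genuine error in step (i): $x_t$ is \emph{not} an isometry from $(Q_t,g_{co,t})$ to $(Q_{0,sm},g_{co,0})$. The identity $x_t^*\omega_{co,0}=\omega_{co,t}$ in Lemma~\ref{lm:4} is an identity of K\"ahler (i.e.\ symplectic) forms, and since $x_t$ is not holomorphic — Lemma~\ref{lm:4} gives nonzero bounds for $\Upsilon_t^*J_t-J_0$ — this does not upgrade to an equality of Riemannian metrics. Indeed, Lemma~\ref{lm:4} also explicitly estimates $|\nabla_{g_{co,0}}^k(\Upsilon_t^*g_{co,t}-g_{co,0})|_{g_{co,0}} \leq D_{2,k}|t|\mathbf{r}_0^{\frac{2}{3}(-3-k)}$, which is not zero. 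Consequently the pointwise identity
\begin{equation*}
|\nabla^j_{g_{co,t}}(f_t^*h_0)|_{\hat{H}_t, g_{co,t}}(z) \;=\; |\nabla^j_{g_{co,0}} h_0|_{\hat{H}, g_{co,0}}(x_t(z))
\end{equation*}
that you rely on is false for $j\geq 1$: the Chern connections of $g_{co,t}$ and $g_{co,0}$ genuinely differ under pullback (even though, as you correctly note, the bundle factors of the connections both reduce to $d$).

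What \emph{does} hold, and what the paper implicitly uses, is a uniform comparability rather than equality. The uniform coordinate charts of Proposition~\ref{pr:n-1} are built so that, in the charts $B_z$, the rescaled metric $\bar{g}_{co,t}=\mathbf{r}_t(z)^{-4/3}g_{co,t}$ is uniformly equivalent to the Euclidean metric with uniform $C^k$-bounds, uniformly in $z$ and $t$. Combined with Lemma~\ref{lm:5} (the uniform Euclidean $C^k$-bounds for $f_t^*h_0$ on the charts $B_z$, which follows from Proposition~\ref{pr:2-3} via the construction of $f_t$), one gets
\begin{equation*}
\mathbf{r}_t^{\frac{2}{3}j}\,|\nabla^j_{g_{co,t}}(f_t^*h_0)|_{\hat{H}_t, g_{co,t}}(z)
\;\lesssim\; \Arrowvert f_t^*h_0 \Arrowvert_{C^j(B_z,\hat{H}_t,g_e)} \;\leq\; C_j
\end{equation*}
by converting the weighted $g_{co,t}$-norm at $z$ into the $\bar{g}_{co,t}$-norm on $B_z$ and then into the Euclidean one. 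So your overall conclusion and the later parts of the argument (the conformal rescaling bookkeeping, the transfer of the weight between $\mathbf{r}_t$ and $\mathbf{r}_0$ via Lemma~\ref{lm:2}) are sound in spirit, but the proof as written hinges on the false isometry claim and needs to be replaced by the uniform metric comparability built into Proposition~\ref{pr:n-1} and Lemma~\ref{lm:5}.
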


For $\alpha$ such that $0<\alpha <\frac{1}{2}$ and $t$ small, the image of the restriction of $x_t$ to $V_t(R|t|^\alpha,2R|t|^\alpha)$ lies in $V_0(\frac{1}{2}R|t|^\alpha,3R|t|^\alpha)$. For $w_t$ as in Proposition \ref{pr:1-4}, define $H'_t:=f_t^*(H_0(w_t))$ to be the constant metric on $\mathcal{E}_t|_{V_t(2R|t|^\alpha)}$ (w.r.t. a constant frame).

Then by Proposition \ref{pr:1-4} we immediately get
\begin{lemma}
 \label{lm:8}
     There is a constant $D>0$ such that for any $t$ and $z \in V_t(R|t|^\alpha,2R|t|^\alpha)$, we have
     \begin{equation*}
          \Arrowvert H''_t(H'_t)^{-1} -I \Arrowvert_{C^2(B_z,\hat{H}_t,g_e)}<D|t|^{\frac{2}{3}\alpha}.
     \end{equation*}
\end{lemma}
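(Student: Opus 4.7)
The plan is to transfer the estimate of Proposition \ref{pr:1-4}, which was established on $X_0$, to $X_t$ via the diffeomorphism $x_t$ and the bundle isomorphism $f_t$ covering it.

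First, I would exploit the fact that $f_t$ was set up precisely so that, under the fixed constant frames of $\mathcal{E}_t|_{V_t(3/4)}$ and $\mathcal{E}|_{V_{0,sm}(1)}$ coming from the trivialization of $\tilde{\mathcal{E}}|_{\mathcal{V}}$, it acts as $x_t$ on the base and the identity on fibers. In particular both $\hat{H}_t$ and $\hat{H}$ are the identity matrix $I$ in these frames, so that
\[
    H''_t(H'_t)^{-1} - I \;=\; f_t^{*}\bigl(H_0 H_0(w_t)^{-1} - I\bigr) \;=\; \bigl(H_0 H_0(w_t)^{-1} - I\bigr)\circ x_t
\]
as a matrix-valued function. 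This reduces the lemma to comparing Euclidean $C^2$-norms of this common matrix-valued function in the coordinate chart $B_z$ on $X_t$ and in the coordinate chart $B_{x_t(z)}$ on $X_0$.

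Second, I would invoke Proposition \ref{pr:1-4} at $y := x_t(z) \in V_0(\tfrac{1}{2} R|t|^{\alpha}, 3R|t|^{\alpha})$ to get
\[
    \Arrowvert H_0 H_0(w_t)^{-1} - I \Arrowvert_{C^{2,\alpha}(B_y,\hat{H},g_e)} \;<\; D\,|t|^{\frac{2}{3}\alpha},
\]
and then transfer this to the $B_z$-norm via the chart transition $\phi_{0,y}^{-1}\circ x_t\circ \phi_{t,z}$. From the construction in Section 3 one has $\phi_{t,z} = \psi_t\circ\Upsilon_1\circ \phi_{0,x_1(\psi_t^{-1}(z))}$; and because $\psi_t$ acts on the cone $\Sigma\times(0,\infty)$ by scaling only the radial coordinate, one also has $\phi_{0,x_t(z)} = \psi_t\circ \phi_{0,x_1(\psi_t^{-1}(z))}$. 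Combined with $x_1\circ \Upsilon_1=\mathrm{id}$, this shows that $\phi_{0,x_t(z)}^{-1}\circ x_t\circ \phi_{t,z}$ is essentially the identity under the real-coordinate parametrization of Proposition \ref{pr:n-1}. After passing to the holomorphic coordinates introduced at the end of Section 3, this transition becomes the holomorphic change of variables described there, whose $C^k$-norms in Euclidean coordinates are bounded independent of $t$ and of $z\in V_t(R|t|^{\alpha},2R|t|^{\alpha})$, with Lemma \ref{lm:2} ensuring that $\mathbf{r}_t(z)$ and $\mathbf{r}_0(x_t(z))$ are comparable.

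Pulling back the Proposition \ref{pr:1-4} estimate through this uniformly bounded chart transition changes the $C^2(\cdot,g_e)$-norm only by a universal constant, yielding the desired bound
\[
    \Arrowvert H''_t(H'_t)^{-1} - I\Arrowvert_{C^2(B_z,\hat{H}_t,g_e)} \;<\; D\,|t|^{\frac{2}{3}\alpha}
\]
after renaming the constant. The only point requiring care is the chart comparison, but since the uniform coordinate systems of Section 3 were engineered so that $x_t$ intertwines them up to bounded distortion, this amounts to an elementary bookkeeping exercise with no genuine analytic obstacle beyond what is already contained in Proposition \ref{pr:1-4}.
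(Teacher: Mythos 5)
Your proposal is correct and takes essentially the same approach as the paper: the paper's proof of this lemma is the single line ``by Proposition~\ref{pr:1-4} we immediately get'' the result, leaving implicit exactly the chart--comparison and frame--comparison bookkeeping (that $f_t$ acts trivially on fibers in the chosen constant frames so $\hat H_t=\hat H=I$, that $x_t(V_t(R|t|^\alpha,2R|t|^\alpha))\subset V_0(\tfrac12 R|t|^\alpha,3R|t|^\alpha)$ via Lemma~\ref{lm:2}, and that the uniform coordinate systems of Section 3 intertwine with $x_t$ up to uniformly bounded distortion). Your write-up simply makes that implicit bookkeeping explicit; the mathematical content is identical.
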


Now let $\tilde{\tau}_{t}(s)$ be a smooth increasing cutoff function on $\mathbb{R}^1$ such that 
\begin{equation*}
    \tilde{\tau}_{t}(s)=\{ 
        \begin{array}{ll}
 				1, & s \geq 2R|t|^{\alpha-\frac{1}{2}} \\
				0, & s \leq R|t|^{\alpha-\frac{1}{2}},
		  \end{array} 
\end{equation*}
and such that its $l$-th derivative $\tilde{\tau}_{t}^{(l)}$ satisfies $|\tilde{\tau}_{t}^{(l)}| \leq \tilde{K}_l|t|^{(\frac{1}{2}-\alpha)l}$ for $l \geq 1$ for a constant $\tilde{K}_l>0$ independent of $t$.
Define $\tau_t=\tilde{\tau}_{t}(|t|^{-\frac{1}{2}}\mathbf{r}_t)$, which is a cutoff function on $X_t$, and define the approximate Hermitian metric to be
\begin{equation*}
     H_t=(1-\tau_t)H'_t+\tau_t H''_t=(I+\tau_t(H''_t(H'_t)^{-1}-I))H'_t.
\end{equation*}

\noindent \textbf{Remark} The metric $H_t$ is just an interpolation between $f_t^*H_0$ and $H'_t=f_t^*(H_0(w_t))$. Because the determinant $f_t^*h_0$ is bounded uniformly both from above and below, the common $C^0$-bound of $f_t^*h_0$ w.r.t. $\hat{H}_t$ in Lemma \ref{lm:5} implies that the norms $|\cdot|_{H_t}$ and $|\cdot|_{\hat{H}_t}$ are in fact equivalent (uniformly in $t$). \\[0.2cm]

The following estimates for $H_t$ are analogous to those for $H_0$ in Proposition \ref{pr:2-3}. They follow from that proposition with the help of Corollary \ref{co:3}.
\begin{proposition}
 \label{pr:3-3}
    There are constants $C_k>0$ for $k \geq 0$ such that  
    \begin{equation*}
        \Arrowvert H_t (\hat{H}_t)^{-1}\Arrowvert_{C^k(B_z,\hat{H}_t,g_e)}<C_k
    \end{equation*}
    for each $z \in X_t$. \\[0.2cm]
\end{proposition}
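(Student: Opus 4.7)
The plan is to partition $X_t$ according to the support of $\tau_t$ and estimate $H_t(\hat{H}_t)^{-1}$ on each piece using the analogous bounds already established for $H_0$. Outside $V_t(\tfrac{3}{4})$ the cutoff $\tau_t$ equals one (for $t$ small), so $H_t=H''_t=f_t^{*}H_0$; the whole chart $B_z$ then lies in the domain of $f_t$ and in a region where the family $\hat{H}_t$ is a fixed smooth family with uniform eigenvalue and derivative bounds, and the scaled metric $\bar{\tilde g}_t$ differs from $\tilde g_t$ by a bounded factor (since $\mathbf{r}_t(z)$ is pinched between $\tfrac{3}{4}$ and $3$). Lemma \ref{lm:5} then immediately converts into a Euclidean $C^k$-bound on $H_t(\hat{H}_t)^{-1}$. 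Inside $V_t(\tfrac{3}{4})$, under the chosen constant frame of $\mathcal{E}_t$ the reference $\hat{H}_t$ is the identity matrix, so it suffices to bound the Euclidean $C^k$-norm of the matrix function $H_t$ itself on each chart $B_z$.

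I would split $V_t(\tfrac{3}{4})$ into the inner core $V_t(R|t|^\alpha)$, where $\tau_t\equiv0$; the annulus $V_t(R|t|^\alpha,2R|t|^\alpha)$; and the outer shell $V_t(2R|t|^\alpha,\tfrac{3}{4})$, where $\tau_t\equiv1$. On the inner core $H_t=H'_t=H_0(w_t)$ is a constant matrix in the constant frame, so $\|H_t\|_{C^k}$ reduces to the uniform $C^0$-bound on $h_0$ recorded in the remark after Theorem \ref{th:2-1}. On the outer shell $H_t=H''_t=f_t^{*}H_0$; here I would use the identification $x_t\circ\phi_{t,z}=\phi_{0,x_t(z)}$ (which follows from $\phi_{t,z}=\psi_t\circ\phi_{1,\psi_t^{-1}(z)}=\psi_t\circ\Upsilon_1\circ\phi_{0,x_1(\psi_t^{-1}(z))}$ together with $x_t=\psi_t\circ x_1\circ\psi_t^{-1}$) to identify $B_z$ and $B_{x_t(z)}$ as the same Euclidean ball, and the fact that $f_t$ carries the constant frame of $\mathcal{E}_t$ to the constant frame of $\mathcal{E}$, to conclude that $H_t$ on $B_z$ coincides as a matrix function with $h_0$ on $B_{x_t(z)}$; Proposition \ref{pr:2-3} then provides the required bound.

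The main obstacle is the annular region $V_t(R|t|^\alpha,2R|t|^\alpha)$. I will write
\[
H_t=\bigl(I+\tau_t(H''_t(H'_t)^{-1}-I)\bigr)H'_t
\]
and apply the Leibniz rule. The factor $H'_t$ is a constant matrix, while the $C^k$-bound on $H''_t(H'_t)^{-1}$ follows by combining the $C^k$-bound on $H''_t$ obtained exactly as in the outer-shell case with the constant $H'_t$ (the sharper $O(|t|^{2\alpha/3})$ smallness in $C^2$ from Lemma \ref{lm:8} is not required for mere boundedness). The delicate step is to bound the Euclidean $C^k$-norm of $\tau_t=\tilde\tau_t(|t|^{-1/2}\mathbf{r}_t)$ on a chart $B_z$ with $\mathbf{r}_t(z)\sim|t|^\alpha$. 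Each Euclidean derivative of the argument $|t|^{-1/2}\mathbf{r}_t$ is bounded by $|t|^{-1/2}R_m\mathbf{r}_t(z)\lesssim |t|^{\alpha-1/2}$ by (\ref{eq:ra-1}), while $\tilde\tau_t^{(l)}$ is bounded by $\tilde K_l|t|^{(1/2-\alpha)l}$; the two exponents cancel term by term in the Faà di Bruno expansion and yield a bound on $|\nabla_e^l\tau_t|_{g_e}$ independent of $t$ and $z$. Combining via Leibniz gives the desired uniform $C^k$-bound on the annulus, and the proposition follows by taking $C_k$ to be the maximum of the three regional constants.
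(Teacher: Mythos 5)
Your decomposition into the four regions ($X_t[\tfrac34]$, inner core, annulus, outer shell), the use of $\hat H_t=I$ in the constant frame on $V_t(\tfrac34)$, and the Leibniz/Fa\`a di Bruno bookkeeping on the annulus are exactly what the paper's one-sentence proof is gesturing at; the cancellation $|t|^{(\alpha-\frac12)l}\cdot|t|^{(\frac12-\alpha)l}=1$ between the bound from (\ref{eq:ra-1}) and the assumed bound on $\tilde\tau_t^{(l)}$ is precisely the content of the reference to Corollary~\ref{co:3}. The outer-region and inner-core cases are also handled as the paper intends.

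The one step that is not correct as written is the asserted identity $x_t\circ\phi_{t,z}=\phi_{0,x_t(z)}$. Your substitutions correctly reduce it to $\psi_t\circ\phi_{0,\xi}=\phi_{0,\psi_t(\xi)}$ on $Q_0$ (with $\xi=x_1(\psi_t^{-1}(z))$ and $\psi_t:Q_0\to Q_0$ the multiplication by $t^{1/2}$), but that equality fails once $\arg t\neq 0$: writing $t^{1/2}=|t|^{1/2}e^{i\arg(t)/2}$, the modulus part commutes with the $SO(4)$ left translations used to define the charts $\Phi_p$ (hence with $\phi_{0,\cdot}$), while the unimodular part $e^{i\arg(t)/2}$ acts on $\Sigma$ by the Reeb rotation, which commutes with $SO(4)$ but is not contained in it, so it moves the chart family $\{\phi_{0,\cdot}\}$ rather than permuting it; concretely $e^{i\phi}\Phi_p(y)\neq\Phi_{e^{i\phi}p}(y)$ in general. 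This is a gap in precision rather than in strategy: $\psi_t\circ\phi_{0,\xi}$ and $\phi_{0,\psi_t(\xi)}$ are two charts centered at $x_t(z)$ whose transition map lies in a compact family of isometries of $(\Sigma,g_\Sigma)$ built from $SO(4)\times U(1)$, so it and all its derivatives are bounded uniformly in $t$ and $z$, and the $C^k$ bound of Proposition~\ref{pr:2-3} survives the composition. You should either insert that comparison explicitly, or avoid chart-chasing altogether by invoking the covariant version of the estimate (Lemma~\ref{lm:5}, Corollary~\ref{co:4}) together with Theorem~\ref{th:0}; with this one repair the argument is a valid fleshing out of the paper's sketch.
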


\subsection{Bounds for the mean curvatures} The following proposition gives the bounds for the mean curvatures $\sqrt{-1}\Lambda_{\tilde{\omega}_t}F_{H_t}$ of the approximate metrics $H_t$.
\begin{proposition}
 \label{pr:3-4}
        There are constants $\Lambda_k>0$ and $\tilde{Z}_k>0$ such that for $t$ small enough, we have the following: 
     \begin{enumerate} 
      \item For any $z \in X_t$ and $k\geq 1$,
      			\begin{equation}
				 \label{eq:F-3}
				 		\Arrowvert \mathbf{r}_t^\frac{4}{3} \Lambda_{\tilde{\omega}_t}F_{H_t} \Arrowvert_{C^k(B_z,\hat{H}_t,g_e)} \leq \Lambda_k,
				\end{equation}
      \item  \begin{equation}
        	 		\label{eq:20-5}
						|\mathbf{r}_t^\frac{4}{3} \Lambda_{\tilde{\omega}_t}F_{H_t} |_{H_t}\leq \tilde{Z}_0 \max \{|t|^{\frac{2}{3}\alpha},\,|t|^{1-2\alpha}\},
			   \end{equation}
and
        \item  \begin{equation}
         		  \label{eq:20-6}
				         \Arrowvert \Lambda_{\tilde{\omega}_t}F_{H_t} \Arrowvert_{L^k_{0,-4}(X_t,H_t,\tilde{g}_t)}\leq \tilde{Z}_k \max \{|t|^{2\alpha},\,|t|^{1-\frac{2}{3}\alpha}\}.
				\end{equation}
	\end{enumerate}
\end{proposition}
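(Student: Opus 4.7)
The plan is to decompose $X_t$ by the size of $\mathbf{r}_t$ and estimate $F_{H_t}$ on each piece. Setting $\eta_t := H''_t(H'_t)^{-1}-I$ and using that $H'_t$ is constant in the chosen holomorphic trivialization, one has
\begin{equation*}
\partial H_t\cdot H_t^{-1}=(\partial\tau_t\cdot\eta_t+\tau_t\,\partial\eta_t)(I+\tau_t\eta_t)^{-1},
\end{equation*}
so $F_{H_t}=\bar\partial(\partial H_t\cdot H_t^{-1})$ vanishes on the inner region $V_t(R|t|^\alpha)$ (where $\tau_t\equiv 0$), is generated entirely by derivatives of $\tau_t$ and $\eta_t$ on the interpolation annulus $V_t(R|t|^\alpha,2R|t|^\alpha)$, and equals the pullback by $f_t$ of $F_{H_0}$ on the outer region $\mathbf{r}_t\ge 2R|t|^\alpha$.

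On the interpolation annulus, the definition of $\tau_t$ together with (\ref{eq:ra-1}) gives $|\nabla^k_{\tilde g_t}\tau_t|_{\tilde g_t}\le C_k|t|^{-2k\alpha/3}$, while Lemma~\ref{lm:8}, converted to $\tilde g_t$-norms via $\tilde g_t\sim\mathbf{r}_t(z)^{4/3}g_e$ (Theorem~\ref{th:0}), yields $|\nabla^j_{\tilde g_t}\eta_t|_{\tilde g_t}\le D|t|^{2\alpha/3-2j\alpha/3}$. Substituting into the expansion of $F_{H_t}$ produces $|F_{H_t}|_{\tilde g_t}\lesssim|t|^{-2\alpha/3}$, hence $|\mathbf{r}_t^{4/3}\Lambda_{\tilde\omega_t}F_{H_t}|_{H_t}\lesssim|t|^{4\alpha/3}\cdot|t|^{-2\alpha/3}=|t|^{2\alpha/3}$, which is the first term in the maximum of (ii). The $C^k$-statement (i) on the annulus follows by differentiating the same expression and using the chart bounds (\ref{eq:19})--(\ref{eq:20}).

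On the outer region we invoke the identity $\Lambda_{\hat\omega_0}F_{H_0}=0$, which after pulling back by the smooth identification $x_t$ reads
\begin{equation*}
\sqrt{-1}\,\Lambda_{x_t^*\hat\omega_0}\bar\partial_{x_t^*J_0}\partial_{x_t^*J_0}(H_0\circ x_t)\cdot(H_0\circ x_t)^{-1}=0.
\end{equation*}
The mean curvature $\sqrt{-1}\Lambda_{\tilde\omega_t}F_{H_t}$ differs from this vanishing expression only by (a) replacing $x_t^*J_0$ with $J_t$ in the Dolbeault operators, and (b) replacing $x_t^*\hat\omega_0$ with $\tilde\omega_t$ in the trace. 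Error (a) is bounded by Corollary~\ref{asym}, which gives $|\nabla^k(J_t-x_t^*J_0)|_{g_{co,t}}\lesssim|t|\mathbf{r}_t^{2(-3-k)/3}$ for $k=0,1$; paired with the $g_{co,t}$-scaled $C^2$-bounds on $H_t=f_t^*H_0$ from Corollary~\ref{co:4}, this contributes $O(|t|\mathbf{r}_t^{-10/3})$ in $|\cdot|_{g_{co,t}}$. Error (b) is handled analogously, using Lemma~\ref{lm:4} to compare $x_t^*\hat\omega_0$ with $\omega_{co,t}$ and Proposition~\ref{pr:6-1} to absorb the $\theta_t$-correction in $\tilde\omega_t^2-\omega_t^2$; both pieces are of the same or smaller order. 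Therefore $|\mathbf{r}_t^{4/3}\Lambda_{\tilde\omega_t}F_{H_t}|_{H_t}\lesssim|t|\mathbf{r}_t^{-2}\le|t|^{1-2\alpha}$ on the outer conical region, giving the second term in (ii); on the bulk $X_t\setminus V_t(1)$ the metrics and complex structures agree with their $t=0$ limits up to errors that are bounded by a power of $|t|$, so the same estimate persists. Proposition~\ref{pr:3-3} together with explicit differentiation of $F_{H_t}$ then upgrades this pointwise bound to the $C^k$-statement (i) on each chart, the $\mathbf{r}_t^{4/3}$ weight being absorbed into the $g_e\leftrightarrow\tilde g_t$ rescaling.

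For (iii), integrate the pointwise bounds above against $\mathbf{r}_t^{-4}dV_{\tilde g_t}$. Using Lemma~\ref{lm:0-1} we may replace $dV_{\tilde g_t}$ with $dV_{co,t}\sim\mathbf{r}_t^3\,d\mathbf{r}_t\,dV_\Sigma$ in the conical region, whereupon a direct radial integration yields $|t|^{2k\alpha}$ on the annulus and $|t|^{k(1-2\alpha/3)}$ on the outer region (the outer integral being dominated by the lower endpoint $\mathbf{r}_t\sim|t|^\alpha$), with the contribution from $X_t\setminus V_t(3/4)$ of smaller order. Taking $k$-th roots and combining gives (iii). The hard part will be the outer-region comparison in (a)--(b): since $x_t$ is only smooth, the HYM property of $H_0$ has to be transported through two derivatives while losing only $\mathbf{r}_t^{-10/3}$, which is precisely the asymptotic content recorded in Corollary~\ref{asym} and forces the choice $0<\alpha<1/2$ that balances the annulus and outer contributions.
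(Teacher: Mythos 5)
Your proposal is correct and follows essentially the same route as the paper. The region decomposition into $V_t(R|t|^\alpha)$ (vanishing curvature), the interpolation annulus $V_t(R|t|^\alpha,2R|t|^\alpha)$ (bounded by derivatives of $\tau_t$ and of $\eta_t=H_t''(H_t')^{-1}-I$ via Lemma \ref{lm:8}), the outer conical region (transporting $\Lambda_{\hat\omega_0}F_{H_0}=0$ through $x_t$ with errors controlled by Corollary \ref{asym}, Corollary \ref{co:4}, Lemma \ref{lm:4}, and Proposition \ref{pr:6-1}), and the bulk $X_t[\tfrac34]$, together with the weighted radial integration for (iii) and the invocation of Theorem \ref{th:0}/Proposition \ref{pr:3-3} for (i), is precisely the paper's argument; the only cosmetic difference is that you carry the intermediate estimates in $\tilde g_t$-norms, whereas the paper works with $g_e$ on the charts and converts at the end.
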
 
\begin{proof}
The first estimates (\ref{eq:F-3}) follow from Theorem \ref{th:0} and Proposition \ref{pr:3-3}.

For (\ref{eq:20-5}), first of all we have
\begin{equation}
 \label{eq:3-18}
      \Lambda_{\tilde{\omega}_t} F_{H_t}=0\,\,\,\text{on}\,\,V_t(R|t|^\alpha)
\end{equation}
because $H_t=H'_t$ there and $H'_t$ is a flat metric. 

Next consider the annulus $V_t(R|t|^\alpha,2R|t|^\alpha)$. Let 
\begin{equation*}
      h'_t=I+\tau_t(H''_t(H'_t)^{-1} -I),
\end{equation*} 
then we have 
\begin{equation*}
      \Lambda_{\tilde{\omega}_t} F_{H_t}
      =\Lambda_{\tilde{\omega}_t} F_{H'_t}+\Lambda_{\tilde{\omega}_t}\bar{\partial}(\partial_{H'_t}h'_t(h'_t)^{-1})
      =\Lambda_{\tilde{\omega}_t}\bar{\partial}(\partial_{H'_t}h'_t(h'_t)^{-1}).
\end{equation*}  

Now, on each local coordinate chart $B_z\cap V_t(R|t|^\alpha,2R|t|^\alpha)$, compute in a frame under which $H_t'$ is constant, we have
\begin{equation}
 \label{eq:3-3}
   \begin{split}
       \mathbf{r}_t^\frac{4}{3}\Lambda_{\tilde{\omega}_t}\bar{\partial}(\partial_{H'_t}h'_t(h'_t)^{-1}) 
     =&\mathbf{r}_t^\frac{4}{3}\Lambda_{\tilde{\omega}_t}\bar{\partial}\left(\left(\partial h'_t-\partial H'_t(H'_t)^{-1} h'_t+h'_t \partial H'_t(H'_t)^{-1}\right)(h'_t)^{-1}\right) \\
     =&\mathbf{r}_t^\frac{4}{3}\Lambda_{\tilde{\omega}_t}\bar{\partial}(\partial h'_t(h'_t)^{-1}) \\
     =&\mathbf{r}_t^\frac{4}{3}\Lambda_{\tilde{\omega}_t}\partial h'_t(h'_t)^{-1}\bar{\partial}h'_t (h'_t)^{-1} 
       +\mathbf{r}_t^\frac{4}{3}\Lambda_{\tilde{\omega}_t}  \bar{\partial}\partial h'_t(h'_t)^{-1}. 
   \end{split}
\end{equation}
To bound the derivatives of $h'_t$ we need to bound the derivatives of $\tau_t$. The first order derivative of $\tau_t$ can be bounded as 
\begin{equation}
 \label{eq:3-1}
     |\nabla_e \tau_t|_{g_e} \leq |\tilde{\tau}'(|t|^{-\frac{1}{2}}\mathbf{r}_t)|t|^{-\frac{1}{2}} \nabla_e \mathbf{r}_t|_{g_e} 
                                  \leq \tilde{K}_1|t|^{\frac{1}{2}-\alpha} |t|^{-\frac{1}{2}}\cdot R_1\mathbf{r}_t \leq \tilde{K}_1R_1 |t|^{-\alpha}\mathbf{r}_t \leq 2\tilde{K}_1R_1R
\end{equation} 
where (\ref{eq:ra-1}) is used. The last inequality follows from the fact that the support of $\nabla\tau_t$ is contained in $V_t(R|t|^\alpha,2R|t|^\alpha)$.

Similarly, the second order derivative of $\tau_t$ can be bounded as 
\begin{equation}
 \label{eq:3-2}
   \begin{split}
     |\nabla_e^2 \tau_t|_{g_e}
\leq & |\tilde{\tau}''(|t|^{-\frac{1}{2}}\mathbf{r}_t)|t|^{-1}||\nabla_e \mathbf{r}_t|_{g_e}^2+ |\tilde{\tau}'(|t|^{-\frac{1}{2}}\mathbf{r}_t)|t|^{-\frac{1}{2}} \nabla_e^2 \mathbf{r}_t|_{g_e} \\
\leq & \tilde{K}_2 |t|^{1-2\alpha} |t|^{-1}\mathbf{r}_t^2+\tilde{K}_1|t|^{\frac{1}{2}-\alpha} |t|^{-\frac{1}{2}}\cdot R_2\mathbf{r}_t \\
\leq & \tilde{K}_2 |t|^{-2\alpha} \mathbf{r}_t^2+\tilde{K}_1R_2|t|^{-\alpha} \mathbf{r}_t \leq  4\tilde{K}_2R^2+2\tilde{K}_1R_2R
    \end{split} 
\end{equation}
where (\ref{eq:ra-1}) is used again and the last inequality follows as in (\ref{eq:3-1}).

From (\ref{eq:3-1}), (\ref{eq:3-2}) and Lemma \ref{lm:8} we can obtain the estimates
\begin{equation*}
   \begin{split}
     |\partial h'_t|_{\hat{H}_t,g_e}=&|\partial(\tau_t(H''_t(H'_t)^{-1}-I))|_{\hat{H}_t,g_e} \\
     									\leq   & |\nabla_e\tau_t|_{g_e} |H''_t(H'_t)^{-1}-I |_{\hat{H}_t}+\tau_t|\nabla_e (H''_t(H'_t)^{-1})|_{\hat{H}_t,g_e}  \\
									    \leq & 2\tilde{K}_1R_1R\cdot D|t|^{\frac{2}{3}\alpha}+D|t|^{\frac{2}{3}\alpha}
									    \leq  (2\tilde{K}_1R_1R+1) D\cdot|t|^{\frac{2}{3}\alpha}
   \end{split}
\end{equation*} 
and
\begin{equation*}
   \begin{split}
     |\bar{\partial}\partial h'_t|_{\hat{H}_t,g_e} 
     \leq & |\nabla_e^2\tau_t|_{g_e} |H''_t(H'_t)^{-1} -I |_{\hat{H}_t}+\tau_t|\nabla_e^2 (H''_t(H'_t)^{-1})|_{\hat{H}_t,g_e}\\
           &+2|\nabla_e\tau_t|_{g_e}|\nabla_e (H''_t(H'_t)^{-1})|_{\hat{H}_t,g_e} \\
      \leq &(4\tilde{K}_2R^2+2\tilde{K}_1R_2R)\cdot D|t|^{\frac{2}{3}\alpha}+D|t|^{\frac{2}{3}\alpha}+2\cdot 2\tilde{K}_1R_1R\cdot D|t|^{\frac{2}{3}\alpha}\\
      \leq & (4\tilde{K}_2R^2+2\tilde{K}_1R_2R+4\tilde{K}_1R_1R+1)\cdot D|t|^{\frac{2}{3}\alpha}.
   \end{split}
\end{equation*}
In local charts, the term $\mathbf{r}_t^\frac{4}{3}\Lambda_{\tilde{\omega}_t}$ contributes to $\mathbf{r}_t^\frac{4}{3}\tilde{g}_t^{-1}$, which is bounded by Theorem \ref{th:0}. Therefore, from expression (\ref{eq:3-3}) we can now conclude that 
\begin{equation*}
      |\mathbf{r}_t^\frac{4}{3}\Lambda_{\tilde{\omega}_t}F_{H_t}|_{\hat{H}_t}
      = |\mathbf{r}_t^\frac{4}{3}\Lambda_{\tilde{\omega}_t}\bar{\partial}(\partial_{H'_t}h'_t(h'_t)^{-1})|_{\hat{H}_t} \leq Z_1|t|^{\frac{2}{3}\alpha}
\end{equation*}
on $V_t(R|t|^\alpha,2R|t|^\alpha)$ for some constant $Z_1>0$ independent of $t$.

From the remark before Proposition \ref{pr:3-3} we get
\begin{equation}
 \label{eq:8-2}
      |\mathbf{r}_t^\frac{4}{3}\Lambda_{\tilde{\omega}_t}F_{H_t}|_{H_t} \leq Z_2|t|^{\frac{2}{3}\alpha}
\end{equation}
on $V_t(R|t|^\alpha,2R|t|^\alpha)$ for some constant $Z_2>0$ independent of $t$..

We now estimate the $L^k_{0,-4}$-norm of $\sqrt{-1}\Lambda_{\tilde{\omega}_t}F_{H_t}$ on $V_t(R|t|^\alpha,2R|t|^\alpha)$ with respect to $\tilde{g}_t$ and $H_t$.
\begin{equation*}
   \begin{split}
       &\int_{V_t(R|t|^\alpha,2R|t|^\alpha)} |\mathbf{r}_t^{\frac{8}{3}} \Lambda_{\tilde{\omega}_t}F_{H_t}|_{H_t}^k \mathbf{r}_t^{-4}dV_t
         = \int_{V_t(R|t|^\alpha,2R|t|^\alpha)} \mathbf{r}_t^{\frac{4}{3} k} |\mathbf{r}_t^\frac{4}{3} \Lambda_{\tilde{\omega}_t} F_{H_t}|_{H_t}^k \mathbf{r}_t^{-4}dV_t \\
    \leq & (2R)^{\frac{4}{3}k} |t|^{\frac{4}{3}\alpha k}\cdot Z_2^k |t|^{\frac{2}{3}\alpha k} \int_{V_t(R|t|^\alpha,2R|t|^\alpha)} \mathbf{r}_t^{-4}dV_t 
    \leq  (2R)^{\frac{4}{3}k}Z_2^k |t|^{2\alpha k} Z_3.
   \end{split}
\end{equation*}
where $Z_3>1$ is an upper bound for $\int_{V_t(R|t|^\alpha,2R|t|^\alpha)} \mathbf{r}_t^{-4}dV_t$ for any $t \neq 0$ small. Thus 
\begin{equation}
 \label{eq:3-17}
     \Arrowvert \Lambda_{\tilde{\omega}_t}F_{H_t} \Arrowvert_{L^k_{0,-4}(V_t(R|t|^\alpha,2R|t|^\alpha),\tilde{g}_t,H_t)} \leq (2R)^{\frac{4}{3}}Z_2  Z_3 |t|^{2\alpha}.
\end{equation}

We proceed to consider the region $V_t(2R|t|^\alpha,\frac{3}{4})$. We will first give a pointwise estimate on the mean curvature of the Hermitian metric $H_t=f_t^*H_0$. We will use $\partial_t$ and $\bar{\partial}_t$ to emphasize that they are the $\partial$- and $\bar{\partial}$-operators on $X_t$, respectively. The calculation will be done under the specific choices of frames as mentioned before Lemma \ref{lm:5}. With these choices, we have $\hat{H}_t=I$ and $f_t^*H_0$ can be regarded as the pullback by $x_t$ of a matrix-valued function representing $H_0$. Since constant frames are holomorphic, the curvature of $f_t^*H_0$ can be computed using this pullback matrix-valued function which we still denote by $f_t^*H_0$.

\begin{lemma}
 \label{lm:7}
    There is a constant $Z_4>0$ independent of $t$ such that 
    \begin{equation*}
         |\mathbf{r}_t^\frac{4}{3} \Lambda_{\tilde{\omega}_t}\left(\bar{\partial}_t(\partial_t (f_t^*H_0)(f_t^*H_0)^{-1})\right)|_{\hat{H}_t} \leq Z_4\cdot |t|\mathbf{r}_t^{-2}
    \end{equation*}
    on $V_t(2R|t|^\alpha,\frac{3}{4})$.
\end{lemma}
\begin{proof}
 We expand and get 
\begin{equation}
 \label{eq:3-19}
    \bar{\partial}_t(\partial_t (f_t^*H_0)(f_t^*H_0)^{-1})
    = (\bar{\partial}_t\partial_t (f_t^*H_0))(f_t^*H_0)^{-1}+\partial_t (f_t^*H_0)\wedge(f_t^*H_0)^{-1}\bar{\partial}_t (f_t^*H_0)  (f_t^*H_0)^{-1}.
\end{equation}
We compute 
\begin{equation}
 \label{eq:3-7}
   \begin{split}
       \bar{\partial}_t\partial_t (f_t^*H_0) 
    =&-\frac{\sqrt{-1}}{2} dJ_t d(f_t^*H_0)=-\frac{\sqrt{-1}}{2} dJ_tf_t^*(dH_0)\\ 
    = &-\frac{\sqrt{-1}}{2} d(x_t^*J_0d(f_t^*H_0))  -\frac{\sqrt{-1}}{2} d\left [(J_t-x_t^*J_0)d(f_t^*H_0) \right ]  \\
    =& -\frac{\sqrt{-1}}{2} f_t^*(dJ_0d(f_t^*H_0))) -\frac{\sqrt{-1}}{2}  d\left [(J_t-x_t^*J_0)d(f_t^*H_0) \right ]  \\
    =& f_t^*(\bar{\partial}_0\partial_0 H_0) -\frac{\sqrt{-1}}{2} d\left [(J_t-x_t^*J_0)f_t^*(dH_0) \right ].
   \end{split}
\end{equation}
Moreover, 
\begin{equation}
 \label{eq:3-8}
   \begin{split}
       \partial_t (f_t^*H_0) 
    =& \frac{d-\sqrt{-1}J_td}{2}(f_t^*H_0)=\frac{1}{2}(1-\sqrt{-1}J_t)f_t^*(dH_0)\\
    =&\frac{1}{2}(1-\sqrt{-1}x_t^*J_0)f_t^*(dH_0)-\frac{\sqrt{-1}}{2}(J_t-x_t^*J_0)f_t^*(dH_0) \\
    =& f_t^*(\partial_0 H_0)-\frac{\sqrt{-1}}{2}(J_t-x_t^*J_0)f_t^*(dH_0),
   \end{split}
\end{equation}
and similarly
\begin{equation}
 \label{eq:3-9}
       \bar{\partial}_t (f_t^*H_0) 
    = f_t^*(\bar{\partial}_0 H_0)+\frac{\sqrt{-1}}{2}(J_t-x_t^*J_0)f_t^*(dH_0).
\end{equation}

Plug in (\ref{eq:3-7}), (\ref{eq:3-8}) and (\ref{eq:3-9}) to (\ref{eq:3-19}), we get
\begin{equation*}
   \begin{split}
       & \bar{\partial}_t(\partial_t (f_t^*H_0)(f_t^*H_0)^{-1}) \\
     =& f_t^*(\bar{\partial}_0\partial_0 H_0)(f_t^*H_0)^{-1}
           -\frac{\sqrt{-1}}{2}d\left [(J_t-x_t^*J_0)f_t^*(dH_0) \right ]\cdot (f_t^*H_0)^{-1}\\
       & +(f_t^*\partial_0 H_0)(f_t^*H_0)^{-1}\wedge (f_t^*\bar{\partial}_0 H_0)(f_t^*H_0)^{-1} \\
       & +\frac{\sqrt{-1}}{2}(f_t^*\partial_0 H_0)(f_t^*H_0)^{-1} \wedge [(J_t-x_t^*J_0)f_t^*(dH_0)](f_t^*H_0)^{-1} \\
       & -\frac{\sqrt{-1}}{2}[(J_t-x_t^*J_0)f_t^*(dH_0)](f_t^*H_0)^{-1} \wedge (f_t^*\bar{\partial}_0 H_0)(f_t^*H_0)^{-1} \\
       & +\frac{1}{4}[(J_t-x_t^*J_0)f_t^*(dH_0)](f_t^*H_0)^{-1} \wedge [(J_t-x_t^*J_0)f_t^*(dH_0)](f_t^*H_0)^{-1} \\
     =& f_t^*(\bar{\partial}_0(\partial_0 H_0(H_0)^{-1})))
           -\frac{\sqrt{-1}}{2}d\left [(J_t-x_t^*J_0)f_t^*(dH_0) \right ]\cdot (f_t^*H_0)^{-1} \\
       & +\frac{\sqrt{-1}}{2}(f_t^*\partial_0 H_0)(f_t^*H_0)^{-1} \wedge [(J_t-x_t^*J_0)f_t^*(dH_0)](f_t^*H_0)^{-1} \\
       & -\frac{\sqrt{-1}}{2}[(J_t-x_t^*J_0)f_t^*(dH_0)](f_t^*H_0)^{-1} \wedge (f_t^*\bar{\partial}_0 H_0)(f_t^*H_0)^{-1} \\
       & +\frac{1}{4}[(J_t-x_t^*J_0)f_t^*(dH_0)](f_t^*H_0)^{-1} \wedge [(J_t-x_t^*J_0)f_t^*(dH_0)](f_t^*H_0)^{-1} \\
   \end{split}
\end{equation*}

Therefore we have
\begin{equation}
 \label{eq:3-10}
   \begin{split}
       &|\mathbf{r}_t^\frac{4}{3} \Lambda_{\tilde{\omega}_t} \bar{\partial}_t(\partial_t (f_t^*H_0)(f_t^*H_0)^{-1})|_{\hat{H}_t} \\
 \leq &|\mathbf{r}_t^\frac{4}{3} \Lambda_{\tilde{\omega}_t} f_t^*(\bar{\partial}_0(\partial_0 H_0(H_0)^{-1}))|_{\hat{H}_t}
          +\frac{1}{2}|\mathbf{r}_t^\frac{4}{3} \Lambda_{\tilde{\omega}_t}d\left [(J_t-x_t^*J_0)f_t^*(dH_0) \right ]\cdot (f_t^*H_0)^{-1}|_{\hat{H}_t} \\
       & +\frac{1}{2} | \mathbf{r}_t^\frac{4}{3} \Lambda_{\tilde{\omega}_t}\left [(f_t^*\partial_0 H_0)(f_t^*H_0)^{-1} \wedge [(J_t-x_t^*J_0)f_t^*(dH_0)](f_t^*H_0)^{-1}\right] |_{\hat{H}_t}\\
       & +\frac{1}{2} | \mathbf{r}_t^\frac{4}{3} \Lambda_{\tilde{\omega}_t}\left [[(J_t-x_t^*J_0)f_t^*(dH_0)](f_t^*H_0)^{-1} \wedge (f_t^*\bar{\partial}_0 H_0)(f_t^*H_0)^{-1} \right ] |_{\hat{H}_t} \\
       & +\frac{1}{4} | \mathbf{r}_t^\frac{4}{3} \Lambda_{\tilde{\omega}_t}\left [[(J_t-x_t^*J_0)f_t^*(dH_0)](f_t^*H_0)^{-1} \wedge [(J_t-x_t^*J_0)f_t^*(dH_0)](f_t^*H_0)^{-1} \right] |_{\hat{H}_t}
   \end{split}
\end{equation}

Note that because $\hat{H}_t=I$ under the chosen frame, we have $f_t^*H_0=f_t^*h_0$. Using the bounds in Proposition \ref{pr:2-3}, we can estimate the first term on the RHS of (\ref{eq:3-10}) in each coordinate chart $B_z$ as
\begin{equation}
 \label{eq:3-11}
   \begin{split}
             & |\mathbf{r}_t^\frac{4}{3} \Lambda_{\tilde{\omega}_t} f_t^*(\bar{\partial}_0(\partial_0 H_0(H_0)^{-1}))|_{\hat{H}_t} \\
       \leq & |\mathbf{r}_t^\frac{4}{3} \Lambda_{x_t^*\hat{\omega}_0} f_t^*(\bar{\partial}_0(\partial_0 H_0(H_0)^{-1}))|_{\hat{H}_t}
              +|\mathbf{r}_t^\frac{4}{3} (\Lambda_{\tilde{\omega}_t} - \Lambda_{x_t^*\hat{\omega}_0})f_t^*(\bar{\partial}_0(\partial_0 H_0(H_0)^{-1}))|_{\hat{H}_t}  \\
       \leq & Z_5|\mathbf{r}_t^\frac{4}{3} f_t^*(\bar{\partial}_0(\partial_0 H_0(H_0)^{-1}))|_{\hat{H}_t,g_{co,t}} \cdot |\tilde{\omega}_t^{-1} - x_t^*\omega_{co,0}^{-1}|_{g_{co,t}} \\
       \leq & Z_6|f_t^*(\bar{\partial}_0(\partial_0 H_0(H_0)^{-1})) |_{C^0(B_z,\hat{H}_t,\bar{g}_{co,t})} \cdot |\tilde{\omega}_t^{-1}-\omega_{co,t}^{-1}|_{g_{co,t}} \\
       \leq & Z_7|f_t^*(\bar{\partial}_0(\partial_0 H_0(H_0)^{-1})) |_{C^0(B_z,\hat{H}_t,g_e)} \cdot C''|t|\mathbf{r}_t^{-\frac{2}{3}} \\
       \leq & Z_8|\bar{\partial}_0(\partial_0 H_0(H_0)^{-1})) |_{C^0(B_{x_t(z)},\hat{H},g_e)} \cdot |t|\mathbf{r}_t^{-\frac{2}{3}}\\
       \leq & Z_9\left(\Arrowvert H_0\Arrowvert_{C^2(B_{x_t(z)},\hat{H},g_e)}+\Arrowvert H_0\Arrowvert_{C^1(B_{x_t(z)},\hat{H},g_e)}^2\right)\cdot |t|\mathbf{r}_t^{-\frac{2}{3}} \\
       \leq &Z_9\cdot (C'_2+(C'_1)^2) \cdot |t|\mathbf{r}_t^{-\frac{2}{3}}
       \leq Z_{10}|t|\mathbf{r}_t^{-2}
   \end{split}
\end{equation}
where Proposition \ref{pr:6-1} and the equation in Lemma \ref{lm:4} are applied. We have also used the fact that
\begin{equation*}
    \Lambda_{x_t^*\hat{\omega}_{0}}f_t^*(\bar{\partial}_0(\partial_0 H_0(H_0)^{-1}))=f_t^*(\Lambda_{\hat{\omega}_{0}}(\bar{\partial}_0(\partial_0 H_0(H_0)^{-1})))=0
\end{equation*}
since $H_0$ is HYM with respect to the balanced metric $\hat{\omega}_{0}$ on $X_{0,sm}$.%Moreover, since $\sup_{X_t} | \theta_t |_{co,t} <C|t|^{-\frac{\beta''}{3}}$ for any 
%$\beta''$ as above, we have $|\tilde{g}_t^{-1}-g_{co,t}^{-1}|_{g_{co,t}} \leq C|t|^{-\frac{\beta''}{3}}$.

The second term on the RHS of (\ref{eq:3-10}) is bounded as
\begin{equation}
 \label{eq:3-12}
    \begin{split}
             & \frac{1}{2}|\mathbf{r}_t^\frac{4}{3} \Lambda_{\tilde{\omega}_t}d\left [(J_t-x_t^*J_0)f_t^*(dH_0) \right ]\cdot (f_t^*H_0)^{-1} |_{\hat{H}_t} \\
       \leq & \frac{1}{2}|\mathbf{r}_t^\frac{4}{3} \Lambda_{\omega_{co,t}}d\left [(J_t-x_t^*J_0)f_t^*(dH_0) \right ]\cdot (f_t^*H_0)^{-1} |_{\hat{H}_t} \\
              &+\frac{1}{2}|\mathbf{r}_t^\frac{4}{3} (\Lambda_{\omega_{co,t}}-\Lambda_{\tilde{\omega}_t}) d\left [(J_t-x_t^*J_0)f_t^*(dH_0) \right ]\cdot (f_t^*H_0)^{-1}|_{\hat{H}_t} \\
       \leq & Z_{11}|\mathbf{r}_t^\frac{4}{3} \Lambda_{\omega_{co,t}}d\left [(J_t-x_t^*J_0)f_t^*(dH_0) \right ]|_{\hat{H}_t} \\
               &+Z_{11}|\tilde{\omega}_t^{-1}-\omega_{co,t}^{-1}|_{g_{co,t}} | \mathbf{r}_t^\frac{4}{3}d\left [(J_t-x_t^*J_0)f_t^*(dH_0) \right ] |_{\hat{H}_t,g_{co,t}}\\
       \leq & Z_{12}(1+|\tilde{\omega}_t^{-1}-\omega_{co,t}^{-1}|_{g_{co,t}})\cdot \\
               & \left( \mathbf{r}_t^\frac{2}{3}|\nabla_{g_{co,t}}(J_t-x_t^*J_0)|_{g_{co,t}} |\mathbf{r}_t^\frac{2}{3} d(f_t^*H_0)|_{\hat{H}_t,g_{co,t}} 
               + |J_t-x_t^*J_0|_{g_{co,t}} \sum_{j=0}^2|\mathbf{r}_t^{\frac{2}{3}j}\nabla_{g_{co,t}}^j (f_t^*H_0)|_{\hat{H}_t,g_{co,t}} \right). \\
    \end{split}
\end{equation}

To proceed, let $\nabla_{\Upsilon_t^*g_{co,t}}-\nabla_{g_{co,0}}$ be the difference between the two connections. It is in fact the difference between the Christoffel symbols of $\Upsilon_t^*g_{co,t}$ and $g_{co,0}$. From the explicit formulas of Christoffel symbols in terms of the metrics and Proposition \ref{pr:n-1}, for some universal constans $D_1>0$ and $D_2>0$ we have
\begin{equation}
 \label{dif}
		|\nabla_{\Upsilon_t^*g_{co,t}}-\nabla_{g_{co,0}}|_{\Upsilon_t^*g_{co,t}}\leq D_1(|g_{co,0}^{-1}dg_{co,0}|_{\Upsilon_t^*g_{co,t}}+|\Upsilon_t^*g_{co,t}^{-1}d\Upsilon_t^*g_{co,t}|_{\Upsilon_t^*g_{co,t}})\leq D_2\mathbf{r}_t^{-\frac{2}{3}}.
\end{equation}

Now, by Corollary \ref{asym} we have
\begin{equation}
 \label{est-1}
    \begin{split}
		|J_t-x_t^*J_0|_{g_{co,t}}
		\leq  |\Upsilon_t^*J_t-J_0|_{\Upsilon_t^*g_{co,t}} 
		\leq D_0|t|\mathbf{r}_t^{-2}
    \end{split}
\end{equation}
and by (\ref{dif}) we have
\begin{equation}
 \label{est-2}
    \begin{split}
		|\nabla_{g_{co,t}}(J_t-x_t^*J_0)|_{g_{co,t}} 
		\leq & |\nabla_{\Upsilon_t^*g_{co,t}}(\Upsilon_t^*J_t-J_0)|_{\Upsilon_t^*g_{co,t}} \\
		\leq & |\nabla_{g_{co,0}}(\Upsilon_t^*J_t-J_0)|_{\Upsilon_t^*g_{co,t}}
					+|(\nabla_{\Upsilon_t^*g_{co,t}}-\nabla_{g_{co,0}})(\Upsilon_t^*J_t-J_0)|_{\Upsilon_t^*g_{co,t}}\\
		\leq & D_0|t|\mathbf{r}_t^{-\frac{8}{3}}+|\nabla_{\Upsilon_t^*g_{co,t}}-\nabla_{g_{co,0}}|_{\Upsilon_t^*g_{co,t}}\cdot |\Upsilon_t^*J_t-J_0|_{\Upsilon_t^*g_{co,t}} \\
		\leq & D_0|t|\mathbf{r}_t^{-\frac{8}{3}}+D_2D_0|t|\mathbf{r}_t^{-\frac{8}{3}}.
    \end{split}
\end{equation}

We also have $|\mathbf{r}_t^\frac{2}{3} d(f_t^*H_0)|_{\hat{H}_t,g_{co,t}} \leq C''_1$ and $\sum_{j=0}^2|\mathbf{r}_t^{\frac{2}{3}j}\nabla_{g_{co,t}}^j (f_t^*H_0)|_{\hat{H}_t,g_{co,t}}  \leq C''_2$ from Corollary \ref{co:4}, and $|\tilde{\omega}_t^{-1}-\omega_{co,t}^{-1}|_{g_{co,t}} \leq C''|t|^\frac{2}{3}$ from Proposition \ref{pr:6-1}. Plug these, (\ref{est-1}) and (\ref{est-2}) into (\ref{eq:3-12}) we get
\begin{equation}
 \label{eq:3-12-1}
    \begin{split}
             & \frac{1}{2}|(f_t^*H_0)^{-1}\mathbf{r}_t^\frac{4}{3} \Lambda_{\tilde{\omega}_t}d\left [(J_t-x_t^*J_0)f_t^*(dH_0) \right ]|_{\hat{H}_t} \\
       \leq & Z_{12}(1+C''|t|^{\frac{2}{3}})\left( \mathbf{r}_t^\frac{2}{3}(D_0+D_2D_0)|t|\mathbf{r}_t^{-\frac{8}{3}}\cdot C''_1 +D_0|t|\mathbf{r}_t^{-2} \cdot C''_2\right)
       \leq  Z_{13}|t|\mathbf{r}_t^{-2}.
    \end{split}
\end{equation}

The third term on the RHS of (\ref{eq:3-10}) is bounded as
\begin{equation}
 \label{eq:3-13}
    \begin{split}
             & \frac{1}{2} | \mathbf{r}_t^\frac{4}{3} \Lambda_{\tilde{\omega}_t}\left [(f_t^*\bar{\partial}_0 H_0) (f_t^*H_0)^{-1}\wedge [(J_t-x_t^*J_0)f_t^*(dH_0)](f_t^*H_0)^{-1} \right] |_{\hat{H}_t}  \\
       \leq  &\frac{1}{2} | \mathbf{r}_t^\frac{4}{3} \Lambda_{\omega_{co,t}}\left [(f_t^*\bar{\partial}_0 H_0) (f_t^*H_0)^{-1}\wedge [(J_t-x_t^*J_0)f_t^*(dH_0)](f_t^*H_0)^{-1} \right] |_{\hat{H}_t}  \\
             &+\frac{1}{2} | \mathbf{r}_t^\frac{4}{3} (\Lambda_{\omega_{co,t}}-\Lambda_{\tilde{\omega}_t}) \left [(f_t^*\bar{\partial}_0 H_0) (f_t^*H_0)^{-1}\wedge [(J_t-x_t^*J_0)f_t^*(dH_0)](f_t^*H_0)^{-1} \right] |_{\hat{H}_t}  \\
       \leq &Z_{14}(1+|\tilde{\omega}_t^{-1}-\omega_{co,t}^{-1}|_{g_{co,t}})\cdot  |J_t-x_t^*J_0|_{g_{co,t}} \cdot |\mathbf{r}_t^\frac{2}{3}d (f_t^*H_0)|_{\hat{H}_t,g_{co,t}} ^2  \\
       \leq &Z_{14} (1+C''|t|^\frac{2}{3}) \cdot D_0|t|\mathbf{r}_t^{-2} \cdot (C''_1)^2 
       \leq  Z_{15} |t|\mathbf{r}_t^{-2}.
    \end{split}
\end{equation}
where (\ref{est-1}) and Corollary \ref{co:4} have been used again.

The last two terms on the RHS of (\ref{eq:3-10}) are also bounded by $Z_{16} |t|\mathbf{r}_t^{-2}$ by similar discussion. This together with (\ref{eq:3-10}), (\ref{eq:3-11}),(\ref{eq:3-12-1}) and (\ref{eq:3-13}) complete the proof of Lemma \ref{lm:7}. \qed
\end{proof}

We continue with the proof of Proposition \ref{pr:3-4}. From the remark before Proposition \ref{pr:3-3} we get, for $V_t(2R|t|^\alpha,\frac{3}{4})$, that
\begin{equation}
 \label{eq:8-1}
    |\mathbf{r}_t^\frac{4}{3} \Lambda_{\tilde{\omega}_t}F_{H_t} |_{H_t}
    =|\mathbf{r}_t^\frac{4}{3} \Lambda_{\tilde{\omega}_t}\left(\bar{\partial}_t(\partial_t (f_t^*H_0)\cdot(f_t^*H_0)^{-1})\right)|_{H_t} 
    \leq Z_{17}\cdot  |t|\mathbf{r}_t^{-2}
\end{equation}
for some constant $Z_{17}>0$. Consequently, in this region we have
\begin{equation}
 \label{eq:20-4}
         |\mathbf{r}_t^\frac{4}{3} \Lambda_{\tilde{\omega}_t}F_{H_t} |_{H_t} \leq Z_{17}\cdot \frac{1}{4R^2} |t|^{1-2\alpha} .
\end{equation}
and one can estimate
\begin{equation}
 \label{eq:3-14}
   \begin{split}
            &\int_{V_t(2R|t|^\alpha,\frac{3}{4})}    |\mathbf{r}_t^\frac{8}{3} \Lambda_{\tilde{\omega}_t} F_{H_t} |_{H_t}^k \mathbf{r}_t^{-4}dV_t 
          =\int_{V_t(2R|t|^\alpha,\frac{3}{4})}    \mathbf{r}_t^{\frac{4}{3} k} |\mathbf{r}_t^\frac{4}{3} \Lambda_{\tilde{\omega}_t} F_{H_t} |_{H_t}^k \mathbf{r}_t^{-4}dV_t \\
      \leq & Z_{17}^k \int_{V_t(2R|t|^\alpha,\frac{3}{4})}    \mathbf{r}_t^{\frac{4}{3} k} (|t|\mathbf{r}_t^{-2})^k \mathbf{r}_t^{-4}dV_t 
      \leq  Z_{17}^kZ_{18} |t|^k \int_{\mathbf{r}_t=2R|t|^\alpha}^{\frac{3}{4}}  \mathbf{r}_t^{-\frac{2}{3}k-1} d\mathbf{r}_t\\
      \leq  & Z_{17}^kZ_{18} |t|^k \cdot\frac{3}{2k}(2R)^{-\frac{2}{3}k}  |t|^{-\frac{2}{3}\alpha k}.
   \end{split}
\end{equation}
We thus obtain
\begin{equation}
 \label{eq:3-15}
     \Arrowvert \Lambda_{\tilde{\omega}_t}F_{H_t} \Arrowvert_{L^k_{0,-4}(V_t(2R|t|^\alpha,\frac{3}{4}),\tilde{g}_t,H_t)}\leq Z_{19}|t|^{1-\frac{2}{3}\alpha}.
\end{equation}
This ends the discussion on the region $V_t(2R|t|^\alpha,\frac{3}{4})$. As for the region $X_t[\frac{3}{4}]$, because the geometry is uniform there it is easy to see that 
\begin{equation}
 \label{eq:20-7}
         |\mathbf{r}_t^\frac{4}{3} \Lambda_{\tilde{\omega}_t} F_{H_t} |_{H_t} \leq Z_{20}\cdot |t|.
\end{equation}
and
\begin{equation}
 \label{eq:3-16}
     \Arrowvert \Lambda_{\tilde{\omega}_t}F_{H_t} \Arrowvert_{L^k_{0,-4}(X_t[\frac{3}{4}],\tilde{g}_t,H_t)} \leq Z_{20,k} |t|
\end{equation}
when $t$ is small.

Finally, from (\ref{eq:3-18}), (\ref{eq:8-2}), (\ref{eq:20-4}) and (\ref{eq:20-7}) we get (\ref{eq:20-5}), and from (\ref{eq:3-18}), (\ref{eq:3-17}), (\ref{eq:3-15}), and (\ref{eq:3-16}) we get (\ref{eq:20-6}). The proof of Proposition \ref{pr:3-4} is complete now. \qed
\end{proof}

\noindent \textbf{Remark} From now on we fix $\alpha=\frac{3}{8}$. Then we have 
\begin{equation}
 \label{eq:F-1}
		|\mathbf{r}_t^\frac{4}{3} \Lambda_{\tilde{\omega}_t}F_{H_t} |_{H_t} \leq \tilde{Z}_0 |t|^\frac{1}{4}
\end{equation}
and 
\begin{equation}
 \label{eq:F-2}
		\Arrowvert \Lambda_{\tilde{\omega}_t}F_{H_t} \Arrowvert_{L^k_{0,-4}(X_t,\tilde{g}_t,H_t)}\leq \tilde{Z}_k |t|^\frac{3}{4}.
\end{equation}

\section{Contraction mapping argument}

Our background Hermitian metric on $\mathcal{E}_t$ as constructed in Section 5 is denoted by $H_t$. Let $\tilde{H}_t$ be another Hermitian metric on $\mathcal{E}_t$ and write $\tilde{h}=\tilde{H}_tH_t^{-1}=I+h$ where $h$ is $H_t$-symmetric.

It is known that the mean curvature 
\begin{equation*}
     \sqrt{-1}\Lambda_{\tilde{\omega}_t}F_{\tilde{H}_t}=\sqrt{-1}\Lambda_{\tilde{\omega}_t} \bar{\partial}((\partial_{H_t}(I+h))(I+h)^{-1})+\sqrt{-1}\Lambda_{\tilde{\omega}_t}F_{H_t}
\end{equation*}
of $\tilde{H}_t$ is $\tilde{H}_t$-symmetric.

To make it $H_t$-symmetric, consider a positive square root of $\tilde{H}_tH_t^{-1}$, denoted by $(\tilde{H}_tH_t^{-1})^{\frac{1}{2}}$. More explicitly, write $h=P^{-1}DP$ where $D$ is diagonal with positive eigenvalues, then $(\tilde{H}_tH_t^{-1})^{\frac{1}{2}}=P^{-1}(I+D)^\frac{1}{2}P$.\\[0.2cm]

\noindent \textbf{Remark} Write $(\tilde{H}_tH_t^{-1})^{\frac{1}{2}}=I+u(h)$. Then it is easy to see that the linear part of $u(h)$ in $h$ is $\frac{1}{2}h$.\\[0.2cm]

 %More explicitly, since $H_t$ and $\tilde{H}_t$ are both Hermitian metrics locally there exist frames of $\mathcal{E}_t$ under which both are diagonal matrices with positive entries. In this case $H_t^{-1}\tilde{H}_t=\text{diag}(\lambda_1,...\lambda_r)$ is also a diagonal matrix with positive entries, and $(H_t^{-1}\tilde{H}_t)^{\frac{1}{2}}=\text{diag}(\lambda_1^\frac{1}{2},...,\lambda_\mathbf{r}^\frac{1}{2})$ under these frames. It is easy to check that this is well-defined. Write $(H_t^{-1}\tilde{H}_t)^{\frac{1}{2}}=I+u(h)$. Then
After twisting the mean curvature above by $I+u(h)$, we obtain
\begin{equation}
 \label{eq:4-1}
    \sqrt{-1}(I+u(h))^{-1}[\Lambda_{\tilde{\omega}_t} \bar{\partial}((\partial_{H_t}(I+h))(I+h)^{-1})+\Lambda_{\tilde{\omega}_t}F_{H_t}](I+u(h)),
\end{equation}
which is $H_t$-symmetric. The equation 
\begin{equation*}
    \sqrt{-1}\Lambda_{\tilde{\omega}_t}F_{\tilde{H}_t}=0
\end{equation*}
is equivalent to the equation
\begin{equation*}
    \sqrt{-1}(I+u(h))^{-1}[\Lambda_{\tilde{\omega}_t} \bar{\partial}((\partial_{H_t}(I+h))(I+h)^{-1})+\Lambda_{\tilde{\omega}_t}F_{H_t}](I+u(h))=0,
\end{equation*}
which can be written in the form
\begin{equation*}
    L_t(h)=Q_t(h)
\end{equation*}
where
\begin{equation*}
    L_t(h)=\sqrt{-1}\left(\Lambda_{\tilde{\omega}_t} \bar{\partial}\partial_{H_t}h+\frac{1}{2}[\Lambda_{\tilde{\omega}_t}F_{H_t},h] \right)
\end{equation*}
is a linear map from 
\begin{equation*}
       \text{Herm}_{H_t}(\text{End} (\mathcal{E}_t)):=\{H_t\text{-symmetric endomorphisms of}\, \mathcal{E}_t\}
\end{equation*}
to itself, and
\begin{equation}
 \label{eq:4-2}
   \begin{split}
      Q_t(h) &=-\sqrt{-1}(I+u(h))^{-1}(\Lambda_{\tilde{\omega}_t} \bar{\partial}\partial_{H_t}h)(I+h)^{-1} (I+u(h))+\sqrt{-1}\Lambda_{\tilde{\omega}_t} \bar{\partial}\partial_{H_t}h \\
               & -\sqrt{-1}(I+u(h))^{-1}\Lambda_{\tilde{\omega}_t} (\partial_{H_t} h\cdot(I+h)^{-1}\wedge  \bar{\partial}h\cdot(I+h)^{-1})(I+u(h)) \\
               &-\sqrt{-1}\left((I+u(h))^{-1}\Lambda_{\tilde{\omega}_t}F_{H_t}(I+u(h))-\frac{1}{2}[\Lambda_{\tilde{\omega}_t}F_{H_t},h] \right).
   \end{split}
\end{equation}
In the above formulas, we use the fact that $\frac{1}{2}h$ is the linear part of $u(h)$.

%\begin{lemma}
	%$\frac{1}{2}h=\frac{1}{2}h$.
%\end{lemma}
%\begin{proof}
	%Write $h=P^{-1}DP$ where $D$ is diagonal, then $u(h)=$For $|h|\ll 1$	
%\end{proof}
Notice that since
 \begin{equation*}
     \int_X \langle \sqrt{-1}\left(\Lambda_{\tilde{\omega}_t} \bar{\partial}\partial_{H_t}h+\frac{1}{2}[\Lambda_{\tilde{\omega}_t}F_{H_t},h]  \right),I \rangle_{H_t} dV_t=0
 \end{equation*}
we have an induced map from 
\begin{equation*}
  \begin{split}
       &\text{Herm}^0_{H_t}(\text{End} (\mathcal{E}_t)):= \\
       &\{H_t\text{-symmetric endomorphisms of}\, \mathcal{E}_t\,\text{which are orthogonal to}\, I\}
  \end{split}
\end{equation*}
to itself. Because (\ref{eq:4-1}) is a $H_t$-symmetric endomorphisms of $\mathcal{E}_t$ which are orthogonal to $I$, we see the same is true for $Q_t(h)$. In this section $h$ will always be a section for the bundle $\text{Herm}^0_{H_t}(\text{End} (\mathcal{E}_t))$.

We consider the contraction mapping problem via weighted norms introduced in Section 2. The metrics that define these norms and all the pointwise norms will be w.r.t. the balanced metrics $\tilde{g}_t$ on $X_t$ and the Hermitian metrics $H_t$ on $\mathcal{E}_t$, and the connections we use are always the Chern connections of $H_t$. Therefore we remove $\tilde{g}_t$ and $H_t$ from the subscripts of the norms for simplicity unless needed. 

As in Section 2, we now consider the following norms defined on the usual Sobolev space $L^k_l(\text{Herm}^0_{H_t}(\text{End} (\mathcal{E}_t)))$:
    \begin{equation*}
         \Arrowvert h \Arrowvert _{L^k_{l,\beta}} =\left( \sum_{j=0}^l \int_{X_t} |\mathbf{r}_t^{-\frac{2}{3}\beta+\frac{2}{3}j}\nabla^jh|_t^k\mathbf{r}_t^{-4}\, dV_t \right)^\frac{1}{k}.
    \end{equation*}
    As before, we use $L^k_{l,\beta}$ to denote $L^k_{l,\beta}(\text{Herm}^0_{H_t}(\text{End} (\mathcal{E}_t)))$ for simplicity.
The following Sobolev inequalities will be used in our discussion:
\begin{proposition}
  \label{pr:1-3}
    For each $l,p,q,r$ there exists a constant $C>0$ independent of $t$ such that for any section $h$ of $\text{Herm}^0_{H_t}(\text{End} (\mathcal{E}_t))$,
    \begin{equation*}
         \Arrowvert h \Arrowvert_{L^r_{l,\beta}}\leq C \Arrowvert h \Arrowvert_{L^p_{q,\beta}}
    \end{equation*}
    whenever $\frac{1}{r}\leq \frac{1}{p} \leq \frac{1}{r}+\frac{q-l}{6}$ and
    \begin{equation*}
         \Arrowvert h \Arrowvert_{C^l_{\beta}}\leq C \Arrowvert h \Arrowvert_{L^p_{q,\beta}}
    \end{equation*}
    whenever $\frac{1}{p} < \frac{q-l}{6}$. Here the norms are with respect to $H_t$ and $\tilde{g}_t$.\\[0.2cm]
\end{proposition}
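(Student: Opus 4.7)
The plan is to derive this from the abstract weighted Sobolev embedding theorem, namely Theorem \ref{th:so}, applied to the bundle $\mathrm{End}(\mathcal{E}_t)$ (equivalently to its subbundle $\mathrm{Herm}^0_{H_t}(\mathrm{End}(\mathcal{E}_t))$), with base manifold $X_t$, Hermitian metric $g=\tilde{g}_t$, bundle metric $H=H_t$, and weight function $\mathbf{r}=\mathbf{r}_t$. Since Theorem \ref{th:so} gives constants depending only on the structural constants $R_m$ and $C_k$ appearing in the bounds (\ref{R-1})--(\ref{R-4}), the entire proof reduces to verifying that those constants can be chosen independently of $t$ when we use the coordinate system $\{(B_z,\phi_{t,z})\}_{z\in X_t}$ from Theorem \ref{th:0}.

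First, I would recall that Theorem \ref{th:0} furnishes, for every $t$ and every $z\in X_t$, a chart $\phi_{t,z}:B_z\to U_z\subset X_t$ of the fixed radius $\rho>0$, and gives:
\begin{itemize}
\item the radial control (\ref{eq:18}) and (\ref{eq:ra-1}), which are precisely (\ref{R-1}) and (\ref{R-2}) with constants $R_m$ independent of $t$ and $z$;
\item the metric control (\ref{eq:19}) and (\ref{eq:20}) for $\mathbf{r}_t(z)^{-4/3}\tilde g_t$, which are precisely (\ref{R-3}) and (\ref{R-4}) with constants $C_k$ independent of $t$ and $z$.
\end{itemize}
Thus the base-geometry hypotheses of Theorem \ref{th:so} hold uniformly in $t$. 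For the bundle-geometry ingredients hidden inside the statement (i.e.\ the fact that the $L^p_{k,\beta}$ norm on sections uses $\nabla_{H_t,\tilde g_t}$), I would invoke Proposition \ref{pr:3-3}, which provides $C_k$-bounds for $H_t(\hat H_t)^{-1}$ on each $B_z$ in the Euclidean coordinates, with constants independent of $t$ and $z$. Combined with the fact that $\hat H_t$ is the identity matrix in the chosen constant frames near the vanishing spheres and is a fixed smooth family elsewhere, this yields the uniform $C^k$-control of the Christoffel symbols of the Chern connection $\nabla_{H_t}$ on each $B_z$ needed to convert Euclidean derivatives to covariant $\nabla_{H_t,\tilde g_t}$-derivatives in a $t$-uniform fashion.

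With those ingredients in place, I would carry out the standard argument underlying Theorem \ref{th:so}: on each chart $B_z$ the classical Euclidean Sobolev inequalities hold with fixed constants depending only on $\rho$ and the exponents; pull these back using $\phi_{t,z}$, convert volume forms and norms from Euclidean to $\tilde g_t$ and $H_t$ using the bounds above, absorb the weight $\mathbf{r}_t^{-2\beta/3+2j/3}$ using the comparison (\ref{eq:18}) which makes $\mathbf{r}_t$ essentially constant on each $B_z$, and finally integrate over $z\in X_t$ with a bounded-multiplicity cover to globalize (using the fixed radius $\rho$ and the volume form $\mathbf{r}_t^{-4}dV_t$). The resulting constant depends only on $l,p,q,r,\beta,\rho$, the $R_m$, the $C_k$, and the uniform bounds on $H_t(\hat H_t)^{-1}$, all of which are independent of $t$. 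The $C^l_\beta$ estimate is obtained by the same procedure with the Morrey-type Euclidean embedding in place of the Sobolev one.

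The only conceptual subtlety, and hence the main potential obstacle, is ensuring that the passage to the closed subspace $\mathrm{Herm}^0_{H_t}(\mathrm{End}(\mathcal{E}_t))$ introduces no $t$-dependence; but since the inequalities are linear and hold for all sections of $\mathrm{End}(\mathcal{E}_t)$, their restriction to the subspace of $H_t$-symmetric sections $L^2$-orthogonal to $I$ is immediate with the same constant. Thus the proposition follows.
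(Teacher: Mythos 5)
Your proposal is correct and follows the route the paper intends: Proposition \ref{pr:1-3} is an instance of the general weighted Sobolev embedding Theorem \ref{th:so}, whose hypotheses (\ref{R-1})--(\ref{R-4}) are supplied, uniformly in $t$, by Theorem \ref{th:0}. One small remark: the constant in Theorem \ref{th:so} is stated to depend only on $R_m$ and $C_k$ precisely because Kato's inequality applied to $|\nabla^j_{H,g}\sigma|_{H,g}$ makes the Sobolev embedding independent of the bundle metric, so the appeal to Proposition \ref{pr:3-3} to control the Chern connection of $H_t$ is not actually needed, though including it does no harm.
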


We now begin the discussion on the properties of the operator $L_t$. 
\begin{lemma}
  \label{lm:1}
     For any given $0<\nu \ll 1$ and $t \neq0$ small enough, we have
     \begin{equation*}
         \Arrowvert h \Arrowvert_{L^2_{1,-2}} \leq 8|t|^{-2\nu}\Arrowvert L_t(h) \Arrowvert_{L^2_{0,-4}}.
     \end{equation*}
     In particular, the operator $L_t$ is injective on $L^2_{2,-2}(\text{Herm}^0_{H_t}(\text{End} (\mathcal{E}_t)))$.
\end{lemma}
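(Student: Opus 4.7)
The plan is a proof by contradiction combined with weighted elliptic regularity and a compactness argument as $t\to 0$. Suppose the inequality fails: there exist $t_n\to 0$ and sections $h_n \in L^2_{2,-2}(\mathrm{Herm}^0_{H_{t_n}}(\mathrm{End}(\mathcal{E}_{t_n})))$ normalized so that $\Arrowvert h_n\Arrowvert_{L^2_{1,-2}}=1$ and $\Arrowvert L_{t_n}(h_n)\Arrowvert_{L^2_{0,-4}}<|t_n|^{2\nu}/8$.

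The first step is a Bochner-type identity. Pairing $L_t(h_n)$ with $h_n$ in the $H_t$-inner product and applying the Li--Teleman formula of Section~2.4 to the $\sqrt{-1}\Lambda_{\tilde\omega_t}\bar\partial\partial_{H_t}h_n$-piece gives $\Arrowvert \partial_{H_t}h_n\Arrowvert^2_{L^2}$. The mean-curvature commutator vanishes pointwise: $\langle \tfrac{\sqrt{-1}}{2}[\Lambda F_{H_t},h_n], h_n\rangle_{H_t}=\tfrac{\sqrt{-1}}{2}\mathrm{tr}([\Lambda F_{H_t},h_n]h_n)=0$, because $h_n$ and $\Lambda F_{H_t}$ are both $H_t$-symmetric (cyclic trace). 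A weighted Cauchy--Schwarz then yields
\begin{equation*}
\Arrowvert \partial_{H_{t_n}} h_n\Arrowvert^2_{L^2} = \int \langle L_{t_n}h_n, h_n\rangle_{H_{t_n}}\,dV_{t_n} \le \Arrowvert L_{t_n}h_n\Arrowvert_{L^2_{0,-4}}\,\Arrowvert h_n\Arrowvert_{L^2_{0,-2}}.
\end{equation*}

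The second step uses weighted elliptic regularity. The operator $\mathbf{r}_t^{4/3}L_t$ is strongly elliptic, with leading coefficients uniformly controlled via Theorem~\ref{th:0} and zeroth-order coefficient $\tfrac{1}{2}\mathbf{r}_t^{4/3}\Lambda F_{H_t}$ uniformly controlled via Proposition~\ref{pr:3-4}. Proposition~\ref{pr:uniform} (with reference metric $\hat H_t$, equivalent to $H_t$ by the remark before Proposition~\ref{pr:3-3}) gives $\Arrowvert h_n\Arrowvert_{L^2_{2,-2}} \le C(\Arrowvert L_{t_n}h_n\Arrowvert_{L^2_{0,-4}} + \Arrowvert h_n\Arrowvert_{L^2_{0,-2}})$ with $C$ independent of $n$. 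A weighted Rellich argument then passes to a subsequence converging weakly in $L^2_{2,-2}$ and strongly in $C^0_{\mathrm{loc}}(X_{0,sm})$ via the identifications $x_{t_n}$. The limit $h_\infty$ satisfies $L_0 h_\infty=\sqrt{-1}\Lambda_{\hat\omega_0}\bar\partial\partial_{H_0}h_\infty=0$ on $X_{0,sm}$ (the mean-curvature piece vanishes since $H_0$ is HYM by Theorem~\ref{th:2-1}), so by Bochner $\partial_{H_0}h_\infty=0$; that is, $h_\infty$ is a holomorphic section of $\mathrm{End}(\mathcal{E})$ on $X_{0,sm}\cong \hat X\setminus\bigcup C_i$. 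Since $\mathcal{E}$ is trivial near each $C_i$ and $\bigcup C_i$ has codimension two in $\hat X$, Hartogs extension produces a global holomorphic endomorphism of $\mathcal{E}$ on $\hat X$; irreducibility and the orthogonality $h_\infty\perp I$ force $h_\infty=0$.

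The main obstacle is that the previous step controls only the bulk behavior on $X_{0,sm}$; the $L^2_{1,-2}$-mass of $h_n$ could instead concentrate in the shrinking necks $V_{t_n}(K|t_n|^{1/2})$, which collapse to the ODPs in the limit. The factor $|t_n|^{-2\nu}$ in the statement is the margin needed to rule this out: rescaling the neck by $\psi_{t_n}$ turns $V_{t_n}(K|t_n|^{1/2})$ into a fixed region in $Q_1$ with metric $g_{co,1}$; by Lemma~\ref{lm:8} the rescaled Hermitian metric tends to the constant $f_{t_n}^*H_0(w_{t_n})$, and the rescaled source is $o(1)$ after paying the $|t_n|^{-2\nu}$ cost. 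The same Bochner argument on $Q_1$ then produces a bounded holomorphic endomorphism of the trivial bundle, necessarily constant, which vanishes by the preserved orthogonality constraint. Combining the two limiting arguments rules out all concentration modes and contradicts $\Arrowvert h_n\Arrowvert_{L^2_{1,-2}}=1$.
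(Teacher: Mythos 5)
Your proposal folds into a single contradiction argument what the paper splits into two pieces: Lemma~\ref{lm:1} is in fact deduced by a short weighted Cauchy--Schwarz computation from Proposition~\ref{pr:5-1}, a weighted Poincar\'e inequality $\Arrowvert h \Arrowvert_{L^2_{0,-2}} \leq |t|^{-\nu}\Arrowvert \mathbf{r}_t^{2/3}\partial_{H_t}h\Arrowvert_{L^2_{0,-2}}$, and it is \emph{that} proposition which the paper proves by a compactness/contradiction argument in Section~7. Your step~1 is correct, and the observation that $\langle [\Lambda_{\tilde\omega_t}F_{H_t},h],h\rangle_{H_t}=0$ pointwise for $H_t$-symmetric $h$ (since the commutator of two $H_t$-symmetric endomorphisms is $H_t$-antisymmetric) is a small improvement over the paper, which simply takes absolute values and pays an extra $|t|^{1/4}$ from the mean-curvature bound; this is a harmless simplification. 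The elliptic-regularity step via Proposition~\ref{pr:uniform} is also what one wants.

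The genuine gap is in the last paragraph, which is the heart of the matter and is essentially waved at. The danger is not concentration at just the two endpoint scales ($\mathbf{r}_t\sim 1$ and $\mathbf{r}_t\sim|t|^{1/2}$); the normalized mass $\int|\mathbf{r}_t^{4/3}h_n|^2\mathbf{r}_t^{-4}dV_t$ could concentrate at any intermediate scale $\mathbf{r}_t\sim |t|^{\theta}$, $0<\theta<\tfrac12$, whose limit is a holomorphic section on the cone $Q_0$ rather than on $Q_1$ or on $X_0$, and neither of your two rescalings sees it. What is needed, and what the paper supplies, is a \emph{quantitative} decay estimate uniform in $n$ — Lemma~\ref{lm:3}, which uses Michael--Simon's Sobolev inequality on $Q_t\subset\mathbb{C}^4$ together with a careful integration by parts with the weight $\mathbf{r}^{-2\iota}$ to produce $\int_{V_n(\delta)}|h_n|^2\mathbf{r}^{-\frac43}\,dV_n\le C\delta^{2\iota}$ for all small $\delta$ and large $n$. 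This single estimate rules out mass escape into the neck at \emph{all} scales simultaneously and also controls the neck contribution $\int_{V_n(\delta)}\operatorname{tr} h_n\,dV_n$ so that the global orthogonality constraint $\int_{X_n}\operatorname{tr} h_n\,dV_n=0$ survives the limit; without it, you cannot conclude $\mu=0$ for the bulk limit $h_\infty=\mu I$. Your neck argument also asserts that a bounded holomorphic endomorphism on (a region of) $Q_1$ is constant, which is false without a growth or decay hypothesis — $Q_1$ carries plenty of non-constant bounded-on-compacta holomorphic functions — and the ``preserved orthogonality constraint'' is a global integral on $X_{t_n}$ that does not restrict the local rescaled limit. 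So the strategy is right in spirit but the proposal, as written, is missing exactly the technical estimate that makes the paper's Section~7 work.
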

    \begin{proof}
    Later in Proposition \ref{pr:5-1} we will show that for arbitrarily given $\nu >0 $, we have
    \begin{equation*}
        \Arrowvert h \Arrowvert_{L^2_{0,-2}} \leq |t|^{-\nu}\Arrowvert \mathbf{r}_t^\frac{2}{3} \partial_{H_t}h \Arrowvert_{L^2_{0,-2}}
    \end{equation*}
    for $t\neq0$ small enough.
    Using this one easily deduce that 
    \begin{equation*}
        \Arrowvert h \Arrowvert_{L^2_{1,-2}} \leq 2|t|^{-\nu} \Arrowvert \mathbf{r}_t^\frac{2}{3} \partial_{H_t}h \Arrowvert_{L^2_{0,-2}}
    \end{equation*}
     for $t\neq0$ small enough. Now
    \begin{equation*}
    \begin{split}
        \Arrowvert \mathbf{r}_t^\frac{2}{3} \partial_{H_t}h \Arrowvert_{L^2_{0,-2}}^2 
                                =& \int_{X_t} \langle\partial_{H_t}h,\partial_{H_t}h\rangle \, dV_t  
                                 = \int_{X_t} \langle \sqrt{-1}\Lambda_{\tilde{\omega}_t} \bar{\partial}\partial_{H_t}h,h\rangle \, dV_t \\ 
                            \leq &\int_{X_t} | L_t(h)-\frac{\sqrt{-1}}{2}[\Lambda_{\tilde{\omega}_t}F_{H_t},h] ||h| \, dV_t \\
                             \leq &\int_{X_t} | L_t(h)||h| \, dV_t
                                      + \int_{X_t} |h|^2|\Lambda_{\tilde{\omega}_t}F_{H_t}| \, dV_t.
    \end{split}
    \end{equation*}

%Now, to bound the second term we need to say something about the pointwise bound on $|\mathbf{r}_t^\frac{4}{3}\Lambda_{\tilde{\omega}_t}F_{H_t}|$. After choosing $\beta$ as above, we also fix $\alpha$, and $\beta'$ as in the Remark after Proposition \ref{pr:3-4}.

%Over $V_t(2|t|^\frac{1}{2})$, $\Lambda_{\tilde{\omega}_t}F_{H_t}$ is just zero. Over $V_t(2|t|^\alpha) \backslash V_t(2|t|^\frac{1}{2})$ we have $|\mathbf{r}_t^\frac{4}{3}\Lambda_{\tilde{\omega}_t}F_{H_t}| <C'|t|^{\frac{2}{3}\alpha}$ by (\ref{eq:8-2}). Over $V_t(\frac{3}{4}) \backslash V_t(2|t|^\alpha)$ where $H_t=f_t^*H_0$, from (\ref{eq:8-1}) we have $|\mathbf{r}_t^\frac{4}{3}\Lambda_{\tilde{\omega}_t}F_{H_t}|\leq D'_4(|t|\mathbf{r}_t^{-2}+|t|^{-\frac{\beta'}{3}}) \leq D''_4(|t|^{1-2\alpha}+|t|^{-\frac{\beta'}{3}})$. Finally, over $X_t[\frac{3}{4}]$ for $|t|$ small $|\mathbf{r}_t^\frac{4}{3}\Lambda_{\tilde{\omega}_t}F_{H_t}| \leq C|t|$. In conclusion, there is a positive number $\mu>0$ depending on $\alpha$, $\beta'$ and $\beta,$ such that for $|t|$ sufficiently small, $|\mathbf{r}_t^\frac{4}{3}\Lambda_{\tilde{\omega}_t}F_{H_t}| \leq C_6|t|^\mu$ for some constant $C>0$. 

From (\ref{eq:F-1}) we have $|\mathbf{r}_t^\frac{4}{3}\Lambda_{\tilde{\omega}_t}F_{H_t}| \leq \tilde{Z}_0|t|^\frac{1}{4}$. Therefore we can bound
\begin{equation*} 
   \begin{split}
     \int_{X_t} |h|^2|\Lambda_{\tilde{\omega}_t}F_{H_t}| \, dV_t
         =&\int_{X_t} |\mathbf{r}_t^{\frac{4}{3}}h|^2|\mathbf{r}_t^\frac{4}{3}\Lambda_{\tilde{\omega}_t}F_{H_t}| \mathbf{r}_t^{-4}\, dV_t \\
     \leq & \tilde{Z}_0|t|^\frac{1}{4} \int_{X_t} |\mathbf{r}_t^{\frac{4}{3}}h|^2 \mathbf{r}_t^{-4}\, dV_t \leq \tilde{Z}_0|t|^\frac{1}{4} \Arrowvert h \Arrowvert^2_{L^2_{1,-2}}
   \end{split}
\end{equation*}
Using this bound, we now have
\begin{equation*}
      \begin{split}
          \Arrowvert h \Arrowvert_{L^2_{1,-2}}^2 
                           \leq & 4|t|^{-2\nu}\Arrowvert \mathbf{r}_t^\frac{2}{3} \partial_{H_t}h \Arrowvert_{L^2_{0,-4}}^2 \\
                           \leq & 4|t|^{-2\nu} \left( \int_{X_t} | L_t(h)||h| \, dV_t+\tilde{Z}_0|t|^\frac{1}{4}\Arrowvert h \Arrowvert^2_{L^2_{1,-2}}\right) \\
                           \leq &  4|t|^{-2\nu} \left(\int_{X_t} | \mathbf{r}_t^{\frac{8}{3}}L_t(h)|^2 \mathbf{r}_t^{-4}\, dV_t \right)^\frac{1}{2}
                                               \left(\int_{X_t} | \mathbf{r}_t^{\frac{4}{3}}h|^2 \mathbf{r}_t^{-4}\, dV_t \right)^\frac{1}{2} 
                            			+4\tilde{Z}_0|t|^{\frac{1}{4}-2\nu}\Arrowvert h \Arrowvert^2_{L^2_{1,-2}} \\
                           \leq & 4|t|^{-2\nu} \Arrowvert L_t(h) \Arrowvert_{L^2_{0,-4}}\Arrowvert h \Arrowvert_{L^2_{1,-2}}
                            +4\tilde{Z}_0|t|^{\frac{1}{4}-2\nu}\Arrowvert h \Arrowvert^2_{L^2_{1,-2}}. \\
      \end{split} 
\end{equation*}
Therefore for $\nu \ll1$ and $t\neq 0$ small enough such that $4\tilde{Z}_0|t|^{\frac{1}{4}-2\nu} \leq \frac{1}{2}$, we have the desired result. \qed 
\end{proof}

    We conclude from this that for $k \geq 6$, the operator $L_t: L^k_{2,-2} \rightarrow L^k_{0,-4}$ is injective.
    
    The operator $L_t$ is also surjective.  First of all, $\Lambda_{\tilde{\omega}_t} \bar{\partial}\partial_{H_t}$ is a self-adjoint Fredholm operator, so it has index zero. Secondly, since for each $t \neq0$, $\Lambda_{\tilde{\omega}_t}F_{H_t}$ is a smooth function on $X_t$, the operator 
    \begin{equation*}
         h \rightarrow \frac{1}{2}[\Lambda_{\tilde{\omega}_t}F_{H_t},h] 
    \end{equation*}
    from $L^k_{2,-2}$ to $L^k_{0,-4}$ is a compact operator. Therefore $L_t$ has index zero, and the injectivity of $L_t$ implies its surjectivity. Let the inverse be denoted by $P_t$. 
\begin{proposition}
 \label{pr:4-1}
    There exist constants $\hat{Z}_k>0$ such that for any $0<\nu \ll1$ and $t\neq0$ small enough,
    \begin{equation}
        \Arrowvert h \Arrowvert_{L^k_{2,-2}} \leq \hat{Z}_k(-\log |t|)^\frac{1}{2}|t|^{-2\nu} \Arrowvert L_t(h) \Arrowvert_{L^k_{0,-4}}.
    \end{equation}
    Consequently, the norm of the operator $P_t:L^k_{0,-4} \rightarrow L^k_{2,-2}$ is bounded as 
    \begin{equation*}
			\Arrowvert P_t \Arrowvert \leq \hat{Z}_k(-\log |t|)^\frac{1}{2}|t|^{-2\nu}.
    \end{equation*}
\end{proposition}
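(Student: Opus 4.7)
The plan is to combine a weighted elliptic regularity estimate for $L_t$ with the $L^2$ bound of Lemma \ref{lm:1}, and to quantify the gap between the $L^2$ and $L^k$ norms of $L_t(h)$ via H\"older's inequality; the logarithm will appear as the order of growth of the weighted volume $\int_{X_t}\mathbf{r}_t^{-4}\,dV_t$.

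First I would apply Proposition \ref{pr:uniform} to the operator $P:=\mathbf{r}_t^{4/3}L_t$. Its principal part $\sqrt{-1}\mathbf{r}_t^{4/3}\tilde g_t^{i\bar j}\partial_i\partial_{\bar j}$ is uniformly strongly elliptic on each chart $B_z$: by Theorem \ref{th:0} the rescaled metric $\bar{\tilde g}_t=\mathbf{r}_t(z)^{-4/3}\tilde g_t$ is uniformly equivalent to $g_e$ with $C^k$-bounded coefficients, and $\mathbf{r}_t^{4/3}\tilde g_t^{i\bar j}=(\mathbf{r}_t/\mathbf{r}_t(z))^{4/3}\,\bar{\tilde g}_t^{i\bar j}$ with the scalar factor controlled in $C^k$ by (\ref{eq:18})--(\ref{eq:ra-1}). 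The subprincipal and zeroth-order pieces come from the Chern connection of $H_t$ and from $\tfrac12[\mathbf{r}_t^{4/3}\Lambda_{\tilde\omega_t}F_{H_t},\cdot]$; both have coefficients uniformly bounded in $C^k$, the first by Proposition \ref{pr:3-3} (which also gives uniform $C^k$-equivalence of $H_t$ with the reference metric $\hat H_t$ in each chart), the second by (\ref{eq:F-3}) of Proposition \ref{pr:3-4}. Proposition \ref{pr:uniform} applied with $p=k$, $l=0$, $m=2$, $\beta=-2$, together with the weight-shift identity $\Arrowvert\mathbf{r}_t^{4/3}f\Arrowvert_{L^k_{0,-2}}=\Arrowvert f\Arrowvert_{L^k_{0,-4}}$, then yields
\begin{equation*}
\Arrowvert h\Arrowvert_{L^k_{2,-2}}\,\leq\, C\bigl(\Arrowvert L_t(h)\Arrowvert_{L^k_{0,-4}}+\Arrowvert h\Arrowvert_{L^2_{0,-2}}\bigr).
\end{equation*}

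To absorb the trailing $L^2$ term I invoke Lemma \ref{lm:1}, obtaining $\Arrowvert h\Arrowvert_{L^2_{0,-2}}\leq \Arrowvert h\Arrowvert_{L^2_{1,-2}}\leq 8|t|^{-2\nu}\Arrowvert L_t(h)\Arrowvert_{L^2_{0,-4}}$, and pass from $L^2$ to $L^k$ on the right by H\"older's inequality on the measure $d\mu:=\mathbf{r}_t^{-4}\,dV_t$ with exponents $k/2$ and $k/(k-2)$:
\begin{equation*}
\Arrowvert L_t(h)\Arrowvert_{L^2_{0,-4}}\,\leq\,\Arrowvert L_t(h)\Arrowvert_{L^k_{0,-4}}\cdot\Bigl(\int_{X_t}\mathbf{r}_t^{-4}\,dV_t\Bigr)^{(k-2)/(2k)}.
\end{equation*}
The crucial calculation is that the weighted volume is of order $-\log|t|$. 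On $X_t[1]$ the integrand is uniformly bounded; on $V_t(1)$, Lemma \ref{lm:0-1} lets me replace $dV_t$ by $dV_{co,t}$ up to a uniform factor, and the rescaling $\psi_t:Q_1\to Q_t$ of Section 2, together with $\psi_t^*\mathbf{r}_t=|t|^{1/2}\mathbf{r}_1$ and $\psi_t^*dV_{co,t}=|t|^2\,dV_{co,1}$, gives
\begin{equation*}
\int_{V_t(1)}\mathbf{r}_t^{-4}\,dV_{co,t}\,=\,\int_{V_1(|t|^{-1/2})}\mathbf{r}_1^{-4}\,dV_{co,1};
\end{equation*}
from the cone expression (\ref{cone}), $dV_{co,1}\sim \mathbf{r}_1^{3}\,d\mathbf{r}_1\wedge d\mathrm{vol}_\Sigma$ for $\mathbf{r}_1\gg 1$, so the right-hand side is of order $\int_1^{|t|^{-1/2}}\mathbf{r}_1^{-1}\,d\mathbf{r}_1=-\tfrac12\log|t|$. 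Since $(k-2)/(2k)\leq 1/2$ and $-\log|t|\geq 1$ for $|t|$ small, this supplies the advertised factor $(-\log|t|)^{1/2}$.

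Chaining the three estimates produces the desired inequality for all $t\neq 0$ sufficiently small, and the operator-norm bound on $P_t:L^k_{0,-4}\to L^k_{2,-2}$ follows immediately by taking $h=P_t(f)$. The main technical obstacle will be the careful bookkeeping in the first step: verifying that every coefficient of $\mathbf{r}_t^{4/3}L_t$---coming from the rescaled metric, the Chern connection of $H_t$, and the mean curvature $\Lambda_{\tilde\omega_t}F_{H_t}$---genuinely meets the uniform $C^k$-hypotheses of Proposition \ref{pr:uniform} with constants independent of both $t$ and $z\in X_t$. Lemma \ref{lm:1} and the weighted-volume computation sketched above are precisely what account for the factors $|t|^{-2\nu}$ and $(-\log|t|)^{1/2}$ in the conclusion.
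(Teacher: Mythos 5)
Your proposal is correct and takes essentially the same route as the paper: both apply Proposition \ref{pr:uniform} to the operator $\mathbf{r}_t^{4/3}L_t$, use the weight-shift identity $\Arrowvert\mathbf{r}_t^{4/3}f\Arrowvert_{L^k_{0,-2}}=\Arrowvert f\Arrowvert_{L^k_{0,-4}}$, absorb the trailing $\Arrowvert h\Arrowvert_{L^2_{0,-2}}$ via Lemma \ref{lm:1}, and pass from $L^2$ to $L^k$ via H\"older against the reference measure $\mathbf{r}_t^{-4}dV_t$, whose total mass grows like $-\log|t|$. Your extra verification that the coefficients of $\mathbf{r}_t^{4/3}L_t$ meet the hypotheses of Proposition \ref{pr:uniform}, and the explicit scaling computation of $\int_{X_t}\mathbf{r}_t^{-4}\,dV_t$, merely flesh out details the paper leaves implicit.
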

 \begin{proof} 
 From the estimates of $\tilde{g}_t$ in Theorem \ref{th:0}, the estimates of $H_t$ in Proposition \ref{pr:3-3}, and the estimates (\ref{eq:F-3}) of $\mathbf{r}_t^\frac{4}{3}\Lambda_{\tilde{\omega}_t}F_{H_t}$ in Proposition \ref{pr:3-4}, we can apply Proposition \ref{pr:uniform} to the operator $\mathbf{r}_t^\frac{4}{3}L_t$, and obtain
    \begin{equation*}
     \begin{split}
         \Arrowvert h \Arrowvert_{L^k_{2,-2}}
         \leq &\hat{C}_k \left( \Arrowvert \mathbf{r}_t^\frac{4}{3}L_t(h) \Arrowvert_{L^k_{0,-2}}+\Arrowvert h \Arrowvert_{L^2_{0,-2}} \right)\\
         \leq &\hat{C}'_k \left( \Arrowvert L_t(h) \Arrowvert_{L^k_{0,-4}}+\Arrowvert h \Arrowvert_{L^2_{0,-2}} \right)\\
      \end{split}
    \end{equation*}
 		for constans $\hat{C}'_k>0$ independent of $t$.
By Lemma \ref{lm:1} and H$\ddot{\text{o}}$lder inequality,
     \begin{equation}
         \begin{split}
         			\Arrowvert h \Arrowvert_{L^2_{0,-2}}^2
        			\leq &64|t|^{-4\nu}\Arrowvert L_t(h) \Arrowvert_{L^2_{0,-4}}^2 
			    = 64|t|^{-4\nu}\int_{X_t} |\mathbf{r}_t^{\frac{8}{3}}L_t(h)|^2 \mathbf{r}_t^{-4} dV_t \\
			    \leq &64|t|^{-4\nu}\left(\int_{X_t} 1\cdot \mathbf{r}_t^{-4} dV_t\right)^{1-\frac{2}{k}} \left(\int_{X_t} |\mathbf{r}_t^{\frac{8}{3}}L_t(h)|^k \mathbf{r}_t^{-4} dV_t\right)^\frac{2}{k} \\
        		\leq &Z'_0|t|^{-4\nu} (-\log |t|)^{1-\frac{2}{k}}\Arrowvert L_t(h) \Arrowvert_{L^k_{0,-4}}^2 
		\leq Z'_0|t|^{-4\nu}(-\log |t|)\Arrowvert L_t(h) \Arrowvert_{L^k_{0,-4}}^2.
         \end{split}
    \end{equation}
for $t\neq 0$ small. The claim follows now.\qed
 \end{proof}
    Now we consider the contraction mapping problem for the map 
    \begin{equation*}
    U_t: L^k_{2,-2}\rightarrow L^k_{2,-2},\, \, \, \, U_t(h)=P_t(Q_t(h)).
    \end{equation*}
   Here $Q_t(h)$ is given in (\ref{eq:4-2}).
 
 Take $\beta'$ to be a number such that $0<\beta'-2 \ll 1$. We restrict ourselves to a ball $B(\beta')$ of radius $|t|^{\frac{\beta'}{3}}$ centered at 0 inside $L^k_{2,-2}$, and show that $U_t$ is a contraction mapping from the ball into itself when $t \neq 0$ is small enough.
 
\begin{proposition}
 \label{pr:4-2}
     For each $k$ large enough, there is a constant $\hat{Z}'_k>0$ such that when $t \neq 0$ is small enough the operator $h \mapsto Q_t(h)$ maps the ball of radius $|t|^{\frac{\beta'}{3}}$ in $L^k_{2,-2}$ into the ball of radius $\hat{Z}'_k|t|^{\frac{\beta'-2}{3}}\cdot |t|^{\frac{\beta'}{3}}$ in $L^k_{0,-4}$.
\end{proposition}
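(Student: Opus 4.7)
The proof proceeds by estimating $Q_t(h)$ term-by-term in the decomposition (\ref{eq:4-2}), exploiting the fact that $L_t$ was chosen precisely so that the linear-in-$h$ contributions of the twisted mean curvature cancel. First I would fix $k$ large enough that Proposition \ref{pr:1-3} gives the Sobolev embedding $L^k_{2,-2}\hookrightarrow C^0_{-2}$ (i.e.\ $k>3$). The ball condition $\|h\|_{L^k_{2,-2}}\leq |t|^{\beta'/3}$ then yields $|h|_{H_t}\leq C|t|^{\beta'/3}\mathbf{r}_t^{-4/3}\leq C|t|^{(\beta'-2)/3}$ pointwise on $X_t$, using $\mathbf{r}_t\geq |t|^{1/2}$. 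Since $\beta'>2$, this is uniformly small as $t\to 0$, so $(I+h)^{\pm 1}$ and $(I+u(h))^{\pm 1}$ exist with uniform operator-norm bounds.

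The four terms of (\ref{eq:4-2}) split into three natural groups. Let $Q^{(A)}(h)$ denote the sum of the first two; its coefficient at $\Lambda_{\tilde\omega_t}\bar\partial\partial_{H_t}h$ is $I-(I+u(h))^{-1}(I+h)^{-1}(I+u(h))$, which vanishes and has zero linearization at $h=0$, so $|Q^{(A)}(h)|_{H_t}\leq C|h|_{H_t}|\nabla^2 h|_{H_t,\tilde g_t}$. Decomposing the integrand of $\|Q^{(A)}\|^k_{L^k_{0,-4}}$ as $|\mathbf{r}_t^{4/3}h|^k\cdot\mathbf{r}_t^{-4k/3}\cdot|\mathbf{r}_t^{8/3}\nabla^2 h|^k\cdot\mathbf{r}_t^{-4}$, pulling $\sup|\mathbf{r}_t^{4/3}h|^k\leq C\|h\|^k_{L^k_{2,-2}}$ outside, and bounding $\mathbf{r}_t^{-4k/3}\leq |t|^{-2k/3}$ gives
\[\|Q^{(A)}(h)\|_{L^k_{0,-4}}\leq C|t|^{-2/3}\|h\|^2_{L^k_{2,-2}}.\]
The third term satisfies $|Q^{(B)}(h)|_{H_t}\leq C|\nabla h|^2_{H_t,\tilde g_t}$. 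Writing $\mathbf{r}_t^{8k/3}|\nabla h|^{2k}=\mathbf{r}_t^{-4k/3}|\mathbf{r}_t^2\nabla h|^{2k}$, factoring out $|t|^{-2k/3}$, and using the Sobolev embedding $L^k_{2,-2}\hookrightarrow L^{2k}_{1,-2}$ (valid for $k\geq 3$) to bound $\int|\mathbf{r}_t^2\nabla h|^{2k}\mathbf{r}_t^{-4}dV\leq C\|h\|^{2k}_{L^k_{2,-2}}$ yields the same bound $C|t|^{-2/3}\|h\|^2_{L^k_{2,-2}}$.

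For the last term $Q^{(C)}(h)$, the expansion $(I+u(h))^{-1}\Lambda_{\tilde\omega_t}F_{H_t}(I+u(h))=\Lambda_{\tilde\omega_t}F_{H_t}+\tfrac12[\Lambda_{\tilde\omega_t}F_{H_t},h]+O(h^2)$ combined with the subtraction of the bracket leaves a constant piece $-\sqrt{-1}\Lambda_{\tilde\omega_t}F_{H_t}$ and a remainder $S(h)$ with $|S(h)|_{H_t}\leq C|h|^2_{H_t}|\Lambda_{\tilde\omega_t}F_{H_t}|_{H_t}$. The constant piece is bounded directly by (\ref{eq:F-2}): $\|\Lambda_{\tilde\omega_t}F_{H_t}\|_{L^k_{0,-4}}\leq \tilde Z_k|t|^{3/4}$. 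For $S(h)$ the same sup-and-weight argument with $\mathbf{r}_t^{-8k/3}\leq |t|^{-4k/3}$ yields
\[\|S(h)\|_{L^k_{0,-4}}\leq C|t|^{-4/3}\cdot|t|^{3/4}\cdot\|h\|^2_{L^k_{2,-2}}=C|t|^{-7/12}\|h\|^2_{L^k_{2,-2}}.\]

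Substituting $\|h\|_{L^k_{2,-2}}\leq|t|^{\beta'/3}$ and summing, each quadratic-in-$h$ contribution is at most $C|t|^{(2\beta'-2)/3}=C|t|^{(\beta'-2)/3}\cdot|t|^{\beta'/3}$ (the $|t|^{-7/12}$ bound is strictly better), while the constant contribution $|t|^{3/4}$ is dominated by $|t|^{(2\beta'-2)/3}$ exactly when $\beta'\leq 17/8$, which is compatible with the hypothesis $0<\beta'-2\ll 1$. This gives the desired estimate $\|Q_t(h)\|_{L^k_{0,-4}}\leq \hat Z'_k|t|^{(\beta'-2)/3}\cdot|t|^{\beta'/3}$. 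The main obstacle is the delicate weighted bookkeeping: the singular factor $\mathbf{r}_t^{-4/3}$ coming from $|\tilde g_t^{-1}|$ near the vanishing cycles pumps a factor $|t|^{-2/3}$ into each quadratic-in-$h$ estimate, which can only be absorbed by the slack $\beta'-2>0$, while the HYM-approximation rate $|t|^{3/4}$ from Proposition \ref{pr:3-4} simultaneously forces $\beta'$ to stay below $17/8$, pinning $\beta'$ to a narrow window just above~$2$.
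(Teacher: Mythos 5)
Your proposal is correct and follows essentially the same route as the paper: bound each piece of $Q_t(h)$ pointwise by $C|h||\nabla^2 h|$, $C|\nabla h|^2$, and (modulo linear cancellation) $C|\Lambda_{\tilde\omega_t}F_{H_t}|$; pull out a sup of $|\mathbf{r}_t^{4/3}h|$ via the $C^0_{-2}$ embedding; pay $\mathbf{r}_t^{-4/3}\leq|t|^{-2/3}$ for the extra weight; and use the Sobolev embedding $L^k_{2,-2}\hookrightarrow L^{2k}_{1,-2}$ together with $\Arrowvert\Lambda_{\tilde\omega_t}F_{H_t}\Arrowvert_{L^k_{0,-4}}\leq\tilde Z_k|t|^{3/4}$. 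The one cosmetic difference is the $F_{H_t}$ term: you split it into a constant part and a quadratic-in-$h$ remainder $S(h)$, whereas the paper bounds it wholesale by $C|\Lambda_{\tilde\omega_t}F_{H_t}|$ and observes directly that $|t|^{3/4}\leq|t|^{(\beta'-2)/3}|t|^{\beta'/3}$ once $\beta'<17/8$; both give the same final estimate, so the extra decomposition is harmless but unnecessary.
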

 
\begin{proof}
Note that when $k$ is large enough one has the Sobolev embedding $L^k_{2,-2}\hookrightarrow C^1_{-2}$. Proposition \ref{pr:1-3} implies the existence of a constant $C^{sb}_k$ independent of $t$ such that 
\begin{equation}
\label{eq:4-0}
       \Arrowvert h \Arrowvert_{C^1_{-2}} \leq C^{sb}_k \Arrowvert h \Arrowvert_{L^k_{2,-2}}.
\end{equation}
In this case, $\Arrowvert h \Arrowvert_{L^k_{2,-2}}<|t|^{\frac{\beta'}{3}}$ implies in particular that $|h|\mathbf{r}_t^{\frac{4}{3}}<C^{sb}_k|t|^{\frac{\beta'}{3}}$, and hence 
\begin{equation}
 \label{eq:4-01}
      |h|<C^{sb}_k|t|^{\frac{\beta'-2}{3}}.
\end{equation}
Therefore, because $0<\beta'-2$, when $t \neq 0$ is small it makes sense to take the inverse of $I+h$ and $I+u(h)$, and there are constants $Z'_1$ and $Z'_2$ such that, for $t\neq 0$ small,
\begin{equation}
 \label{eq:4-3}
   \begin{split}
       |(I+u(h))^{-1}&(\Lambda_{\tilde{\omega}_t}  \bar{\partial}\partial_{H_t}h)(I+h)^{-1} (I+u(h))-\sqrt{-1}\Lambda_{\tilde{\omega}_t} \bar{\partial}\partial_{H_t}h|  \\
       &\leq Z'_1|\bar{\partial}\partial_{H_t}h||h|  \leq Z'_1C^{sb}_k|t|^{\frac{\beta'-2}{3}}|\bar{\partial}\partial_{H_t}h|
         \leq Z'_1C^{sb}_k|t|^{\frac{\beta'-2}{3}} |\nabla_{H_t}^2 h|
   \end{split}
\end{equation}
and
\begin{equation}
 \label{eq:4-4}
       \max \{ |h|,\,\,|I+u(h)|,\,\, |(I+u(h))^{-1}|,\,\, |I+h|,\,\, |(I+h)^{-1}|\}<Z'_2.
\end{equation}

From the expression (\ref{eq:4-2}) for $Q_t$ we can bound it as
\begin{equation}
 \label{eq:4-5}
    \begin{split}
        |Q_t(h)|
                   \leq & |(I+u(h))^{-1}(\Lambda_{\tilde{\omega}_t} \bar{\partial}\partial_{H_t}h)(I+h)^{-1} (I+u(h))-\sqrt{-1}\Lambda_{\tilde{\omega}_t} \bar{\partial}\partial_{H_t}h| \\
                     +  & |(I+u(h))^{-1}\Lambda_{\tilde{\omega}_t} (\partial_{H_t} h\cdot(I+h)^{-1}\wedge  \bar{\partial}h\cdot (I+h)^{-1})(I+u(h)) |\\
                     +  & |((I+u(h))^{-1}| \cdot |\Lambda_{\tilde{\omega}_t}F_{H_t}| \cdot |(I+u(h))| + |h| \cdot |\Lambda_{\tilde{\omega}_t}F_{H_t}| \\
                   \leq & Z'_1C^{sb}_k |t|^{\frac{\beta'-2}{3}} |\nabla_{H_t}^2 h| + (Z'_2)^4|\nabla_{H_t} h|^2 
                            + ((Z'_2)^2+Z'_2)|\Lambda_{\tilde{\omega}_t}F_{H_t}|                   
    \end{split}
\end{equation}
where (\ref{eq:4-3}) and (\ref{eq:4-4}) are used.
 
 Now we estimate the $L^k_{0,-4}$-norm of $|\nabla_{H_t}^2 h|$ and $|\nabla_{H_t} h|^2$. First of all we have
  \begin{equation*}
    \begin{split}
        \Arrowvert |\nabla_{H_t}^2 h| \Arrowvert_{L^k_{0,-4}}^k &
     			=\int_{X_t} \mathbf{r}_t^{\frac{8}{3}k}|\nabla_{H_t}^2 h|^k \mathbf{r}_t^{-4}dV_t 
        =  \int_{X_t} |\mathbf{r}_t^{\frac{2}{3}(2+2)}\nabla_{H_t}^2 h|^k \mathbf{r}_t^{-4}dV_t 
        \leq   \Arrowvert h \Arrowvert_{L^k_{2,-2}}^k
    \end{split}
 \end{equation*}
 and hence 
 \begin{equation}
  \label{eq:4-6}
       \Arrowvert |\nabla_{H_t}^2 h| \Arrowvert_{L^k_{0,-4}}  \leq  \Arrowvert h \Arrowvert_{L^k_{2,-2}} \leq |t|^{\frac{\beta'}{3}}.
 \end{equation}
 
 Next we estimate 
 \begin{equation*}
    \begin{split}
       \Arrowvert |\nabla_{H_t} h|^2 \Arrowvert_{L^k_{0,-4}}^k &=\int_{X_t} \mathbf{r}_t^{\frac{8}{3}k}|\nabla_{H_t} h|^{2k}\mathbf{r}_t^{-4}dV_t \\
                                                          								  		 &=\int_{X_t} \mathbf{r}_t^{-\frac{4}{3} k}\mathbf{r}_t^{\frac{2}{3}(2+1)2k}|\nabla_{H_t} h|^{2k}\mathbf{r}_t^{-4}dV_t 
								               												  	 	  \leq |t|^{-\frac{2}{3}k}\Arrowvert h \Arrowvert_{L^{2k}_{1,-2}}^{2k}.
    \end{split}
 \end{equation*} 
By Proposition \ref{pr:1-3} we have, for large $k$, $\Arrowvert h \Arrowvert_{L^{2k}_{1,-2}} \leq \hat{C}^{sb}_k \Arrowvert h \Arrowvert_{L^{k}_{2,-2}} \leq \hat{C}^{sb}_k|t|^{\frac{\beta'}{3}}$ for some constant $\hat{C}^{sb}_k$ independent of $t$. Thus we get 
\begin{equation}
  \label{eq:4-7}
        \Arrowvert |\nabla_{H_t} h|^2 \Arrowvert_{L^k_{0,-4}} 
        \leq |t|^{-\frac{2}{3}} \Arrowvert h \Arrowvert_{L^{2k}_{1,-2}}^2 
        \leq  (\hat{C}^{sb}_k)^2|t|^{\frac{1}{3}(2\beta'-2) }<(\hat{C}^{sb}_k)^2|t|^{\frac{\beta'-2}{3}} |t|^{\frac{\beta'}{3}}.
\end{equation} 

From the remark after Proposition \ref{pr:3-4}, we have for some constants $\tilde{Z}_k>0$
\begin{equation}
 \label{eq:4-8}
         \Arrowvert \Lambda_{\tilde{\omega}_t}F_{H_t}  \Arrowvert_{L^k_{0,-4}} \leq \tilde{Z}_k|t|^{\frac{3}{4}} \leq \tilde{Z}_k|t|^{\frac{3}{4}-\frac{\beta'}{3}}|t|^{\frac{\beta'}{3}}.
\end{equation}
Note that for $0<\beta'-2\ll 1$, $\frac{3}{4}-\frac{\beta'}{3}>\frac{\beta'-2}{3}>0$. We fix such a $\beta'$.

Now, from (\ref{eq:4-5}), (\ref{eq:4-6}), (\ref{eq:4-7}), and (\ref{eq:4-8}) we have
\begin{equation}
 \label{eq:4-9}
    \begin{split}
         \Arrowvert Q_t(h)\Arrowvert_{L^k_{0,-4}}  
                   \leq %& Z'_1C^{sb}_k |t|^{\frac{-2-\beta'}{3}} |t|^{-\frac{\beta'}{3}}+ (Z'_2)^4\cdot (\hat{C}^{sb}_k)^2|t|^{\frac{-2-\beta'}{3}} |t|^{-\frac{\beta'}{3}} 
                           % + ((Z'_2)^2+Z'_2)Z_0|t|^{\frac{3}{4}+\frac{\beta'}{3}}|t|^{-\frac{\beta'}{3}}\\
                          &\left(Z'_1C^{sb}_k + (Z'_2)^4\cdot (\hat{C}^{sb}_k)^2  
                            + ((Z'_2)^2+Z'_2)\tilde{Z}_k \right)|t|^{\frac{\beta'-2}{3}}\cdot |t|^{\frac{\beta'}{3}}  
    \end{split}
\end{equation}
for $t\neq 0$ small enough. \qed
\end{proof}

Fix $\beta'$ as in Proposition \ref{pr:4-2} and choose $\nu<\frac{1}{6}\beta'-\frac{1}{3}$ in Proposition \ref{pr:4-1}, then for $t\neq 0$ sufficiently small, $U_t$ maps $B(\beta')$ to itself. Next we show

\begin{proposition}
     $U_t$ is a contraction mapping on $B(\beta')$ for $t\neq 0$ small enough.
\end{proposition}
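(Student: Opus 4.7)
The plan is to apply the Banach fixed-point theorem: we have already shown that $U_t$ sends $B(\beta')$ into itself, so it remains to verify that $U_t$ is a strict contraction for $t \neq 0$ sufficiently small. Since $U_t = P_t \circ Q_t$, the operator-norm estimate of Proposition \ref{pr:4-1} reduces the task to proving a bound of the form
\[
\Arrowvert Q_t(h_1) - Q_t(h_2)\Arrowvert_{L^k_{0,-4}} \leq C\,|t|^{\frac{\beta'-2}{3}}\Arrowvert h_1 - h_2\Arrowvert_{L^k_{2,-2}}
\]
for all $h_1, h_2 \in B(\beta')$. Combining this with $\Arrowvert P_t\Arrowvert \leq \hat{Z}_k(-\log|t|)^{1/2}|t|^{-2\nu}$ and the earlier choice $\nu < \tfrac{1}{6}(\beta'-2)$ yields a contraction factor of order $(-\log|t|)^{1/2}|t|^{-2\nu + (\beta'-2)/3}$, which tends to $0$ as $t \to 0$ and therefore becomes strictly less than $1$ for $t$ small enough.

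To prove the displayed estimate, I would expand the difference $Q_t(h_1) - Q_t(h_2)$ either via the mean-value identity
\[
Q_t(h_1) - Q_t(h_2) = \int_0^1 (DQ_t)_{sh_1+(1-s)h_2}(h_1-h_2)\,ds
\]
or by direct algebraic manipulation of the three groups of terms constituting $Q_t$ in (\ref{eq:4-2}). The objective is to present every resulting summand as a product of a ``small'' factor involving $h_1, h_2$ and their derivatives, and a ``difference'' factor containing $h_1-h_2$, $\nabla (h_1-h_2)$, or $\nabla^2(h_1-h_2)$. The key pointwise inputs are the Sobolev embedding $L^k_{2,-2} \hookrightarrow C^1_{-2}$ (valid for $k$ large by Proposition \ref{pr:1-3}), which via $\mathbf{r}_t \geq |t|^{1/2}$ gives the uniform bounds $|h_i| \leq C^{sb}_k|t|^{(\beta'-2)/3}$ as in (\ref{eq:4-01}), together with the mean-curvature bound $|\mathbf{r}_t^{4/3}\Lambda_{\tilde{\omega}_t}F_{H_t}| \leq \tilde Z_0|t|^{1/4}$ from (\ref{eq:F-1}) and the weighted $L^k$-bound (\ref{eq:F-2}).

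The three groups of terms in (\ref{eq:4-2}) are then controlled exactly as in the proof of Proposition \ref{pr:4-2}: the first group produces terms of type $O(h_i)\cdot \nabla^2(h_1-h_2) + O(\nabla^2 h_i)\cdot(h_1-h_2)$ whose $L^k_{0,-4}$-norm is handled by (\ref{eq:4-6}); the second group yields bilinear expressions $O(\nabla h_i)\cdot \nabla(h_1-h_2)$ controlled via H\"older's inequality and the embedding $L^k_{2,-2}\hookrightarrow L^{2k}_{1,-2}$ as in (\ref{eq:4-7}); the third group rests on the algebraic observation, already implicit in the derivation of $Q_t$, that the linear-in-$h$ part of $(I+u(h))^{-1}\Lambda_{\tilde\omega_t}F_{H_t}(I+u(h))$ cancels $\tfrac{1}{2}[\Lambda_{\tilde\omega_t}F_{H_t},h]$, so the remainder begins at quadratic order in $h$ and its difference has the schematic form $(h_1+h_2)\cdot\Lambda_{\tilde\omega_t}F_{H_t}\cdot(h_1-h_2)$ plus permutations. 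The main technical obstacle is closing the estimate on this third group: one must combine the $C^0$-smallness of the extra factor $(h_1+h_2)$ with the weighted integral estimate on $\Lambda_{\tilde\omega_t}F_{H_t}$ supplied by Proposition \ref{pr:3-4}(iii), and verify that the resulting total power of $|t|$ still dominates $|t|^{(\beta'-2)/3}$ for the chosen $\beta'$ close to $2$.
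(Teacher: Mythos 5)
Your proposal follows essentially the same route as the paper: establish a Lipschitz estimate for $Q_t$ on $B(\beta')$ of order $|t|^{(\beta'-2)/3}$, and combine it with the bound $\Arrowvert P_t\Arrowvert\leq\hat Z_k(-\log|t|)^{1/2}|t|^{-2\nu}$ and the choice $\nu<\tfrac16\beta'-\tfrac13$ to get a contraction factor tending to zero. The paper proves the Lipschitz bound by direct algebraic manipulation (its analogues of your ``small factor'' times ``difference factor'' decomposition) plus the estimates (\ref{eq:4-6}), (\ref{eq:4-7}), (\ref{eq:4-8}), exactly the three groups you describe. One remark on your treatment of the curvature group: the cancellation of the linear part of $(I+u(h))^{-1}\Lambda_{\tilde\omega_t}F_{H_t}(I+u(h))$ against $\tfrac12[\Lambda_{\tilde\omega_t}F_{H_t},h]$ that you invoke is correct, but the paper does not need it — it simply bounds the difference of the third group crudely by $|\Lambda_{\tilde\omega_t}F_{H_t}|\,|h_1-h_2|$, rewrites this as $|t|^{-2/3}|\Lambda_{\tilde\omega_t}F_{H_t}|\,|\mathbf r_t^{4/3}(h_1-h_2)|$ using $|t|^{-2/3}\mathbf r_t^{4/3}\geq1$, and uses $\Arrowvert\Lambda_{\tilde\omega_t}F_{H_t}\Arrowvert_{L^k_{0,-4}}\leq\tilde Z_k|t|^{3/4}\leq\tilde Z_k|t|^{\beta'/3}$; so what you flagged as ``the main technical obstacle'' is in fact not an obstacle at all, and the extra quadratic smallness is an unused bonus.
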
 
\begin{proof}
   We first show that when $t \neq 0$ is small enough and $k$ large enough, there are constants $\hat{Z}''_k>0$ such that for any $h_1$ and $h_2$ contained in $B(\beta')$, we have 
\begin{equation}
 \label{eq:18-4}
      \Arrowvert Q_t(h_1)-Q_t(h_2) \Arrowvert_{L^k_{0,-4}} \leq \hat{Z}''_k|t|^{\frac{\beta'-2}{3}}\Arrowvert h_1-h_2\Arrowvert_{L^k_{2,-2}}.
\end{equation}

   As discussed in Proposition \ref{pr:4-2}, for $i=1,2$ when $|h_i| \in B(\beta')$ we have $|h_i|<C^{sb}_k|t|^{\frac{\beta'-2}{3}}$ for some constants $C^{sb}_k$. In this case there is a constant $Z'_3$ independent of $t$ such that
\begin{equation}
 \label{eq:4-10}
     \begin{split}
         | (I+h_1)^{-1}(I+u(h_1)) - (I+h_2)^{-1}(I+u(h_2)) | & \leq Z'_3 |h_1-h_2|, \\
              			 | (I+u(h_1))^{-1} - (I+u(h_2))^{-1} | & \leq Z'_3 |h_1-h_2|, \\
			 						 | (I+u(h_1)) - (I+u(h_2)) | & \leq Z'_3 |h_1-h_2|,\\
								  | (I+h_1)^{-1} - (I+h_2)^{-1} | & \leq Z'_3 |h_1-h_2|, \\
									 		 | u(h_1) - u(h_2) | & \leq Z'_3 |h_1-h_2|.
     \end{split}
\end{equation}
Using these bounds, the bounds in (\ref{eq:4-4}), and the expression in (\ref{eq:4-2}) for $Q_t(h)$, it is not hard to see that for some constant $Z'_4$ we have
\begin{equation}
    \begin{split}
     & |Q_t(h_1)-Q_t(h_2)| \\
      \leq &Z'_4 ((|\Lambda_{\tilde{\omega}_t} \bar{\partial}\partial_{H_t}h_1 | + |\Lambda_{\tilde{\omega}_t} \bar{\partial}\partial_{H_t}h_2 |) |h_1-h_2| 
      									+ (|h_1|+|h_2|)|\Lambda_{\tilde{\omega}_t} \bar{\partial}\partial_{H_t}(h_1-h_2)| \\
                            &+(|\nabla_{H_t} h_1|^2+|\nabla_{H_t} h_2|^2) |h_1-h_2| \\
                            &+ (|\nabla_{H_t} h_1|+|\nabla_{H_t} h_2|) |\nabla_{H_t} (h_1-h_2)|
      						+|\Lambda_{\tilde{\omega}_t}F_{H_t} | |h_1-h_2| ) \\
      \leq &Z'_4 (|t|^{-\frac{2}{3} } ( |\Lambda_{\tilde{\omega}_t} \bar{\partial}\partial_{H_t}h_1 |+ |\Lambda_{\tilde{\omega}_t} \bar{\partial}\partial_{H_t}h_2 | 
      															+|\Lambda_{\tilde{\omega}_t}F_{H_t} |) |\mathbf{r}_t^{\frac{4}{3}}(h_1-h_2)|\\
								&+|t|^{-\frac{2}{3} } (|\nabla_{H_t} h_1|^2+|\nabla_{H_t} h_2|^2) |\mathbf{r}_t^{\frac{4}{3}}(h_1-h_2)| \\
								& +|t|^{-\frac{2}{3} } (|\mathbf{r}_t^{-\frac{2}{3}}\nabla_{H_t} h_1|+|\mathbf{r}_t^{-\frac{2}{3}}\nabla_{H_t} h_2|) |\mathbf{r}_t^{2}\nabla_{H_t} (h_1-h_2)| \\
      							& + (|h_1|+|h_2|)|\Lambda_{\tilde{\omega}_t} \bar{\partial}\partial_{H_t}(h_1-h_2)| )
	\end{split}
\end{equation}
where in the last line we use the fact that $|t|^{-\frac{2}{3}}\mathbf{r}_t^{\frac{4}{3}} \geq 1$ on $X_t$.
Therefore
\begin{equation}
 \label{eq:4-11}
    \begin{split}
        &\Arrowvert Q_t(h_1)-Q_t(h_2) \Arrowvert_{L^k_{0,-4}} \\
      \leq & Z'_4 (|t|^{-\frac{2}{3}} (\Arrowvert \Lambda_{\tilde{\omega}_t} \bar{\partial}\partial_{H_t}h_1 \Arrowvert_{L^k_{0,-4}} 
      														+\Arrowvert \Lambda_{\tilde{\omega}_t} \bar{\partial}\partial_{H_t}h_2 \Arrowvert_{L^k_{0,-4}} 
        													+\Arrowvert \Lambda_{\tilde{\omega}_t}F_{H_t} \Arrowvert_{L^k_{0,-4}}) \sup_{X_t}|\mathbf{r}_t^{\frac{4}{3}}(h_1-h_2)| \\
						&+|t|^{-\frac{2}{3}} (\Arrowvert |\nabla_{H_t} h_1|^2 \Arrowvert_{L^{k}_{0,-4}}
							 +\Arrowvert |\nabla_{H_t} h_2|^2 \Arrowvert_{L^{k}_{0,-4}} )\sup_{X_t}|\mathbf{r}_t^{\frac{4}{3}}(h_1-h_2)| \\
						&+|t|^{-\frac{2}{3}} ( \Arrowvert \mathbf{r}_t^{-\frac{2}{3}}\nabla_{H_t} h_1 \Arrowvert_{L^k_{0,-4}}
																+\Arrowvert \mathbf{r}_t^{-\frac{2}{3}}\nabla_{H_t} h_2 \Arrowvert_{L^k_{0,-4}} ) \sup_{X_t}|\mathbf{r}_t^{2}\nabla_{H_t}(h_1-h_2)| \\
            &+2C^{sb}_k|t|^{\frac{\beta'-2}{3}} \Arrowvert \Lambda_{\tilde{\omega}_t} \bar{\partial}\partial_{H_t}(h_1-h_2) \Arrowvert_{L^k_{0,-4}} ) 
       	\end{split}
\end{equation}
where (\ref{eq:4-01}) is used to bound $|h_1|+|h_2|$.

The first term in the RHS of (\ref{eq:4-11}) is bounded as
\begin{equation}
 \label{eq:4-12}
    \begin{split}
             &|t|^{-\frac{2}{3}} (\Arrowvert \Lambda_{\tilde{\omega}_t} \bar{\partial}\partial_{H_t}h_1 \Arrowvert_{L^k_{0,-4}} 
      														+\Arrowvert \Lambda_{\tilde{\omega}_t} \bar{\partial}\partial_{H_t}h_2 \Arrowvert_{L^k_{0,-4}} 
        													+\Arrowvert \Lambda_{\tilde{\omega}_t}F_{H_t} \Arrowvert_{L^k_{0,-4}}) \sup_{X_t}|\mathbf{r}_t^{\frac{4}{3}}(h_1-h_2)| \\
       \leq & (2+\tilde{Z}_k)|t|^{-\frac{2}{3}} |t|^{\frac{\beta'}{3}} \Arrowvert h_1-h_2\Arrowvert_{C^0_{-2}}
               =(2+\tilde{Z}_k)C^{sb}_k|t|^{\frac{\beta'-2}{3}} \Arrowvert h_1-h_2\Arrowvert_{L^k_{2,-2}}
     \end{split}
\end{equation}
for $t$ small enough. Here we have used (\ref{eq:4-6}), (\ref{eq:4-8}) and (\ref{eq:4-0}).

The second term in the RHS of (\ref{eq:4-11}) is bounded as
\begin{equation}
 \label{eq:4-13}
    \begin{split}
    		|t|^{-\frac{2}{3}}& (\Arrowvert |\nabla_{H_t} h_1|^2 \Arrowvert_{L^{k}_{0,-4}}
							 +\Arrowvert |\nabla_{H_t} h_2|^2 \Arrowvert_{L^{k}_{0,-4}} )\sup_{X_t}|\mathbf{r}_t^{\frac{4}{3}}(h_1-h_2)| \\
        %\leq  &|t|^{-\frac{2}{3}} (\Arrowvert h_1 \Arrowvert_{L^{2k}_{1,-2}}^{2} +\Arrowvert h_2 \Arrowvert_{L^{2k}_{1,-2}}^{2} )\sup_{X_t}|\mathbf{r}_t^{\frac{4}{3}}(h_1-h_2)| \\
      \leq &|t|^{-\frac{2}{3}}\cdot 2(\hat{C}^{sb}_k)^2|t|^{\frac{\beta'-2}{3}} |t|^{\frac{\beta'}{3}} \cdot \Arrowvert h_1-h_2\Arrowvert_{C^0_{-2}}    
         \leq 2(\hat{C}^{sb}_k)^2C^{sb}_k|t|^{\frac{2\beta'-4}{3}}  \Arrowvert h_1-h_2\Arrowvert_{L^k_{2,-2}}
     \end{split}
\end{equation} 
for $t$ small enough. Here (\ref{eq:4-7}) and (\ref{eq:4-0}) are used.

To bound the third term in the RHS of (\ref{eq:4-11}), we first estimate that, for $\Arrowvert h \Arrowvert_{L^k_{2,-2}} \leq |t|^{\frac{\beta'}{3}}$,
\begin{equation*}
    \begin{split}
        \Arrowvert \mathbf{r}_t^{-\frac{2}{3}}\nabla_{H_t} h \Arrowvert_{L^k_{0,-4}}^k &
        =\int_{X_t} \mathbf{r}_t^{\frac{8}{3}k}|\mathbf{r}_t^{-\frac{2}{3}}\nabla_{H_t} h|^k \mathbf{r}_t^{-4}dV_t 
     \leq \int_{X_t} | \mathbf{r}_t^{2} \nabla_{H_t} h|^k \mathbf{r}_t^{-4}dV_t 
     \leq  \Arrowvert h \Arrowvert_{L^k_{2,-2}}^k 
    \end{split}
 \end{equation*}
 and hence 
 \begin{equation*}
       \Arrowvert \mathbf{r}_t^{-\frac{2}{3}}\nabla_{H_t} h \Arrowvert_{L^k_{0,-4}}  \leq \Arrowvert h \Arrowvert_{L^k_{2,-2}} 
       \leq |t|^{\frac{\beta'}{3}}.
 \end{equation*}
 Therefore we have
 \begin{equation}
  \label{eq:4-14}
       \begin{split}
          & |t|^{-\frac{2}{3}} \left( \Arrowvert \mathbf{r}_t^{-\frac{2}{3}}\nabla_{H_t} h_1 \Arrowvert_{L^k_{0,-4}}
				+\Arrowvert \mathbf{r}_t^{-\frac{2}{3}}\nabla_{H_t} h_2 \Arrowvert_{L^k_{0,-4}} \right ) \sup_{X_t}|\mathbf{r}_t^{2}\nabla_{H_t}(h_1-h_2)|\\
		 	\leq & 2|t|^{-\frac{2}{3}} |t|^{\frac{\beta'}{3}} \Arrowvert h_1-h_2\Arrowvert_{C^1_{-2}} 
			\leq 2C^{sb}_k|t|^{\frac{\beta'-2}{3}} \Arrowvert h_1-h_2\Arrowvert_{L^k_{2,-2}}
		\end{split}
 \end{equation}
where the above estimate and (\ref{eq:4-0}) are used.

Finally, it is easy to see that the last term in (\ref{eq:4-11}) is also bounded as
\begin{equation}
 \label{last}
		2C^{sb}_k|t|^{\frac{\beta'-2}{3}} \Arrowvert \Lambda_{\tilde{\omega}_t} \bar{\partial}\partial_{H_t}(h_1-h_2) \Arrowvert_{L^k_{0,-4}}
		\leq 2C^{sb}_k |t|^{\frac{\beta'-2}{3}} \Arrowvert h_1-h_2\Arrowvert_{L^k_{2,-2}}.
\end{equation}
Plugging  (\ref{eq:4-12}), (\ref{eq:4-13}), (\ref{eq:4-14}) and (\ref{last}) into (\ref{eq:4-11}) proves (\ref{eq:18-4}).  

Recall that we have chosen $\nu<\frac{1}{6}\beta'-\frac{1}{3}$. Therefore (\ref{eq:18-4}) and the bound for the norm of $P_t$ given in Proposition \ref{pr:4-1} show that for $t \neq 0$ small enough $U_t$ is a contraction mapping, as desired. \qed 
\end{proof}

Using the contraction mapping theorem on $U_t:B(\beta') \rightarrow B(\beta')$, we have now proved 
\begin{theorem}
       For $t\neq 0$ sufficiently small, the bundle $\mathcal{E}_t$ admits a smooth Hermitian-Yang-Mills metric with respect to the balanced metric $\tilde{\omega}_t$.\\[0.2cm]
\end{theorem}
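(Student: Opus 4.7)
The plan is to invoke the Banach contraction mapping theorem on the operator $U_t: B(\beta') \to B(\beta')$ and then bootstrap the resulting weak solution to a smooth one. First I would check the self-map property: for $h \in B(\beta')$, Proposition \ref{pr:4-2} gives $\Arrowvert Q_t(h) \Arrowvert_{L^k_{0,-4}} \leq \hat{Z}'_k |t|^{(\beta'-2)/3}\cdot |t|^{\beta'/3}$, and combining with the bound $\Arrowvert P_t \Arrowvert \leq \hat{Z}_k (-\log|t|)^{1/2} |t|^{-2\nu}$ from Proposition \ref{pr:4-1}, the choice $\nu < \beta'/6 - 1/3$ ensures $\Arrowvert U_t(h) \Arrowvert_{L^k_{2,-2}} < |t|^{\beta'/3}$ for $|t|$ small. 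The contraction property follows analogously from the Lipschitz estimate (\ref{eq:18-4}) together with the operator-norm bound on $P_t$, both evaluated at the same exponent margin.

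By the contraction mapping theorem, for $t \neq 0$ sufficiently small there exists a unique $h_t \in B(\beta') \subset L^k_{2,-2}(\text{Herm}^0_{H_t}(\text{End}(\mathcal{E}_t)))$ with $U_t(h_t) = h_t$, equivalently $L_t(h_t) = Q_t(h_t)$. Setting $\tilde{H}_t := (I+h_t)H_t$, unwinding the derivation of the equation $L_t(h) = Q_t(h)$ from $\sqrt{-1}\Lambda_{\tilde{\omega}_t} F_{\tilde{H}_t} = 0$ (the step where we twisted by $I + u(h)$ to restore $H_t$-symmetry) shows that $\tilde{H}_t$ satisfies $\sqrt{-1}\Lambda_{\tilde{\omega}_t} F_{\tilde{H}_t} = 0$. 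Since we took $h_t \perp I$ in $\text{Herm}^0_{H_t}$, pairing the equation with $I$ is consistent with the vanishing constant, matching the fact that $\deg_{\tilde{\omega}_t}(\mathcal{E}_t) = 0$ because $c_1(\mathcal{E}_t) = 0$ and $\tilde{\omega}_t$ is balanced; so this is the HYM equation with $\lambda = 0$.

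Next I would upgrade regularity. For $k$ large, Sobolev embedding (Proposition \ref{pr:1-3}) gives $h_t \in C^1_{-2}$, so $|h_t|$ is uniformly small and $I+h_t$ is invertible on all of $X_t$. Rewriting $L_t(h_t) - Q_t(h_t) = 0$ as a quasilinear elliptic equation of the form $\sqrt{-1}\Lambda_{\tilde{\omega}_t}\bar{\partial}\partial_{H_t} h_t = \text{(lower order in } h_t\text{)}$, and noting that $H_t$, $\tilde{\omega}_t$, $F_{H_t}$ are all smooth on $X_t$, a standard bootstrap using interior elliptic regularity in the uniform coordinate charts from Theorem \ref{th:0} (with constants controlled via Proposition \ref{pr:uniform}) gives $h_t \in C^\infty$. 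Hence $\tilde{H}_t$ is a smooth Hermitian-Yang-Mills metric on $\mathcal{E}_t$ with respect to $\tilde{\omega}_t$.

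The main obstacle is really the delicate balance of small parameters in $|t|$, which has essentially been handled: the weighted elliptic estimate for $L_t$ loses at most a factor $|t|^{-2\nu}$ (Proposition \ref{pr:4-1}), while the approximate mean curvature $\Lambda_{\tilde{\omega}_t}F_{H_t}$ supplies the $|t|^{3/4}$ smallness in $L^k_{0,-4}$ (estimate (\ref{eq:F-2})), and the residual nonlinearity in $Q_t$ is quadratic in $h$ plus a linear-in-$F_{H_t}$ correction, which is why the ball of radius $|t|^{\beta'/3}$ with $\beta'$ just above $2$ is stable under $U_t$. All that remains beyond invoking the fixed point theorem is the routine elliptic bootstrap, which presents no obstruction given the uniform estimates already established.
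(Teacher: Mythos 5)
Your proposal is correct and follows essentially the same contraction-mapping route as the paper: the self-map property comes from combining the operator-norm bound on $P_t$ (Proposition~\ref{pr:4-1}) with the quadratic-plus-source estimate for $Q_t$ (Proposition~\ref{pr:4-2}) under the choice $\nu < \frac{1}{6}\beta' - \frac{1}{3}$, and the contraction property from the Lipschitz estimate (\ref{eq:18-4}) together with the same $P_t$ bound, after which the Banach fixed point theorem yields the solution $h_t$. The additional details you supply — checking that $I+h_t$ is positive so that $\tilde{H}_t=(I+h_t)H_t$ is a genuine metric, that the HYM constant is $\lambda=0$ in agreement with $\deg_{\tilde{\omega}_t}\mathcal{E}_t=0$, and the elliptic bootstrap from $L^k_{2,-2}$ to $C^\infty$ using the uniform charts of Theorem~\ref{th:0} — are left implicit in the paper's one-line conclusion but are exactly the right gaps to fill.
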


\section{Proposition 7.1}

What remains to be proved is the following proposition.
\begin{proposition}
 \label{pr:5-1}
For each $\nu >0$, we have
  \begin{equation*}
      \int_{X_t} |\mathbf{r}_t^{\frac{4}{3}}h|^2\mathbf{r}_t^{-4}dV_t\leq |t|^{-\nu}\int_{X_t} |\partial_{H_t}h|^2dV_t
  \end{equation*}
  for $t\neq 0$ small.
\end{proposition}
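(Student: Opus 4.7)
The plan is to argue by contradiction using concentration-compactness combined with the irreducibility of $\mathcal{E}$. Suppose the claim fails. Then there exist $\nu_0>0$, a sequence $t_i\to 0$, and $h_i\in\text{Herm}^0_{H_{t_i}}(\text{End}(\mathcal{E}_{t_i}))$ with $\Arrowvert h_i\Arrowvert_{L^2_{0,-2}}^2=1$ and $\int_{X_{t_i}}|\partial_{H_{t_i}}h_i|^2\, dV_{t_i}\le|t_i|^{\nu_0}$. Because $h_i$ is $H_{t_i}$-symmetric we have $(\partial_{H_{t_i}}h_i)^{*_{H_{t_i}}}=\bar\partial h_i$, so $\Arrowvert\nabla_{H_{t_i}}h_i\Arrowvert_{L^2}\to 0$. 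The weighted normalization gives $\int_{\{\mathbf{r}_{t_i}\ge\epsilon\}}|h_i|^2\, dV_{t_i}\le\epsilon^{4/3}$ for each $\epsilon>0$, so after pulling back to $X_{0,sm}$ via the diffeomorphisms $x_{t_i}$ and the bundle identifications $f_{t_i}$ of Section 5, and using the smooth local convergence $\tilde g_{t_i}\to\hat g_0$ and $H_{t_i}\to H_0$, Rellich compactness extracts a subsequence converging weakly in $L^2_{1,\mathrm{loc}}$ and strongly in $L^2_{\mathrm{loc}}$ on $X_{0,sm}$ to a limit $h_\infty$ satisfying $\nabla_{H_0}h_\infty=0$.

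Next, since $h_\infty$ is parallel, $|h_\infty|_{H_0}$ is constant on $X_{0,sm}$; combined with the uniform bound on $h_0=H_0\hat H^{-1}$ from Proposition \ref{pr:2-3}, this makes $h_\infty$ a bounded holomorphic section of $\text{End}(\mathcal{E})$ on $\hat X\setminus\bigcup C_i$. Because $\bigcup C_i$ has complex codimension two in $\hat X$, Hartogs extension produces a holomorphic section $\tilde h_\infty$ of $\text{End}(\mathcal{E})$ on $\hat X$. The simplicity of the stable bundle $\mathcal{E}$ forces $H^0(\hat X,\text{End}(\mathcal{E}))=\mathbb{C}\,I$, so $\tilde h_\infty=cI$ for some $c\in\mathbb{C}$; $H_0$-symmetry of the limit forces $c\in\mathbb{R}$. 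The orthogonality $\int_{X_{t_i}}\text{tr}\, h_i\, dV_{t_i}=0$ passes to the limit: Cauchy-Schwarz on the vanishing-cycle region gives
\begin{equation*}
\Big|\int_{V_{t_i}(\delta)}\text{tr}\, h_i\, dV_{t_i}\Big|
\le\sqrt{r}\,\delta^{2/3}\,\mathrm{vol}_{t_i}(V_{t_i}(\delta))^{1/2},
\end{equation*}
which tends to zero as $\delta\to 0$ uniformly in $i$, while the complementary integral converges to $c\,r\,\mathrm{vol}(\hat X)$ by $L^2_{\mathrm{loc}}$ convergence. Hence $c=0$ and $h_\infty\equiv 0$.

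The main obstacle, and the final step, is to contradict $\Arrowvert h_i\Arrowvert_{L^2_{0,-2}}=1$ by ruling out concentration of the weighted $L^2$-mass near the vanishing cycles. Since $h_i\to 0$ in $L^2_{\mathrm{loc}}(X_{0,sm})$ and $\mathbf{r}_{t_i}^{-4/3}$ is bounded there, $\int_{X_{t_i}[\delta]}|h_i|^2\mathbf{r}_{t_i}^{-4/3}\, dV_{t_i}\to 0$ for each fixed $\delta>0$. The remaining integral on $V_{t_i}(\delta)$ I would control by splitting $V_{t_i}(\delta)=V_{t_i}(R|t_i|^{1/2})\cup V_{t_i}(R|t_i|^{1/2},\delta)$ for a large fixed $R$. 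On the cone annulus, with the radial variable $\rho=\mathbf{r}^{2/3}$ in which the weight $\mathbf{r}_{t_i}^{-4/3}=\rho^{-2}$ is the critical Hardy weight in six real dimensions, after cutting off near $\mathbf{r}_{t_i}=\delta$ and applying the Hardy inequality for the asymptotic cone metric $g_{co,t_i}$ (the asymptotics of Lemma \ref{lm:4} allow the cone-metric constant to be used up to a uniform error), one bounds $\int_{V_{t_i}(R|t_i|^{1/2},\delta)}|h_i|^2\mathbf{r}_{t_i}^{-4/3}\, dV_{t_i}$ by $C\Arrowvert\partial h_i\Arrowvert_{L^2}^2$ plus a boundary term that vanishes as $i\to\infty$ by $L^2_{\mathrm{loc}}$ convergence. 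On the core region $V_{t_i}(R|t_i|^{1/2})$, where the bundle is trivial and $H_{t_i}$ is close to a flat metric, rescale via $\psi_{t_i}:Q_1\to Q_{t_i}$ to the fixed compact region $V_1(R)$: the scalings $\psi_{t_i}^*g_{co,t_i}=|t_i|^{2/3}g_{co,1}$ and $\psi_{t_i}^*\mathbf{r}_{t_i}=|t_i|^{1/2}\mathbf{r}_1$ show that both the weighted mass and the Dirichlet energy scale by the same factor $|t_i|^{4/3}$, so the Poincaré inequality for matrix-valued functions on $V_1(R)$ controls the deviation of $\psi_{t_i}^* h_i$ from its average, while the average is pinned down by matching the boundary values on $\{\mathbf{r}_{t_i}=R|t_i|^{1/2}\}$ to the cone side, which go to zero by local convergence. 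Combining these pieces yields $\Arrowvert h_i\Arrowvert_{L^2_{0,-2}}\to 0$, contradicting the normalization; the factor $|t|^{-\nu}$ in the claimed inequality provides the slack needed to absorb the suboptimal constants generated in the matching procedure.
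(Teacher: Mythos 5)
Your overall strategy is the same as the paper's: suppose the estimate fails, pass to a sequence $h_i$ with unit $L^2_{0,-2}$-norm and vanishing Dirichlet energy, use Hermitian symmetry to turn $\partial_{H_{t_i}}h_i\to 0$ into $\nabla_{H_{t_i}}h_i\to 0$, extract a locally $L^2$-convergent limit $h_\infty$ on $X_{0,sm}$, use Hartogs extension and simplicity of $\mathcal{E}$ to conclude $h_\infty=cI$, and invoke the orthogonality to $I$ to pin down $c$. The genuine novelty — and the technical heart — is the no-concentration step. The paper proves a uniform-in-$n$ tail estimate $\int_{V_n(\delta)}|h_n|^2\mathbf{r}^{-4/3}dV_n\le C\delta^{2\iota}$ (Lemma 7.2) via the Michael–Simon Sobolev inequality on the conifold embedded in $\mathbb{C}^4$ together with an integration by parts against the \emph{subcritical} weight $\phi_2=\mathbf{r}^{-2\iota}$, exploiting the sign of $\iota-6\iota^2>0$ for small $\iota$. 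You instead split $V_{t_i}(\delta)$ into a cone annulus and a core of radius $R|t_i|^{1/2}$ and try a critical-weight Hardy inequality on the annulus combined with rescaling and Poincaré on the core. The decomposition and the $|t_i|^{4/3}$ scale-invariance are correct, and the route is interesting because it avoids the Michael–Simon inequality altogether.

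However, the proof as written has a genuine gap in the core estimate. You claim the average of $\psi_{t_i}^*h_i$ on $V_1(R)$ is ``pinned down by matching the boundary values on $\{\mathbf{r}_{t_i}=R|t_i|^{1/2}\}$ to the cone side, which go to zero by local convergence.'' But $L^2_{\mathrm{loc}}$ (or $C^1_{\mathrm{loc}}$) convergence on compactly embedded open subsets of $X_{0,sm}$ gives no information whatsoever on the hypersurface $\{\mathbf{r}_{t_i}=R|t_i|^{1/2}\}$, since that set converges to the singular locus as $i\to\infty$. Without a bound on those boundary values the Poincaré inequality only controls the deviation from the average, and the average could still be of order $|t_i|^{-2/3}$, exactly the size needed to keep $\Arrowvert h_i\Arrowvert_{L^2_{0,-2}}$ bounded away from zero. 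The argument can in fact be salvaged, but by a different mechanism than the one you invoke: the one-dimensional radial integration by parts underlying the Hardy inequality on the annulus produces an inner boundary term of the form $\frac{a^4}{4}\int_\Sigma|h_i(a,\cdot)|^2\,d\sigma_\Sigma$ with a favorable sign, so keeping it rather than discarding it yields the trace bound $a^4\int_\Sigma|h_i(a,\cdot)|^2\,d\sigma_\Sigma\lesssim \int_{\mathrm{annulus}}|\partial_{H_{t_i}}h_i|^2\,dV_{t_i}\to 0$, which is precisely what your Poincaré step requires; one must also control the error between $g_{co,t}$ and the exact cone metric at $\mathbf{r}_t\sim R|t|^{1/2}$ (Lemma 2.1 gives an $O(R^{-2})$ error, which is acceptable for $R$ large, but this needs to be stated). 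As written, the justification is wrong even though the conclusion is reachable. Separately, a small slip: the inequality $\int_{\{\mathbf{r}_{t_i}\geq\epsilon\}}|h_i|^2\,dV_{t_i}\leq\epsilon^{4/3}$ goes the wrong way; the normalization together with the a priori bound $\mathbf{r}_t\leq 3$ gives only a uniform $L^2$ bound, not decay in $\epsilon$.
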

 
 We can regard this proposition as a problem of smallest eigenvalue of a self-adjoint operator. Consider the pairing 
\begin{equation*}
     \langle h_1,h_2\rangle_{L^2_{0,-2}}:=\int_{X_t} \mathbf{r}_t^{\frac{8}{3}}\langle h_1,h_2 \rangle_{H_t} \mathbf{r}_t^{-4}dV_t.
\end{equation*}
One can compute
\begin{equation*}
   \begin{split}
        &\int_{X_t} \langle \partial_{H_t}h_1,\partial_{H_t}h_2 \rangle_{H_t,\tilde{g}_t} dV_t 
       = \int_{X_t}  \langle \sqrt{-1}\Lambda_{\tilde{\omega}_t}\bar{\partial} \partial_{H_t} h_1, h_2 \rangle_{H_t} dV_t \\
  		=&\int_{X_t} \mathbf{r}_t^{\frac{8}{3}} \langle \sqrt{-1}\mathbf{r}_t^\frac{4}{3}\Lambda_{\tilde{\omega}_t}\bar{\partial} \partial_{H_t} h_1, h_2 \rangle_{H_t} \mathbf{r}_t^{-4}dV_t
      =\langle \sqrt{-1}\mathbf{r}_t^\frac{4}{3}\Lambda_{\tilde{\omega}_t}\bar{\partial} \partial_{H_t} h_1,h_2\rangle_{L^2_{0,-2}}.
   \end{split}
\end{equation*}
 From this we see that the operator $\sqrt{-1}\mathbf{r}_t^\frac{4}{3}\Lambda_{\tilde{\omega}_t}\bar{\partial} \partial_{H_t}$ is self-adjoint on $L^2_{0,-2}(\text{Herm}^0_{H_t}(\text{End} (\mathcal{E}_t)))$.

Define the number
\begin{equation*}
     \lambda_t:=\inf_{0\neq h\in L^2_{0,-2}(\text{Herm}^0_{H_t}(\text{End} (\mathcal{E}_t)))}\frac{\int_{X_t} |\partial_{H_t}h|^2 dV_t}{\int_{X_t} |h|^2\mathbf{r}_t^{-\frac{4}{3}}dV_t}.
\end{equation*}
It is not hard to show that the above infimum is achieved at those $h$ satisfying 
\begin{equation}
 \label{eq:11-2}
				\sqrt{-1}\mathbf{r}_t^\frac{4}{3}\Lambda_{\tilde{\omega}_t}\bar{\partial} \partial_{H_t}h=\lambda_t h,
\end{equation}
 i.e., $h$ is an eigenvector of the operator $\sqrt{-1}\mathbf{r}_t^\frac{4}{3}\Lambda_{\tilde{\omega}_t}\bar{\partial} \partial_{H_t}$ corresponding to the smallest nonzero eigenvalue $\lambda_t$ on $L^2_{0,-2}(\text{Herm}^0_{H_t}(\text{End} (\mathcal{E}_t)))$. For each $t \neq 0$ let $h_t$ be such an element which satisfies $\Arrowvert h_t \Arrowvert_{L^2_{0,-2}}=1$.

\begin{proof}
Our goal is to show that for each $\nu >0$ one has $\lambda_t>|t|^\nu$ when $t \neq 0$ is small.
Suppose such a bound does not exist. Then for some $\nu>0$ there is a sequence $\{t_n\}$ converging to $0$ such that $\lambda_{t_n} \leq |t_n|^\nu$. The endomorphisms $h_{t_n}$ introduced above satisfy 
\begin{equation}
 \label{eq:11-1}
     \sqrt{-1}\mathbf{r}^\frac{4}{3}\Lambda_{\tilde{\omega}_n}\bar{\partial} \partial_{H_n}h_n=\lambda_n h_n,
\end{equation}
\begin{equation}
 \label{eq:11-3}
		\int_{X_n} |h_n|^2\mathbf{r}^{-\frac{4}{3}}dV_n=1
\end{equation}
and
\begin{equation}
 \label{eq:11-4}
			\int_{X_n} |\partial_{H_n}h_n|^2 dV_n \leq |t_n|^\nu.
\end{equation}
Here we use the notations $\mathbf{r}$, $\tilde{\omega}_n$, $H_n$ and $\lambda_n$ to denote $\mathbf{r}_{t_n}$, $\tilde{\omega}_{t_n}$, $H_{t_n}$ and $\lambda_{t_n}$, respectively. In the following we will replace the subscripts $t_n$ with $n$.

For each fixed $\delta>0$ and $n$ sufficiently large, because the Riemannian manifold $(X_n[\delta], \tilde{\omega}_n)$ has uniform geometry, and because the coefficients  in the equations (\ref{eq:11-1}) are uniformly bounded, there is a constant $C$ independent of large $n$ such that
\begin{equation*}
 \label{Lp}
		\Arrowvert h_n \Arrowvert_{L^p_3(X_n[2\delta])} 
		\leq C\Arrowvert h_n \Arrowvert_{L^2(X_n[\delta])}
		\leq C'\left(\int_{X_n} |h_n|^2\mathbf{r}^{-\frac{4}{3}}dV_n\right)^\frac{1}{2} \leq C'
\end{equation*}
where $C'$ depends only on $\delta$ and $p$. For $p$ large enough we see that $\Arrowvert h_n \Arrowvert_{C^2(X_n[2\delta])} $ is bounded independent of $n$. Therefore by using the diagonal argument, there is a subsequence of $\{h_n\}$ converging to an $H_{0}$-symmetric endomorphism $h$ in the $C^1$ sense over each compactly embedded open subset of $X_{0,sm}$. From (\ref{eq:11-4}) one sees that $\bar{\partial}h=0$ over $X_{0,sm}$. But then $h$ is a holomorphic endomorphism of $\mathcal{E}|_{\hat{X}\backslash \bigcup C_i}$, and by Hartog's Theorem it extends to a holomorphic endomorphism of $\mathcal{E}$ over $\hat{X}$. Since $\mathcal{E}$ is irreducible, the existence of a HYM metric on $\mathcal{E}$ implies that it is stable and hence simple. Therefore $h=\mu I$ for some constant $\mu$.

%One sees from (\ref{eq:11-3}) and (\ref{eq:11-4}) that over each compactly embedded open subset $U\subset\subset X_{0,sm}$, a subsequence of the sequence $\{h_n\}$ converges in the $L^2$ sense to an element $h_U$ such that $\bar{\partial}h_U=0$ holds weakly. By considering a sequence of increasing compactly embedded open subsets which exhaust $X_{0,sm}$, one obtains by diagonal argument an $H_{0}$-symmetric endomorphism $h$ satisfying $\bar{\partial}h=0$ over $X_{0,sm}$. But then $h$ is a holomorphic endomorphism of $\mathcal{E}|_{\hat{X}\backslash \bigcup C_i}$, and by Hartog's Theorem it extends to a holomorphic endomorphism of $\mathcal{E}$ over $\hat{X}$. Since $\mathcal{E}$ is stable it is simple, and therefore $h=\mu I$ for some constant $\mu$.

\begin{lemma}
  \label{lm:3}
There exists an $0<\iota<\frac{1}{6}$ and a constant $C_{10}>0$ such that for any $0<\delta<\frac{1}{4}$ and large $n$,
\begin{equation*}
   \int_{V_n(\delta)} |h_n|^2 \mathbf{r}^{-\frac{4}{3}}dV_n \leq C_{10}\delta^{2\iota}.
\end{equation*}
\end{lemma}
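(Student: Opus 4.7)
The strategy is to combine a weighted Hardy inequality on the conifold neighborhoods with the Dirichlet energy bound for $h_n$, then iterate dyadically.

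First, integration by parts on the eigenvalue equation (\ref{eq:11-1}) yields
\[
\int_{X_n}|\partial_{H_n}h_n|^2\,dV_n=\lambda_n\int_{X_n}|h_n|^2\mathbf{r}^{-4/3}\,dV_n=\lambda_n\leq|t_n|^\nu,
\]
and since $h_n$ is $H_n$-symmetric, $|\bar\partial_{H_n}h_n|=|\partial_{H_n}h_n|$, so $\int_{V_n(2\delta)}|\nabla_{H_n}h_n|^2\,dV_n\leq 2|t_n|^\nu$.

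The key step is a weighted Hardy inequality, uniform in $n$ and $\delta\in(0,1/4)$: there exist constants $A,B>0$ such that for any smooth section $f$ on $V_n(2\delta)$,
\[
\int_{V_n(\delta)}|f|^2\mathbf{r}^{-4/3}\,dV_n \leq A\delta^{-4/3}\int_{V_n(\delta,2\delta)}|f|^2\,dV_n + B\int_{V_n(2\delta)}|\nabla_{H_n}f|^2\,dV_n.
\]
I would prove this first on the Calabi--Yau cone $(Q_0,g_{co,0})$: in the conical radial variable $\rho=\mathbf{r}^{2/3}$, one has $dV_{co,0}\propto\rho^5\,d\rho\,dA_\Sigma$, $|f|^2\mathbf{r}^{-4/3}dV_{co,0}\propto|f|^2\rho^3\,d\rho\,dA_\Sigma$, and $|\partial_\rho f|^2\rho^5\,d\rho\,dA_\Sigma \leq |\nabla f|^2_{co,0}\,dV_{co,0}$. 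Writing $\rho^3=(\rho^4/4)'$ and integrating by parts on any interval $[\rho_0,R]$ (the inner-boundary term has favorable sign and can be dropped), followed by Cauchy--Schwarz and Young's inequality, yields a one-dimensional Hardy inequality with a single boundary term at $\rho=R$. Averaging $R$ over $[\delta^{2/3},(2\delta)^{2/3}]$ converts this boundary term into a volume integral over the shell $V_0(\delta,2\delta)$; Kato's inequality $|\nabla|f||\leq|\nabla_{H_n}f|$ extends the scalar version to vector bundle sections. The scaling relations (\ref{eq:s}) together with the asymptotic estimates of Lemma \ref{lm:4} and Lemma \ref{lm:0-1} then transfer the inequality to $(X_n,\tilde g_n)$ with constants uniform in $n$ and $\delta$.

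Applying the inequality with $f=h_n$, and using $\mathbf{r}^{4/3}\leq(2\delta)^{4/3}$ on $V_n(\delta,2\delta)$ so that $\delta^{-4/3}\int_{V_n(\delta,2\delta)}|h_n|^2\,dV_n\leq 2^{4/3}[F(2\delta)-F(\delta)]$ with $F(\delta):=\int_{V_n(\delta)}|h_n|^2\mathbf{r}^{-4/3}\,dV_n$, one obtains the recursion
\[
F(\delta)\leq qF(2\delta)+2B|t_n|^\nu,\qquad q:=\frac{2^{4/3}A}{1+2^{4/3}A}\in(0,1).
\]
Iterating $K\sim\log_2(1/(4\delta))$ times and using $F(1/4)\leq 1$ gives $F(\delta)\leq C\delta^c+C'|t_n|^\nu$ where $c:=\log_2(1/q)>0$; choosing $\iota\in(0,1/6)$ with $2\iota\leq c$ and $n$ sufficiently large that $C'|t_n|^\nu\leq\delta^{2\iota}$ yields $F(\delta)\leq C_{10}\delta^{2\iota}$, as claimed.

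The main obstacle is establishing the Hardy inequality with constants uniform in both the scale $\delta$ and the smoothing parameter $t_n$; the difficulty lies near the vanishing sphere $\{\mathbf{r}=|t_n|^{1/2}\}$, where the $S^2$ factor of the link $\Sigma=S^2\times S^3$ of the ODP collapses. Fortunately the inner-boundary term in the integration by parts has a favorable (negative) sign and can be discarded, and the rescaling $\psi_n$ in (\ref{eq:s}) reduces the uniform-in-$n$ assertion to a statement on the fixed deformed conifold $Q_1$, where the asymptotic conicality provided by Lemma \ref{lm:4} gives the required uniform control of constants.
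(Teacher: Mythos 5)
Your approach is genuinely different from the paper's. The paper first peels off the $\delta$-dependence with H\"older's inequality, writing the left side as $\bigl(\int_{V_n(\frac{1}{4})}|h_n|^3\mathbf{r}^{-3\iota}dV_n\bigr)^{2/3}\bigl(\int_{V_n(\delta)}\mathbf{r}^{-4+6\iota}dV_n\bigr)^{1/3}$, then bounds the $L^3(\mathbf{r}^{-3\iota})$-factor via Michael--Simon's Sobolev inequality on the submanifold $Q_{t_n}\subset\mathbb{C}^4$ combined with a weighted integration by parts against $\phi_2=\mathbf{r}^{-2\iota}$, where the crucial sign $2|\partial\log\phi_2|^2-\sqrt{-1}\Lambda\bar\partial\partial\log\phi_2\leq(6\iota^2-\iota)\mathbf{r}^{-4/3}<0$ makes the absorption work. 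You instead propose a local Hardy inequality plus dyadic decay iteration. Granting the Hardy inequality, your iteration is correctly set up and does yield $F(\delta)\leq C\delta^c+C'|t_n|^\nu$ with $c=\log_2(1/q)>0$, hence the lemma with any $\iota\leq\min\{c/2,1/7\}$ for $n$ large depending on $\delta$; that weaker quantifier structure suffices for the use made of Lemma \ref{lm:3} in Proposition \ref{pr:5-1}, where the double limit $\lim_{\delta\to 0}\lim_{n\to\infty}$ is taken. Your approach is also more elementary in that it avoids Michael--Simon entirely.

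The gap is in the Hardy inequality itself, specifically in the region $\mathbf{r}_t\sim|t|^{1/2}$. Your proof of it on $Q_0$ is a one-dimensional integration by parts in $\rho=\mathbf{r}_0^{2/3}$ on an interval $[\rho_0,R]$, where the inner term $-\frac{\rho_0^4}{4}|f(\rho_0)|^2$ is negative and can be dropped before sending $\rho_0\to 0$. On $Q_t$, or after rescaling to $Q_1$, there is no inner boundary at all: the vanishing $S^3$ is a smooth interior submanifold at which $\mathbf{r}_1$ attains its minimum and the level-set foliation by $\Sigma=S^2\times S^3$ degenerates (the $S^2$ factor collapses). The Fubini-plus-one-dimensional-IBP argument does not apply across the critical level, so the ``favorable inner boundary sign'' reasoning does not transfer. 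Your appeal to the asymptotic conicality of Lemma \ref{lm:4} controls the region $\mathbf{r}_1\gg 1$, but you give no argument for $\mathbf{r}_1$ in a bounded range $[1,C]$, which after rescaling is exactly the regime $\delta\lesssim|t|^{1/2}$ where the constants must be checked. To close this you would need either a separate compactness/Poincar\'e argument on $Q_1$ uniform over $\delta'\in[1,C]$, or a more intrinsic formulation of the integration by parts against a smooth vector field on all of $Q_t$ whose divergence is comparable to $\mathbf{r}_t^{-4/3}$ (one can try to build such a field from the K\"ahler potential $f_t(\mathbf{r}_t^2)$, whose Laplacian is constant). Notably, the paper avoids this difficulty because its Bochner-type estimate with $\phi_2$ is an intrinsic identity on $(X_n,\tilde g_n)$ that never invokes a radial foliation near the vanishing cycle.
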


Let's assume the lemma first. Then we have
\begin{equation*}
   \begin{split}
       \int_{X_{0,sm}} |h|^2 \mathbf{r}^{-\frac{4}{3}}dV_0 =\lim_{\delta\rightarrow 0}\int_{X_{0}[\delta]} |h|^2 \mathbf{r}^{-\frac{4}{3}} dV_0 
      															       &=\lim_{\delta\rightarrow 0}\lim_{n\rightarrow \infty}\int_{X_{n}[\delta]} |h_n|^2\mathbf{r}^{-\frac{4}{3}}dV_n \\
															           &\geq \lim_{\delta\rightarrow 0}\lim_{n\rightarrow \infty} (1-C_{10}\delta^{2\iota})=1.
   \end{split}
\end{equation*}
On the other hand
\begin{equation*}
   \begin{split}
       \int_{X_{0,sm}} |h|^2 \mathbf{r}^{-\frac{4}{3}}dV_0 &=\lim_{\delta\rightarrow 0}\lim_{n\rightarrow \infty}\int_{X_{n}[\delta]} |h_n|^2\mathbf{r}^{-\frac{4}{3}}dV_n 
        																   \leq \lim_{\delta\rightarrow 0}\lim_{n\rightarrow \infty} 1=1,
   \end{split}
\end{equation*}
so we have
\begin{equation*}
      \int_{X_{0,sm}} |h|^2\mathbf{r}^{-\frac{4}{3}}dV_0=1.
\end{equation*}
Since $h=\mu I$, this implies that
\begin{equation}
 \label{eq:f-1}
    |\mu|^2=\left(\text{rank}(\mathcal{E})\int_{X_{0,sm}} \mathbf{r}^{-\frac{4}{3}}dV_0 \right)^{-1}.
\end{equation}
On the other hand, note that for each $\delta>0$,
\begin{equation*}
    |\mu|\text{rank}(\mathcal{E})\text{Vol}_0(X_0[\delta])=\left| \int_{X_0[\delta]}\text{tr}\,h\, dV_0   \right|=\lim_{n \rightarrow \infty}\left| \int_{X_n[\delta]}\text{tr}\,h_n\, dV_n   \right|.
\end{equation*}
Because
\begin{equation*}
    \int_{X_n}\text{tr}\,h_n\, dV_n=0,
\end{equation*}
we have
\begin{equation}
 \label{eq:f-2}
   \begin{split}
      &|\mu|\text{rank}(\mathcal{E})\text{Vol}_0(X_0[\delta])
      		=\lim_{n\rightarrow \infty}\left| \int_{X_n[{\delta}]}\text{tr}\,h_n\, dV_n   \right|
      			=\lim_{n\rightarrow \infty}\left| \int_{V_n(\delta)}\text{tr}\,h_n\, dV_n   \right| \\
     		&\leq C_1\lim_{n\rightarrow \infty}\left( \int_{V_n(\delta)} |h_n|^2dV_n\right)^\frac{1}{2}
				\leq C_2\lim_{n\rightarrow \infty} \left( \int_{V_n(\delta)} |h_n|^2\mathbf{r}^{-\frac{4}{3}}dV_n \right)^\frac{1}{2} \leq C_3\delta^\iota.
   \end{split}
\end{equation}

Now choose $\delta$ small enough such that
\begin{equation}
 \label{eq:f-3}
    \text{Vol}_0(X_0[\delta])\geq \frac{1}{2}\text{Vol}_0(X_0)
\end{equation}
and
\begin{equation}
 \label{eq:f-4}
    \left(\text{rank}(\mathcal{E})\int_{X_{0,sm}} \mathbf{r}^{-\frac{4}{3}}dV_0 \right)^{-\frac{1}{2}}>\frac{2C_3\delta^\iota}{\text{rank}(\mathcal{E})\text{Vol}_0(X_0)}.
\end{equation}
We see that a contradiction arises from (\ref{eq:f-1})-(\ref{eq:f-4}). We have thus shown Proposition \ref{pr:5-1}.\qed

\noindent{\scshape Proof of Lemma \ref{lm:3}}
First of all, by H$\ddot{\text{o}}$lder inequality,
\begin{equation*}
	\int_{V_n(\delta)} |h_n|^2 \mathbf{r}^{-\frac{4}{3}} dV_n \leq \left( \int_{V_n(\frac{1}{4})} |h_n|^3 \mathbf{r}^{-3\iota} dV_n \right)^\frac{2}{3} \left(\int_{V_n(\delta)} \mathbf{r}^{-4+6\iota} dV_n \right)^\frac{1}{3}.
\end{equation*}
Because
\begin{equation*}
		\left(\int_{V_n(\delta)} \mathbf{r}^{-4+6\iota} dV_n \right)^\frac{1}{3} \leq C_3\delta^{2\iota},
\end{equation*}
it is enough to prove that 
\begin{equation*}
		\left( \int_{V_n(\frac{1}{4})} |h_n|^3 \mathbf{r}^{-3\iota} dV_n \right)^\frac{2}{3} \leq C_4
\end{equation*}
for some constant $C_4>0$.

The proof makes use of Michael-Simon's Sobolev inequality \cite{MS} which we now describe. Let $M$ be an $m$-dimensional submanifold in $\mathbb{R}^N$. Denote the mean curvature vector of $M$ by $H$. Then for any nonnegative function $f$ on $M$ with compact support, one has
\begin{equation}
 \label{eq:9-1}
       \left( \int_M f^\frac{m}{m-1} dV_{g_E} \right) \leq C(m)\int_M (|\nabla f|_{g_E}+|H|\cdot f)\,dV_{g_E}
\end{equation}
where $C(m)$ is a constant depending only on $m$. Here all metrics and norms are the induced ones from the Euclidean metric on $\mathbb{C}^4$. We denote this induced metric by $g_E$. Do not confuse this metric with the metric $g_e$ appearing in earlier sections. In our case $M$ is the space $V_t(\frac{1}{2})$ identified as part of the submanifold $Q_{t} \subset \mathbb{C}^4$. As pointed out in \cite{FLY}, the relations between the volumes and norms for the CO-metric $g_{co,t}$ and those for the induced metric $g_E$ are
\begin{equation}
 \label{eq:r-1}
     dV_{g_{co,t}}=\frac{2}{3}\mathbf{r}_t^{-2}dV_{g_E}
\end{equation}
and 
\begin{equation}
 \label{eq:r-2}
     |\nabla f|_{g_E}^2 \leq C\mathbf{r}_t^{-\frac{2}{3}}|\nabla f|_{g_{co,t}}^2
\end{equation}
for any smooth function $f$ on $V_t(\frac{\delta}{2})$.

Let $\tau(\mathbf{r})$ be a cutoff function defined on $V_n(1)$ such that $\tau(\mathbf{r})=1$ when $\mathbf{r}\leq \frac{1}{4}$ and $\tau(\mathbf{r})=0$ when $\mathbf{r} \geq \frac{1}{2}$. Extend it to $X_n$ by zero. From (\ref{eq:r-1}) we have
\begin{equation}
 \label{eq:9-2}
     \int_{V_n(\frac{1}{4})} | h_n |^3\mathbf{r}^{-3\iota}dV_{co,n}
         \leq  \frac{2}{3}\int_{V_n(\frac{1}{2})} |h_n |^{3} \mathbf{r}^{-3\iota-2} \tau^3  dV_{g_E}.
\end{equation}
where $dV_{co,n}$ is the volume form with respect to the CO-metric $\omega_{co,t_n}$.% (and also to $H_{t_n}$, which we do not indicate explicitly).

Moreover, using H$\ddot{\text{o}}$lder inequality, one can deduce from (\ref{eq:9-1}) that
\begin{equation*}
    \left( \int_{V_n(\frac{1}{2})} f^3dV_{g_E} \right)^\frac{2}{3} \leq C\int_{V_n(\frac{1}{2})} |\nabla f|_{g_E}^2dV_{g_E} ,
\end{equation*}
and using (\ref{eq:r-1}) and (\ref{eq:r-2}) we get
\begin{equation}
 \label{eq:r-3}
    \left( \int_{V_n(\frac{1}{2})} f^3dV_{g_E} \right)^\frac{2}{3} \leq C_5\int_{V_n(\frac{1}{2})} |\nabla f|_{co,n}^2\mathbf{r}^\frac{4}{3}dV_{g_{co,n}}
\end{equation}
where $|\cdot|_{co,n}$ is the used to denote $|\cdot|_{g_{co,t_n}}$.

Apply (\ref{eq:r-3}) to $f=|h_n| \mathbf{r}^{-\iota-\frac{2}{3}}\tau$, and then together with (\ref{eq:9-2}) (and Lemma \ref{lm:0-1}) we have

\begin{equation}
 \label{eq:19-1}
   \begin{split}
		&\left( \int_{V_n(\frac{1}{4})} |h_n|^3 \mathbf{r}^{-3\iota} dV_n \right)^\frac{2}{3} \\
		\leq & \tilde{C}_1^\frac{2}{3}\left( \int_{V_n(\frac{1}{4})} |h_n|^3 \mathbf{r}^{-3\iota} dV_{co,n} \right)^\frac{2}{3} 
					\leq \tilde{C}_1^\frac{2}{3}\left( \frac{2}{3}\int_{V_n(\frac{1}{2})} (|h_n| \mathbf{r}^{-\iota-\frac{2}{3}}\tau)^3  dV_{g_E} \right)^\frac{2}{3} \\
		\leq & C_6\int_{V_n(\frac{1}{2})} |\nabla (|h_n| \mathbf{r}^{-\iota-\frac{2}{3}}\tau)|_{co,n}^2 \mathbf{r}^\frac{4}{3}  dV_{co,n} \\
		\leq & 3C_6\int_{V_n(\frac{1}{2})} |\nabla |h_n| |_{co,n}^2 \mathbf{r}^{-2\iota}\tau^2  dV_{co,n} 
				+3C_6\int_{V_n(\frac{1}{2})} |h_n|^2 |\nabla \mathbf{r}^{-\iota-\frac{2}{3}}|_{co,n}^2 \tau^2 \mathbf{r}^\frac{4}{3}   dV_{co,n} \\
				&+ 3C_6\int_{V_n(\frac{1}{2})} |h_n|^2 \mathbf{r}^{-2\iota} |\nabla\tau|_{co,n}^2  dV_{co,n}. \\
   \end{split}
\end{equation}

The third term on the RHS of (\ref{eq:19-1}) is an integral over $V_n(\frac{1}{4},\frac{1}{2})$ in which the support of $\nabla \tau$ lies. From (\ref{eq:11-3}) one sees that it is bounded by some constant $C_7>0$ independent of $n$. Later whenever we encounter an integral with a derivative of $\tau$ in the integrant, we will bound it by a constant for the same reason.

Because $h_n$ is $H_n$-hermitian symmetric, $\bar{\partial} h_n=(\partial_{H_n} h_n)^{*_{H_n}}$, and so the first term on the RHS of (\ref{eq:19-1}) can be bounded as
\begin{equation}
 \label{eq:r-4}
   \begin{split}
		 \int_{V_n(\frac{1}{2})} |\nabla |h_n| |_{co,n}^2 \mathbf{r}^{-2\iota}\tau^2  dV_{co,n} 
		\leq &\int_{V_n(\frac{1}{2})} (\langle \partial_{H_n} h_n, \partial_{H_n} h_n \rangle_{co,n}+\langle \bar{\partial} h_n, \bar{\partial} h_n \rangle_{co,n}) \mathbf{r}^{-2\iota}\tau^2  dV_{co,n} \\
		   = &2\int_{V_n(\frac{1}{2})} \langle \partial_{H_n} h_n, \partial_{H_n} h_n \rangle_{co,n} \mathbf{r}^{-2\iota}\tau^2  dV_{co,n}
		   \leq \tilde{C}_3|t_n|^{\nu-\iota}
   \end{split}
\end{equation}
for some constant $\tilde{C}_3>0$ independent of $n$. The last inequality follows from (\ref{eq:11-4}) and Lemma \ref{lm:0-1}.
We now fix an $\iota$ such that $0<\iota<\min\{\frac{1}{6},\nu\}$. Then we see that as $n$ goes to infinity, this term goes to zero.

Finally we deal with the second term on the RHS of (\ref{eq:19-1}). It can be bounded as
\begin{equation}
 \label{eq:19-2}
    \begin{split}
		\int_{V_n(\frac{1}{2})} |h_n|^2 |\nabla \mathbf{r}^{-\iota-\frac{2}{3}}|_{co,n}^2 \tau^2 \mathbf{r}^\frac{4}{3}   dV_{co,n}
		\leq &C_8\int_{V_n(\frac{1}{2})} |h_n|^2 \mathbf{r}^{-2\iota-\frac{4}{3}} \tau^2  dV_{co,n} 
	\end{split}
\end{equation}
for some constant $C_8>0$. hence it is enough to bound the term on the right.

To do so, we introduce the notation $\phi_2=\mathbf{r}^{-2\iota}$, and denote $\partial_{H_n}^{\phi_2}=\partial_{H_n}+\partial \log \phi_2 \wedge$. We can estimate
\begin{equation*}
   \begin{split}
	0 \leq& \int_{V_n(\frac{1}{2})} \langle \bar{\partial} h_n, \bar{\partial} h_n \rangle_{\tilde{g}_n} \phi_2\tau^2  dV_{n} 
		 \leq  -\int_{V_n(\frac{1}{2})} \langle \sqrt{-1}\Lambda_{\tilde{\omega}_n} \partial_{H_n}^{\phi_2} \bar{\partial} h_n, h_n \rangle_{\tilde{g}_n} \phi_2\tau^2  dV_{n}+C_7 \\
		    =& \int_{V_n(\frac{1}{2})} -\langle \sqrt{-1}\Lambda_{\tilde{\omega}_n} \partial_{H_n}^{\phi_2} \bar{\partial} h_n+\sqrt{-1}\Lambda_{\tilde{\omega}_n} \bar{\partial} \partial_{H_n}^{\phi_2} h_n, h_n \rangle_{\tilde{g}_n}\phi_2\tau^2  dV_{n} \\ 
		    &+\int_{V_n(\frac{1}{2})} \langle \sqrt{-1}\Lambda_{\tilde{\omega}_n}  \bar{\partial} \partial_{H_n}^{\phi_2} h_n, h_n \rangle_{\tilde{g}_n} \phi_2\tau^2  dV_{n} 
		    +C_7.
   \end{split}
\end{equation*}
One can compute that 
\begin{equation*}
		\sqrt{-1}\Lambda_{\tilde{\omega}_n} \partial_{H_n}^{\phi_2} \bar{\partial} h_n+\sqrt{-1}\Lambda_{\tilde{\omega}_n} \bar{\partial} \partial_{H_n}^{\phi_2} h_n
		= -[\sqrt{-1}\Lambda_{\tilde{\omega}_n}F_{H_n}, h_n]+(\sqrt{-1}\Lambda_{\tilde{\omega}_n} \bar{\partial} \partial \log\phi_2) h_n,
\end{equation*}
and so we have
\begin{equation*}
   \begin{split}
		0\leq  & \int_{V_n(\frac{1}{2})} \langle  [\sqrt{-1}\Lambda_{\tilde{\omega}_n}F_{H_n}, h_n],h_n\rangle_{\tilde{g}_n}-\langle (\sqrt{-1}\Lambda_{\tilde{\omega}_n} \bar{\partial} \partial \log\phi_2) h_n, h_n \rangle_{\tilde{g}_n} \phi_2\tau^2  dV_{n} \\ 
		    &+\int_{V_n(\frac{1}{2})} \langle  \partial_{H_n}^{\phi_2} h_n, \partial_{H_n}^{\phi_2} h_n \rangle_{\tilde{g}_n} \phi_2\tau^2  dV_{n} 
		    +C_7 \\
		\leq &2 \int_{V_n(\frac{1}{2})}  |\Lambda_{\tilde{\omega}_n}F_{H_n}| | h_n|^2 \phi_2\tau^2  dV_{n}
			+ 2\int_{V_n(\frac{1}{2})} |\partial_{H_n} h_n|_{\tilde{g}_n}^2 \phi_2\tau^2  dV_{n} \\
			&+  \int_{V_n(\frac{1}{2})} (2|\partial\log \phi_2|_{\tilde{g}_n}^2-\sqrt{-1}\Lambda_{\tilde{\omega}_n} \bar{\partial} \partial \log\phi_2) | h_n|^2 \phi_2\tau^2  dV_{n} +C_7
   \end{split}
\end{equation*}
To proceed, we use the bound $|\Lambda_{\tilde{\omega}_n}F_{H_n}| \leq \tilde{Z}_0\mathbf{r}^{-\frac{4}{3}}|t_n|^{\frac{1}{4}}$ from the remark at the end of Section 5 to deal with the first term. We use (\ref{eq:11-4}) to take care of the second term. Finally, we have
\begin{equation*}
			|\partial\log \phi_2|_{\tilde{g}_n}^2<3\iota^2 \mathbf{r}^{-\frac{4}{3}}\,\,\,\text{and}\,\,\sqrt{-1}\Lambda_{\tilde{\omega}_n} \bar{\partial} \partial \log\phi_2>\iota \mathbf{r}^{-\frac{4}{3}},
\end{equation*}
which follow from (bottom of) p.31 of \cite{FLY} together with the observation $\sqrt{-1}\bar{\partial} \partial \log\phi_2\geq 0$ and the crude estimate $\frac{1}{2}g_{co,t} \leq \tilde{g}_t \leq 2g_{co,t}$ on $V_t(\frac{1}{2})$ for $t$ sufficiently small.

Thus
\begin{equation*}
   \begin{split}
		0\leq & 2\tilde{Z}_0 |t_n|^{\frac{1}{4}} \int_{V_n(\frac{1}{2})}   | h_n|^2 \mathbf{r}^{-\frac{4}{3}}\phi_2\tau^2  dV_{n}
			+ 2|t_n|^{\nu-\iota} 
			+ \int_{V_n(\frac{1}{2})} (6\iota^2-\iota) | h_n|^2 \mathbf{r}^{-\frac{4}{3}}\phi_2\tau^2  dV_{n} +C_7 \\
		\leq &(2\tilde{Z}_0 |t_n|^\frac{1}{4}+6\iota^2-\iota) \int_{V_n(\frac{1}{2})}   | h_n|^2 \mathbf{r}^{-2\iota-\frac{4}{3}}\tau^2  dV_{n}+ 2|t_n|^{\nu-\iota} +C_4.
   \end{split}
\end{equation*}
Recall that $0<\iota<\min\{\frac{1}{6},\nu\}$ is fixed. Let $n$ be large so that $2\tilde{Z}_0 |t_n|^{\frac{1}{4}}+6\iota^2-\iota<0$, we see from above that
\begin{equation}
 \label{eq:19-4}
			\int_{V_n(\frac{1}{2})}   | h_n|^2 \mathbf{r}^{-2\iota-\frac{4}{3}}\tau^2  dV_{n} \leq C(\iota)
\end{equation}
for some constant $C(\iota)>0$ depending on $\iota$.

From (\ref{eq:19-1}), (\ref{eq:r-4}), (\ref{eq:19-2}), and (\ref{eq:19-4}) the proof is complete. \qed
\end{proof}

\end{document}